\documentclass[11pt]{amsart}
\usepackage[utf8]{inputenc}
\usepackage{amssymb,amsmath, amsthm, mathabx}
\usepackage{color, enumerate}
\usepackage[stable]{footmisc}
\usepackage{hyperref}
\usepackage{caption}
\usepackage{subcaption}
\usepackage{a4wide}
\usepackage{accents}
\usepackage{tikz}
\usepackage{graphicx}
\usepackage{chngcntr}
\usepackage{datetime2}
\usepackage{scalerel}    


\usepackage{xifthen}
\def \w {\omega}

\def \wb{\wh{\omega}}

\def \wne{\wc{\omega}}

\def \G{\mathrm G}

\def \Gb{\wh{\G}}

\def \Gne{\wc{\G}}

\def \B{\mathrm B}

\def \Bcdf{\mathrm F}

\def \bcdf{\mathrm f}

\def \b{\mathrm b}

\def \cif{\varphi}

\def \tree{\sT}


\def\Lb{\mathrm{L}}


\def\hor{\mathrm{hor}}
\def\ver{\mathrm{ver}}

\def \P{\mathbf P} 
\def\E{\bfE}

\def\Exp{\mathrm{Exp}}

\def\curv{\sigma}


\def\dd{\mathrm d}

\def\Eh{\mathrm{Z}}
\def\Ev{\mathrm{Z}}

\def\M{\mathrm M} 
\def\Shp{\gamma}

\def\Min{\zeta}

\newcommand{\bbR}{\mathbb{R}}

\newcommand{\bbZ}{\mathbb{Z}}


\newcommand{\bfE}{\mathbf{E}}


\newcommand{\mrR}{\mathrm{R}}
\newcommand{\mrS}{\mathrm{S}}


\newcommand{\sT}{\mathcal{T}}

\newcommand{\Fabs}[4]
{
\ifthenelse{\isempty{#2}}
{
{#1}_{#3}^{#4}
}
{
{#1}_{#3}^{#4}(#2)
}
}

\newcommand{\one}{\mathbf{1}}

\newcommand{\lc}{\lceil}
\newcommand{\rc}{\rceil}
\newcommand{\lf}{\lfloor}
\newcommand{\rf}{\rfloor}
\newcommand{\Var}{\mathbf{Var}}

\def\dd{\mathrm d}

\definecolor{darkblue}{rgb}{.1,.1,.6}

\newcommand{\wt}[1]{\widetilde{#1}}
\newcommand{\wh}[1]{\widehat{#1}}
\newcommand{\wc}[1]{\widecheck{#1}}

\newtheorem{thm}{Theorem}[section]
\newtheorem{prop}[thm]{Proposition}
\newtheorem{lem}[thm]{Lemma}
\newtheorem{cor}[thm]{Corollary}

\theoremstyle{remark}
\newtheorem{rem}{Remark}[section]
\newtheorem{example}[rem]{Example}

\newcommand{\be}{\begin{equation}}
\newcommand{\ee}{\end{equation}}

\usepackage[percent]{overpic}

\counterwithin{figure}{section}
\counterwithin{equation}{section}

\usepackage{todonotes, xargs}
\newcommandx{\note}[2][1=]{\todo[linecolor=yellow,backgroundcolor=yellow!25,bordercolor=yellow,#1]{#2}}

\allowdisplaybreaks[1] 

\title[Coupling approach to exit point bounds in LPP]
{Optimal-order exit point bounds in exponential last-passage percolation via the coupling technique}

\author[E.~Emrah]{Elnur Emrah}
\address{Elnur Emrah\\ University of Bristol \\ School of Mathematics \\ Bristol \\ United Kingdom}
\email{e.emrah@bristol.ac.uk}
\urladdr{https://sites.google.com/view/elnur-emrah}
\thanks{E.\ Emrah was partially supported by the grant KAW 2015.0270 from the Knut and Alice Wallenberg Foundation and by the Mathematical Sciences Department at Carnegie Mellon University through a postdoctoral position.} 

\author[C.~Janjigian]{Christopher Janjigian}
\address{Christopher Janjigian\\ Purdue University\\  Mathematics Department\\  150 N. University Street
\\   West Lafayette, IN 47907\\ USA.}
\email{cjanjigi@purdue.edu}
\urladdr{http://www.math.purdue.edu/~cjanjigi}
\thanks{C.\ Janjigian was partially supported by National Science Foundation grant DMS-1954204 and by a postdoctoral grant from the Fondation Sciences Math\'ematiques de Paris while working at Universit\'e Paris Diderot.}

\author[T.~Sepp\"al\"ainen]{Timo Sepp\"al\"ainen}
\address{Timo Sepp\"al\"ainen\\ University of Wisconsin-Madison\\  Mathematics Department\\ Van Vleck Hall\\ 480 Lincoln Dr.\\   Madison WI 53706-1388\\ USA.}
\email{seppalai@math.wisc.edu}
\urladdr{http://www.math.wisc.edu/~seppalai}
\thanks{T.\ Sepp\"al\"ainen was partially supported by National Science Foundation grants  DMS-1602486, DMS-1854619  and DMS-2152362, and by the Wisconsin Alumni Research Foundation.} 
\keywords{Busemann limit, competition interface, coupling method, corner growth model, 
exit point, geodesic, last-passage percolation, path fluctuations, transversal fluctuations, wandering exponent}
\subjclass[2000]{60K35, 60K37} 

\thanks{First published by ©2023 MSP (Mathematical Sciences Publishers) in Probability and Mathematical Physics in Vol.\ 4 (2023), No.\ 3.}


\begin{document}

\begin{abstract}
We develop a new probabilistic method for deriving deviation estimates in directed planar polymer and percolation models. The key estimates are for exit points of geodesics as they cross transversal down-right boundaries. These bounds are of optimal cubic-exponential order. We derive them in the context of last-passage percolation with exponential weights for a class of boundary conditions including the stationary case. 
%
%
As a result, the probabilistic coupling method is empowered to treat a variety of problems optimally, which could previously be achieved only via inputs from integrable probability. As applications in the bulk setting, we obtain upper bounds of cubic-exponential order for transversal fluctuations of geodesics, and cube-root upper bounds with a logarithmic correction for distributional Busemann limits and competition interface limits. Several other applications are already in the literature. 
\end{abstract}

\maketitle

\tableofcontents

\section{Introduction}

\subsection{Purpose of this work} 

This paper introduces a new method for deriving probability estimates for directed planar polymer and percolation models. This method operates naturally in the context of  the probabilistic coupling approach. It utilizes a generating function of the process with two   distinct boundary conditions on the axes.  It yields probability bounds of optimal exponential order, for the first time within the coupling approach. This allows the coupling approach to match the strength of the integrable probability approach in the treatment of a variety of problems of planar random growth. 

We develop the technique for the {\it exponential corner growth model}  since this is the most-studied example  in the KPZ class. The ideas will transfer readily to other planar models with tractable increment or ratio stationary versions.  Indeed,  since the first version \cite{emra-janj-sepp-20-md} of this paper,  some key aspects of our approach have been implemented for the geometric corner growth model \cite{groa-janj-rass-21},  integrable lattice polymers \cite{land-soso-22a,  xie-22-phd},  the O'Connell-Yor polymer \cite{land-soso-22b} and,  most remarkably,  a nonintegrable\footnote{In the sense that there is no integrable structure beyond the existence of product-form invariant measures.} model of interacting diffusions \cite{land-soso-22a} that includes the O'Connell-Yor polymer (introduced in \cite{oconn-yor-01}) as a special case. 

Our main result gives control of the exit point of the geodesic from the axes.  This is often the starting point for applications of the coupling methodology because it is based on comparisons of several versions of the process.  
From these bounds follow a number of applications.  Some applications are described in this paper.  Other applications\footnote{Some of the cited works utilized an earlier preprint \cite{emra-janj-sepp-20-md} of this paper.} concern optimal-order (at least up to logarithmic factors) central moment bounds for last-passage times \cite[Theorem 3.1]{emra-geor-ortm-22},  and transversal fluctuations and coalescence bounds for finite and semi-infinite geodesics \cite[Theorems 2.2 and 2.8]{busa-ferr-22},  \cite[Theorem 2.3]{shen-sepp-19}.  These results were previously inaccessible to the coupling approach,  and could be proved only through inputs from integrable probability.  

Next this introduction describes the broader KPZ class, the coupling approach to their study, our results and their first applications, and related literature. The reader familiar with the subject can check our notation in Section \ref{SExPt}  and proceed to the results in Sections \ref{SExBd}  and \ref{SApp}.  The organization of the paper is described in Section \ref{SOrg} below. 

\subsection{Kardar-Parisi-Zhang class of planar stochastic models} 
 
The Kardar-Parisi-Zhang (KPZ) universality \cite{kard-pari-zhan-86} predicts long-time fluctuations in one-dimensional, out-of-equilibrium, stochastic interface growth with nonlinearly slope-dependent vertical speed, local only constraints, a smoothing mechanism and rapidly decorrelating space-time noise. After a long time $T > 0$, the interface is expected to display height fluctuations on $T^{1/3}$ scale and nontrivial spatial correlations on $T^{2/3}$ scale. Furthermore, with suitable centering and rescaling, the evolution of the interface is expected to converge to that of a universal limiting interface, the KPZ fixed point \cite{mate-quas-reme-21}, started from the limiting initial conditions. The broader significance of these predictions is that the same scaling and, at least to a large extent, the limit behavior are also believed and partially confirmed to arise in a diverse array of probabilistic models and physical systems. These form the KPZ universality class in $1+1$ (one space and one time) dimensions, and include certain interacting particle systems, random matrix ensembles, stochastic PDEs, and models of polymers in random media, growth of bacterial colonies, and liquid percolation. In-depth introductions to the subject from both mathematical and physical perspectives can be found in the review articles \cite{Cor-12, krie-krug-10-rev, quas-12-rev, Qua-Spo-15}. The short surveys \cite{baik-22, Cor-16, gang-22} also provide a summary of major milestones and some interesting research directions in this area. 

From a mathematical perspective, much of the KPZ prediction remains conjectural. However, for a small class of integrable (exactly-solvable) models with special structure, it has been possible to rigorously verify some aspects of the KPZ universality. The field of {\it integrable probability}, which exploits the structural properties of models to the greatest extent, often provides the most precise results on the KPZ class models and presently offers the only feasible path for rigorous analysis to the point of computing limit distributions. In the integrable approach, one first derives explicit formulas in the form of Fredholm determinants for the expectations of certain observables of interest, typically through versions of the Robinson-Schensted-Knuth (RSK) correspondence, the machinery of determinantal point processes or the Bethe ansatz, and then computes a suitable limit via the methods of asymptotic analysis. This line of argument was first demonstrated in the breakthrough articles \cite{baik-deif-joha-99, joha} and has since expanded remarkably in scope \cite{boro-gori-16-lec, boro-petr-14-surv, corw-14-icm, joha-06-lec, zygo-18}. 

The techniques of integrable probability do not seem well-suited, however, to study the KPZ universality beyond the integrable settings due to being too closely tied to the special, model-specific properties. Therefore, the development of alternative methodologies reliant on more broadly available structures and with greater potential for generalization merits research attention. 

\subsection{Coupling approach} 

One promising probabilistic approach capable of identifying the KPZ scaling exponents is the coupling method introduced by E.\ Cator and P.\ Groeneboom in the context of Hammersley's process \cite{cato-groe-05,  cato-groe-06}. This is a particularly versatile scheme that has since been further developed and fruitfully adapted to a variety of KPZ class models, including some directed percolation models \cite{bala-cato-sepp, ciec-geor-19}, particle systems with nearest-neighbor interaction \cite{bala-komj-sepp-12jsp, bala-komj-sepp-12, bala-sepp-aom} and directed polymers \cite{bala-quas-sepp, bala-rass-sepp-19, chau-noac-18b, more-sepp-valk-14, sepp-12-aop}. 
In broad strokes, the method compares a model under study with its stationary versions through suitable couplings, and likely produces results as long as the latter models are sufficiently tractable. For example, it would at least in principle be applicable in all integrable directed percolation and polymers on the integer quadrant considered in \cite{baik-rain-01a, bara-corw-17, chau-noac-18, corw-sepp-shen-15, ocon-ortm-15, rain-00, thie-ledo-15}, including their inhomogeneous generalizations. Furthermore, the variants of these models with general i.i.d.\ weights having finite $p$th moment for some $p > 2$ also possess stationary versions \cite{geor-rass-sepp-17-buse, Jan-Ras-20}, although no longer in explicit form. This raises the attractive, albeit presently highly speculative, prospect that the coupling method can potentially be improved in the future to the extent of being able to study the KPZ exponents in such non-integrable settings of great interest. As perhaps 
encouraging developments in this respect,  some aspects of limit shapes and geodesics in i.i.d.\ directed percolation as well as the positive temperature counterparts of these objects in i.i.d.\ directed polymers have been successfully studied in recent works \cite{geor-rass-sepp-17-geod, geor-rass-sepp-17-buse, Jan-Ras-20, Jan-Ras-Sep-20-} through coupling arguments. 

On the other hand, there are also drawbacks to the coupling approach. Besides being unable to access the KPZ limit distributions,  prior to the present work,  
the coupling method produced weaker than optimal results in some applications.  Most notably,  it provided only polynomially decaying\footnote{Before this work,  the best left-tail fluctuation bounds for last-passage times accessible via the coupling approach had cubic decay; see \cite{bala-cato-sepp,  sepp-cgm-18} for example.  On the polymer side,  the results of \cite{noac-soso-20a, noac-soso-20b} imply polynomially (of arbitrary degree) decaying fluctuation bounds for the free energy in the stationary versions of the O'Connell-Yor polymer and integrable lattice polymers.  We also remind here that optimal-order/sharp exponential tail bounds are available through integrable or random matrix techniques \cite{baik-etal-01,  ledo-ride-10}.} left-tail bounds for the last-passage time and free energy,  which was a main source of limitations and suboptimalities in various results.  For example,  before this work, optimal-order bounds were available via the coupling approach only for certain low central moments of these random variables. In a recent advance, refining the coupling method suitably, preprints \cite{noac-soso-20a, noac-soso-20b} managed to establish nearly optimal (with an $\epsilon$-deficiency in the exponents) bounds for all central moments of the free energies in the stationary versions of the O'Connell-Yor polymer and the four basic integrable lattice polymers. As our work demonstrates, however, there is still significant room for further fundamental improvements to the method. 

The purpose of this article is to optimize the coupling method in a key aspect, namely, the exit point bounds. {\it As a result, the method is brought on par with integrable probability in handling a variety of problems of interest.}  
Being able to achieve optimal results via the coupling approach is of some significance because the method can be preferable 
in these situations on account of its aforementioned virtues. 


\subsection{Exponential last-passage percolation}

Our setting is the last-passage percolation (LPP) on the nonnegative integer quadrant $\bbZ_{\ge 0}^2$ with independent exponential weights. The rates of the exponentials equal $1$ in the bulk $\bbZ_{>0}^2$, $w$ on the horizontal axis $\bbZ_{>0} \times \{0\}$ and $1-z$ on the vertical axis $\{0\} \times \bbZ_{>0}$ for some parameters $w > 0$ and $z < 1$. The weight at the origin is irrelevant and set to zero. The basic objects of study are the last-passage times and geodesics defined in Section \ref{SsLpp}. 

The exponential LPP is among the most-studied integrable models in the KPZ universality class \cite{bala-cato-sepp, bena-corw, prah-spoh}, owing in large part to its close connection to the totally asymmetric simple exclusion process (TASEP) started with the two-sided product Bernoulli initial condition and a single second-class particle at the origin. More specifically, the initial occupation probabilities for the sites of $\bbZ_{>0}$ and $\bbZ_{<0}$ equal $z$ and $w$, respectively, assuming now that $w \in (0, 1]$ and $z \in [0, 1)$. For the present work, the particle system picture 
only serves as a motivating context. Our proofs are developed entirely within the LPP framework from the stationarity properties of the equilibrium case $w = z$. A point to stress here is that the choice of the exponential LPP (among the integrable settings to which the coupling approach applies) is not a requirement but made for concreteness as well as relative simplicity and significance of the model. Analogous developments to ours can likely be carried out in all previously listed integrable directed percolation and polymers. These extensions are left for future works. 

\subsection{Overview of main results 
and methodology} An important role in the coupling approach to the LPP is played by the \emph{exit points} of the geodesics out of the origin in the equilibrium regime. The most basic case is the exit point from the axes, which is the last vertex that the geodesic visits on the axes before entering into the bulk.  More generally, we consider the exit point from an arbitrary down-right path; see Section \ref{SsEx} for the precise notion. Our interest is in the right-tail event in which the exit point is at least a given distance away from a fixed base vertex on the down-right path. The collection of such events describes, for example, 
the transversal fluctuations of the geodesic.   

Our main results on the exit points in the exponential LPP are upper and lower bounds of matching cubic-exponential order for the right-tail fluctuations, covering primarily the equilibrium regime $w = z$ (Theorems \ref{TExitUB} and \ref{TExitLB}).  
Crucially for our purposes of empowering the coupling method, the preceding bounds are obtained 
utilizing no more than the knowledge of the explicit equilibrium models and the stationarity of the last-passage increments there. Also worth noting with a view to future extensions is that the stronger distributional feature known as the {\it Burke property} (see \eqref{EBurke}) is also not used at this stage,  although it does come in for our applications mentioned below.  Our exit point bounds can be equivalently rephrased in terms of increment-stationary path-to-point exponential LPP as well (Propositions \ref{PExitUB2} and \ref{PExitLB2}).

Before the present work, known proofs of exponentially decaying fluctuation upper bounds for geodesic exit points relied on LPP fluctuation upper bounds with exponential decay.  Examples of this approach can be found in the proofs of \cite[Theorem 11.1]{basu-sido-sly-16}, \cite[Lemma 3.6]{ferr-ghos-nejj-19} and \cite[Lemma 2.5]{Fer-Occ-18}. 
In particular,  these arguments achieve optimal-order cubic-exponential decay for exit points starting from LPP fluctuation upper bounds with exponent $3/2$.  Prior to \cite{emra-geor-ortm-22} which builds on this article,  the known techniques that can produce such bounds for the left tail analyze exact distributional formulas for last-passage times either via Riemann-Hilbert methods \cite{baik-etal-01, lowe-merk-01, lowe-merk-roll-02}, or via tridiagonalization methods applied to the closely related Laguerre unitary ensemble 
\cite{ledo-ride-10}, or via H.\ Widom's trace trick \cite{wido-02} combined with 
steepest-descent methods applied to the trace of the associated correlation kernel \cite{Bai-Fer-Pec-14}.  On the other hand,  before this article,  the coupling method provided only cubically decaying left-tail fluctuation upper bounds\footnote{After our work,  it has become possible to achieve optimal-order left-tail fluctuation upper bounds within the coupling framework through combining \cite[Proposition 4.5]{emra-geor-ortm-22} with the generic results of \cite{gang-hegd-20}.  See also \cite{land-soso-22b} where similar strategy is employed for the O'Connell-Yor polymer.} for last-passage times \cite{bala-cato-sepp}.  This made it challenging to obtain optimal-order exit point upper bounds via the coupling approach.  In fact, the best upper bounds available via the coupling approach prior to our work were cubically decaying \cite[Theorem 2.2 and 2.5]{bala-cato-sepp},  \cite[Lemma 2.2]{pime-18},  \cite[Lemma 3.7]{pime-21},  \cite[Proposition 5.9]{sepp-cgm-18}. 

The main novelty of the present work is that optimal-order exit point upper bounds are obtained here through the coupling method,  without inputs from integrable probability or random matrix theory.  This is achieved through a moment generating function identity previously observed in a preprint of E.\ Rains \cite{rain-00} and recorded as Proposition \ref{PLMId} below.  In \cite{rain-00},  this identity is derived from determinantal formulas developed in \cite{baik-rain-01a} for the distribution of last-passage times.  We give a short probabilistic proof utilizing the increment-stationary LPP process.  We also find that Rains' identity is a natural generalization of the well-known variance identity of Bal\'{a}zs-Cator-Sepp\"{a}l\"{a}inen; see \eqref{EVar} below.  The latter identity and its variants formed the basis of the fluctuation theory developed with the coupling approach for integrable directed LPP since the seminal articles \cite{bala-cato-sepp, cato-groe-05, cato-groe-06}. See, for example, the recent lecture notes covering the exponential LPP \cite{sepp-cgm-18}. The key observation in the present work is that \eqref{EVar} can be upgraded to Proposition \ref{PLMId} without leaving the coupling context, and the fluctuation theory based on the latter produces optimal results previously inaccessible via the coupling approach. 

From Rains' identity,  we first extract a cubic-exponential order upper bound for the first step probability of a  geodesic out of the origin (Proposition \ref{PGeoInStep}) for the increment-stationary LPP.  We then turn this bound into our main upper bound (Theorem \ref{TExitUB}) with the aid of a known distributional identity \cite[Lemma A.2]{sepp-cgm-18}(stated as Lemma \ref{LExitDistId} below) relating the exit points from the axes to those from general L-shaped paths.  The proof of our main lower bound (Theorem \ref{TExitLB}) follows the same broader strategy with a key step being the derivation of a cubic-exponential lower bound for the first step probability (Proposition \ref{PGeoInStepLB}).  The proof of the latter is more involved than the corresponding upper bound.  It combines a change-of-measure argument originated in \cite{bala-sepp-aom} together with our exit point upper bounds.  We point out that the technique from \cite{bala-sepp-aom} was previously adapted to the exponential LPP \cite{sepp-cgm-18} and has been employed recently to obtain coalescence bounds for semi-infinite geodesics \cite{shen-sepp-19}.  An intermediate result in \cite[Theorem 4.1]{shen-sepp-19} is a lower bound of optimal-order for the exit points from the axes.  Our main lower bound extends this result mainly to arbitrary down-right paths through a similar argument.  

We conclude this section by commenting on some hypotheses in our main results.  First,  our main lower bound requires an arbitrarily small but \emph{fluctuations-scale} distance between the exit point and the base vertex.  Such a condition is expected on grounds that exit points should exhibit a different decay behavior for \emph{small deviations},  which has been recently studied for the bulk model (with i.i.d.\ $\Exp(1)$ weights) in \cite{basu-bhat-21}.  Also,  our exit point bounds apply to geodesics with endpoints away from the axes.  For fluctuation bounds on \emph{steep} geodesics,  see \cite[Theorems 2.5 and 2.7]{basu-hoff-sly-22}. 

\subsection{Some applications and extensions}

Our second set of results demonstrates some initial applications of our exit point upper bounds.  Theorem \ref{TPathFluc} provides cubic-exponential order fluctuation upper bounds for the exit points of the bulk geodesics.  The order of decay in this result should be optimal on account of universality and a recent optimal-order cubic-exponential lower bound for the geodesics in the Poisson LPP \cite[Proposition 1.4]{hamm-sark-20}.  The authors employed some LPP moderate deviation bounds from \cite{lowe-merk-01, lowe-merk-roll-02} as the only inputs from integrable probability.  After the present article,  it has become possible to reproduce these inputs through the coupling approach \cite{emra-geor-ortm-22,  emra-janj-sepp-20-md}\footnote{A forthcoming update to our preprint \cite{emra-janj-sepp-20-md} will present a coupling proof of the right-tail lower bound analogous to the input \cite[Theorem 2.3]{hamm-sark-20} for the exponential LPP.},  which suggests that optimal-order lower bounds complementary to Theorem \ref{TPathFluc} can also be established within the coupling framework.  

The remaining applications utilize the Burke property.  The next one concerns \emph{Busemann functions},  namely,  the a.s.\ directional limits of the last-passage increments.  Our interest is in the speed of distributional convergence.  Theorem \ref{TBuse} provides speed upper bounds with cube-root decay (up to logarithms) in the bulk setting. This result is in the spirit of \cite[Theorem 2.1]{bala-busa-sepp-21},  which proved an upper bound for the total variation distance between the LPP increments in the bulk and in a suitable equilibrium model.  Compared with our speed bounds,  their result provides bounds with respect to a stronger metric but with a weaker decay rate; see Remark \ref{RBBS}.  Our proof is based on Proposition \ref{PGeoInStep} and the \emph{crossing lemma} (Lemma \ref{LCros}).  
We expect that our upper bounds are of optimal-order,  and it might be possible to deduce matching lower bounds from the coalescence and stability results of \cite{bala-busa-sepp-21,  busa-ferr-22} but we do not pursue this point here. 

We also study the speed of distributional convergence for the limiting direction of the \emph{competition interface},  the boundary between two geodesic subtrees sharing the same root vertex.  In the bulk case,  Theorem \ref{TCif} gives upper speed bounds with cube-root decay (up to logarithms).  Encountering the same quantitative bound as in Theorem \ref{TBuse} is not surprising since the distribution of the competition interface direction can be recovered from the Busemann functions in the limit.  This suggest that Theorem \ref{TCif} can possibly be extracted from Theorem \ref{TBuse} as a corollary but we could not find such an argument without some loss in the strength of the bound.  Our proof of Theorem \ref{TCif} is instead a separate application of Proposition \ref{PGeoInStep} and the crossing lemma.  We again expect that our upper bound is optimal in order and a matching lower bound might be achievable in view of \cite{bala-busa-sepp-21,  busa-ferr-22} but presently do not attempt putting together a proof.  Subsequently,  we examine the competition interface in the equilibrium case and observe a dramatically different behavior in the speed of distributional convergence.  Theorem \ref{TCifSt} provides matching order upper and lower speed bounds of cubic-exponential order.  This result is derived from the corresponding bounds in Propositions \ref{PGeoInStep} and \ref{PGeoInStepLB} for the first step probabilities.  

A natural extension of Theorems \ref{TCif} and \ref{TCifSt} would be to determine the speed of convergence for the competition interface for the full range of the boundary parameters $w > 0$ and $z < 1$.  (The bulk and equilibrium cases correspond to taking $(w, z) = (1, 0)$ and $w = z$,  respectively).  It is natural to expect that the convergence speed exhibits an interesting transition that parallels the one from \cite{ferr-mart-pime-06,  ferr-mart-pime-09} describing when the competition interface has a deterministic or random limiting direction.  Another follow-up would be to investigate whether our speed bounds for the competition interface translate to bounds for the second-class particle of TASEP.  As far as we are aware,  there are no results quantifying the speed of convergence for the latter.  We leave these extensions to future works to explore. 

The last pair of results in this work recasts our main upper and lower exit point bounds in terms of increment-stationary down-right-path-to-point exponential LPP.  These reformulations are stated as Theorem \ref{PExitUB2} and \ref{PExitLB2},  respectively.  We derive them as corollaries of Theorems \ref{TExitUB} and \ref{TExitLB} after developing a suitable generalization (Proposition \ref{PExitDistId}) of  the distributional identity \cite[Lemma A.2]{sepp-cgm-18}.  Some line-to-point special cases of these bounds have been utilized in some recent articles \cite{basu-busa-ferr-22,  ferr-ghos-nejj-19,  pime-18}.  Similarly,  our path-to-point extensions can be potentially useful in future works. 

\subsection{Related literature}
\label{SsLit}

We briefly touch on literature related to exit points,  Busemann functions and competition interfaces,  each of which is a major topic of research. 

Exit points (as defined in this work) are of interest since they capture geodesic fluctuations and are also closely connected to geodesic coalescence \cite{pime-16}.  An early work concerning exit points is \cite{joha-ptrf-00} which rigorously verified the \emph{wandering exponent} of geodesics as $2/3$ for the Poisson LPP.  This determines the correct scale of fluctuations for the geodesic exit points.  Starting with articles \cite{bala-cato-sepp,  cato-groe-06} employing the coupling approach and with article  \cite{basu-sido-sly-16} importing more powerful inputs from integrable probability,  an increasingly refined picture of fluctuations emerged through tail bounds.  By now,  fluctuation bounds for geodesic exit points have featured frequently in the literature,  often in service of proving some deeper properties of last-passage times.  Some earliest applications of exit point bounds obtain optimal-order variance bounds \cite{bala-cato-sepp,  cato-groe-06} through identities such as \eqref{EVar} below. Further applications concern the non-existence of infinite bigeodesics \cite{bala-busa-sepp-20,  basu-hoff-sly-22,  groa-janj-rass-21}, geodesic coalescence \cite{basu-sark-sly-19,  shen-sepp-19,  zhan-20}, modulus of continuity \cite{hamm-sark-20}, temporal correlations \cite{basu-gang-18,  basu-gang-zhan-21,  ferr-occe-19} and tightness \cite{cato-pime-15, chhi-ferr-spoh-16, Fer-Occ-18, pime-18} of last-passage times, 
empirical weight distribution along geodesics \cite{mart-sly-zhan-21} and mixing times of TASEP on a ring \cite{schm-sly-22} among other topics. 



Since the influential works 
\cite{lice-newm-96} and 
\cite{hoff-05, hoff-08},  Busemann functions have become a useful instrument in the  study of 
 geodesics in both undirected first-passage percolation (FPP) and directed LPP.  For an overview of the related literature,  see the surveys \cite{auff-damr-hans-17, rass-cgm-18}.  Following the approach of 
 \cite{newm-icm-95},  the existence of the a.s.\ Busemann limits in the exponential LPP was first proved 
 in \cite{ferr-pime-05} for a deterministic set of fixed directions of full Lebesgue measure.  (See also the earlier work \cite{wuth-02} where Busemann functions are constructed for the Poisson LPP).  The result was subsequently extended to each fixed direction in 
 \cite
{coup-11}. The limits were later established in broad generality in a joint work of the third author \cite
{geor-rass-sepp-17-buse}. Their result covers LPP with i.i.d.\ weights bounded from below and of finite $p$th moment for some $p > 2$, and applies to all directions except those into the closed (possibly degenerate) flat regions of the shape function with at least one boundary direction where the shape function is not differentiable. The lower bound requirement on the weights was 
removed afterwards in \cite{Jan-Ras-20}.  See also the discussion in \cite[Appendix A]{Jan-Ras-Sep-20-}.  
  
Competition interface was introduced in \cite{ferr-pime-05} in the context of exponential LPP as a notion of a boundary between competing growth processes.  See also the earlier work \cite{hagg-pema-98} which studied competition in exponential FPP.  A main significance of the competition interface in the exponential LPP is that it captures the trajectory of the second-class particle in the associated TASEP \cite{ferr-mart-pime-06, ferr-mart-pime-09, ferr-pime-05}. Consequently, an initial set of results on the competition interface comes from translating the predating TASEP literature on the second-class particle, e.g.\ \cite{ferr-92, ferr-font-94a, ferr-kipn-95, moun-guio-05, prah-spoh, sepp98ebp}. We refer the reader to \cite{ferr-mart-pime-06, ferr-mart-pime-09} for a more detailed account. 
The competition interface also naturally features in a characterization of the exceptional directions of non-coalescence in the geometry of the semi-infinite geodesics \cite{Jan-Ras-Sep-20-}. 

Article \cite
{ferr-pime-05} proved the a.s.\ existence of the limiting direction of the competition interface and identified its distribution explicitly.  (See also \cite{ferr-kipn-95} and \cite{moun-guio-05} for the corresponding result on the second-class particle).  Articles \cite
{cato-pime-13, ferr-mart-pime-09} extended the preceding result to the down-right-path-to-point exponential LPP assuming that the boundary path has asymptotic directions on both ends.  In another direction of generalization, \cite
{geor-rass-sepp-17-geod} proved the a.s.\ convergence for the LPP with general i.i.d.\ weights on $\bbZ^2_{>0}$ under the assumptions that the weights are bounded from below and have continuous distributions of finite $p$th moment for some $p > 2$, and the shape function is differentiable at the endpoints of its linear segments. The lower bound assumption on the weights was later eliminated in \cite
{Jan-Ras-20}.  
Although it is not within the focus of the present work, the fluctuations of the competition interface around its limiting direction, and the related fluctuations of the second-class particle have also received renewed attention recently.  See \cite{ferr-ghos-nejj-19, ferr-nejj-17} and the references there.  

\subsection{Organization of the paper} \label{SOrg} 

Section \ref{SExPt} defines the exponential LPP model,  its geodesics and exit points, and the increment-stationary version. The key generating function identity is in Proposition \ref{PLMId}. 
Section \ref{SExBd} records the main results, namely, matching upper and lower bounds on 
fluctuations of exit points.  Section \ref{SApp} collects some applications of the main bounds to transversal fluctuations of bulk geodesics,  and to speed of distributional convergence to Busemann functions and the limiting competition interface direction. 
Sections \ref{SPf} and \ref{SPfApp} contain the proofs. Appendix \ref{SAux}  contains auxiliary technical results for LPP with arbitrary real weights. Appendix \ref{SExBnd2} extends our main bounds to certain path-to-point LPP processes,  and relates our work to the recent exit point bounds from \cite{ferr-ghos-nejj-19} and \cite{pime-18}. 
Appendix \ref{SEstim} contains some basic estimates.

\subsection*{Notation and conventions}  

$\emptyset$ denotes the empty set. $\min \emptyset = \inf \emptyset = \infty$ and $\max \emptyset = \sup \emptyset = -\infty$. $\bbZ$ and $\bbR$ denote the sets of integers and reals, respectively. For $A \subset \bbR$, $x \in \bbR$ and relation $\square \in \{\ge, >, \le, <\}$, let $A_{\square x} = \{a \in A: a \square x\}$. For example, $\bbZ_{>0}$ stands for the set of positive integers.  

$[n] = \{1, 2, \dotsc, n\}$ for $n \in \bbZ_{>0}$, and $[0] = \emptyset$. For $x \in \bbR$, $x^+ = \max \{x, 0\}$ and $x^- = (-x)^+$. Also, $\lf x \rf = \sup \bbZ_{\le x}$ and $\lc x \rc = \inf \bbZ_{\ge x}$.  Our convention is that $\bbR^0 = \bbR^{[0]} = \bbR^{\emptyset} = \{\emptyset\}$.   

For a finite sequence $\pi = (\pi_i)_{i \in [n]}$ in $\bbZ^2$, $\ell(\pi) = n$ indicates the number of terms in (the \emph{length} of) $\pi$. We refer to the set $\{\pi_i: i \in [n]\}$ also as $\pi$. 

For any subset $A \subset S$ of an arbitrary space $S$, the indicator function $\one_A: S \to \{0, 1\}$ equals $1$ on $A$ and $0$ on the complement $S \smallsetminus A$. The cardinality of $A$ is denoted by $\hash A$. 

$X \sim \Exp(\lambda)$ for $\lambda > 0$ means that $X$ is a rate $\lambda$ exponential random variable with the mean $E(X)=\lambda^{-1}$ and the moment generating function $E(e^{tX})=\lambda(t-\lambda)^{-1} \one_{\{t < \lambda\}}+ \infty \one_{\{t \ge \lambda\}}$ for $t \in \bbR$. Also, $Y \sim -\Exp(\lambda)$ means that $-Y \sim \Exp(\lambda)$. If $X \sim \Exp(\lambda)$ and $Y \sim -\Exp(\mu)$ are independent, the distribution of $X+Y$ is denoted by $\Exp(\lambda)-\Exp(\mu)$.  

The same name (e.g.\ $C_0, c_0$) may refer to different constants that appear in various steps within a proof. 

\subsection*{Acknowledgements} 
The authors are grateful to Patrik Ferrari for helpful comments and reference suggestions on our preprint \cite{emra-janj-sepp-20-md} that have informed many improvements in this article.  The authors would also like to thank anonymous referees for their careful reading of our work,  pointers to the literature,  and insightful remarks. 


\section{Exit points in exponential last-passage percolation}
\label{SExPt}

This section contains a precise description of the model and the main tools utilized for its treatment in the present work.  

\subsection{Last-passage times with exponential weights}
\label{SsLpp}

Given parameters $w > 0$ and $z < 1$, consider independent random \emph{weights} $\{\wb^{w, z}(i, j): i, j \in \bbZ_{\ge 0}\}$ such that $\wb^{w, z}(0, 0) = 0$, and 
\begin{align}
\wb^{w, z}(i, j) \sim \begin{cases} \Exp(1) \quad &\text{ if } i, j > 0 \\ \Exp(w) \quad &\text{ if } i > 0, j = 0\\ \Exp(1-z) \quad &\text{ if } i = 0, j > 0. \end{cases} \label{Ewbd}
\end{align} 
These weights can be coupled  through a single collection  $\{\eta(i, j): i, j \in \bbZ_{\ge 0}\}$ of  i.i.d.\ $\Exp(1)$-distributed random real numbers by setting 
\begin{align}
\wh{\w}^{w, z}(i, j) &= \eta(i, j)\bigg(\one_{\{i, j > 0\}} + \frac{\one_{\{i > 0, \hspace{0.5pt} j = 0\}}}{w} + \frac{\one_{\{i=0, \hspace{0.5pt} j> 0\}}}{1-z}\bigg) \quad \text{ for } i,j \in \bbZ_{\ge 0} . \label{ECplBlkBd}
\end{align}
The boundary rates in \eqref{Ewbd} are chosen so that the case $w = z$ gives rise to the increment-stationary LPP process to be discussed in Section \ref{SstatLPP}. 

Let $\P$ denote the probability measure on the sample space of the $\eta$-variables, and $\E$ denote the corresponding expectation. 

Throughout, we employ the following notational conventions with respect to \eqref{ECplBlkBd} and various quantities defined from these weights. We drop one $z$ from the superscript when $w = z$ ($\wb^{z}=\wb^{z, z}$), and omit $w$ when $j > 0$ since there is no dependence on $w$ in that case. Similarly, $z$ is omitted when $i > 0$. Finally, to distinguish the \emph{bulk} weights (those on $\bbZ_{>0}^2$), we also remove the hat from the notation 
and write $\w(i, j) = \wb^{w, z}(i, j)$ when $i, j > 0$. 

A finite sequence $\pi = (\pi)_{k \in [\ell(\pi)]}$ in $\bbZ^2$ is called an \emph{up-right path} if $\pi_{k+1}-\pi_{k} \in \{(1, 0), (0, 1)\}$ for $k \in [\ell(\pi)-1]$. Let $\Pi_{p, q}^{m, n}$ denote the set of all up-right paths $\pi$ 
from $\pi_1 = (p, q) \in \bbZ^2$ to $\pi_{\ell(\pi)} = (m, n) \in \bbZ^2$. 

For $(m, n), (p, q) \in \bbZ_{\ge 0}^2$, introduce the \emph{last-passage time} from $(p, q)$ to $(m, n)$ by 
\begin{align}
\Gb^{w, z}_{p, q}(m, n) = \max_{\pi \in \Pi_{p, q}^{m, n}} \sum_{(i, j) \in \pi} \wb^{w, z}(i, j). \label{Elppbd}
\end{align}
The case $m, n, p, q > 0$ of \eqref{Elppbd} defines the bulk last-passage times 
\begin{align}
\G_{p, q}(m, n) = \Gb_{p, q}^{w, z}(m, n) = \max_{\pi \in \Pi_{p, q}^{m, n}} \sum_{(i, j) \in \pi} \w(i, j).  \label{Elpp}
\end{align}
For often-used initial points we abbreviate   $\Gb^{w, z}(m, n)=\Gb_{0,0}^{w, z}(m, n)$  and   $\G(m, n) =\G_{1,1}(m, n)$. 

A maximizing path $\pi \in \Pi_{p, q}^{m, n}$ in \eqref{Elppbd} (and in \eqref{Elpp}) is called a \emph{geodesic} (or $\wb^{w, z}$-geodesic to indicate the weights) from $(p, q)$ to $(m, n)$. Since the marginal distributions of $\wb^{w, z}$ given in \eqref{Ewbd} are continuous,  when $p \le m$ and $q \le n$,  a.s.\ there exists a unique geodesic $\pi_{p, q}^{w, z}(m, n) \in \Pi_{p, q}^{m, n}$ (also denoted by $\pi_{p, q}^{m, n, w, z}$ as convenient). 

\subsection{Exit points of geodesics from down-right paths}
\label{SsEx}

A \emph{down-right path} is a finite sequence $\nu = (\nu_k)_{k \in [\ell(\nu)]}$ in $\bbZ^2$ such that $\nu_{k+1}-\nu_{k} \in \{(1, 0), (0, -1)\}$ for $k \in [\ell(\nu)-1]$. A frequent special case for the sequel is when $\nu$ is the \emph{L-shaped} path $L_{p, q}^{m, n}$ from $\nu_1 = (p, n)$ to $\nu_{\ell(\nu)} = (m, q)$ such that $(p, q) \in \nu$ for some $(m, n), (p, q) \in \bbZ^2$ with $p \le m$ and $q \le n$. 

Let $\pi$ be an up-right path and $\nu$ be a down-right path. If $\pi \cap \nu \neq \emptyset$ then define the \emph{exit point} $Z_{\pi, \nu}$ of $\pi$ from $\nu$ as the unique index $k_0 \in [\ell(\nu)]$ such that 
\begin{align}
\nu_{k_0} = \pi_{l_0} \quad \text{ where } l_0 = \max \{l \in [\ell(\pi)]: \pi_{l} \in \nu\}. \label{EExitDef}
\end{align}
In other words, $\nu_{Z_{\pi, \nu}}$ is the last vertex of $\pi$ on $\nu$. See Figure \ref{FExit}. 

Fix a \emph{base vertex} $\nu_b = (i_0, j_0)$ on $\nu$ for some $b \in [\ell(\nu)]$. We represent the exit point also relative to $(i_0, j_0)$ by 
\begin{align}
\label{EExitPts}
\begin{split}
Z^\square_{\pi, \nu, i_0, j_0} = (Z_{\pi, \nu}-b)^\square \quad \text{ for both signs } \square \in \{+, -\}. 
\end{split}
\end{align}
For all paths $\pi,\nu$ and base points $\nu_b\in\nu$, this quantity is zero for at least one  choice of sign $\square \in \{+, -\}$.  

To guarantee the existence of exit points, paths are restricted in the sequel as follows. With $(p, q) = \pi_1$, assume that $\nu_{\ell(\nu)} \in \{u\} \times \bbZ_{\ge q}$ and $\nu_1 \in \bbZ_{\ge p} \times \{v\}$ for some $u, v \in \bbZ$. Necessarily, $u \ge p$ and $v \ge q$. Let $V_\nu$ denote the set of all vertices $(m, n) \in \bbZ_{\le u} \times \bbZ_{\le v}$ such that 
\begin{align}
m \ge i \text{ and } n \ge j \quad \text{ for some } (i, j) \in \nu. \label{EVDef}
\end{align}
Then, under the further assumption that $\pi_{\ell(\pi)} \in V_{\nu}$, the intersection $\pi \cap \nu \neq \emptyset$ as required for the definition of $Z_{\pi, \nu}$. Figure \ref{FExit} illustrates this.

\begin{figure}
\centering
\begin{overpic}[scale=0.5]{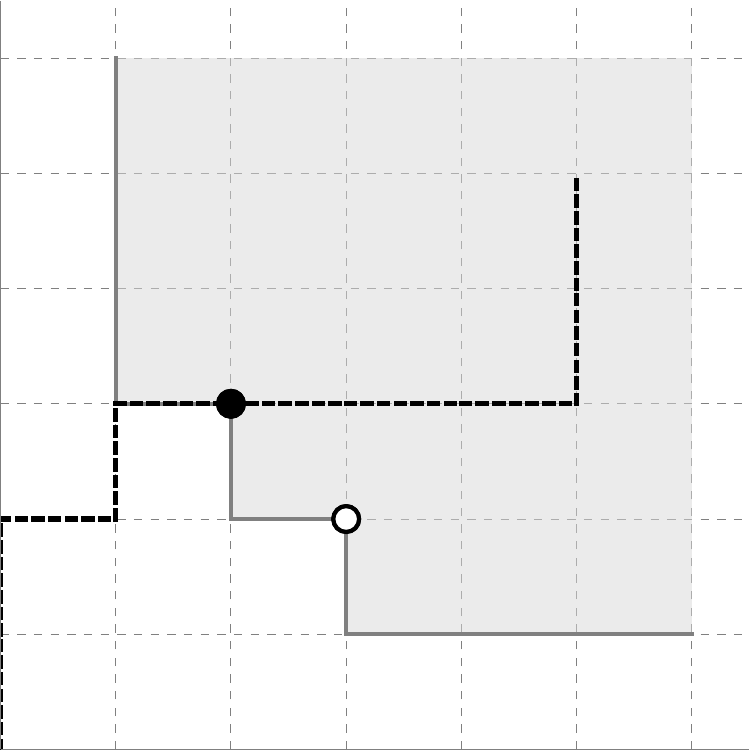}
\put(10, 83){$\nu$}
\put(3, 5){$\pi$}
\put(25, 52){$\nu_{Z_{\pi, \nu}}$}
\put(50, 29){$\nu_b$}
\put(50, 69){$V_{\nu}$}
\end{overpic}
\quad 
\begin{overpic}[scale=0.5]{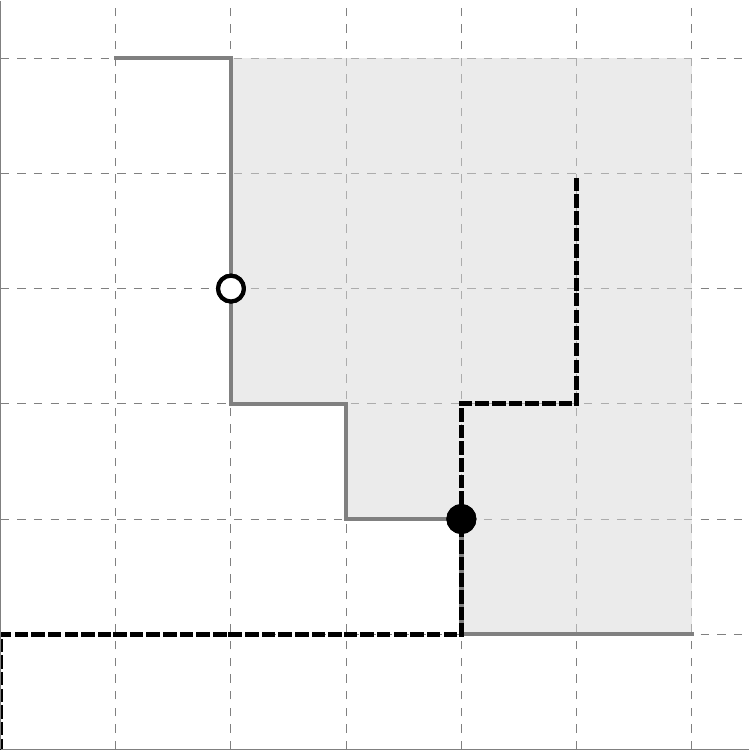}
\put(23, 83){$\nu$}
\put(3, 5){$\pi$}
\put(65, 30){$\nu_{Z_{\pi, \nu}}$}
\put(22, 61){$\nu_b$}
\put(60, 69){$V_{\nu}$}
\end{overpic}
\begin{overpic}[scale=0.5]{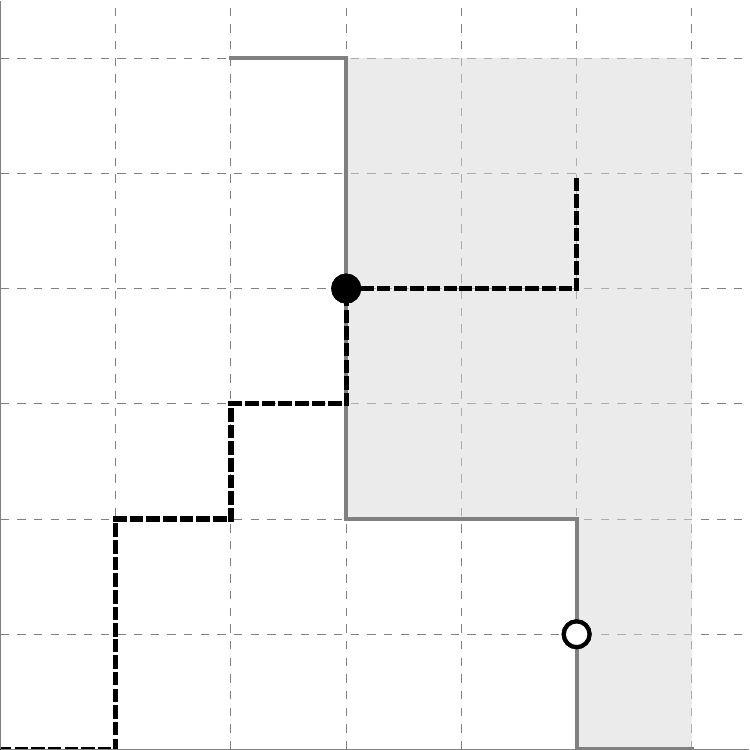}
\put(38, 83){$\nu$}
\put(3, 5){$\pi$}
\put(32, 60){$\nu_{Z_{\pi, \nu}}$}
\put(80, 14){$\nu_b$}
\put(60, 69){$V_{\nu}$}
\end{overpic}
\quad 
\begin{overpic}[scale=0.5]{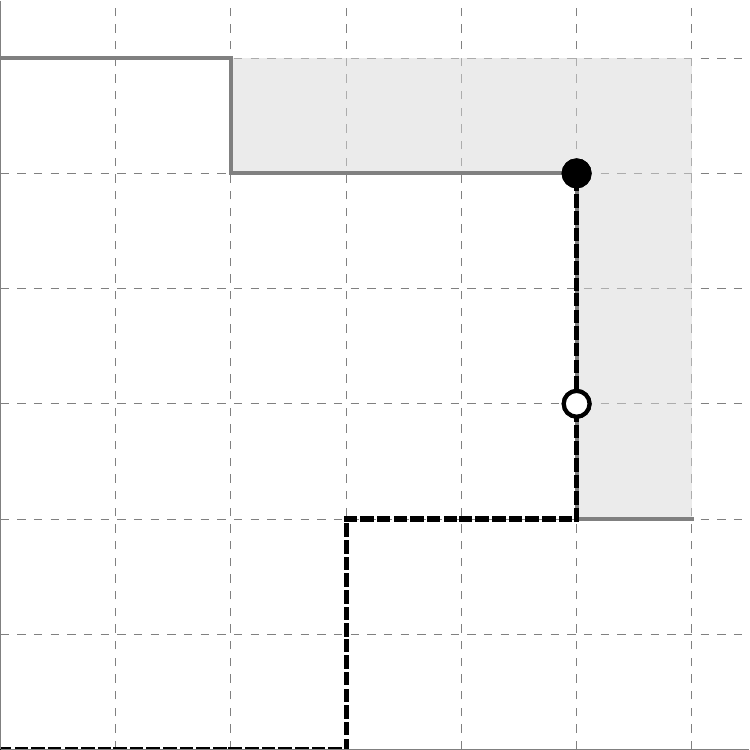}
\put(23, 83){$\nu$}
\put(3, 5){$\pi$}
\put(80, 76){$\nu_{Z_{\pi, \nu}}$}
\put(80, 45){$\nu_b$}
\put(60, 83){$V_{\nu}$}
\end{overpic}

\caption{Illustrations of the exit point of an up-right path $\pi$ (dashed, black) from a down-right path $\nu$ (gray). Vertex $\nu_{Z_{\pi, \nu}}$ (black dot), the base vertex $\nu_b = (i_0, j_0)$ (white dot) and the set $V_{\nu} \smallsetminus \nu$ (light gray) are indicated. The nonzero values $Z^{\pm} = Z_{\pi, \nu, i_0, j_0}^{\pm}$ above are as follows. Top left: $Z^- = 2$. Top right: $Z^+ = 4$. Bottom left: $Z^- = 5$. Bottom right: $Z^- = 2$.}
\label{FExit}
\end{figure}

Assume now that $\nu \subset \bbZ_{\ge 0}^2$. For each $(m, n) \in V_{\nu}$ and the choice of the sign $\square \in \{+, -\}$, define the (maximal) \emph{exit point} from $\nu$ of $\wb^{w, z}$-geodesics in $\Pi_{0, 0}^{m, n}$ by 
\begin{align}
\label{EEx}
\begin{split}
\Eh^{w, z, \square}_{\nu, i_0, j_0}(m, n) &= \max \{Z_{\pi, \nu, i_0, j_0}^\square: \pi \in \Pi_{0, 0}^{m, n} \text{ is a $\wb^{w, z}$-geodesic}\} \stackrel{\text{a.s.}}{=} Z_{\pi_{0, 0}^{w, z}(m, n), \nu, i_0, j_0}^\square. 
\end{split}
\end{align}
These are the main objects under study in this work. Definition \eqref{EEx} makes sense since $\pi \cap \nu \neq  \emptyset$ for each $\pi \in \Pi_{0, 0}^{m, n}$ by the restriction on $\nu$. The second equality above is due to the a.s.\ uniqueness of the geodesic $\pi_{0, 0}^{w, z}(m, n)$. 

When $\nu = L = L_{i_0, j_0}^{u, v}$ is the L-shaped path with lower left base vertex $\nu_b=(i_0, j_0)$ (where $b = v-j_0+1$),  \eqref{EEx} simplifies to 
\begin{align}
\label{EExL}
\begin{split}
\Eh^{w, z, +}_{L, i_0, j_0}(m, n) 
&\stackrel{\text{a.s.}}{=} [\max \{k \in \bbZ_{\ge 0}: (i_0+k, j_0) \in \pi^{w, z}_{0, 0}(m, n)\}]^+, \quad \text{ and } \\
\Eh^{w, z, -}_{L, i_0, j_0}(m, n) 
&\stackrel{\text{a.s.}}{=} [\max \{k \in \bbZ_{\ge 0}: (i_0, j_0+k) \in \pi^{w, z}_{0, 0}(m, n)\}]^+. 
\end{split}
\end{align}
To distinguish the case $i_0 = j_0 = 0$ where $L$ lies on the coordinate axes and contains the origin, we simplify the notation in  \eqref{EExL} to 
\begin{align}
\Eh^{w, z, \hor}(m, n) = \Eh^{w, z, +}_{L, 0, 0}(m, n) \quad \text{ and } \quad \Ev^{w, z, \ver}(m, n) = \Ev^{w, z, -}_{L, 0, 0}(m, n). \label{EExhv}
\end{align}
Equivalently,  $\Eh^{w, z, \hor}(m, n)$ is a.s.\ the distance that the geodesic $\pi_{0, 0}^{w, z}(m, n)$ spends on the horizontal axis, and $\Ev^{w, z, \ver}(m, n)$ the same for the vertical axis.

\subsection{Increment-stationary last-passage percolation}\label{SstatLPP}

A central role in our treatment belongs to the equilibrium (increment-stationary) versions of the exponential LPP and their characteristic directions. Let us briefly recall these notions.

For a down-right path $\nu$ in $\bbZ^2$, define the sets 
\begin{align}
\label{ERDsteps}
\begin{split}
R_\nu &= \{i \in [\ell(\nu)-1]: \nu_{i+1} = \nu_i + (1, 0)\}, \\ 
D_\nu &= \{j \in [\ell(\nu)] \smallsetminus \{1\}: \nu_{j-1} = \nu_j+(0, 1)\} 
\end{split}
\end{align}
that encode the right and down steps of $\nu$, respectively. See Figure \ref{FDRpath}. 

\begin{figure}
\centering
\begin{overpic}[scale=0.6]{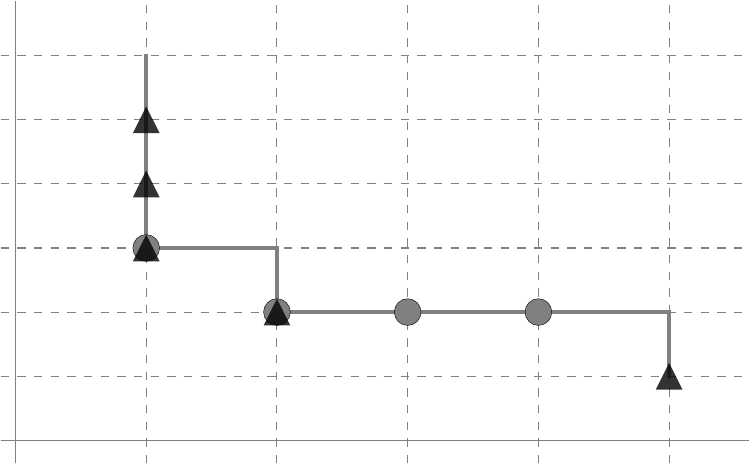}
\put(14, 48){$\nu$}
\end{overpic}
\caption{Illustrates a down-right path $\nu$ and the sets $R_{\nu}$ and $D_{\nu}$ defined at \eqref{ERDsteps}. Vertices in $R_{\nu}$ (dots) are the left endpoints of horizontal edges in $\nu$ while the vertices in $D_{\nu}$ (triangles) are the lower endpoints of the vertical edges in $\nu$. Two vertices lie in both $R_{\nu}$ and $D_{\nu}$.}
\label{FDRpath}
\end{figure}

By virtue of a version of Burke's theorem for LPP \cite[Lemma 4.2]{bala-cato-sepp}, for each $z \in (0, 1)$, the increments of the $\G^z$-process along any down-right path $\nu$ 
contained in $\bbZ_{\ge 0}^2$ enjoy this property: The collection 
\begin{align}
\label{EBurke}
\begin{split}
&\{\Gb^z(\nu_{i+1})-\Gb^{z}(\nu_{i}): i \in R_{\nu}\} \cup \{\Gb^z(\nu_{j-1})-\Gb^{z}(\nu_j): j \in D_{\nu}\} \\
&\text{ is jointly independent with the marginals given by } \\
&\Gb^z(\nu_{i+1})-\Gb^z(\nu_i) \sim \Exp\{z\} \quad \text{ for } i \in R_\nu \text{ and } \\
&\Gb^{z}(\nu_{j-1})-\Gb^{z}(\nu_j) \sim \Exp\{1-z\} \quad \text{ for } j \in D_\nu. 
\end{split}
\end{align}
As a consequence, the $\Gb^z$-process is increment-stationary in the sense that
\begin{align}
\Gb^z(m+p, n+q) - \Gb^z(p, q) \stackrel{\text{dist}}{=} \Gb^{z}(m, n) \quad \text{ for } m, n, p, q \in \bbZ_{\ge 0}. \label{Eincst}
\end{align}
Our exit point bounds (Theorems \ref{TExitUB}, \ref{TExitLB} and \ref{TPathFluc}, and Propositions \ref{PGeoInStep} and \ref{PGeoInStepLB}) are derived from \eqref{Eincst} while our applications of these results to the Busemann limits, competition interface and path-to-point exponential LPP (Theorems \ref{TBuse},  \ref{TCif} and \ref{TCifSt},  and Propositions \ref{PExitUB2} and \ref{PExitLB2}) rely on the stronger property \eqref{EBurke}. 

Introduce the function 
\begin{align}
\M^z(x, y) = \frac{x}{z} + \frac{y}{1-z} \quad \text{ for } x, y \in \bbR_{\ge0} \text{ and } z \in (0, 1). \label{EM}
\end{align}
It follows from \eqref{Eincst} that this function records the marginal means of the $\Gb^z$-process: $\E[\Gb^{z}(m, n)] = \M^z(m, n)$ for $m, n \in \bbZ_{\ge 0}$.
The curve $z \mapsto \M^z(x, y)$ for $z \in (0, 1)$ and some fixed $x, y \in \bbR_{>0}$ is plotted in Figure \ref{FMplot}. 

The \emph{shape function} of  the bulk $\G$-process can be expressed in terms of \eqref{EM} by 
\begin{align}
\Shp(x, y) = \inf_{z \in (0, 1)} \M^z(x, y) = (\sqrt{x}+\sqrt{y}\hspace{1pt})^2 \quad \text{ for } x, y \in \bbR_{\ge0}. \label{EShp}
\end{align}
A seminal result of H.\ Rost \cite{rost} identifies \eqref{EShp} as the limit $\lim \limits_{N\to\infty}N^{-1}\G(\lc Nx \rc, \lc Ny \rc) = \Shp(x, y)$ for $x, y \in \bbR_{>0}$ $\P$-a.s. 


The unique minimizer in \eqref{EShp} is given by 
\begin{align}
\Min(x, y) = \frac{\sqrt{x}}{\sqrt{x}+\sqrt{y}} \quad \text{ for } (x, y) \in \bbR_{\ge 0}^2 \smallsetminus \{(0, 0)\}. \label{EMin}
\end{align}
This function defines a bijection between  directions (unit vectors) in $\bbR_{\ge 0}^2$ and  the interval $[0, 1]$.  When  $z = \Min(x, y) \in (0, 1)$, the vector $(x, y)$ points in the   {\it characteristic direction}  of the $\Gb^z$-process.  
In this direction  the geodesics from the origin exit the boundary within a submacroscopic neighborhood of the origin. A precise version of the statement is contained in Theorem \ref{TExitUB} below. 

\begin{figure}
\centering
\begin{overpic}[scale=0.5]{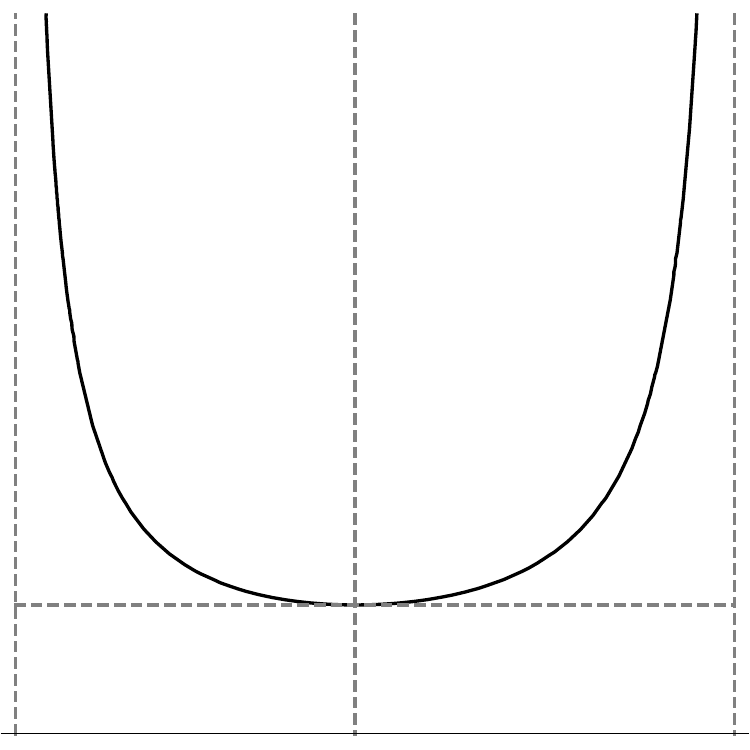}
\put (-16, 18){$\displaystyle \Shp(x, y)$}
\put (40, -3){$\displaystyle \Min(x, y)$}
\put (65, 60){$\displaystyle \M^{z}(x, y)$}
\put (0, -3){$0$}
\put (97, -3){$1$}
\end{overpic}
\caption{The plot of the curve $z \mapsto \M^z(x, y), z \in (0, 1)$ (black) with $x = 4$ and $y = 5$. The minimum value $\Shp(x, y)$ and the unique minimizer $\Min(x, y)$ are indicated.}
\label{FMplot}
\end{figure}

\subsection{The l.m.g.f.\ of the LPP process with boundary weights}

Define
\begin{align}
\Lb^{w, z}(x, y) = x\log \bigg(\frac{w}{z}\bigg)+y \log \bigg(\frac{1-z}{1-w}\bigg) \quad \text{ for } x, y \in \bbR_{\ge0} \text{ and } w, z \in (0, 1). \label{ELbd}
\end{align}
At the heart of our development is the following identity that links \eqref{ELbd} to the l.m.g.f.\ of the $\Gb^{w, z}$-process. 
\begin{prop}[\cite{rain-00}]
\label{PLMId}
Let $m, n \in \bbZ_{\ge 0}$ and $w, z \in (0, 1)$. Then 
\begin{align*}
\log \E[\exp\{(w-z) \Gb^{w, z}(m, n)\}] = \Lb^{w, z}(m, n). 
\end{align*}
\end{prop}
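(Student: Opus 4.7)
The plan is to prove the identity through a change-of-measure step from the weights $\wb^{w,z}$ to their stationary counterparts $\wb^z = \wb^{z,z}$, followed by an application of the Burke property \eqref{EBurke} to compute an explicit MGF of a sum of i.i.d.\ exponential variables.

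The distributions of $\wb^{w,z}$ and $\wb^z$ agree on all coordinates except those on the horizontal axis, where they have marginals $\Exp(w)$ and $\Exp(z)$ respectively. Hence the Radon-Nikodym derivative of the law of $\wb^{w,z}$ with respect to that of $\wb^z$, restricted to the weights in $[0,m]\times[0,n]\cap\bbZ^2$, is
\[
\left(\frac{w}{z}\right)^{m}\exp\bigl\{-(w-z)\,\Gb^{z}(m,0)\bigr\},
\]
since the unique up-right path from $(0,0)$ to $(m,0)$ is the horizontal segment, so that $\Gb^z(m,0)$ pathwise equals the sum of the $m$ horizontal boundary weights. Substituting into the change-of-measure identity yields
\[
\E\bigl[e^{(w-z)\Gb^{w,z}(m,n)}\bigr]=\left(\frac{w}{z}\right)^{m}\E\bigl[e^{(w-z)[\Gb^{z}(m,n)-\Gb^{z}(m,0)]}\bigr].
\]

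The second step applies the Burke property \eqref{EBurke} to the down-right path $(m,n),(m,n-1),\ldots,(m,0)$ consisting of $n$ down-steps in a single column. This yields that, in the stationary $\Gb^z$-process, the increments $\Gb^z(m,k)-\Gb^z(m,k-1)$ for $k=1,\ldots,n$ are i.i.d.\ $\Exp(1-z)$ random variables, so $\Gb^{z}(m,n)-\Gb^{z}(m,0)$ is a sum of $n$ such. Since $w-z<1-z$ (equivalent to $w<1$), its MGF at $w-z$ equals $\bigl(\tfrac{1-z}{1-w}\bigr)^{n}$. Combining and taking logarithms produces $m\log(w/z)+n\log((1-z)/(1-w))=\Lb^{w,z}(m,n)$, completing the proof.

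There is no deep obstacle: the argument is a short bookkeeping computation combining a well-chosen change of measure with a one-line application of Burke's property. A symmetric variant would instead change measure on the vertical boundary (from $\wb^{w,z}$ to $\wb^w$) and then apply Burke along the top row, yielding the two factors in the opposite order; a consistency check which confirms that the computation does not depend on which boundary is singled out.
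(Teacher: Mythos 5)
Your proof is correct and uses essentially the same strategy as the paper's: recognize the product $\prod_{i=1}^m e^{(w-z)\wb^{w,z}(i,0)}$ as a Radon--Nikodym derivative that changes the horizontal-axis rates from $w$ to $z$, thereby reducing the claim to computing the MGF of the increment $\Gb^z(m,n)-\Gb^z(m,0)$. The one small divergence lies in how that increment's distribution is identified: you invoke the full Burke property \eqref{EBurke} along the column $\{m\}\times\{0,\dots,n\}$ to read off the $n$ i.i.d.\ $\Exp(1-z)$ increments directly, whereas the paper uses only the weaker increment-stationarity \eqref{Eincst} to pass from $\Gb^z(m,n)-\Gb^z(m,0)$ to $\Gb^z(0,n)$, whose law is then immediate from the definition of the boundary weights in \eqref{Ewbd}. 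Both closing computations are one line and give the same factor $\bigl(\tfrac{1-z}{1-w}\bigr)^n$; the paper's slightly more parsimonious input is deliberate bookkeeping (cf.\ the discussion in Section~\ref{SstatLPP} distinguishing which downstream results need only \eqref{Eincst} versus the stronger \eqref{EBurke}), but there is nothing wrong with invoking Burke here. Your concluding remark about the symmetric alternative, changing measure on the vertical axis and applying the analogous increment structure along the top row, is also a correct consistency check.
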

\begin{proof}
Recall \eqref{Ewbd}.  The second equality below  uses the product inside the expectation  as a Radon-Nikodym derivative.  This changes the rates of the exponential weights on the vertices $\{(i, 0): i \in [m]\}$ from $w$ to $z$. Then we use shift invariance \eqref{Eincst}. 
\begin{align*}
\log \E[e^{(w-z) \Gb^{w, z}(m, n)}]  
&=  \log \E\biggl[ \biggl(\;  \prod_{i=1}^m e^{(w-z) \wb^{w, z}(i,0)} \biggr)    e^{(w-z)(\Gb^{w,z}(m, n)-\Gb^{w,z}(m, 0))}\biggr] \\
&= m \log \bigg(\frac{w}{z}\bigg) + \log \E[e^{(w-z)(\Gb^{z}(m, n)-\Gb^{z}(m, 0))}] \\
&= m \log \bigg(\frac{w}{z}\bigg) + \log \E[e^{(w-z)\Gb^{z}(0, n)}] \\
&= m \log \bigg(\frac{w}{z}\bigg) + n \log \bigg(\frac{1-z}{1-w}\bigg) = \Lb^{w, z}(m, n). \qedhere
\end{align*}
\end{proof}

\begin{rem}
While the left-hand side of the identity makes sense for $w > 0$ and $z < 1$,  the restriction $w, z \in (0, 1)$ does not lose anything interesting.  Indeed,  if $w \ge 1$ and $n > 0$ then $w-z \ge 1-z \ge 0$ and 
\begin{align*}
\bfE[\exp\{(w-z)\mathrm{G}^{w,  z}(m, n)\}] \ge \bfE[\exp\{(1-z)\omega^{z}(0, 1)\}] = \infty
\end{align*}
because the weights are nonnegative and $\omega^z(0, 1) \sim \Exp(1-z)$.  Likewise,  the left-hand side is infinite if $z \le 0$ and $m > 0$.  Finally,  if either $m = 0$ or $n = 0$ then the left-hand side reduces to the l.m.g.f.\ of the sum of i.i.d.\  exponentials. 
\end{rem}
\begin{rem}A more general form of Proposition \ref{PLMId} appeared in a preprint of Rains \cite[Corollaries 3.3--3.4]{rain-00}. His version covers mixtures of the exponential and Poisson LPP, and mixtures of the geometric and Bernoulli LPP, and allows some inhomogeneity in parameters. \cite{rain-00} provides two proofs for the identity, both of which ultimately rely on exact determinantal formulas for the distribution of the last-passage times developed in \cite{baik-rain-01a}. 
The short argument above extends readily to the inhomogeneous exponential and geometric LPP but we have not attempted to verify this in the full setting of \cite{rain-00}.  Since the initial appearance of this work,  versions of Proposition \ref{PLMId} have been proved also in some integrable polymer models \cite{land-soso-22a, land-soso-22b, xie-22-phd} as well as for a nonintegrable model of interacting diffusions \cite{land-soso-22a} that generalizes the O'Connell-Yor polymer. 
\end{rem}
\begin{rem}\label{RmVarId}
The variance identity of Bal\'{a}zs-Cator-Sepp\"{a}l\"{a}inen \cite[Lemma 4.6]{bala-cato-sepp} can be recovered from Proposition \ref{PLMId}.  We give a formal calculation which can be made rigorous. Exponentiating, multiplying through by $\exp\{-(w-z)\M^z\}$ and Taylor expanding lead to 
\begin{align*}
\exp\{\Lb^{w, z}-(w-z)\M^z\} &= \E[\exp\{(w-z)(\Gb^{w, z}-\M^{z})\}] = \sum_{p = 0}^\infty \frac{(w-z)^p}{p!} \cdot \E[(\Gb^{w, z}-\M^z)^p], 
\end{align*}
where the vertex $(m, n)$ is dropped for brevity. Differentiating twice with respect to $w$ and setting $w = z$ yield 
\begin{align*}
\partial_w|_{w = z} \{\M^w(m, n)\} = 2 \partial_{w}|_{w = z} \{\E[\Gb^{w, z}(m, n)]\} + \Var[\Gb^z(m, n)]
\end{align*}
on account of the identity $\partial_w \Lb^{w, z} = \M^w$. Hence, with the left-hand side above written explicitly, one obtains that 
\begin{align}
\Var[\Gb^z(m, n)] = -\frac{m}{z^2} + \frac{n}{(1-z)^2} - 2\partial_w|_{w = z} \{\E[\Gb^{w, z}(m, n)]\}. \label{EVarIdDer}
\end{align}
This identity essentially appears within the proof of \cite[Lemma 4.6]{bala-cato-sepp}. The argument there computes the derivative in \eqref{EVarIdDer} in terms of 
the exit point, which results in the final form of the variance identity: 
\begin{align}
\Var[\Gb^z(m, n)] = -\frac{m}{z^2} + \frac{n}{(1-z)^2} + \frac{2}{z} \cdot \E\left[\sum_{i=0}^{\Eh^{z, \hor}(m, n)}\wb^z(i, 0)\right]. \label{EVar}
\end{align}
The analogue of \eqref{EVar} for the Poisson LPP was previously observed in \cite[Theorem 2.1]{cato-groe-06}. An early analogous identity relating the variance of the particle current to the expected position of the second-class particle in TASEP appeared in \cite{ferr-font-94a}.  Article \cite{bala-sepp-07JSP} generalized such identities to a broader class of stochastic particle processes. 

\end{rem}

\section{Main fluctuation bounds for exit points}
\label{SExBd}

We present our main results on the exit points. Theorems \ref{TExitUB} and \ref{TExitLB} below provide upper and lower fluctuation bounds in suitable regimes for the right tail of the exit points in \eqref{EEx}. A key point is that these bounds capture the correct cubic-exponential order of decay and are derived from the stationarity feature \eqref{Eincst} without integrable probability. Alternative formulations of the same results in terms of increment-stationary down-right-path-to-point exponential LPP are included in the appendix as Propositions \ref{PExitUB2} and \ref{PExitLB2}. 

To ensure the uniformity of various bounds, vertices are often restricted to the cone 
\begin{align}
\label{ESc}
S_\delta = \{(x, y) \in \bbR_{>0}^2: x \ge \delta y \text{ and } y \ge \delta x\}
\end{align}
for some fixed $\delta > 0$. 

\subsection{Main upper bounds}

The following theorem gives right-tail upper bounds on the exit point where  the geodesic  $\pi_{0,0}^{w, z}(m+i_0, n+j_0)$ 
leaves  a down-right path $\nu$, relative to the base vertex $(i_0, j_0)$ on $\nu$. The increment-stationary case is the one with $w = z$. 
Figure \ref{FExitUB} illustrates the statement.  The set $V_{\nu}$ was defined at \eqref{EVDef}.   


\begin{thm}
\label{TExitUB}
Fix $\delta > 0$. There exist finite positive constants $c_0 = c_0(\delta)$, $\epsilon_0 = \epsilon_0(\delta)$ and $N_0 = N_0(\delta)$ such that the following statements hold whenever $w > 0$, $z <1$, $(m, n) \in S_\delta \cap \bbZ_{\ge N_0}^2$, $s \ge (m+n)^{-2/3}$, $(i_0, j_0) \in \bbZ_{\ge 0}^2$, and $\nu$ is a down-right path on $\bbZ^2_{\ge 0}$ with $(i_0, j_0) \in \nu$ and $(m+i_0, n+j_0) \in V_{\nu}$. 
\begin{enumerate}[\normalfont (a)]
\item If $\min \{w, z\} \ge \Min(m, n)-\epsilon_0 s(m+n)^{-1/3}$ then 
\begin{align*}
\P\{\Eh^{w, z, +}_{\nu, i_0, j_0}(m+i_0, n+j_0) > s(m+n)^{2/3}\} \le \exp\{-c_0 \min \{s^3, m+n\}\}. 
\end{align*}
\item If $\max \{w, z\} \le \Min(m, n)+\epsilon_0 s(m+n)^{-1/3}$ then 
\begin{align*}
\P\{\Eh^{w, z, -}_{\nu, i_0, j_0}(m+i_0, n+j_0) > s(m+n)^{2/3}\} \le \exp\{-c_0 \min \{s^3, m+n\}\}. 
\end{align*}
\end{enumerate}
\end{thm}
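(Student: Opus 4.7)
My first step is to reduce from a general down-right path $\nu$ to the axis case. Using a crossing argument (any up-right path from $(0,0)$ to a vertex in $V_\nu$ must cross $\nu$), combined with monotonicity of exit points under the natural coupling, the exit point from $\nu$ with base $(i_0, j_0)$ is controlled by the exit point from an L-shape with corner at $(i_0, j_0)$. The translation invariance of the stationary process \eqref{Eincst} then reduces the problem to bounding
\begin{align*}
\P\{\Eh^{w, z, \hor}(m, n) > k\}, \qquad k = \lceil s (m+n)^{2/3}\rceil,
\end{align*}
the horizontal exit from the axes. Part (b) follows from part (a) via the symmetry $(w, z) \mapsto (1 - z, 1 - w)$ exchanging the two axes and the signs $\square \in \{+, -\}$.

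\textbf{Reduction to a tilted equilibrium model.} Under the coupling through common $\Exp(1)$ base variables (cf.~\eqref{ECplBlkBd}), the horizontal exit $\Eh^{w, z, \hor}(m, n)$ is monotonically decreasing in both $w$ and $z$ (larger $w$ shrinks the horizontal boundary weights, larger $z$ inflates the vertical boundary weights, both discouraging horizontal exits). The worst case under the hypothesis of (a) is therefore $w = z = z_1 := \Min(m, n) - \epsilon_0 s(m+n)^{-1/3}$, and it suffices to bound the probability in the stationary $(z_1, z_1)$-model, where the characteristic direction of $(m, n)$ is just slightly undershot.

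\textbf{Chernoff against the MGF identity.} I would then introduce a second tilted parameter $z_2$ with $z_2 - z_1 \asymp s(m+n)^{-1/3}$ at the KPZ scale, and use a path-wise coupling comparison between $\Gb^{z_1, z_1}$ and $\Gb^{z_2, z_1}$ to relate the event $\{\Eh^{z_1, \hor}(m, n) \ge k\}$ to a tail event for the last-passage difference $\Gb^{z_1, z_1}(m,n) - \Gb^{z_2, z_1}(m, n)$ of size $\asymp (z_2 - z_1) k$. Applying Chernoff with tilt $(z_2 - z_1)$, the exponential moment of $\Gb^{z_2, z_1}(m, n)$ is evaluated exactly by Proposition~\ref{PLMId} as $e^{\Lb^{z_2, z_1}(m, n)}$, while concentration of $\Gb^{z_1, z_1}(m, n)$ around its mean $\M^{z_1}(m, n)$ is again produced from Proposition~\ref{PLMId} applied to an auxiliary tilted system. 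Taylor expansion of $\Lb^{z_2, z_1}(m,n) - (z_2 - z_1) \M^{z_1}(m,n)$ at the characteristic value $z^* = \Min(m, n)$, using the characteristic-direction relation $m/z^{*2} = n/(1-z^*)^2$, cancels the first-order contribution in $(z_2 - z_1)$; the leading surviving term is quadratic, of order $(z_2 - z_1)^2 (m+n)$. Balancing this against the linear savings $-(z_2 - z_1)k$ and optimizing at $z_2 - z_1 \sim s(m+n)^{-1/3}$ produces the cubic-exponential bound $e^{-c_0 s^3}$ in the main regime $s \lesssim (m+n)^{1/3}$ and saturates at $e^{-c_0 (m+n)}$ for larger $s$.

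\textbf{Main obstacle.} The key technical difficulty is producing the coupling inequality in the third step so that every exponential tilt that appears lines up \emph{exactly} with a parameter pair $(w, z^\dagger)$ available in Proposition~\ref{PLMId}. Traditional proofs at this point invoke a left-tail moderate deviation bound for $\Gb^{z_1}(m,n)$, which is precisely the input that coupling methods cannot supply at optimal order. The MGF identity must carry the entire burden: both as an exact computation of a specific MGF and, by monotonicity in the auxiliary parameters, as a substitute for the missing left-tail bound. Arranging the path decomposition so that the ``exit-point segment'' of length $k$ is cleanly separated from the remaining last-passage time, while preserving the independence needed for the MGF to factor, is the most delicate part of the argument.
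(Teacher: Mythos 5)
Your outline assembles the right ingredients---reduction to the L-shape, stationarity, worst-case monotonicity in $(w,z)$, Chernoff against Proposition~\ref{PLMId}---and you correctly diagnose the central obstruction in your ``main obstacle'' paragraph: the generating-function identity does not, by itself, yield left-tail control on $\Gb^{z_1}(m,n)$, which your Chernoff comparison of $\Gb^{z_1}$ against $\Gb^{z_2,z_1}$ would eventually require. Proposition~\ref{PLMId} ties the exponential tilt rigidly to the pair of boundary rates---it computes $\E[e^{(w-z)\Gb^{w,z}}]$, not $\E[e^{-\theta\Gb^{z_1}}]$ for a free $\theta$---and coupling monotonicity runs in the wrong direction when you try to extract a negative moment of $\Gb^{z_1}$ from it. Your proposal names this obstacle but does not resolve it, and that resolution is the substance of the proof.

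The paper evades the left-tail problem through a two-stage reduction that is absent from your sketch. First, the event $\{\Eh^{w,z,+}_{\nu,i_0,j_0}>k\}$ is reduced not to a ``distance $>k$'' exit for the L-shape at $(i_0,j_0)$ but to a \emph{first-step} event $\{\Eh^{z,\hor}(m',n')>0\}$ for a \emph{shifted endpoint} $(m',n')=(m+i_0-i_k,\,n+j_0-j_k)$, where $(i_k,j_k)=\nu_{b+k}$ is the vertex $k$ steps past the base along $\nu$; this is Lemma~\ref{LExitStr}(a) combined with Corollary~\ref{CExitDistId}. The key estimate \eqref{E-23} (via Lemma~\ref{LMinEst}) shows that shifting the endpoint by $k=\lfloor s(m+n)^{2/3}\rfloor$ lattice steps along a down-right path moves $\Min(m',n')$ below $z$ by $\asymp k/(m+n)\asymp s(m+n)^{-1/3}$, i.e.\ the geometric shift converts an in-characteristic exit problem into an off-characteristic first-step problem with a gap at precisely the KPZ order. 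Second, Proposition~\ref{PGeoInStep} then runs the Chernoff/Cauchy--Schwarz/MGF argument in a setting where the dangerous indicator is eliminated before any moment is taken: on $\{\Gb_{1,0}^{z-2\lambda}\ge\Gb_{0,1}^{z}\}$ one has $\Gb^{z-2\lambda,z}=\Gb_{1,0}^{z-2\lambda}$, so inserting $\exp\{\lambda\Gb_{1,0}^{z-2\lambda}-\lambda\Gb^{z-2\lambda,z}\}$ costs nothing on the event and allows the indicator to be dropped, after which every moment that arises is of the matched form $\E[e^{(w-z')\Gb^{w,z'}}]$ and Proposition~\ref{PLMId} supplies it exactly. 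Without the reduction to the first step and this cancellation device, the Chernoff route you sketch has no way around the missing left-tail input.
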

\begin{rem}
\label{RPathLen}
Assume further that $\ell(\nu) \le C (m+n)$ for some constant $C > 0$. Then the probabilities above vanish for $s > C (m+n)^{1/3}$. Therefore, the bounds in the theorem can be replaced with $\exp\{-c_1 s^3\}$ where $c_1 = c_0 \min \{C^{-3}, 1\}$. 
\end{rem}



\begin{figure}
\centering
\begin{overpic}[scale=0.6]{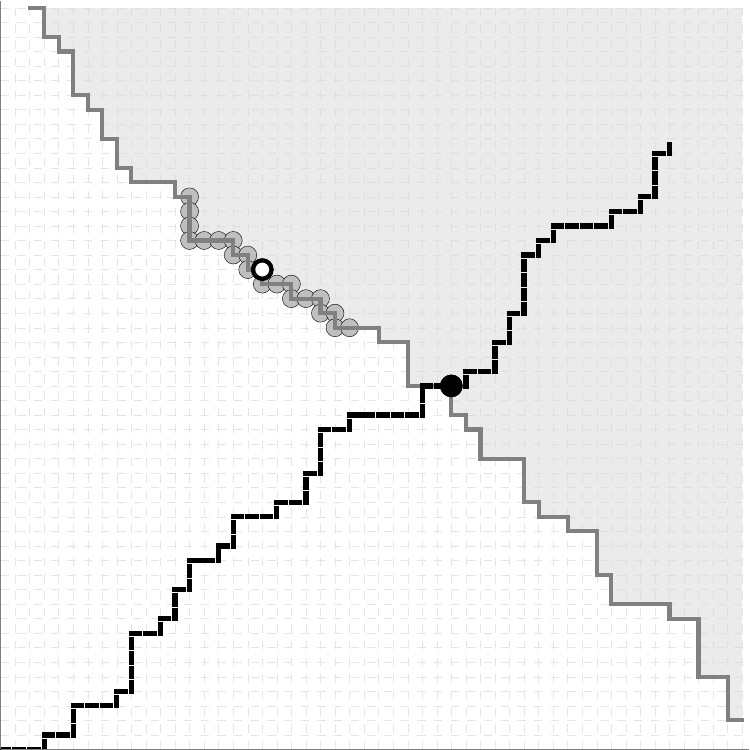}
\put(18, 80){$\nu$}
\put(50, 80){$V_\nu$}
\put(36, 65){$\nu_b = (i_0, j_0)$}
\put(72, 82){$(m+i_0, n+j_0)$}
\put(24, 27){$\pi$}
\put(62, 45){$\nu_{Z_{\nu, \pi}}$}
\end{overpic}
\caption{Illustrates Theorem \ref{TExitUB} in the case $w = z = \Min(m, n)$. A possible occurrence of the event that the geodesic $\pi = \pi^z_{0, 0}(m+i_0, n+j_0)$ to a vertex $(m+i_0, n+j_0)$ in $V_\nu$ exits a down-right path $\nu$ at a vertex (black dot) outside and below the $s(m+n)^{2/3}$ neighborhood (gray dots) of the base vertex $\nu_b = (i_0, j_0)$ (white dot) is shown. By part (a) of the theorem, the probability of this event is at most $\exp\{-c_0\min \{s^3, m+n\}\}$.}
\label{FExitUB}
\end{figure}

The following corollary for the L-shaped down-right path on the coordinate axes strengthens an exit point bound from the earlier version of this article \cite[Theorem 2.5]{emra-janj-sepp-20-md}.  A similar bound also appeared independently in \cite[Theorem 2.5]{Bha-20-JSP}. 
\begin{cor}
\label{CExitUB}
Fix $\delta > 0$. There exist finite positive constants $c_0 = c_0(\delta)$, $\epsilon_0 = \epsilon_0(\delta)$ and $N_0 = N_0(\delta)$ such that the following statements hold whenever $w > 0$, $z < 1$, $(m, n) \in S_\delta \cap \bbZ_{\ge N_0}^2$ and $s \ge (m+n)^{-2/3}$. 
\begin{enumerate}[\normalfont (a)]
\item If $\min \{w, z\} \ge \Min(m, n) - \epsilon_0 s(m+n)^{-1/3}$ then $$\P\{\Eh^{w, z, \hor}(m, n) > s(m+n)^{2/3}\} \le \exp\{-c_0s^3\}.$$ 
\item If $\max \{w, z\} \le \Min(m, n) + \epsilon_0 s(m+n)^{-1/3}$ then $$\P\{\Eh^{w, z, \ver}(m, n) > s(m+n)^{2/3}\} \le \exp\{-c_0s^3\}.$$ 
\end{enumerate}
\end{cor}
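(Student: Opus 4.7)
The plan is to deduce the corollary as a direct specialization of Theorem \ref{TExitUB}. Take $\nu = L_{0,0}^{m,n}$, the L-shaped path running from $(0,n)$ down to $(0,0)$ and then right to $(m,0)$, and set the base vertex $(i_0,j_0)=(0,0)$. The hypotheses of the theorem are immediate: $\nu \subset \bbZ_{\ge 0}^2$, $(i_0,j_0)=(0,0)\in\nu$, and the endpoint $(m+i_0,n+j_0) = (m,n)$ lies in $V_\nu$. Moreover, the notational identifications \eqref{EExL}--\eqref{EExhv} give exactly
\begin{align*}
\Eh^{w,z,\hor}(m,n) = \Eh^{w,z,+}_{L,0,0}(m,n), \qquad \Ev^{w,z,\ver}(m,n) = \Ev^{w,z,-}_{L,0,0}(m,n),
\end{align*}
so parts (a) and (b) of the theorem translate verbatim into parts (a) and (b) of the corollary, except with the weaker exponential rate $-c_0\min\{s^3,m+n\}$ in place of $-c_0 s^3$.

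The remaining step is to replace $\min\{s^3,m+n\}$ by $s^3$ in the final bound. Two equivalent routes are available. First, one can invoke Remark \ref{RPathLen} with $C=2$, since $\ell(L_{0,0}^{m,n}) = m+n+1 \le 2(m+n)$; this yields the improvement with a new constant $c_1 = c_0/8$. Alternatively, and perhaps more transparently, observe that the horizontal exit point is deterministically bounded by $m$ (and the vertical one by $n$), so the event $\{\Eh^{w,z,\hor}(m,n) > s(m+n)^{2/3}\}$ is empty once $s > m(m+n)^{-2/3}$; combined with the cone constraint $(m,n)\in S_\delta$, this forces $s \le (m+n)^{1/3}$ whenever the probability is nonzero, and in that regime $s^3 \le m+n$ already makes $\min\{s^3,m+n\} = s^3$ automatic.

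Since the whole argument is a direct substitution of parameters into Theorem \ref{TExitUB} followed by the trivial domain observation above, there is no substantive obstacle here; the constants $c_0$, $\epsilon_0$, $N_0$ of the corollary can be inherited from the theorem (with $c_0$ possibly replaced by $c_0/8$ if one prefers the remark-based route). All the real work is encapsulated in Theorem \ref{TExitUB} itself.
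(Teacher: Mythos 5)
Your proof is correct and is essentially the same as the paper's: both apply Theorem~\ref{TExitUB} with $(i_0,j_0)=(0,0)$ and $\nu = L_{0,0}^{m,n}$, then invoke Remark~\ref{RPathLen} with $C=2$ to upgrade $\min\{s^3,m+n\}$ to $s^3$. Your alternative observation, that $\Eh^{w,z,\hor}(m,n)\le m\le m+n$ forces the event to be empty unless $s^3< m+n$, is a clean way to reach the same conclusion without adjusting the constant, but it is a minor variant rather than a substantively different argument.
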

\begin{proof}
Apply Theorem \ref{TExitUB} with $(i_0, j_0) = (0, 0)$ and $\nu = L_{0, 0}^{m, n}$. The result then follows from Remark \ref{RPathLen} since $\ell(\nu) = m+n+1 \le 2(m+n)$. 
\end{proof}

\subsection{Upper bounds for the first step probabilities}

The following proposition provides upper bounds of optimal order for the probability of a geodesic from the origin taking the less likely initial step. 
The complementary lower bounds come  in Proposition \ref{PGeoInStepLB} below. 
The statement is a main ingredient in the proofs of several results in this work and is of independent interest. 

\begin{prop}
\label{PGeoInStep}
Fix $\delta > 0$. Let $(m, n) \in S_\delta \cap \bbZ_{>0}^2$, $\Min = \Min(m, n)$ and $z \in (0, 1)$. There exists a constant $c_0 = c_0(\delta) > 0$ such that the following statements hold. 
\begin{enumerate}[\normalfont (a)]
\item If $z > \Min$ then 
\begin{align*}
\P\{\Eh^{z, \hor}(m, n) > 0\} \le \exp\{-c_0(m+n)(z-\Min)^3\}. 
\end{align*}
\item If $z < \Min$ then
\begin{align*}
\P\{\Ev^{z, \ver}(m, n) > 0\} \le \exp\{-c_0(m+n)(\Min-z)^3\}. 
\end{align*}
\end{enumerate}
\end{prop}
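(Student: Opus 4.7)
I would prove part (a); part (b) follows from (a) via the symmetry $(m, n, z) \mapsto (n, m, 1-z)$, which interchanges horizontal and vertical axes and preserves the stationary LPP structure. Fix $(m, n) \in S_\delta \cap \bbZ_{>0}^2$, set $\Min := \Min(m, n)$ and $r := z - \Min > 0$, and let $A := \{\Eh^{z, \hor}(m, n) \ge 1\}$. The goal is $\P(A) \le \exp(-c_0 (m+n)r^3)$. The central tool is Proposition \ref{PLMId}, applied with an auxiliary parameter $w \in (0, z)$ to be optimized.

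The key coupling input: since for $w < z$ the horizontal-axis weights in $\wb^{w, z}$ exceed those in $\wb^z$ by the factor $z/w$ while bulk and vertical-axis weights coincide, inserting the $\wb^z$-geodesic into the $\wb^{w, z}$-LPP yields on $A$
\begin{align*}
\Gb^{w, z}(m, n) \ge \Gb^z(m, n) + \bigl(\tfrac{1}{w} - \tfrac{1}{z}\bigr)\eta(1, 0),
\end{align*}
and $\Gb^{w, z}(m, n) \ge \Gb^z(m, n)$ unconditionally. With $u := w - z < 0$ and $\beta := (z - w)^2/(wz) > 0$, exponentiation gives $e^{u\Gb^{w, z}(m, n)} \le e^{u\Gb^z(m, n)}\bigl(1 - (1 - e^{-\beta\eta(1, 0)})\one_A\bigr)$. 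Taking expectations and applying Proposition \ref{PLMId} produces
\begin{align*}
\E\bigl[e^{u\Gb^z(m, n)}(1 - e^{-\beta\eta(1, 0)})\one_A\bigr] \le \E\bigl[e^{u\Gb^z(m, n)}\bigr] - e^{\Lb^{w, z}(m, n)}.
\end{align*}

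To extract $\P(A)$, I condition on the $\sigma$-algebra $\mathcal{F}'$ generated by all weights except $\eta(1, 0)$. Writing $\Gb^z(m, n) = \max(\eta(1, 0)/z + \widetilde{G}_1, b')$ with $\widetilde{G}_1, b' \in \mathcal{F}'$, the event $A$ becomes $\{\eta(1, 0) > D\}$ for $\mathcal{F}'$-measurable $D := z(b' - \widetilde{G}_1)$. Explicit integration of $\eta(1, 0) \sim \Exp(1)$, together with the algebraic identity $(1 - u/z) + \beta = z/w$, produces a closed-form conditional expectation on the left above that I can bound below by a positive multiple of $\beta \cdot e^{u\widetilde{G}_1}\,\P(A \mid \mathcal{F}')$, after a case split on the sign of $D$. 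Averaging over $\mathcal{F}'$ and applying Jensen ($\E[e^{u\Gb^z(m, n)}] \ge e^{u\M^z(m, n)}$) converts the previous display into
\begin{align*}
\P(A) \lesssim \beta^{-1}\bigl[1 - e^{\Lb^{w, z}(m, n) - u\M^z(m, n)}\bigr].
\end{align*}

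Finally, Taylor expanding around $w = z$ yields
\begin{align*}
\Lb^{w, z}(m, n) - u\M^z(m, n) = \tfrac{u^2}{2}\partial_z\M^z(m, n) + \tfrac{u^3}{3}\bigl(\tfrac{m}{z^3} + \tfrac{n}{(1-z)^3}\bigr) + O(u^4).
\end{align*}
On $S_\delta$ one has $\partial_z\M^z(m, n) \asymp (m+n)r$ and $\tfrac{m}{z^3}+\tfrac{n}{(1-z)^3} \asymp m+n$. Choosing $u = -\theta r$ for a small constant $\theta > 0$ makes the Taylor polynomial equal $-c(m+n)r^3$ to leading order, and absorbing the polynomial prefactor $\beta^{-1} \asymp r^{-2}$ into the exponential (at the cost of shrinking $c_0$) yields $\P(A) \le \exp\{-c_0(m+n)r^3\}$. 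The main technical obstacle is the conditional lower bound in the second step: producing it uniformly in $D$, especially for large positive $D$ where $\P(A\mid\mathcal{F}')$ is exponentially small but still drives the average, relies on the algebraic identity $(1 - u/z) + \beta = z/w$ and a careful case analysis.
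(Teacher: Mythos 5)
Your opening move---applying Proposition~\ref{PLMId} with an auxiliary $w<z$ and comparing $\Gb^{w,z}$ to $\Gb^z$ via the $\wb^z$-geodesic---is in the right family, but the argument does not close, for three interlocking reasons. \textbf{(1)} The conditional lower bound you flag as the main obstacle in fact fails for large $D$: with $a:=1-u/z>1$, the conditional expectation equals $e^{u\widetilde G_1}\bigl[\tfrac{e^{-aD^+}}{a}-\tfrac{e^{-(a+\beta)D^+}}{a+\beta}\bigr]$, which decays like $e^{-aD^+}$, while $\beta e^{u\widetilde G_1}\P(A\mid\mathcal F')$ decays like $e^{-D^+}$; since $a>1$, no uniform constant works. \textbf{(2)} Even with a valid conditional bound you only control $\E[e^{u\Gb^z}\one_A]$, and because $u<0$ makes $e^{u\Gb^z}$ of order $e^{u\M^z}$ (exponentially small in $m+n$) on $A$, this does not translate into a bound on $\P(A)$ without a concentration input for $\Gb^z$ of exactly the kind the scheme is meant to avoid; Jensen, $\E[e^{u\Gb^z}]\ge e^{u\M^z}$, is the wrong direction, and Proposition~\ref{PLMId} cannot supply an upper bound for $\E[e^{u\Gb^{z,z}}]$ because $u$ does not match the superscript difference. \textbf{(3)} For $u=-\theta r$ with $\theta$ small and $w=z+u>\Min$, the integral form $\Lb^{w,z}-u\M^z=\int_w^z[\M^z-\M^t]\,dt$ is \emph{positive} (the $\tfrac{u^2}{2}\partial_z\M^z\asymp\theta^2\curv^3 r^3$ term dominates the negative cubic term), so $1-e^{\Lb^{w,z}-u\M^z}<0$ and the asserted inequality is vacuous. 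Taking $\theta>3$ flips the sign but yields at best $\beta^{-1}[1-e^{-c(m+n)r^3}]\lesssim\min\{(m+n)r,\,r^{-2}\}$, a polynomial estimate with no route to the required cubic-exponential decay; the prefactor $\beta^{-1}$ cannot be ``absorbed'' into an exponential that is not there.

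The paper's proof sidesteps the extraction problem entirely. After the monotone inclusion $\{\Gb^z_{1,0}\ge\Gb^z_{0,1}\}\subseteq\{\Gb^{z-2\lambda}_{1,0}\ge\Gb^z_{0,1}\}$, the latter probability is rewritten \emph{exactly} as $\E\bigl[e^{\lambda(\Gb^{z-2\lambda}_{1,0}-\Gb^{z-2\lambda,z})}\one\{\Gb^{z-2\lambda}_{1,0}\ge\Gb^z_{0,1}\}\bigr]$, valid because $\Gb^{z-2\lambda,z}=\max\{\Gb^{z-2\lambda}_{1,0},\Gb^z_{0,1}\}$ makes the exponent vanish on the event and nonpositive off it. Dropping the indicator, applying Cauchy--Schwarz, the monotone comparison $\Gb^{z-2\lambda}_{1,0}\le\Gb^{z-2\lambda,z-4\lambda}$, and two applications of Proposition~\ref{PLMId} then give the explicit exponential $\exp\{\tfrac12\Lb^{z-2\lambda,z-4\lambda}-\tfrac12\Lb^{z,z-2\lambda}\}$, estimated by the geometry of $z\mapsto\M^z$ via Lemmas~\ref{LLIBdEst} and~\ref{LShpMinBnd}. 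Crucially, the probability never sits inside a weighted expectation with a random, exponentially small weight; that is the trick your scheme is missing.
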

\begin{rem}
\label{RConst}
By tweaking the proof of Proposition \ref{PGeoInStep}, it is possible to restate the preceding bounds with a precise constant and error term when $z$ is sufficiently close to $\Min$. For example, fixing $\delta > 0$ and setting $z = \Min + \dfrac{s}{\curv}$ where $\curv = \curv(m, n)$ is the scaling factor defined at \eqref{Ecurv} below, one has 
\begin{align}
\P\{\Ev^{z, \hor}(m, n) > 0\} \le \exp\bigg\{-\frac{s^3}{6} + \frac{C_0 s^4}{(m+n)^{1/3}}\bigg\} \label{EExitConst}
\end{align}
whenever $(m, n) \in S_\delta \cap \bbZ_{>0}^2$ and $0 \le s \le \epsilon (m+n)^{1/3}$ for some constants $C_0 = C_0(\delta) > 0$ and $\epsilon = \epsilon(\delta) > 0$. The leading order term in \eqref{EExitConst} arises from some optimal choices in the proof.  See Remark \ref{ROptConst} for more details.  We do not know whether the constant $-1/6$ is indeed sharp. 
\end{rem}

\subsection{Main lower bounds}

The next theorem states the lower bounds complementary to Theorem \ref{TExitUB}. Note that, unlike the situation in Theorem \ref{TExitUB}, the $s$ parameter is now bounded from below by some fixed $\epsilon > 0$ and the constants $N_0$ and $C_0$ depend on $\epsilon$.  

\begin{thm}
\label{TExitLB}
Fix $\delta > 0$,  $\epsilon > 0$ and $K \ge 0$. There exist finite positive constants $c_0 = c_0(\delta,  K)$, $C_0 = C_0(\delta, \epsilon,  K)$ and $N_0 = N_0(\delta, \epsilon,  K)$ such that the following statements hold whenenever $(m, n) \in S_\delta \cap \bbZ_{\ge N_0}^2$, $s \in [\epsilon, c_0(m+n)^{1/3}]$, $w > 0$,  $z < 1$, $(i_0, j_0) \in \bbZ_{\ge 0}^2$, and $\nu$ is a down-right path on $\bbZ^2_{\ge 0}$ with $(i_0, j_0) \in \nu$ and $(m+i_0, n+j_0) \in V_\nu$. 
\begin{enumerate}[\normalfont (a)]
\item If $\max \{w, z\} \le \Min(m, n) + K s (m+n)^{-1/3}$ then 
\begin{align*}
\P\{\Eh^{w, z, +}_{\nu, i_0, j_0}(m+i_0, n+j_0) > s(m+n)^{2/3}\} \ge \exp\{-C_0s^3\}. 
\end{align*}
\item If $\min \{w, z\} \ge \Min(m, n)-Ks(m+n)^{-1/3}$ then 
\begin{align*}
\P\{\Eh^{w, z, -}_{\nu, i_0, j_0}(m+i_0, n+j_0) > s(m+n)^{2/3}\} \ge \exp\{-C_0s^3\}. 
\end{align*}
\end{enumerate}
\end{thm}

The special case of Theorem \ref{TExitLB} where $(i_0, j_0) = (0, 0)$ and $\nu = L_{0, 0}^{m, n}$ gives the lower bounds that match the upper bounds in Corollary \ref{CExitUB}.

\begin{cor}
\label{CExitLB}
Fix $\delta > 0$,  $\epsilon > 0$ and $K \ge 0$. There exist finite positive constants $c_0 = c_0(\delta,  K)$, $C_0 = C_0(\delta, \epsilon,  K)$ and $N_0 = N_0(\delta, \epsilon,  K)$ such that the following statements hold whenever $(m, n) \in S_\delta \cap \bbZ_{\ge N_0}^2$, $s \in [\epsilon, c_0(m+n)^{1/3}]$, $w > 0$ and $z < 1$. 
\begin{enumerate}[\normalfont (a)]
\item If $\max \{w, z\} \le \Min(m, n) + K s(m+n)^{-1/3}$ then $$\P\{\Eh^{w, z, \hor}(m, n) > s(m+n)^{2/3}\} \ge \exp\{-C_0s^3\}.$$ 
\item If $\min \{w, z\} \ge \Min(m, n) - Ks(m+n)^{-1/3}$ then $$\P\{\Eh^{w, z, \ver}(m, n) > s(m+n)^{2/3}\} \ge \exp\{-C_0s^3\}.$$ 
\end{enumerate}
\end{cor}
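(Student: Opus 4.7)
The plan is to derive this corollary as an immediate specialization of Theorem \ref{TExitLB}. I would take $(i_0, j_0) = (0, 0)$ and let $\nu = L_{0, 0}^{m, n}$ be the L-shaped down-right path through the origin whose two arms lie on the coordinate axes, going from $(0, n)$ to $(m, 0)$ via $(0,0)$. Then $\nu \subset \bbZ_{\ge 0}^2$, and with $u = m$, $v = n$ in the definition \eqref{EVDef}, the base point $(m, n) = (m + i_0, n + j_0)$ belongs to $V_\nu$ since $(0, 0) \in \nu$ and $0 \le m$, $0 \le n$. Hence all hypotheses on the paths in Theorem \ref{TExitLB} are met, and the same constants $c_0, C_0, \epsilon_0, N_0$ can be used.

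The second step is purely definitional: under these choices, formulas \eqref{EExL} and \eqref{EExhv} identify
\[
\Eh^{w, z, +}_{\nu, 0, 0}(m, n) \stackrel{\text{a.s.}}{=} \Eh^{w, z, \hor}(m, n) \quad\text{and}\quad \Eh^{w, z, -}_{\nu, 0, 0}(m, n) \stackrel{\text{a.s.}}{=} \Eh^{w, z, \ver}(m, n).
\]
Substituting these identifications into parts (a) and (b) of Theorem \ref{TExitLB} produces precisely the two lower bounds claimed in parts (a) and (b) of the corollary.

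There is no genuine obstacle here; the proof is a one-line bookkeeping argument parallel to the derivation of Corollary \ref{CExitUB} from Theorem \ref{TExitUB}. All of the substantive probabilistic content, including the change-of-measure step and the stationarity input from \eqref{EBurke}, is absorbed into the proof of Theorem \ref{TExitLB} itself, so no additional estimates are needed at this stage.
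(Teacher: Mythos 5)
Your proof is correct and is exactly the specialization the paper intends: it states directly after Theorem \ref{TExitLB} that Corollary \ref{CExitLB} is the case $(i_0, j_0) = (0, 0)$, $\nu = L_{0,0}^{m,n}$, and your observation that the identifications in \eqref{EExL}--\eqref{EExhv} turn $\Eh^{w,z,+}_{\nu,0,0}$ into $\Eh^{w,z,\hor}$ and $\Eh^{w,z,-}_{\nu,0,0}$ into $\Eh^{w,z,\ver}$ is the whole content. You are also right that, unlike the proof of Corollary \ref{CExitUB}, no appeal to Remark \ref{RPathLen} is needed here since Theorem \ref{TExitLB} already yields the bound $\exp\{-C_0 s^3\}$ without a $\min\{s^3, m+n\}$ truncation.
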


\begin{rem}
\label{RExitLB}
The result implies the following lower bound in the increment-stationary case: Given $\delta > 0$ and $\epsilon > 0$, there exist positive constants $c_0 = c_0(\delta)$, $C_0 = C_0(\delta, \epsilon)$ and $N_0 = N_0(\delta, \epsilon)$ such that
\begin{align}
\P\{\max \{\Eh^{z, \hor}(m, n), \Ev^{z, \ver}(m, n)\} > s(m+n)^{2/3}\} \ge \exp\{-C_0 s^3\} \label{Exit_LB}
\end{align}
whenever $(m, n) \in S_\delta \cap \bbZ_{\ge N_0}^2$, $z \in (0, 1)$ and $s \in [\epsilon, c_0(m+n)^{1/3}]$. This bound has essentially the same content as a recent result in \cite[Theorem 4.1]{shen-sepp-19} that was also proved without integrable probability, by an adaptation of a  change-of-measure argument from \cite{bala-sepp-aom, sepp-cgm-18}. Our proof of Theorem \ref{TExitLB} is in a similar spirit. 
\end{rem}


\subsection{Lower bounds for the first step probabilities}

The next result gives lower bounds complementing the upper bounds in Proposition \ref{PGeoInStep}. The result serves as a main step in our proof of Theorem \ref{TExitLB}. 
\begin{prop}
\label{PGeoInStepLB}
Fix $\delta > 0$ and $\epsilon > 0$. There exist finite positive constants $C_0 = C_0(\delta, \epsilon)$,  $\epsilon_0 = \epsilon_0(\delta)$ and $N_0 = N_0(\delta, \epsilon)$ such that the following statements hold for $(m, n) \in S_\delta \cap \bbZ_{\ge N_0}^2$ and $z \in (0, 1)$ with $|z-\Min| \le \epsilon_0$ where $\Min= \Min(m, n)$. 
\begin{enumerate}[\normalfont (a)]
\item If $z \ge \Min+\epsilon (m+n)^{-1/3}$ then 
\begin{align*}
\P\{\Ev^{z, \hor}(m, n) > 0\} \ge \exp\{-C_0(m+n)(z-\Min)^3\}. 
\end{align*}
\item If $z \le \Min - \epsilon (m+n)^{-1/3}$ then 
\begin{align*}
\P\{\Ev^{z, \ver}(m, n) > 0\} \ge \exp\{-C_0(m+n)(\Min-z)^3\}. 
\end{align*}
\end{enumerate}
\end{prop}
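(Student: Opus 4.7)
By the symmetry $(i, j, z) \leftrightarrow (j, i, 1-z)$, it suffices to prove part (a). Let $\eta = z - \Min(m, n) \ge \epsilon(m+n)^{-1/3}$ and set $A = \{\Ev^{z, \hor}(m, n) > 0\}$, the event that the geodesic from the origin takes a horizontal first step. The plan is a Radon-Nikodym change-of-measure argument comparing $\P^{z, z}(A)$ against the probability of the same event under a tilted stationary measure $\P^{\tilde z, \tilde z}$ with $\tilde z = \Min(m, n) - \eta$, the reflection of $z$ across the characteristic parameter. Under this tilt, Proposition \ref{PGeoInStep}(b) applied with parameter $\tilde z$ (valid since $\Min - \tilde z = \eta$) yields $\P^{\tilde z, \tilde z}(A) \ge 1 - \exp\{-c(m+n)\eta^3\} \ge 1 - \exp\{-c\epsilon^3\}$, a positive constant.

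Next I would restrict the Radon-Nikodym derivative to a finite slab of boundary weights $\{(i, 0), (0, i) : 1 \le i \le K\}$ with $K$ of order $\eta(m+n)$. This scale is chosen to exceed the typical horizontal exit point under $\P^{\tilde z, \tilde z}$, which is itself of order $\eta(m+n)$ because $(m, n)$ is off-characteristic for $\tilde z$ by an amount comparable to $\eta$. On a good event $G$ consisting of (i) the horizontal boundary sum $\sum_{i=1}^{K} \wh{\omega}(i, 0)$ and vertical boundary sum $\sum_{j=1}^{K} \wh{\omega}(0, j)$ concentrated within a Chernoff-scale $\tau = O(\sqrt{K})$ of their $\P^{\tilde z, \tilde z}$-means $K/\tilde z$ and $K/(1-\tilde z)$, and (ii) the main geodesic together with the auxiliary geodesics from $(1, 0)$ and $(0, 1)$ to $(m, n)$ all having their respective boundary exit points bounded by $K$, the event $A$ becomes measurable with respect to the slab and bulk weights alone. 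The probability $\P^{\tilde z, \tilde z}(G^c)$ is controlled using Theorem \ref{TExitUB} and Proposition \ref{PGeoInStep} for the exit-point conditions and a standard Chernoff bound for the boundary sums, keeping $\P^{\tilde z, \tilde z}(A \cap G)$ bounded below by a constant.

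The central computation is a lower bound on the logarithm of the restricted Radon-Nikodym derivative,
\begin{align*}
\log \frac{d \P^{z, z}}{d \P^{\tilde z, \tilde z}}\bigg|_{\text{slab}} = K \log \frac{z}{\tilde z} + K \log \frac{1 - z}{1 - \tilde z} - 2\eta \sum_{i=1}^{K} \wh{\omega}(i, 0) + 2\eta \sum_{j=1}^{K} \wh{\omega}(0, j).
\end{align*}
On $G$, inserting the concentrated sums and Taylor expanding around $\tilde z$ with increment $z - \tilde z = 2\eta$, the first-order terms in $\eta$ cancel against the mean sum terms, mirroring the cancellation underlying the variance identity of Remark \ref{RmVarId}. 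This leaves the leading contribution $-2 K \eta^2 (\tilde z^{-2} + (1 - \tilde z)^{-2}) \asymp -K\eta^2 \asymp -(m+n)\eta^3$ for $(m, n) \in S_\delta$, plus a subleading deviation term of order $-\eta \tau$. Combining, $\P^{z, z}(A) \ge \inf_G (d \P^{z, z}/d \P^{\tilde z, \tilde z}) \cdot \P^{\tilde z, \tilde z}(A \cap G) \ge c \exp\{-C_0 (m+n) \eta^3\}$, which yields the claim after absorbing the constant $c$ using the lower bound $(m+n)\eta^3 \ge \epsilon^3$.

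The main obstacle is balancing $K$ and $\tau$ so that the subleading contributions are absorbed into the target rate $(m+n)\eta^3$. The Chernoff-scale choice $\tau \asymp \sqrt{K} \asymp \sqrt{(m+n)\eta}$ produces $\eta \tau \asymp \eta^{3/2}\sqrt{m+n}$, which at the near-characteristic edge $\eta \asymp \epsilon (m+n)^{-1/3}$ is smaller than the main rate only by a factor $\epsilon^{3/2}$, which forces $C_0$ to depend on $\epsilon$ as in the statement. Two additional technical points arise: applying Proposition \ref{PGeoInStep} to the auxiliary geodesics starting at $(1, 0)$ and $(0, 1)$ uses shift stationarity and that $(m-1, n), (m, n-1) \in S_{\delta/2}$ when $(m, n) \in S_\delta$ and $m + n \ge N_0$; and the regime $\eta \ge \Min(m, n)/2$, where $\tilde z$ would fail to be positive, has $(m+n)\eta^3$ already bounded below by a $\delta$-dependent constant, so it can be handled separately by lower-bounding $\P^{z, z}(A)$ through the event $\{\wh{\omega}(1, 0) > C(m + n)\}$ that forces the horizontal first step directly.
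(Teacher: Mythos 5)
Your proposal follows the same overall strategy as the paper's proof: reflect $z = \Min + \eta$ across the characteristic parameter to $\tilde z = \Min - \eta$, tilt the measure only on a boundary slab of linear size $K \asymp \eta(m+n)$, use Proposition \ref{PGeoInStep}(b) to show the untilted probability (under rate $\tilde z = w$) is bounded below by a positive constant, and use the exit-point upper bounds to control the slab truncation. These are exactly the paper's choices: $w = \Min - \lambda$, $k = \lf r\lambda(m+n)\rf$, and the good event $F$ on which all relevant exit points are $\le k$.

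Where you diverge is in the treatment of the Radon-Nikodym derivative. You restrict to a good event $G$ on which the boundary sums concentrate at Chernoff scale $\tau \asymp \sqrt{K}$, take a pointwise infimum of the RN derivative over $G$, and absorb the resulting $\eta\tau$ error into the constant $C_0$. The paper instead applies Cauchy-Schwarz to separate the exponential moment $\E[\exp\{4\lambda(\Gb^z(k,0)-\Gb^z(0,k))\}]$, which it then computes exactly (it is a ratio of moment generating functions of independent exponentials), with no concentration inequality needed. Both routes lose constant factors at the near-characteristic edge $\eta \asymp \epsilon(m+n)^{-1/3}$, which is why $C_0$ must depend on $\epsilon$ — you identify this correctly. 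The Cauchy-Schwarz route is a little cleaner because it sidesteps the question of whether the good event $G$ (and hence $A\cap G$) is slab-and-bulk measurable, which is a real subtlety that your sketch glosses over: the exit-point conditions in $G$ are a priori whole-boundary events, and making the argument precise requires showing that when the exit points of the $w$-process and both $z$-process geodesics are $\le K$, the ordering of $\Gb^z_{1,0}$ and $\Gb^z_{0,1}$ is determined by the slab and bulk weights alone (this is the paper's display \eqref{E-12} and the containment \eqref{E-10}). Your mention of "auxiliary geodesics from $(1,0)$ and $(0,1)$" is not quite the right good event; you need exit-point control for the $\wb^w$- and $\wb^z$-geodesics from the origin as in the paper's event $F$. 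These are fillable gaps, not errors in approach, and the core ideas are sound.
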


\section{Applications of main exit point bounds}
\label{SApp}

We apply some results 
from Section \ref{SExBd} to obtain fluctuation upper bounds for the geodesics in the bulk, and speed 
bounds for the distributional convergence of the LPP increments and the competition interface direction. 

\subsection{Exit point upper bounds for geodesics in the bulk}
\label{SsBlkGeo}

In our framework, by virtue of the identity 
\begin{align}
\{\Gb^{1, 0}(m, n): (m, n) \in \bbZ^2_{\ge 0}\} \stackrel{\text{dist.}}{=} \{\G(m+1, n+1)-\w(1, 1): (m, n) \in \bbZ^2_{\ge 0}\},  \label{EBlkDistId}
\end{align}
bounds on transversal fluctuations of bulk geodesics can be expressed as right-tail bounds for the exit points in \eqref{EEx} when $w = 1$ and $z = 0$. The exit point bounds from Section \ref{SExBd} do not apply in this case due to the restrictions there on the  parameters. Nevertheless, utilizing the ordering of geodesics with a common endpoint, the upper bound in Theorem \ref{TPathFluc} below can be deduced from Theorem \ref{TExitUB} and Proposition \ref{PGeoInStep} in a fairly straightforward manner. 

\begin{thm}
\label{TPathFluc}
Fix $\delta > 0$. There exist finite positive constants $c_0 = c_0(\delta), \epsilon_0 = \epsilon_0(\delta), N_0 = N_0(\delta)$ and $s_0 = s_0(\delta)$ such that 
\begin{align*}\P\{\Eh^{1, 0, \square}_{\nu, i_0, j_0}(m+i_0, n+j_0) > s(m+n)^{2/3}\} \le \exp\{-c_0\min \{s^3, m+n\}\}\end{align*}
whenever $\square \in \{+, -\}$, $s \ge s_0$, $(m, n) \in S_\delta \cap \bbZ_{\ge N_0}^2$, $(i_0, j_0) \in \bbZ^2_{\ge 0}$ subject to 
\begin{align}
\label{EBaseCond}
\begin{split}
&|\Min(m+i_0+1, n+j_0+1)-\Min(m, n)| \le \epsilon_0s(m+n)^{-1/3}, 
\end{split}
\end{align} 
and $\nu$ is a down-right path in $\bbZ_{\ge 0}^2$ with $(i_0, j_0) \in \nu$ and $(m+i_0, n+j_0) \in V_{\nu}$. 
\end{thm}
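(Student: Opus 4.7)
The plan is to deduce the upper bound from Theorem \ref{TExitUB} and Proposition \ref{PGeoInStep} via a coupling-and-sandwiching argument. First, use the distributional identity \eqref{EBlkDistId} to translate the bound into a statement about the $\wb^{1, 0}$-geodesic from the origin to $(m+i_0, n+j_0)$. Since the parameter choice $(w, z) = (1, 0)$ lies outside the window of Theorem \ref{TExitUB}, introduce an auxiliary equilibrium environment $\wb^{z^*, z^*}$ with $z^* \in (0, 1)$, coupled to $\wb^{1, 0}$ through the shared weights $\eta$ of \eqref{ECplBlkBd}. Condition \eqref{EBaseCond} guarantees that $\Min(m+i_0+1, n+j_0+1)$ lies within $\epsilon_0 s(m+n)^{-1/3}$ of $\Min(m, n)$, so $z^*$ can be chosen inside the parameter window of Theorem \ref{TExitUB}: slightly below $\Min(m, n)$ for sign $+$ and slightly above for sign $-$.

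Next, compare the bulk exit event with an analogous event in the equilibrium environment, leveraging the planar ordering of geodesics with a common endpoint under the $\eta$-coupling. The subtlety is that going from $\wb^{1, 0}$ to $\wb^{z^*, z^*}$ simultaneously enlarges boundary rates on both axes, and these two changes push the equilibrium geodesic in competing directions relative to the bulk one. Circumvent this by routing through an intermediate single-axis-modified environment such as $\wb^{z^*, 0}$, in which the ordering of geodesics to a common endpoint is unambiguous; then transition from this intermediate to $\wb^{z^*, z^*}$ using Proposition \ref{PGeoInStep} to bound the probability that the geodesic takes a ``wrong'' first step that would spoil the coupling. This spoiling probability is of order $\exp\{-c_0(m+n)(z^*-\Min(m, n))^3\} \asymp \exp\{-c_0 s^3\}$. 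Consequently, the bulk exit event $\{\Eh^{1, 0, \square}_{\nu, i_0, j_0}(m+i_0, n+j_0) > s(m+n)^{2/3}\}$ is dominated, up to a cubic-exponential additive error, by an exit event in the equilibrium environment for which Theorem \ref{TExitUB} produces the estimate $\exp\{-c_0\min\{s^3, m+n\}\}$.

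The main technical obstacle is quantifying the sandwiching cleanly so that the Proposition \ref{PGeoInStep} correction is absorbed into the cubic-exponential main term. This is the source of the threshold $s \ge s_0$, chosen so that the exponent from the spoiling correction is comparable to the bound from Theorem \ref{TExitUB} rather than overwhelming it. Condition \eqref{EBaseCond} is essential for choosing $z^*$ uniformly within the valid parameter windows for both auxiliary estimates; the lack of uniformity in $(i_0, j_0)$ noted in the remark after the theorem statement reflects precisely the dependence of the viable $z^*$ on $(i_0, j_0)$ through this condition.
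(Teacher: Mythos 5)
Your argument is correct and follows the same architecture as the paper's proof: couple with an equilibrium environment at a parameter $z^*$ chosen near $\Min(m,n)$ as permitted by \eqref{EBaseCond}, pay a cubic-exponential price via Proposition \ref{PGeoInStep} for the event that the equilibrium geodesic takes a vertical first step, and on the complementary event transfer the exit bound to the equilibrium geodesic, which is controlled by Theorem \ref{TExitUB}. The implementation differs modestly: you route through the intermediate single-axis environment $\wb^{z^*, 0}$, using Lemma \ref{LExMon}(a) for the monotone comparison $\Eh^{1,0,+} \le \Eh^{z^*,0,+}$ and then the observation that the $\wb^{z^*, 0}$- and $\wb^{z^*, z^*}$-geodesics coincide once the latter avoids the vertical axis. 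The paper instead shifts everything by $(1,1)$ via \eqref{EBlkDistId}, so the comparison is between the bulk geodesic $\pi_{1,1}(M,N)$ and the equilibrium geodesic $\pi^z_{0,0}(M,N)$ to the common endpoint $(M,N)=(m+i_0+1,n+j_0+1)$, using planar geodesic ordering conditioned on the latter passing through $(1,0)$. Since your intermediate-environment route works directly from the origin, the opening appeal to \eqref{EBlkDistId} in your sketch is actually unnecessary; and a small imprecision is that passing from $\wb^{1,0}$ to $\wb^{z^*,z^*}$ decreases (not enlarges) both boundary rates, enlarging the boundary weights, though your conclusion about the competing pulls is still correct. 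Your reading of the role of $s_0$ (absorbing constants in front of the exponentials) and of the source of the non-uniformity in $(i_0, j_0)$ matches the paper; see Remark \ref{RPathFluc}.
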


\begin{rem}
Note from \eqref{EMin} that the left-hand side of \eqref{EBaseCond} equals zero if and only if the vertices $(0, 0)$, $(m, n)$ and $(m+i_0+1, n+j_0+1)$ are colinear. On the grounds of this and the continuity of the function $\Min$, condition \eqref{EBaseCond} can be interpreted as indicating the approximate colinearity of the preceding vertices. 
\end{rem}
\begin{rem}
\label{RPathFluc}
Uniform bounds similar to \cite[Theorem 2.8]{busa-ferr-22} but weaker by a logarithmic factor can be readily obtained from Theorem \ref{TPathFluc} via the union bound. To demonstrate, let $(x, y) \in S_\delta$ and $(p_k, q_k) = (\lf kx \rf, \lf ky \rf)$ for $k \in \bbZ_{\ge 0}$. Pick $N \in \bbZ_{>0}$, and consider the L-shaped paths $L_{k, N} = L_{p_k, 0}^{p_N, q_N}$ for $k \in [N] \cup \{0\}$. Theorem \ref{TPathFluc} implies the existence of positive constants $c_0, N_0$ and $s_0$ depending only on $\delta$ such that 
\begin{align*}
\begin{split}
\P\{\Eh^{1, 0, +}_{L_{k, N}, p_k, q_k}(p_N, q_N) > t_k (N-k)^{2/3} \text{ for } N-M \le k \le N-N_0\} \le \sum_{k=N-M}^{N-N_0} \exp\{-c_0t_{k}^3\} 
\end{split}
\end{align*}
whenever $M \in [N]$ with $M \ge N_0$ and $t_{k} \ge s_0$ for $N-M \le k \le N-N_0$. In particular, setting $t_k = s (\log M)^{1/3}$ above for some $s \ge s_0$, and choosing $s_0$ and $N_0$ sufficiently large yields 
\begin{align*}
\begin{split}
&\P\{\Eh^{1, 0, +}_{L_{k, N}, p_k, q_k}(p_N, q_N) > s (\log M)^{1/3} (N-k)^{2/3} \text{ for } N-M \le k \le N-N_0\} \\
&\le \exp\{-c_0 s^3 \log M\}
\end{split}
\end{align*}
after renaming the constant $c_0$.  
\end{rem}

\subsection{Speed of the distributional convergence to Busemann functions}

We now turn to an application of Proposition \ref{PGeoInStep} to bound from above the speed of distributional convergence of the bulk LPP increments to the Busemann functions. 


Denote the increments of the bulk LPP process with respect to the initial point by 
\begin{align}
\label{Einc}
\begin{split}
\B_{i, j}^{\hor}(m, n) &= \G_{i, j}(m, n)-\G_{i+1, j}(m, n) \\ 
\B_{i, j}^{\ver}(m, n) &= \G_{i, j}(m, n)-\G_{i, j+1}(m, n)  
\end{split}
\end{align}
for $m, n \in \bbZ_{>0}$, $i \in [m]$ and $j \in [n]$. By definition \eqref{Elpp} (and the convention that $\max \emptyset = -\infty$), these increments are equal to  $+\infty$ when $i = m$ and $j =n$, respectively. 

It is known \cite{coup-11,  ferr-pime-05} that, for any given direction vector $(x, y) \in \bbR_{>0}^2$,  there exists a stationary stochastic process $\{ \b_{i, j}^{\hor}(x, y), \b_{i, j}^{\ver}(x, y): i,j\in\bbZ_{>0}\}$ and  an event of full probability  on which the    limits   
\begin{align}
\lim_{N \rightarrow \infty}\B_{i, j}^{\hor}(m_N, n_N) &= \b_{i, j}^{\hor}(x, y) \quad \text{ and } \quad \lim_{N \rightarrow \infty}\B_{i, j}^{\ver}(m_N, n_N) = \b_{i, j}^{\ver}(x, y) 
\label{EBuse}
\end{align}
hold for all $(i, j) \in \bbZ^2_{>0}$ and 
sequences  $\{(m_N, n_N)\}_{N\ge 1}\subset\bbZ^2_{>0}$ such that 
$\min \{m_N, n_N\} \rightarrow \infty$ and $m_N/n_N\to x/y$. The limits \eqref{EBuse} are examples of \emph{Busemann functions} evaluated, respectively, at pairs $((i, j), (i+1, j))$ and $((i, j), (i, j+1))$ of adjacent vertices. 


The following distributional properties of the Busemann functions were obtained in \cite[Lemma 3.3]{cato-pime-13}. The marginal distributions are given by 
\begin{align}
\b^{\hor}_{i, j}(x, y) \sim \Exp\{\Min(x, y)\} \quad \text{ and } \quad \b^{\ver}_{i, j}(x, y) \sim \Exp\{1-\Min(x, y)\} \label{EBuseDis}
\end{align}
for $(i, j) \in \bbZ_{>0}^2$ and $(x, y) \in \bbR_{>0}^2$. Furthermore, for any down-right path $\nu$ in $\bbZ_{>0}^2$ (recalling the definition of $R_\nu$ and $D_\nu$ in \eqref{ERDsteps})
 the collection 
\begin{align}
\label{EBuseInd}
\begin{split}
&\{\b_{\nu_i}^{\hor}(x, y): i \in R_\nu\} \cup \{\b_{\nu_j}^{\ver}(x, y): j \in D_\nu\} \text{ is jointly independent. } 
\end{split}
\end{align} 
Comparing with \eqref{EBurke}, one recognizes that the Busemann functions in \eqref{EBuseInd} have the same joint distribution as the (absolute) increments of the $\Gb^{z}$-process along 
$\nu$, namely, the collection 
$
\{\Gb^{z}(\nu_{i+1})-\Gb^{z}(\nu_{i}): i \in R_{\nu}\} \cup \{\Gb^{z}(\nu_{j-1})-\Gb^{z}(\nu_{j}): j \in D_{\nu}\}
$
when $z = \Min(x, y)$. 

For each $p, q \in \bbZ_{\ge 0}$ and $z \in (0, 1)$, introduce the functions $\bcdf_{p, q}^{z, \hor}, \bcdf_{p, q}^{z, \ver}: \bbR^p \times \bbR^q \to [0, 1]$ by 
\begin{align}
\label{Ebcdf}
\begin{split}
\bcdf_{p, q}^{z, \hor}(s, t) &= \prod_{i \in [p]}\exp\{-s_i^-z\} \prod_{j \in [q]}(1-\exp\{-t_j^+(1-z)\}) \\
\bcdf_{p, q}^{z, \ver}(s, t) &= \prod_{i \in [p]}(1-\exp\{-s_i^+z\}) \prod_{j \in [q]}\exp\{-t_j^-(1-z)\} 
\end{split}
\end{align}
for $s = (s_i)_{i \in [p]} \in \bbR^p$ and $t = (t_j)_{j \in [q]} \in \bbR^q$.  Suppose $p = \hash R_\nu$ and $q = \hash D_{\nu}$ equal the number of down and right steps in $\nu$.  Then the functions $\bcdf_{p, q}^{z, \ver}$ and $\bcdf_{p, q}^{z, \hor}$ give two representations of the joint distribution 
of the increments $(\Gb^z(\nu_{k+1})-\Gb^z(\nu_{k}))_{k \in [\ell(\nu)-1]}$. Indeed, by definition \eqref{Ebcdf} and \eqref{EBurke}, 
\begin{align*}
\bcdf_{p, q}^{z, \ver}(s, t) &= \P\{\wb^z_{i, 0} \le s_i \text{ for } i \in [p] \text{ and } \wb^z_{0, j} \ge -t_j \text{ for } j \in [q]\} \\
&= \P\{\Gb^{z}(\nu_{r_{i}+1})-\Gb^{z}(\nu_{r_i}) \le s_i \text{ for } i \in [p] \text{ and } \\
&\qquad \qquad \ \ \Gb^z(\nu_{d_{j}})-\Gb^{z}(\nu_{d_j-1}) \le t_j \text{ for } j \in [q]\}, 
\end{align*}
where $(r_i)_{i \in [p]}$ and $(d_j)_{j \in [q]}$ denote the elements of $R_\nu$ and $D_\nu$ in increasing order. Similarly, $\bcdf_{p, q}^{z, \hor}$ gives the joint distribution 
of the
reversed
 increments $(\Gb^z(\nu_{k})-\Gb^z(\nu_{k+1}))_{k \in [\ell(\nu)-1]}$. In particular, from the discussion in the preceding paragraph, one has 
\begin{align}
\label{EBuseCdf}
\begin{split}
\bcdf_{p, q}^{\Min(x, y), \hor}(s, t) &= \P\{\b_{\nu_{r_i}}^{\hor}(x, y) \ge -s_i \text{ for } i \in [p] \text{ and } \b_{\nu_{d_j}}^{\ver}(x, y) \le t_j \text{ for } j \in [q]\} \\
\bcdf_{p, q}^{\Min(x, y), \ver}(s, t) &= \P\{\b_{\nu_{r_i}}^{\hor}(x, y) \le s_i \text{ for } i \in [p] \text{ and } \b_{\nu_{d_j}}^{\ver}(x, y) \ge -t_j \text{ for } j \in [q]\}. 
\end{split}
\end{align}

Let $m, n \in \bbZ_{>0}$ be sufficiently large such that $m \ge u$ and $n \ge v$ for any $(u, v) \in \pi$. In the same vein as in \eqref{EBuseCdf}, define the
pre-limiting functions 
$\Bcdf_{\nu}^{m, n, \hor}, \Bcdf_{\nu}^{m, n, \ver}: \bbR^p \times \bbR^q \to [0, 1]$ by 
\begin{align}
\label{EIncCdf}
\begin{split}
\Bcdf_{\nu}^{m, n, \hor}(s, t) &= \P\{\B_{\nu_{r_i}}^{\hor}(m, n) \ge -s_i \text{ for } i \in [p] \text{ and } \B_{\nu_{d_j}}^{\ver}(m, n) \le t_j \text{ for } j \in [q]\} \\
\Bcdf_{\nu}^{m, n, \ver}(s, t) &= \P\{\B_{\nu_{r_i}}^{\hor}(m, n) \le s_i \text{ for } i \in [p] \text{ and } \B_{\nu_{d_j}}^{\ver}(m, n) \ge -t_j \text{ for } j \in [q]\}
\end{split}
\end{align}
for $s \in \bbR^p$ and $t \in \bbR^q$. 
In light of \eqref{EBuse}, \eqref{EBuseDis} and \eqref{EBuseInd}, for fixed $\nu$ and $(x, y)$,
\begin{align}
\lim_{N \rightarrow \infty} \Bcdf_{\nu}^{\lc Nx \rc, \lc Ny \rc, \square}(s, t) = f_{p, q}^{\Min(x, y), \square}(s, t) \quad \text{ for each } s \in \bbR^p, t \in \bbR^q \text{ and } \square \in \{\hor, \ver\}. \label{EcdfLim}
\end{align}
A natural problem is then the speed of convergence. The next result provides some bounds in this direction. 

\begin{thm}
\label{TBuse}
Let $\delta > 0$ and $\epsilon > 0$. There exist constants $N_0 = N_0(\delta, \epsilon) > 0$ and $C_0 = C_0(\delta) > 0$ such that 
\begin{align*}
|\,\Bcdf_{\nu}^{m, n, \hor}(s, t)-\bcdf_{p, q}^{\Min(m, n), \hor}(s, t)\,| &\le C_0 (1+\one_{\{q \ge1\}}\log q) \bigg\{\frac{\log (m+n)}{(m+n)}\bigg\}^{1/3}\\
|\,\Bcdf_{\nu}^{m, n, \ver}(s, t)-\bcdf_{p, q}^{\Min(m, n), \ver}(s, t)\,| &\le C_0 (1+\one_{\{p \ge1\}}\log p) \bigg\{\frac{\log (m+n)}{(m+n)}\bigg\}^{1/3}
\end{align*}
whenever $(m, n) \in S_\delta \cap \bbZ_{\ge N_0}^2$, $\nu$ is a down-right path contained in $[1, \epsilon (m+n)^{2/3}]^2$, $p = \hash R_{\nu}$, $q = \hash D_{\nu}$, $s \in \bbR^p$ and $t \in \bbR^q$. 
\end{thm}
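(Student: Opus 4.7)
The plan is to compare the bulk increments along $\nu$ with those of a dual stationary LPP at parameter $z = \Min(m,n)$, via a sandwich coupling driven by Proposition \ref{PGeoInStep}. On the probability space of the coupling \eqref{ECplBlkBd}, introduce reverse stationary LPP processes $\widetilde{\G}^{w}_{(m,n)}$ carrying boundary weights of rates $w$ and $1-w$ along the top and right edges of a rectangle anchored at $(m,n)$. By $180^\circ$-rotational symmetry these are distributionally equivalent to the forward stationary LPP $\Gb^{w}$, so Burke's property \eqref{EBurke} transfers: the increments of $\widetilde{\G}^{w}_{(m,n)}$ along any down-right path in $\bbZ_{\ge 0}^2$ are jointly independent with $\Exp(w)$ marginals on horizontal steps and $\Exp(1-w)$ marginals on vertical steps. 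At $w=z$ this realizes the target product distributions $\bcdf^{z, \hor}_{p,q}$ and $\bcdf^{z, \ver}_{p,q}$ exactly.

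The core estimate is a sandwich with perturbed parameters $w^\pm = z \pm h$ for $h = C_1 \{\log(m+n)/(m+n)\}^{1/3}$. A common CDF-inverse coupling of the boundary weights across $w^-, z, w^+$ preserves monotonicity of LPP and yields pointwise ordering between the three stationary increment processes and the bulk increments. On the event that the reverse $w^+$-geodesic from a vertex $(i,j) \in \nu$ to $(m,n)$ avoids the top boundary, the horizontal increment of $\widetilde{\G}^{w^+}_{(m,n)}$ at $(i,j)$ equals the bulk increment $\B^{\hor}_{i,j}(m,n)$; analogously for $w^-$ and the right boundary for the vertical increments. Via the $180^\circ$-rotation equivalence, each such avoidance event corresponds to a vanishing exit-point event for $\Gb^{w^\pm}$ from the origin to $(m-i, n-j)$.

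Since $\nu \subset [1, \epsilon(m+n)^{2/3}]^2$ and $(m,n) \in S_\delta$, a Taylor expansion of $\Min$ gives $|\Min(m-i, n-j) - z| = O((m+n)^{-1/3})$, so $|w^\pm - \Min(m-i, n-j)| \ge h/2$ once $C_1$ is large enough. Proposition \ref{PGeoInStep} then delivers each avoidance probability at least $1 - \exp\{-c_0(m+n)h^3/8\} = 1 - (m+n)^{-c_0 C_1^3/8}$. A union bound over the $O((m+n)^{2/3})$ vertices of $\nu$ keeps the bad event at probability $O((m+n)^{-c_0 C_1^3/8 + 2/3})$, which is at most $\{\log(m+n)/(m+n)\}^{1/3}$ for $C_1$ sufficiently large. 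On the joint good event the bulk CDFs are sandwiched between the product CDFs $\bcdf^{w^\pm, \hor}_{p,q}$, and the residual task is to bound $|\bcdf^{w^\pm, \hor}_{p,q}(s,t) - \bcdf^{z, \hor}_{p,q}(s,t)|$ uniformly in $(s,t) \in \bbR^p \times \bbR^q$.

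The hard part is this final product-CDF comparison. A direct coordinatewise telescoping gives $O((p + q)h)$, which blows up like $(m+n)^{2/3} h$ and is far too large. The remedy is a truncation: for the vertical factors $\prod_j(1 - \exp\{-t_j^+(1-w^\pm)\})$, split at scale $T = C_2 \log q$. The contribution from $t_j > T$ is within $O(q^{-C_2(1-z)})$ of $1$ on both sides and is negligible for $C_2$ large, while the contribution from $t_j \le T$ telescopes to a total perturbation of $O(Th) = O((1 + \log q)h)$. The horizontal factors $\prod_i \exp\{-s_i^- w^\pm\}$ are handled in parallel, exploiting that large $\sum_i s_i^-$ already makes both sides exponentially small in the same amount. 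This yields the $(1 + \log q)\{\log(m+n)/(m+n)\}^{1/3}$ bound for the horizontal CDF, with the vertical CDF treated symmetrically under $z \leftrightarrow 1-z$ and $p \leftrightarrow q$.
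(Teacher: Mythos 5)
Your overall strategy---compare the bulk increment distribution with a stationary one at a shifted parameter, control the error via an ``avoidance'' event bounded by Proposition~\ref{PGeoInStep}, and then estimate the Lipschitz dependence of the product c.d.f.\ on the parameter---is the same as the paper's, but two of the load-bearing steps are not set up correctly.

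First, the claimed ``pointwise ordering between the three stationary increment processes and the bulk increments'' under a quantile coupling of the boundary weights is not something you can take for granted, and it is not the mechanism the paper uses. The paper's Lemma~\ref{LSqz} compares bulk increments with stationary ones not by perturbing the parameter $z$ but by \emph{shifting the base point} of the reverse (northeast) LPP from $(m,n)$ to $(m,n+1)$ or $(m+1,n)$ to $(m+1,n+1)$, and the one-sided inequalities come from the crossing Lemma~\ref{LCros} (which is a statement about base points, not parameters). Relatedly, your claim that on the avoidance event the stationary increment ``equals'' the bulk increment is false: a geodesic from any $(i,j)\in\nu$ to the corner $(m+1,n+1)$ must cross one of the two boundary files, so the increments with base $(m,n)$ and $(m+1,n+1)$ are genuinely different random variables. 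What the paper actually shows (via planarity, see the step justifying \eqref{E26}) is that the \emph{events} $\{\text{increment}\ge -s_i\}$, etc., with bases $(m+1,n+1)$ and $(m,n+1)$ agree on a good event, which is a weaker but sufficient statement. Because planarity propagates the geodesic's choice at the extremal vertex $(k,v)$ of $\nu$ to all vertices of $\nu$ simultaneously, the paper needs only a \emph{single} exit-point bound rather than a union bound over $O((m+n)^{2/3})$ vertices; your union bound can be absorbed into the constants but is both unnecessary and strictly weaker.

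Second, the product-c.d.f.\ comparison is the genuinely delicate part of the proof, and your truncation sketch as written gives $O(q\,T h)$, not $O(T h)$: even after discarding $t_j>T$, telescoping over the remaining $q$ factors using $|a_j-b_j|\le t_j h\le Th$ still produces a factor of $q$. The paper's Lemma~\ref{LcdfLip} gets the clean $(1+\log q)|w-z|$ bound by computing the $z$-derivative of $\bcdf^{z,\hor}_{p,q}$, recognizing that the product structure self-damps (the prefactor $\bcdf^z$ kills the sum when many $t_j$ are small), and maximizing the resulting expression over $(t_j)_j\in\bbR^q_{>0}$ via a change of variables and a critical-point analysis; the worst case is $t_j\approx \log q/(1-z)$ for all $j$. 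Your truncation idea can likely be made to work with more care (essentially by noticing that the factor $\prod_j a_j$ is itself small when many $a_j<1$), but as written the argument does not close.
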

\begin{rem}
\label{RBBS}
Let us compare Theorem \ref{TBuse} with a related previous result from \cite{bala-busa-sepp-21}. Let $\epsilon > 0$, $N \in \bbZ_{>0}$ and $R_N = \bbZ^2_{>0} \cap [0, \epsilon N^{2/3}]^2$. Consider the total variation distance between the joint distributions of the two collections 
\begin{align*}
\{\B^{\square}_{i, j}(N, N): (i, j, \square) \in R_N \times \{\hor, \ver\}\} \quad \text{ and } \quad \{\b^{\square}_{i, j}(1, 1): (i, j, \square) \in R_N \times \{\hor, \ver\}\}. 
\end{align*}
\cite[Theorem 2.1]{bala-busa-sepp-21} shows that this distance is at most $C\epsilon^{3/8}$ for all $\epsilon \le c$  and $N \ge 1$ for some positive constants $c$ and $C$. (Their result covers all directions not just $(1, 1)$). The total variation is stronger as a metric than the c.d.f.\ distance, which is the notion of distance considered in Theorem \ref{TBuse}. Therefore, taking $\epsilon = N^{-2/3}$ above yields  
\begin{align*}
|\P\{\B^{\square}_{1, 1}(N, N) \le x\}-\P\{\b^{\square}_{1, 1}(N, N) \le x\}| \le CN^{-1/4} \quad \text{ for } x \in \bbR \text{ and } \square \in \{\hor, \ver\}.  
\end{align*}
This is weaker than the order $(\log N)^{1/3}N^{-1/3}$ bound provided by Theorem \ref{TBuse}. 
%
%
\end{rem}
\begin{rem}
Our expectation is that the upper bounds of Theorem \ref{TBuse} are optimal up to logarithmic factors but we are unable to verify this at the moment.  We are also unaware of \emph{any} lower bounds for the speed of convergence to the Busemann functions in \emph{any} LPP setting.  It would be nice to have lower bounds complementary to Theorem \ref{TBuse}; we leave this interesting problem as a topic of future works. 
\end{rem}

\subsection{Speed of the distributional convergence of the competition interface direction} \label{SsCif}
We describe one more application of Proposition \ref{PGeoInStep}, of a flavor similar to  Theorem \ref{TBuse}.  This time we bound from above the speed of distributional convergence  of the competition interface to its limiting direction. 

For the definitions in this section, restrict to the full probability event on which the geodesic $\pi^{m, n}$ from $(1, 1)$ to $(m, n)$ is unique for all $m, n \in \bbZ_{>0}$. Partition $\bbZ_{>0}^2 \smallsetminus \{(1, 1)\}$ into the subsets
\begin{align}
\tree^{\hor} &= \{(m, n) \in \bbZ_{>0}^2: (2, 1) \in \pi^{m, n}\} = \{(m, n) \in \bbZ_{>0}^2: \G_{2, 1}(m, n) > \G_{1, 2}(m, n)\} \label{ETh} \\
\tree^{\ver} &= \{(m, n) \in \bbZ_{>0}^2: (1, 2) \in \pi^{m, n}\} = \{(m, n) \in \bbZ_{>0}^2: \G_{2, 1}(m, n) < \G_{1, 2}(m, n)\}. \label{ETv}
\end{align}
As a consequence of planarity and the uniqueness of geodesics,  the sets above  enjoy the following structure:
\begin{align}
(k, l) \in \tree^{\hor} &\text{ implies that } \bbZ_{\ge k} \times [l] \subset \tree^{\hor}, \text{ and } 
\label{EThmon}\\
(k, l) \in \tree^{\ver} &\text{ implies that } [k] \times \bbZ_{\ge l} \subset \tree^{\ver}
\label{ETvmon}
\end{align}
See Figure \ref{Fcif}. 

The \emph{competition interface} 
is a notion of a boundary between $\tree^{\hor}$ and $\tree^{\ver}$ introduced by P.\ A.\ Ferrari and L.\ Pimentel in \cite{ferr-pime-05}. One precise definition of it is as the unique sequence $\cif = (\cif_n)_{n \in \bbZ_{>0}} = (\cif_n^{\hor}, \cif_n^{\ver})_{n \in \bbZ_{>0}}$ in $\bbZ_{>0}^2$ such that, for all $n \in \bbZ_{>0}$, 
\begin{align}
(\cif_n^{\hor}+1, \cif_n^{\ver}) \in \tree^{\hor}, \quad (\cif_n^{\hor}, \cif_n^{\ver}+1) \in \tree^{\ver} \quad \text{ and } \quad \cif_n^{\hor}+\cif_n^{\ver} = n+1. \label{Ecif} 
\end{align} 
The existence and uniqueness of $\cif$ can be seen from properties \eqref{EThmon}--\eqref{ETvmon}. The original definition from \cite{ferr-pime-05} describes the competition interface recursively as follows: 
\begin{align}
\cif_1^{\hor} =1, \quad \cif_n^{\hor} = \cif_{n-1}^{\hor} + \one\{\G(\cif_{n-1}^{\hor}+1, \cif_{n-1}^{\ver}) < \G(\cif_{n-1}^{\hor}, \cif_{n-1}^{\ver}+1)\} \label{Ecifh}\\
\cif_1^{\ver} = 1, \quad \cif_n^{\ver} =  \cif_{n-1}^{\ver} + \one\{\G(\cif_{n-1}^{\hor}+1, \cif_{n-1}^{\ver}) > \G(\cif_{n-1}^{\hor}, \cif_{n-1}^{\ver}+1)\} \label{Ecifv}
\end{align}
for $n \in \bbZ_{>1}$.  This says that $\cif$ always moves in the direction of the minimal increment of $\G$.  The equivalence of \eqref{Ecif} and \eqref{Ecifh}--\eqref{Ecifv} can be verified by induction.
With $\mrR_t = \{(m, n) \in \bbZ_{>0}^2: \G(m, n) \le t\}$, one can view the sets $\tree^{\hor} \cap \mrR_t$ and $\tree^{\ver} \cap \mrR_t$ as the states of two competing growth processes on $\bbZ_{>0}^2$, and the path $\{\varphi_n: n \in \bbZ_{>0} \text{ and } \G(\cif_n) \le t\}$ as the interface between them at time $t \ge 0$.  The study of competing growth began with 
\cite{hagg-pema-98} in the context of first-passage percolation with exponential weights. 

\begin{figure}
\centering
\begin{overpic}[scale=0.8]{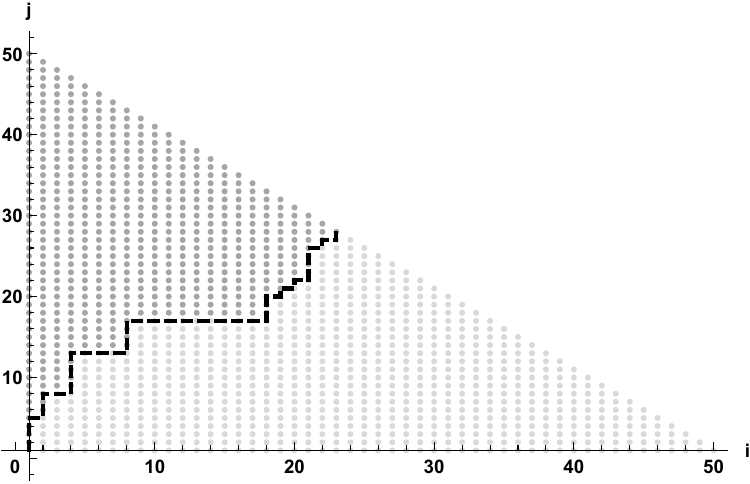}
\end{overpic}
\caption{A simulation of the first $N = 50$ steps of the competition interface (black). The vertices in $\tree^{\hor}$ (lighter gray) and $\tree^{\ver}$ (darker gray) below the antidiagonal $i+j = N+1$ are shown.}
\label{Fcif}
\end{figure}

The distributional limit of $\cif$ computed in \cite[Theorem 1]{ferr-pime-05} (see also \cite{ferr-kipn-95,  moun-guio-05}) can be phrased in our notation as follows: For $x \in [0, 1]$, 
\begin{align}
\lim_{n \rightarrow \infty} \P\{\cif_n^{\hor} \le nx \} = \Min(x, 1-x) = \frac{\sqrt{x}}{\sqrt{x}+\sqrt{1-x}} = \lim_{n \rightarrow \infty} \P\{\cif_n^{\ver} \le nx\}. \label{EcifLimDis}
\end{align}
The next result bounds the speed of convergence in \eqref{EcifLimDis} from above.  Similarly to the situation with Theorem \ref{TBuse},  we predict the upper bound of Theorem \ref{TCif} to be optimal up to logarithms but are unable to produce a matching-order lower bound at this time.  
\begin{thm}
\label{TCif}
Let $\delta > 0$. There exists a constant $C_0 = C_0(\delta) > 0$ such that 
\begin{align*}
|\P\{\cif_n^{\hor} \le nx\}-\Min(x, 1-x)| &\le C_0 \bigg(\frac{\log n}{n}\bigg)^{1/3} 
\end{align*}
for $x \in [\delta, 1-\delta]$ and $n \in \bbZ_{>1}$.
\end{thm}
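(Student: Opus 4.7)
The plan is to reformulate $\{\cif_n^{\hor} \le nx\}$ as a first-step event for the bulk geodesic, then control its probability via Proposition \ref{PGeoInStep}. By the recursive characterization \eqref{Ecif}--\eqref{Ecifv} and the monotonicity \eqref{EThmon}--\eqref{ETvmon}, I would observe that $\{\cif_n^{\hor} \le nx\} = \{(m_n, k_n) \in \tree^{\hor}\}$ where $m_n = \lf nx \rf + 1$ and $k_n = n+1-m_n$. By \eqref{ETh} and \eqref{Einc}, this event coincides with $\{\B^{\hor}_{1,1}(m_n, k_n) < \B^{\ver}_{1,1}(m_n, k_n)\}$. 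Since the Busemann limits are independent exponentials with rates $\Min(x, 1-x)$ and $1-\Min(x, 1-x)$ by \eqref{EBuseDis}--\eqref{EBuseInd}, the standard identity $\P\{\Exp(\lambda) < \Exp(\mu)\} = \lambda/(\lambda+\mu)$ recovers the limit value $\Min(x, 1-x)$ and confirms the distributional limit \eqref{EcifLimDis}.

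For the quantitative bound, the plan is to apply Proposition \ref{PGeoInStep} to the stationary models $\Gb^z$ at the shifted parameters $z_\pm = \Min(m_n, k_n) \pm c(\log n/n)^{1/3}$, with $c = c(\delta)$ chosen large enough that $n^{-c_0 c^3}$ is negligible compared with $(\log n/n)^{1/3}$. The proposition then gives $\P\{\Eh^{z_+, \hor}(m_n, k_n) \ge 1\} \le n^{-c_0 c^3}$ and an analogous bound for $\Ev^{z_-, \ver}$. Via the coupling \eqref{ECplBlkBd}, which places the bulk LPP and every stationary $\Gb^z$ on a common probability space with shared bulk weights, these exit-point bounds will control how the bulk first-step decision at $(1,1)$ relates to the structure of the stationary geodesic from the origin. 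The shift $c(\log n/n)^{1/3}$ in $z$ translates into a matching shift of the characteristic direction, and Burke's property \eqref{EBurke} then delivers the exact limiting comparison probability at that shifted direction.

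The hard part will be converting the local stationary exit-point estimates into a sharp bound on the global bulk comparison event $\{\B^{\hor}_{1, 1}(m_n, k_n) < \B^{\ver}_{1, 1}(m_n, k_n)\}$ while preserving the $(\log n/n)^{1/3}$ rate: a direct Kolmogorov-to-event conversion using Theorem \ref{TBuse} alone (with smoothing and truncation of the indicator $\one_{s<t}$) would lose a square-root factor and yield only $(\log n)^{2/3}/n^{1/6}$. The plan is instead to execute a change-of-measure or tilting argument in the spirit of the exit-point lower bound proofs (cf.\ Remark \ref{RExitLB} and the method of \cite{shen-sepp-19, bala-sepp-aom}), pivoting around the characteristic direction so that the probability of the tilted bulk event can be read off exactly while the Radon--Nikodym correction is controlled by Proposition \ref{PLMId} and the exit-point bound. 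Uniformity in $x \in [\delta, 1-\delta]$ should follow from the uniformity of constants in Proposition \ref{PGeoInStep} within the cone $S_\delta$ via \eqref{ESc}.
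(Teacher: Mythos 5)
Your opening reformulation is correct and is indeed the paper's starting point: $\{\cif_n^{\hor}\le nx\}$ reduces to comparing $\B^{\hor}_{1,1}$ with $\B^{\ver}_{1,1}$ at the vertex on the anti-diagonal, and Proposition \ref{PGeoInStep} with $z=\Min+c(\log n/n)^{1/3}$ is exactly the right tool for the error term. Your diagnosis that a direct smoothing/Kolmogorov conversion via Theorem \ref{TBuse} would lose a factor is also correct. However, the bridging argument you propose is where the proof genuinely diverges from the paper, and as written your plan has a gap.

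The paper does \emph{not} use a change-of-measure or tilting argument here. Instead it sandwiches the bulk comparison probability between an \emph{exactly computable} stationary quantity and an exit-point error, via three ingredients you do not invoke: (i) the crossing inequalities of Lemma \ref{LCros}, which allow the base point of the increment comparison to be moved monotonically (from the bulk corner $(k+1,n-k+1)$ to $(k+2,n-k+1)$ and then to $(k+2,n-k+2)$), yielding one-sided bounds; (ii) a planarity/ordering-of-geodesics argument combined with a union bound, which exchanges the discrepancy incurred by the last base-point shift for precisely the event $\{\Eh^{z,\hor}(k+1,n-k)>0\}$, to which Proposition \ref{PGeoInStep} applies; and (iii) the Burke property \eqref{EBurke}, which makes the stationary comparison probability equal exactly $z$, since $\Gb^z_{1,0}(\cdot)-\Gb^z_{0,1}(\cdot)$ is $\Exp(z)-\Exp(1-z)$ at a fixed endpoint. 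Lemma \ref{LMinEst} then converts $\Min(\lc nx\rc+1,n-\lc nx\rc)$ to $\Min(x,1-x)$ at $O(1/n)$ cost.

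By contrast, your proposed tilt does not obviously deliver the exact stationary probability: the bulk weights are fixed $\Exp(1)$ under the coupling \eqref{ECplBlkBd}, so a Radon--Nikodym correction on the boundary alone does not convert the bulk first-step event at $(1,1)$ into a stationary first-step event (the geodesic from $(1,1)$ never touches the axes). You would still need a monotone coupling of geodesics/increments between bulk and stationary processes, which is exactly what Lemma \ref{LCros} and the planarity argument provide. Without those, the claim that ``Burke's property \eqref{EBurke} then delivers the exact limiting comparison probability at that shifted direction'' is unsupported: Burke's property applies to the stationary process, not to the bulk one, and the crossing lemma is the device that transfers the bound. I would recommend replacing the change-of-measure pivot with the crossing-lemma sandwich and planarity union bound; the rest of your outline (choice of $z$, application of Proposition \ref{PGeoInStep}, uniformity in $S_\delta$) then carries through essentially verbatim.
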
 

We next take an initial step towards generalizing Theorem \ref{TCif} to the exponential LPP with two-sided boundary.  Fix $w > 0$ and $z < 1$. Let $\wh{\cif}^{w, z} = (\wh{\cif}_n^{w, z})_{n \in \bbZ_{\ge 0}} = (\wh{\cif}_n^{w, z, \hor}, \wh{\cif}_n^{w, z, \ver})_{n \in \bbZ_{\ge 0}}$ denote the competition interface associated with the $\Gb^{w, z}$-process.  More precisely, let 
\begin{align}
\label{Etreebd}
\begin{split}
\wh{\sT}^{w, z, \hor} &= \{(m, n) \in \bbZ_{\ge 0}^2: \Gb_{1, 0}^{w, z}(m, n) > \Gb_{0, 1}^{w, z}(m, n)\}, \\
\wh{\sT}^{w, z, \ver} &= \{(m, n) \in \bbZ_{\ge 0}^2: \Gb_{1, 0}^{w, z}(m, n) < \Gb_{0, 1}^{w, z}(m, n)\}
\end{split}
\end{align} 
The geodesic $\pi_{0, 0}^{w, z}(m, n)$ is a.s.\ unique for each $(m, n) \in \bbZ_{\ge 0}^2$. On this event of probability one, define $\wh{\cif}^{w, z}$ as the unique sequence in $\bbZ^2_{\ge 0}$ such that 
\begin{align}
\label{Ecifbd}
\begin{split}
(\wh{\cif}_n^{w, z, \hor} + 1, \wh{\cif}_n^{w, z, \ver}) &\in \wh{\sT}^{w, z, \hor}, \quad (\wh{\cif}_n^{w, z, \hor}, \wh{\cif}_n^{w, z, \ver}+1) \in \wh{\sT}^{w, z, \ver}, \quad \text{ and } \\
&\wh{\cif}_n^{w, z, \hor} + \wh{\cif}_n^{w, z, \ver} = n \quad \text{ for } n \in \bbZ_{\ge 0}. 
\end{split}
\end{align}
Definition \eqref{Ecifbd} coincides with \eqref{Ecif} for $w = 1$ and $z = 0$ in the sense that the sequences  $(\wh{\cif}_n^{1, 0})_{n \in \bbZ_{\ge 0}}$ and $(\cif_{n+1}-(1, 1))_{n \in \bbZ_{\ge 0}}$ are equal in distribution. 

For any $w \in (0, 1]$ and $z \in [0, 1)$, the limit distribution of $\wh{\cif}^{w, z}$ has also been  computed explicitly \cite[Theorem 2]{ferr-mart-pime-09}, and is given by 
\begin{align}
\label{EcifLimDis2}
\begin{split}
\lim_{n \to \infty}\P\{\wh{\cif}^{w, z, \hor}_n \le n x\} &= \frac{\one\{w > z\}}{w-z}\left(\min \{w, \Min(x, 1-x)\}-\min \{z, \Min(x, 1-x)\}\right) \\
&+\one\{w \le z\}\one\left\{\Min(x, 1-x) > \rho^{w, z}\right\} \quad \text{ for } x \in [0, 1]
\end{split}
\end{align}
except at the point of discontinuity $\Min(x, 1-x) = \rho^{w, z} = \Min(wz, (1-w)(1-z))$ in the case $w \le z$.  

The next result provides matching-order upper and lower speed bounds for the distributional convergence of $\wh{\cif}^z$ in the equilibrium case $w = z$. The main point is to contrast the cubic-exponential decay below with the cube-root decay in Theorem \ref{TCif}.  
It would be nice to have optimal-order speed bounds for the full range of the $w$ and $z$ parameters illuminating the transition in the speed of convergence from cube-root to cubic-exponential decay.  We leave this to future works. 

\begin{thm}
\label{TCifSt}
Fix $\delta > 0$ and $\epsilon > 0$. The following statements hold for all $x \in [\delta, 1-\delta]$ and $z \in (0, 1)$ subject to the indicated assumptions. 
\begin{enumerate}[\normalfont (a)]
\item There exist positive constants $c_0 = c_0(\delta)$ and $A_0 = A_0(\delta)$ such that 
\begin{align*}
|\P\{\wh{\cif}_n^{z, \hor} \le n x\}-\one\{\Min(x, 1-x) > z\}| \le \exp\{-c_0 n |\Min(x, 1-x)-z|^3\}
\end{align*} 
whenever $n \in \bbZ_{>0}$ and $|\Min(x, 1-x)-z| \ge A_0n^{-1}$. 
\item There exist positive constants $\epsilon_0 = \epsilon_0(\delta)$,  
$C_0 = C_0(\delta, \epsilon)$ and $N_0 = N_0(\delta, \epsilon)$ such that 
\begin{align*}
|\P\{\wh{\cif}_n^{z, \hor} \le n x\}-\one\{\Min(x, 1-x) > z\}| \ge \exp\{-C_0 n |\Min(x, 1-x)-z|^3\}
\end{align*}
whenever $n \in \bbZ_{\ge N_0}$ and $|\Min(x, 1-x)-z| \in [\epsilon n^{-1/3},  \epsilon_0]$. 
\end{enumerate}
\end{thm}

\section{Proofs of the exit point bounds}\label{SPf}

We begin to prove our main results.  This part is divided into Sections \ref{SsPfInStepUB}-\ref{SsPfPathFluc} devoted to the proofs of Proposition \ref{PGeoInStep},  Theorem \ref{TExitUB},  Proposition \ref{PGeoInStepLB},  and Theorems \ref{TExitLB} and \ref{TPathFluc},  respectively.  The proofs of Theorems \ref{TBuse},  \ref{TCif} and \ref{TCifSt} will appear in Section \ref{SPfApp}.  

\subsection{Proof of the upper bounds for the first step probabilities}
\label{SsPfInStepUB}

For the proof of Proposition \ref{PGeoInStep}, let us first record a suitable Taylor approximation of the l.m.g.f.\ in \eqref{ELbd}. From definitions \eqref{EM} and \eqref{ELbd}, one has the identity 
\begin{align}
\Lb^{w, z}(x, y) = \int_{z}^w \M^t(x, y) \dd t \quad \text{ for } w, z \in (0, 1) \text{ with } w \ge z \text{ and } x, y \in \bbR_{\ge 0}. \label{ELbdMId}
\end{align}
Define the function  
\begin{align}
\curv(x, y) = \bigg(\frac{\Shp(x, y)}{\Min(x, y) (1-\Min(x, y))}\bigg)^{1/3} = \frac{(\sqrt{x}+\sqrt{y}\hspace{1pt})^{4/3}}{x^{1/6}y^{1/6}} \quad \text{ for } x, y \in \bbR_{>0}.  
 \label{Ecurv}
\end{align}
This is connected to \eqref{EM} through 
\begin{align}
\curv(x, y)^3 = \frac{1}{2}\partial_z^2\bigg|_{z = \Min(x, y)} \bigg\{\M^{z}(x, y)\bigg\} \quad \text{ for } x, y \in \bbR_{>0}. \label{Ecurv2}
\end{align}
 
\begin{lem}
\label{LLIBdEst}
Let $x, y \in \bbR_{>0}$ and $w, z \in (0, 1)$ with $w \ge z$. Abbreviate $\Shp = \Shp(x, y)$, $\Min = \Min(x, y)$ and $\curv = \curv(x, y)$. 
Fix $\delta > 0$ and $\epsilon > 0$. There exists a constant $C_0 = C_0(\delta, \epsilon) > 0$ such that 
\begin{align*}
\bigg|\Lb^{w, z}(x, y)-(w-z) \Shp- \dfrac{\curv^3}{3} \{(w-\Min)^3-(z-\Min)^3\}\bigg| &\le C_0 (x+y)\{(w-\Min)^4 + (z-\Min)^4\} \end{align*}
whenever $(x, y) \in S_\delta$ and $w, z \in (\epsilon, 1-\epsilon)$.
\end{lem}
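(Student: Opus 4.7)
The plan is to start from the integral representation \eqref{ELbdMId}, which gives $\Lb^{w,z}(x,y) = \int_z^w \M^t(x,y)\,dt$, and Taylor expand the integrand about the unique minimizer $t = \Min(x,y)$. By definition of the shape function, $\M^{\Min} = \Shp$; by minimality, $\partial_t|_{t=\Min}\M^t = 0$; and by \eqref{Ecurv2}, $\tfrac{1}{2}\partial_t^2|_{t=\Min}\M^t = \curv^3$. Hence Taylor's theorem with Lagrange remainder gives
$$\M^t(x,y) = \Shp + \curv^3(t-\Min)^2 + \tfrac{1}{6}(\partial_s^3\M^s)|_{s=\Min}(t-\Min)^3 + \tfrac{1}{24}(\partial_s^4\M^s)|_{s=\xi_t}(t-\Min)^4$$
for some $\xi_t$ between $t$ and $\Min$.

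Next I would integrate from $z$ to $w$: the constant and quadratic pieces contribute the announced main terms $(w-z)\Shp$ and $\tfrac{\curv^3}{3}[(w-\Min)^3 - (z-\Min)^3]$, while the cubic piece yields $\tfrac{1}{24}(\partial_s^3\M^s)|_{s=\Min}[(w-\Min)^4 - (z-\Min)^4]$ and the remainder yields $\tfrac{1}{24}\int_z^w (\partial_s^4\M^s)|_{s=\xi_t}(t-\Min)^4\,dt$; these last two form the error to bound. Direct differentiation of \eqref{EM} gives $\partial_s^3 \M^s = -6xs^{-4} + 6y(1-s)^{-4}$ and $\partial_s^4 \M^s = 24xs^{-5} + 24y(1-s)^{-5}$. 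Since $(x,y) \in S_\delta$ forces $\Min \in [c_\delta, 1-c_\delta]$ for some $c_\delta > 0$ depending only on $\delta$, and $w,z \in (\epsilon, 1-\epsilon)$, the point $\xi_t$ lies in an interval bounded away from $0$ and $1$ by a constant depending only on $(\delta, \epsilon)$. This yields uniform bounds $|(\partial_s^3\M^s)|_{s=\Min}| \le C(x+y)$ and $|(\partial_s^4\M^s)(\xi_t)| \le C(x+y)$ for $t \in [z,w]$, with $C = C(\delta, \epsilon)$.

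To finish, the cubic contribution is immediately bounded by $C(x+y)[(w-\Min)^4 + (z-\Min)^4]$ via the trivial estimate $|a^4 - b^4| \le a^4 + b^4$ for $a, b \ge 0$ applied to $|w-\Min|$ and $|z-\Min|$. For the quartic remainder, one computes $\int_z^w (t-\Min)^4\,dt = \tfrac{1}{5}[(w-\Min)^5 - (z-\Min)^5]$ and uses $|w-\Min|, |z-\Min| \le 1$ (since $w, z, \Min \in (0,1)$) to dominate the fifth powers by fourth powers. The only mildly delicate point — really just bookkeeping rather than any conceptual difficulty — is a brief case check on the three possible orderings of $z, w, \Min$ compatible with $w \ge z$ to confirm that the integral of $(t-\Min)^4$ is always bounded by $\tfrac{1}{5}[(w-\Min)^4 + (z-\Min)^4]$, giving the desired symmetric error form.
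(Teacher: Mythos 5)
Your proof is correct and follows the same basic strategy as the paper's: start from the integral representation $\Lb^{w,z}(x,y)=\int_z^w \M^t(x,y)\,\dd t$ from \eqref{ELbdMId} and expand the integrand about $t=\Min$. The only difference is in how the expansion is packaged. The paper simply integrates the bound of Lemma \ref{LMShpId}, which states $|\M^t-\Shp-\curv^3(t-\Min)^2|\le C_0(x+y)|t-\Min|^3$; since $\int_z^w|t-\Min|^3\,\dd t\le\tfrac14[(w-\Min)^4+(z-\Min)^4]$ for any ordering of $z,w,\Min$ with $w\ge z$, this immediately gives the claim, with no need to carry the cubic term of the Taylor expansion explicitly. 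You instead re-derive an equivalent estimate from scratch via Taylor's theorem with Lagrange remainder, keeping the $(t-\Min)^3$ term explicit and controlling it together with the quartic remainder; this works fine (and your bookkeeping, including the fifth-to-fourth-power domination, is sound), but it is doing more work than necessary given that Lemma \ref{LMShpId} is already available, and it loses the cleanness of that lemma's exact algebraic error term $(z-\Min)^3\Shp\{\frac{\Min+z-1}{z(1-z)\Min(1-\Min)}\}$.
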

\begin{proof}
This follows from \eqref{ELbdMId} and Lemma \ref{LMShpId}. 
\end{proof}

\begin{proof}[Proof of Proposition \ref{PGeoInStep}]
By symmetry, it suffices to prove (a). Assume $z > \Min$ and write $\lambda = (z-\Min)/4 > 0$. In the computations below, the arguments of the LPP values and various functions are fixed at the vertex $(m, n)$ and omitted. Using definitions \eqref{Elppbd} and \eqref{EExhv}, monotonicity, the Cauchy-Schwarz inequality and Proposition \ref{PLMId}, one arrives at 
\begin{align}
\label{E-2}
\begin{split}
\P\{\Ev^{z, \hor} > 0\} &= \P\{\Gb_{1, 0}^z \ge \Gb_{0, 1}^z\} \le \P\{\Gb_{1, 0}^{z-2\lambda} \ge \Gb_{0, 1}^{z}\} \\
&= \E[\exp\{\lambda\Gb_{1, 0}^{z-2\lambda}-\lambda\Gb^{z-2\lambda, z}\}\one\{\Gb_{1, 0}^{z-2\lambda} \ge \Gb_{0, 1}^{z}\}] \\
&\le \E[\exp\{\lambda \Gb_{1, 0}^{z-2\lambda}-\lambda \Gb^{z-2\lambda, z}\}] \\ 
&\le \E[\exp\{2\lambda \Gb_{1, 0}^{z-2\lambda}\}]^{1/2}\,\E[\exp\{-2\lambda \Gb^{z-2\lambda, z}\}]^{1/2} \\
&= \E[\exp\{2\lambda \Gb_{1, 0}^{z-2\lambda}\}]^{1/2}\exp\bigg\{\frac{1}{2}\Lb^{z-2\lambda, z}\bigg\} \\
&\le \E[\exp\{2\lambda \Gb_{}^{z-2\lambda, z-4\lambda}\}]^{1/2}\exp\bigg\{\frac{1}{2}\Lb^{z-2\lambda, z}\bigg\}\\
&= \exp\bigg\{\frac{1}{2}\Lb^{z-2\lambda, z-4\lambda} + \frac{1}{2}\Lb^{z-2\lambda, z}\bigg\} \\
&= \exp\bigg\{\frac{1}{2}\Lb^{z-2\lambda, z-4\lambda} - \frac{1}{2}\Lb^{z, z-2\lambda}\bigg\}. 
\end{split}
\end{align}
The minus sign in the final step comes from switching the order of the parameters $z-2\lambda$ and $z$ in the superscript, see definition \eqref{ELbd}. 

Since $(m, n) \in S_\delta$, the last exponent in \eqref{E-2} can be bounded by means of Lemmas \ref{LLIBdEst} and \ref{LShpMinBnd} as follows (see Figure \ref{FPrGeoUB}): For some constants $C_0, c_0, \epsilon > 0$ depending only on $\delta$, 
\begin{align*}
\Lb^{z-2\lambda, z-4\lambda} - \Lb^{z, z-2\lambda} &= \Lb^{\Min+2\lambda, \Min} - \Lb^{\Min+4\lambda, \Min+2\lambda} \\
&\le \bigg(2\lambda \Shp + \frac{8\lambda^3\curv^3}{3}\bigg) - \bigg(2\lambda \Shp + \frac{1}{3}( 64\lambda^3-8\lambda^3)\curv^3\bigg) + C_0 (m+n) \lambda^4\\
& = -16 \lambda^3 \curv^3+ C_0 (m+n) \lambda^4 \\
&\le -2c_0 (m+n)\lambda^3  + C_0(m+n) \lambda^4 \\ 
&\le -c_0(m+n)\lambda^3 
\end{align*}
provided that $\lambda \le \epsilon$. This completes the proof in the case $\lambda \le \epsilon$. 

When $\lambda \in (\epsilon, 1)$, the claimed bound also holds after adjusting $c_0$ by a constant factor dependent only on $\epsilon = \epsilon(\delta)$. 
\end{proof}

\begin{figure}
\centering
\begin{overpic}[scale=0.6]{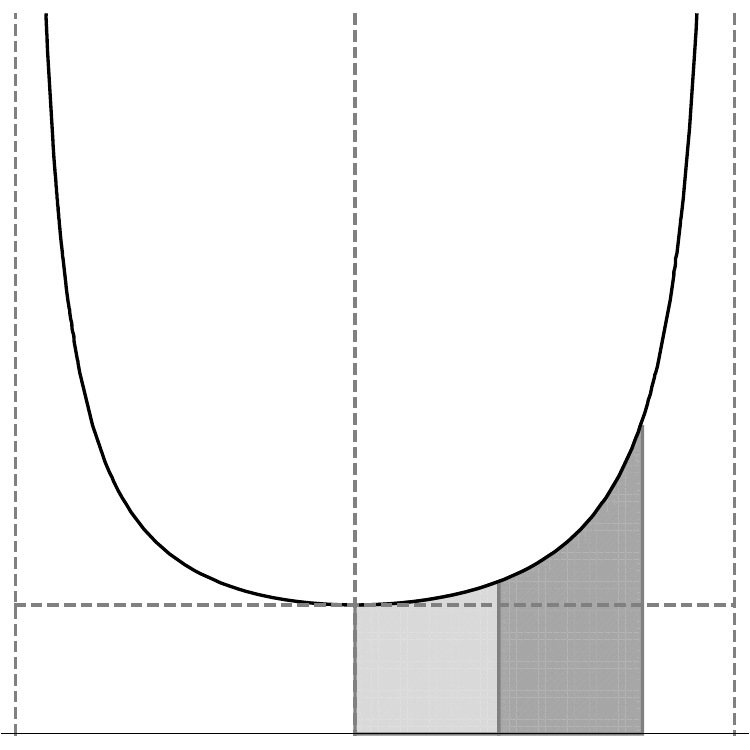}
\put (-5, 18){$\displaystyle \Shp$}
\put (45, -3){$\displaystyle \Min$}
\put (85, -3){$\displaystyle z$}
\put (80, 60){$\displaystyle \M^{t}$}
\put (0, -3){$0$}
\put (97, -3){$1$}
\put (48, 0){{\color{gray}$\displaystyle \underbrace{\hspace{0.55in}}$}}
\put (67, 0){{\color{gray}$\displaystyle \underbrace{\hspace{0.55in}}$}}
\put (53, -8){$2\lambda$}
\put (73, -8){$2\lambda$}
\put (49, 10){$\Lb^{\Min+2\lambda, \Min}$}
\put (69, 10){$\Lb^{\Min+4\lambda, \Min+2\lambda}$}
\end{overpic}
\vspace{0.2in}
\caption{Illustrates the last display in the proof of Proposition \ref{PGeoInStep}. The bound $\Lb^{\Min+2\lambda \Min}- \Lb^{\Min+4\lambda, \Min+2\lambda}$ measures the difference of the areas of the regions shaded in gray and darker gray. This quantity is of order $(z-\Min)^3$ because, for fixed $x, y > 0$, the curve $t \mapsto \M^t(x, y)$ (black) is approximately a parabola with the vertex at $(\Min(x, y), \Shp(x, y))$. 
}
\label{FPrGeoUB}
\end{figure}

\begin{rem}
\label{ROptConst}
We sketch how \eqref{EExitConst} can be obtained by modifying the preceding proof. Choosing $\epsilon > 0$ small ensures that $z = \Min + s/\curv \in (0, 1)$ in view of Lemma \ref{LShpMinBnd}(c). Let $p, q > 1$ with $1/p + 1/q = 1$. Using H\"{o}lder's inequality instead of the Cauchy-Schwarz in display \eqref{E-2} leads to the bound 
\begin{align}
\log \P\{\Eh^{z, \hor} > 0\} \le -\frac{1}{p}\Lb^{z-q\lambda, z-(p+q)\lambda} + \frac{1}{q}\Lb^{z, z-q\lambda}, \label{E-3}
\end{align}
where $\lambda \ge 0$ is a parameter to be chosen comparable to $z-\Min$. It can be seen from Lemma \ref{LLIBdEst} and some algebra that the right-hand side of \eqref{E-3} is at most 
\begin{align*}
\frac{\curv^3}{3}[3p^2\lambda^2(z-\Min) - p^3 \lambda^3 - 3p^2 q \lambda^2 (z-\Min) + p^3 q^2 \lambda^3] + \frac{C_0s^4}{(m+n)^{1/3}}. 
\end{align*}
Then calculus shows that the leading order term above attains its optimal value $-s^3/6$ when $p = 3/2, q = 3$ and $\lambda = (z-\Min)/3$. 
\end{rem}


\subsection{Proof of the main upper bound}

In preparation for the proof of Theorem \ref{TExitUB}, the next lemma states a simple geometric property of the exit points.  See Figure \ref{FExitStr} for an illustration.  The top right picture in particular exemplifies the case of equality in part (b).  The lemma uses the notation for the L-shaped down-right paths from Section \ref{SsEx}.  


\begin{lem}
\label{LExitStr}
Let $\pi$ be an up-right path from $(p, q)$ to $(m, n)$. Let $\nu$ be a down-right path  such that $\nu_1 \in \bbZ_{\ge p} \times \{n\}$ and $\nu_{\ell(\nu)} \in \{m\} \times \bbZ_{\ge q}$. Pick some $b \in [\ell(\nu)]$ and write $(i_0, j_0) = \nu_{b}$. 
Let $r \in \bbZ_{\ge 0}$. Then the following statements hold for each $\square \in \{+, -\}$. 
\begin{enumerate}[\normalfont (a)]
\item If $Z^\square_{\pi, \nu, i_0, j_0} > r$ then $b\square r \in [\ell(\nu)]$ and $Z^\square_{\pi, L, i_r, j_r} > 0$ where $(i_r, j_r) = \nu_{b\square r}$ and $L = L_{i_r, j_r}^{m, n}$. 
\item If $b\square r \in [\ell(\nu)]$ and $Z^\square_{\pi, L, i_r, j_r} > 0$ where $i_r, j_r$ and $L$ are as in {\rm(a}{\rm)} then $Z^\square_{\pi, \nu, i_0, j_0} \ge r$. 
\end{enumerate}
\end{lem}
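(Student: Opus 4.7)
My plan is to first prove a simple geometric characterization that reduces both parts of the lemma to tracking which ``quadrant'' of the base vertex $(i_r, j_r)$ the path $\pi$ visits. Define
\begin{align*}
Q^+ = \{(m', n') \in \bbZ^2 : m' > i_r,\ n' \le j_r\}, \qquad Q^- = \{(m', n') \in \bbZ^2 : m' \le i_r,\ n' > j_r\}.
\end{align*}
I will establish $Z^\square_{\pi, L, i_r, j_r} > 0 \iff \pi \cap Q^\square \ne \emptyset$ for both signs. The forward implication is immediate since the last $L$-vertex of $\pi$ sits on the appropriate arm past the corner, which lies in $Q^\square$. For the converse in the $+$ case, an up-right path visiting $Q^+$ must reach row $j_r$ at some column strictly exceeding $i_r$, placing its final intersection with $L$ on the horizontal arm past the corner; the $Q^-$ case is symmetric.

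For part (a) with $\square = +$, set $k_0 = Z_{\pi, \nu}$ and let $l_0$ be its $\pi$-index. The index bound $b + r \in [\ell(\nu)]$ follows immediately from $b + r < k_0 \le \ell(\nu)$. Since $\nu$ is down-right, $\nu_{k_0}^x \ge i_r$ and $\nu_{k_0}^y \le j_r$. If $\nu_{k_0}^x > i_r$ then $\nu_{k_0} \in \pi \cap Q^+$, and the characterization concludes the case. The delicate remaining case is $\nu_{k_0} = (i_r, j_{k_0})$ with $j_{k_0} < j_r$; since the $x$-coordinate of $\nu$ is constant on indices $b+r, \ldots, k_0$, this segment consists of pure down steps, so $\nu_{k_0-1} = (i_r, j_{k_0}+1)$. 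The maximality in $l_0 = \max\{l : \pi_l \in \nu\}$ then rules out $\pi_{l_0+1} = \nu_{k_0-1}$, forcing $\pi_{l_0+1} = (i_r+1, j_{k_0})$, which lies in $Q^+$.

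For part (b) with $\square = +$, I argue by contradiction. The characterization provides a vertex $(m', n') \in \pi \cap Q^+$. Assume $k_0 = Z_{\pi, \nu} < b + r$; then $\nu_{k_0}^x \le i_r$ and $\nu_{k_0}^y \ge j_r$. Monotonicity of $\pi$'s $x$-coordinate together with $m' > i_r \ge \nu_{k_0}^x$ places the visit to $(m', n')$ strictly after $\pi_{l_0} = \nu_{k_0}$, and monotonicity of the $y$-coordinate together with $n' \le j_r \le \nu_{k_0}^y$ forces $\nu_{k_0}^y = j_r = n'$. Hence $\pi$ traverses row $j_r$ by pure right steps from $\nu_{k_0}$ to $(m', j_r)$, necessarily passing through $(i_r, j_r) = \nu_{b+r}$ at a $\pi$-index exceeding $l_0$, contradicting the maximality of $l_0$. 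The $\square = -$ statements follow from the evident symmetry exchanging the two coordinate axes. The principal obstacle is the boundary case in part (a) where $\nu_{k_0}$ sits directly below the base vertex along $\nu$; it is resolved by carefully exploiting the defining maximality of $l_0$ as above.
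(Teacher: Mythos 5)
Your proof is correct (up to one small patch in part (b)) and takes a mildly different route from the paper's. You isolate the quadrant characterization $Z^\square_{\pi, L, i_r, j_r} > 0 \iff \pi \cap Q^\square \ne \emptyset$ as a standalone fact and apply it uniformly in both parts; the paper argues more directly without naming such a lemma. In part (a) the two arguments are close in spirit, both locating a vertex of $\pi$ strictly to the right of column $i_r$ in row $j_r$ or below, but the paper finds it as $\pi$'s crossing of row $j_r$ after the exit from $\nu$, while you find it at the exit vertex $\nu_{Z_{\pi, \nu}}$ itself or at the very next step of $\pi$. In part (b) you argue by contradiction through the exit index, while the paper argues directly, showing $\pi$ visits some $\nu_{b+t}$ with $t \ge r$ and asserting $\nu_k \not\in \pi$ for $k \in [b+r-1]$. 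Your contradiction route is arguably the cleaner one here: that blanket claim in the paper is a bit too strong (a vertex $\nu_k$ lying in row $j_r$ at a column strictly below $i_r$ can perfectly well lie on $\pi$), although the paper's conclusion $Z_{\pi, \nu} \ge b+r$ nonetheless holds.

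The patch: in part (b) you conclude that $\pi$ passes through $(i_r, j_r) = \nu_{b+r}$ at a $\pi$-index exceeding $l_0$, but this requires the first coordinate of $\nu_{k_0}$ to be strictly less than $i_r$, whereas you have only established that it is at most $i_r$. The strict inequality does hold under your contradiction hypothesis: you have shown the second coordinate of $\nu_{k_0}$ equals $j_r$, so equality in the first coordinate would force $\nu_{k_0} = \nu_{b+r}$, which is impossible when $k_0 < b+r$ because $k \mapsto \nu_k$ is injective along a down-right path (its coordinate difference is strictly increasing in $k$). State this to close the argument; otherwise, in the excluded case, the vertex $(i_r,j_r)$ would be $\pi_{l_0}$ itself and no contradiction with the maximality of $l_0$ is obtained.
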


\begin{proof}
By symmetry, it suffices to verify the claims for $\square = +$. 

To obtain (a), assume that $Z^+_{\pi, \nu, i_0, j_0} > r$. By definition \eqref{EExitPts}, $Z_{\pi, \nu} = b+s$ for some $s > r$ with $b+s \in [\ell(\nu)]$. Since $\nu$ is down-right, writing $(i_s, j_s) = \nu_{b+s}$, one has $i_r \le i_s \le m$ and $j_s \le j_r \le n$. Therefore, and also because $\pi$ is up-right and contains $\{(i_s, j_s), (m, n)\}$, $\pi$ also contains $(u, j_r)$ for some $u \in \bbZ \cap [i_s, m]$. If $u = i_r$ then necessarily $i_r = i_s$ and the vertical segment $\{(i_r, j): j \in \bbZ \cap [j_s, j_r]\} \subset \pi \cap \nu$ but this contradicts the present assumption that $\pi$ exits $\nu$ at $(i_s, j_s) \neq (i_r, j_r)$. Hence, $u > i_r$. Then, by definition \eqref{EExitPts}, $Z^+_{\pi, L, i_r, j_r} > 0$ as claimed. 

Now (b). Assume that $b+r \in [\ell(\nu)]$ and $Z^{+}_{\pi, L, i_r, j_r} > 0$. Then $(v, j_r) \in \pi$ for some $v \in \bbZ \cap [i_r+1, m]$ by \eqref{EExitPts}. Because $\pi$ is up-right path with $\{(p, q), (v, j_r)\} \subset \pi$ while $\nu$ is a down-right path with $(i_r, j_r) \in \nu$ and $\nu_{\ell(\nu)} \in \{m\} \times \bbZ$, from the inequalities $p \le i_r \le v \le m$, 
one concludes that $\nu_{b+t} = (i_t, j_t) \in \pi$ for some $t \ge r$ with $b+t \in [\ell(\nu)]$. Due to the strict inequality $v > i_r$ and that $\pi$ is up-right with $(v, j_r) \in \pi$, one has $\pi \cap (\{i_r\} \times \bbZ_{>j_r}) = \emptyset$. Hence, $\nu_k \not \in \pi$ for $k \in [b+r-1]$ because $\nu$ is down-right and $\nu_{b+r} = (i_r, j_r)$. Since also $(i_t, j_t) \in \pi \cap \nu$, it follows from definition \eqref{EExitDef} that $Z_{\pi, \nu} \ge b+r$, and therefore $Z^+_{\pi, \nu, i_0, j_0} \ge r$ by \eqref{EExitPts}.
\end{proof}

\begin{figure}
\centering
\begin{overpic}[scale=0.5]{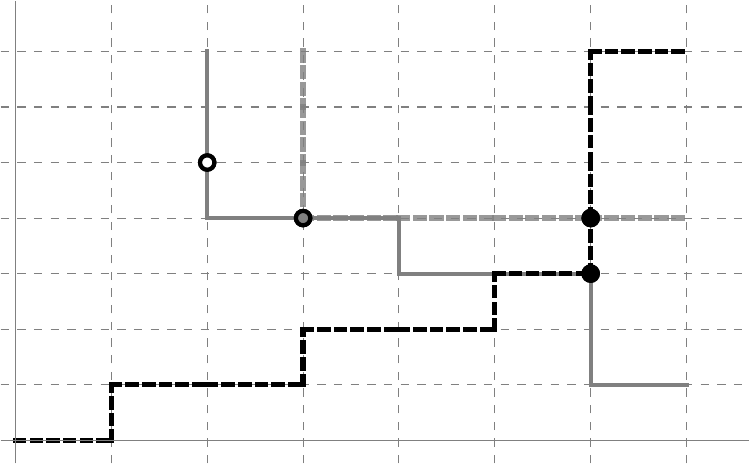}
\put (26, 57){$\nu$}
\put (38, 57){$L$}
\put (30, 12){$\pi$}
\put (20, 40){$\nu_b$}
\put (35, 28){$\nu_{b+r}$}
\put (82, 25){$\displaystyle \nu_{Z_{\pi, \nu}}$}
\put (82, 36){$\displaystyle L_{Z_{\pi, L}}$}
\end{overpic}
\quad 
\begin{overpic}[scale=0.5]{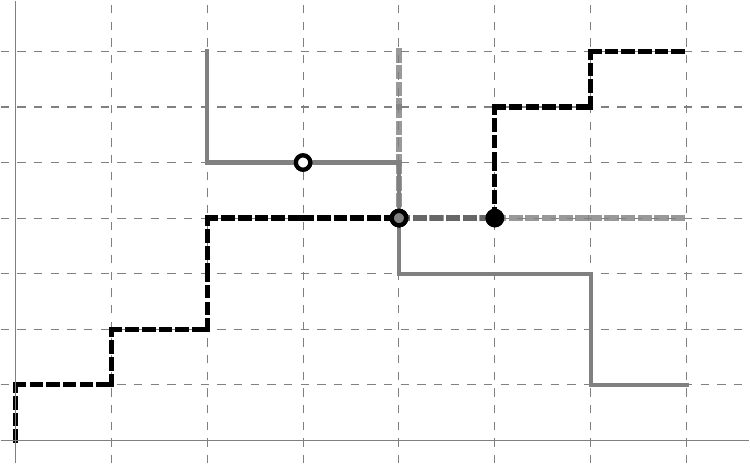}
\put (26, 57){$\nu$}
\put (51, 57){$L$}
\put (20, 20){$\pi$}
\put (37, 44){$\nu_b$}
\put (41, 28){$\nu_{b+r}$}
\put (55, 28){$\displaystyle \nu_{Z_{\pi, \nu}}$}
\put (67, 36){$\displaystyle L_{Z_{\pi, L}}$}
\end{overpic}
\quad 
\begin{overpic}[scale=0.5]{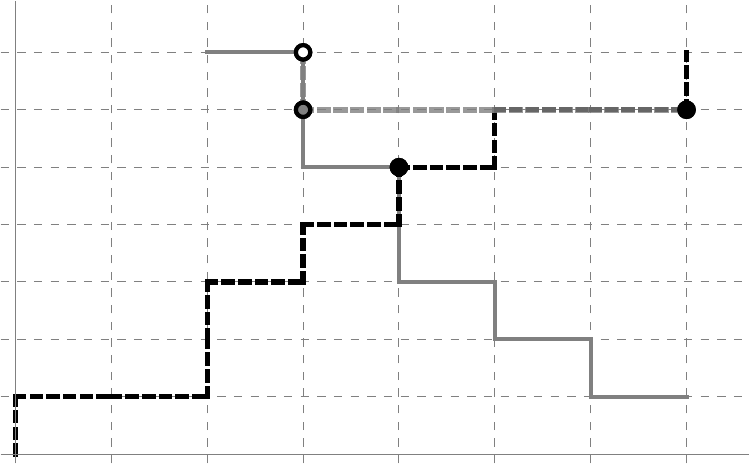}
\put (26, 57){$\nu$}
\put (39, 57){$L$}
\put (22, 20){$\pi$}
\put (43, 53){$\nu_b$}
\put (27, 47){$\nu_{b+r}$}
\put (55, 35){$\displaystyle \nu_{Z_{\pi, \nu}}$}
\put (85, 42){$\displaystyle L_{Z_{\pi, L}}$}
\end{overpic}
\caption{Illustrates Lemma \ref{LExitStr} for the case $\square = +$ in three situations. Paths $\pi$ (dashed black), $\nu$ (gray) and $L$ (dashed gray), and the vertices $\nu_b = (i_0, j_0)$ (white dot) and $\nu_{b+r} = (i_r, j_r)$ (gray dot) are shown. The last vertices $\nu_{Z_{\pi, \nu}}$ and $L_{Z_{\pi, L}}$ that $\pi$ visits on $\nu$ and $L$, respectively, are also marked (with black dots unless already indicated). Informally, part (a) of the lemma says that if $\nu_{Z_{\pi, \nu}}$ is strictly after $\nu_{b+r}$ along $\nu$ then $\pi$ exits $L$ from its horizontal segment. Part (b) says that if $\pi$ exits $L$ from its horizontal segment then $\nu_{Z_{\pi, \nu}}$ is strictly after or the same as $\nu_{b+r}$. On top left: $Z_{\pi, \nu, i_0, j_0}^+ = 6 > r = 2$ and $Z_{\pi, L, i_r, j_r}^+ = 4 > 0$. On top right: $Z_{\pi, \nu, i_0, j_0}^+ = 2 = r$ and $Z_{\pi, L, i_r, j_r}^+ = 1 > 0$. On the bottom: $Z_{\pi, \nu, i_0, j_0}^+ = 3 > r = 1$ and $Z_{\pi, L, i_r, j_r}^+ = 4 > 0$. These statements are consistent with the claims of the lemma. In particular, the top right picture demonstrates the case of equality in part (b). }
\label{FExitStr}
\end{figure}

Next a monotonicity property for the geodesic exit points defined at \eqref{EEx}. 
\begin{lem}
\label{LExMon2}
The exit points in \eqref{EEx} satisfy the following properties for each $(m, n) \in V_{\nu}$. 
\begin{enumerate}[\normalfont (a)]
\item $\Eh^{w, z, +}_{\nu, i_0, j_0}(m, n)$ is nonincreasing in $w$ and $z$. 
\item $\Eh^{w, z, -}_{\nu, i_0, j_0}(m, n)$ is nondecreasing in $w$ and $z$. 
\end{enumerate}
\end{lem}
\begin{proof}
Note from \eqref{ECplBlkBd} that $\wb^{w}(i,  0)$ is decreasing in $w$,  and $\wb^{z}(0, j)$ is increasing in $z$ for each $i, j \in \bbZ_{>0}$.  The claimed monotonicities are then special cases of Lemma \ref{LExMon}. 
\end{proof}

The following distributional identity connects the exit points from L-shaped paths and to those from the axes.  A statement to the same effect appeared previously in \cite[Lemma A.2]{sepp-cgm-18}.  The lemma can also be derived as a corollary of Proposition \ref{PExitDistId}. 
\begin{lem}
\label{LExitDistId}
Let $m, n, p, q \in \bbZ_{\ge 0}$, 
$L = L_{p, q}^{m+p, n+q}$ and $z \in (0, 1)$. Then 
\begin{align*}
\Eh^{z, +}_{L, p, q}(m+p, n+q) \stackrel{\text{\rm{dist.}}}{=} \Eh^{z, \hor}(m, n) \quad \text{ and } \quad \Eh^{z, -}_{L, p, q}(m+p, n+q) \stackrel{\text{\rm{dist.}}}{=} \Eh^{z, \ver}(m, n). 
\end{align*}
\end{lem}

To establish Theorem \ref{TExitUB}, one needs one more lemma comparing the values of the minimizer \eqref{EMin} at different vertices. 

\begin{lem}
\label{LMinEst}
Let $x, y \in \bbR_{>0}$ and $\delta \in \bbR_{\ge 0}$. Then  
\begin{align*}
\Min(x+\delta, y)-\Min(x, y) &= \frac{\delta (1-\Min(x, y))}{(\sqrt{x+\delta}+\sqrt{y})(\sqrt{x+\delta} + \sqrt{x})},  \\ 
\Min(x, y+\delta)-\Min(x, y) &= -\frac{\delta\Min(x, y)}{(\sqrt{x}+\sqrt{y+\delta})(\sqrt{y} + \sqrt{y+\delta})}. 
\end{align*}
\end{lem}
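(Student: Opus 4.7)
The plan is to verify both identities by direct algebraic manipulation from the definition $\Min(x,y)=\sqrt{x}/(\sqrt{x}+\sqrt{y})$, with the only non-obvious ingredient being the standard conjugate identity $\sqrt{x+\delta}-\sqrt{x}=\delta/(\sqrt{x+\delta}+\sqrt{x})$ (and its analogue in $y$). By the $x \leftrightarrow y$ symmetry $\Min(y,x)=1-\Min(x,y)$, the second identity will follow from the first, so I would only need to carry out one computation carefully.

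For the first identity, I would put the difference over a common denominator:
\begin{align*}
\Min(x+\delta,y)-\Min(x,y) &= \frac{\sqrt{x+\delta}}{\sqrt{x+\delta}+\sqrt{y}}-\frac{\sqrt{x}}{\sqrt{x}+\sqrt{y}} \\
&= \frac{\sqrt{x+\delta}(\sqrt{x}+\sqrt{y})-\sqrt{x}(\sqrt{x+\delta}+\sqrt{y})}{(\sqrt{x+\delta}+\sqrt{y})(\sqrt{x}+\sqrt{y})} \\
&= \frac{\sqrt{y}\,(\sqrt{x+\delta}-\sqrt{x})}{(\sqrt{x+\delta}+\sqrt{y})(\sqrt{x}+\sqrt{y})}.
\end{align*}
Applying the conjugate identity to the numerator and recognizing the factor $\sqrt{y}/(\sqrt{x}+\sqrt{y})=1-\Min(x,y)$ yields the claimed formula. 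The second identity follows either by repeating the same steps verbatim with the roles of $x$ and $y$ swapped, or by applying the first identity to $\Min(y+\delta,x)-\Min(y,x)$ and using $\Min(x,y+\delta)-\Min(x,y)=-[\Min(y+\delta,x)-\Min(y,x)]$.

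There is no real obstacle here: the lemma is a purely algebraic identity and the only trick is the conjugate rationalization. The main care needed is bookkeeping of the three factors in the denominator.
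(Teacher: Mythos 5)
Your computation is correct, and it is the same direct algebraic verification the paper has in mind — the paper's own proof just says the identities "can be readily verified" from the definitions \eqref{EShp} and \eqref{EMin} and omits the details you supply. The symmetry shortcut $\Min(y,x)=1-\Min(x,y)$ for the second identity is a nice touch that halves the work.
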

\begin{proof}
These can be readily verified from \eqref{EShp} and \eqref{EMin}. 
\end{proof}

\begin{proof}[Proof of Theorem \ref{TExitUB}]

Let $\epsilon_0 = \epsilon_0(\delta)$ and $c = c(\delta)$ denote positive constants to be chosen sufficiently small below. Take $N_0 = N_0(\delta) \ge 1/c$. Let $(m, n) \in S_\delta \cap \bbZ_{\ge N_0}^2$, $(m+n)^{-2/3} \le s \le c(m+n)^{1/3}$ and $k = \lf s(m+n)^{2/3}\rf$. Note that the interval for $s$ is nonempty and $k \ge 1$.  The additional restriction on $s$ due to the upper bound will be lifted at the end of our argument.  

Our aim is to verify the bound in (a).  If $w > 0$ and $z < 1$ such that $\min\{w, z\} \ge \Min(m, n)-\epsilon_0 s (m+n)^{-1/3}$ then,  by Lemma \ref{LShpMinBnd}(b) and since $(m, n) \in S_\delta$,    
\begin{align*}
1 > \min\{w, z\} \ge \Min(m, n)-\epsilon_0 c > 0
\end{align*}
provided that $\epsilon_0 c$ is sufficiently small.  Then on account of the monotonicity of the exit points recorded in Lemma \ref{LExMon2},  it suffices to obtain (a) for the exit point $\Eh_{\nu, i_0, j_0}^{\min\{w, z\}, \min\{w, z\}, +}$,  which is well-defined since $\min\{w, z\} \in (0, 1)$.  This reduces (a) to the case $w = z$.  To treat this case,  pick $z \in (0, 1)$ with $z \ge \Min(m, n) - \epsilon_0 s(m+n)^{-1/3}$.  

Let $(i_0, j_0) \in \bbZ_{\ge 0}^2$ and consider a down-right path $\nu$ on $\bbZ^2_{\ge 0}$ such that $(m+i_0, n+j_0) \in V_{\nu}$ and $\nu_b = (i_0, j_0)$ for some $b \in [\ell(\nu)]$. Assuming that $b+k \le \ell(\nu)$ for now, let $(i_k, j_k) = \nu_{b+k}$. Then $j_k \le j_0 \le n+j_0$. Assuming further that $i_k \le m+i_0$, let $L_k = L_{i_k, j_k}^{m+i_0, n+j_0}$ denote the L-shaped down-right path (see Section \ref{SsEx}) from $(i_k, n+j_0)$ to $(m+i_0, j_k)$ passing through the vertex $(i_k, j_k)$. 
See Figure \ref{FPrExitUB}. 

\begin{figure}
\centering
\begin{overpic}[scale=0.8]{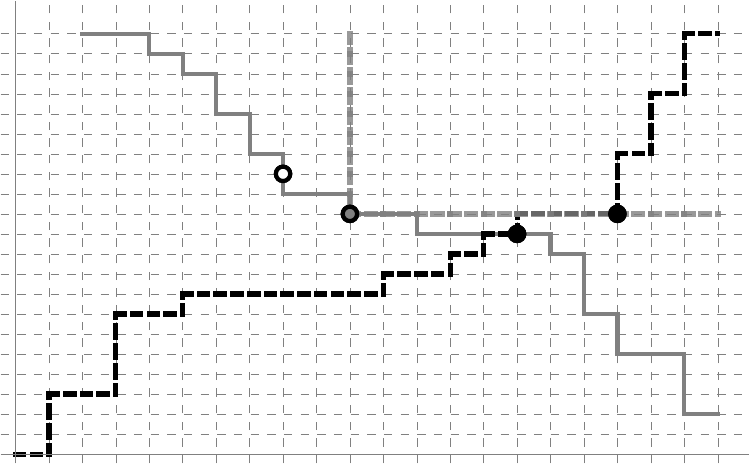}
\put (21, 52){$\nu$}
\put (18, 12){$\pi$}
\put (48, 44){$L = L_k$}
\put(33, 38){$\nu_b$}
\put(42, 30){$\nu_{b+k}$}
\put(66, 27){$\nu_{Z_{\pi, \nu}}$}
\put(78, 30){$L_{Z_{\pi, L}}$}
\end{overpic}
\caption{Illustrates the setup prior to display \eqref{E-22}. Path $\nu$ (gray), base vertex $(i_0, j_0) = \nu_b$ (white dot), the geodesic $\pi = \pi_{0, 0}^{z}(m+i_0, n+j_0)$ (black, dashed), vertex $(i_k, j_k) = \nu_{b+k}$ (gray dot) and path $L = L_k = L_{i_k, j_k}^{m+i_k, n+j_k}$ (gray, dashed) are shown. The exit points $\nu_{Z_{\pi, \nu}}$ and $L_{Z_{\pi, L}}$ (black dots) are also marked.}
\label{FPrExitUB}
\end{figure}

In the next display, the inequality comes from an application of Lemma \ref{LExitStr}(a) with the geodesic $\pi = \pi_{0, 0}^z(m+i_0, n+j_0)$. The subsequent equality holds by virtue of Lemma \ref{LExitDistId}. 
\begin{align}
\label{E-22}
\begin{split}
\P\{\Eh^{z, +}_{\nu, i_0, j_0}(m+i_0, n+j_0) > s(m+n)^{2/3}\}  &= \P\{\Eh^{z, +}_{\nu, i_0, j_0}(m+i_0, n+j_0) > k\} \\
&\le \P\{\Eh^{z, +}_{L_k, i_k, j_k}(m+i_0, n+j_0) > 0\} \\
&= \P\{\Eh^{z, \hor}(m+i_0-i_k, n+j_0-j_k) > 0\}. 
\end{split}
\end{align}

Because $k \le c(m+n)$ and $(m, n) \in S_\delta$,  choosing $c < 1$ sufficiently small (depending on $\delta$) ensures that $(m-k, n+k) \in S_{\delta/2}$.  Then $(m+i_0-i_k, n)$,  $(m, n+j_0-j_k) \in S_{\delta/2}$ as well since both $i_k-i_0,  j_0-j_k \in [0, k]$.  Combining these with Lemmas \ref{LMinEst} and \ref{LShpMinBnd}(b) (applying the latter to bound the factors involving $\zeta$) 
leads to the bound 
\begin{align}
\label{E-23}
\begin{split}
&\Min(m + i_0 - i_k, n+j_0-j_k)-\Min(m, n) \\
&= \Min(m + i_0 - i_k, n+j_0-j_k)-\Min(m, n+j_0-j_k) \\
&+ \Min(m, n+j_0-j_k)-\Min(m, n) \\
&= - \frac{(i_k-i_0)(1-\Min(m, n+j_0-j_k))}{(\sqrt{m+i_0-i_k} + \sqrt{n+j_0-j_k})(\sqrt{m+i_0-i_k}+\sqrt{m})} \\
&-  \frac{(j_0-j_k)\Min(m, n)}{(\sqrt{m} + \sqrt{n+j_0-j_k})(\sqrt{n} + \sqrt{n+j_0-j_k})} \\
&\le -\frac{a_0 (i_k-i_0 + j_0-j_k)}{m+n} = -\frac{a_0 k}{(m+n)} \le -\frac{1}{2}a_0 s (m+n)^{-1/3}
\end{split}
\end{align} 
for some constant $a_0 = a_0(\delta) > 0$. For the last step, recall that $k = \lf s(m+n)^{2/3} \rf \ge 1$. Now choosing $\epsilon_0 \le a_0/4$ gives  
\begin{align*}
z-\Min(m + i_0 - i_k, n+j_0-j_k) \ge \frac{1}{4}a_0 s(m+n)^{-1/3}
\end{align*}
in view of \eqref{E-23} and the assumption on $z$.  Note also that $(m+i_0-i_k,  n+j_0-j_k) \in S_{\delta/2}$ since $(m-k, n+k) \in S_{\delta/2}$ and $(m, n) \in S_\delta$. Therefore, appealing to Proposition \ref{PGeoInStep}(a) for the last probability in \eqref{E-22}, one obtains that 
\begin{align}
\label{E-24}
\P\{\Eh^{z, +}_{\nu, i_0, j_0}(m+i_0, n+j_0) > s(m+n)^{2/3}\} \le \exp\{-c_0 s^3\} 
\end{align}
for $s \in [(m+n)^{-2/3}, c(m+n)^{1/3}]$ and some constant $c_0 = c_0(\delta) > 0$.  The bound in \eqref{E-24} also holds trivially when $i_k > m+i_0$ or $b+k > \ell(\nu)$ because in both cases the event on the left-hand side is empty. 

Assume now that $s > c(m+n)^{1/3}$. Then, by \eqref{E-24}, 
\begin{align}
\label{E-25}
\begin{split}
\P\{\Eh^{z, +}_{\nu, i_0, j_0}(m+i_0, n+j_0) > s(m+n)^{2/3}\} &\le \P\{\Eh^{z, +}_{\nu, i_0, j_0}(m+i_0, n+j_0) > c(m+n)\}\\
&\le \exp\{-c_0 c^3(m+n)\}. 
\end{split}
\end{align} 

Combining \eqref{E-24} and \eqref{E-25}, and redefining $c_0$ suitably (for example,  as $c_0c^3$) completes the proof of part (a).  Part (b) is treated similarly.  
\end{proof}

\subsection{Proof of the lower bound for the first step probabilities}

The main idea behind the proof of Proposition \ref{PGeoInStepLB} is 
a suitable change of the rates on the boundaries. 
This type of change-of-measure argument originated in \cite{bala-sepp-aom} and has been employed recently in \cite{sepp-cgm-18, shen-sepp-19}. 
Changing the measure 
brings in weights with \emph{mixed} boundary rates. In our proof, these weights can be defined from the i.i.d.\ $\Exp(1)$-distributed weights $\{\eta(i, j): i, j \in \bbZ_{\ge 0}\}$ by 
\begin{align}
\label{EMixEnv}
\begin{split}
\wt{\w}^{w, z, k}(i, j) = \eta(i, j) \cdot \bigg(\one_{\{i, j > 0\}} &+ \one_{\{j = 0\}}\bigg\{\frac{\one_{\{i > k\}}}{w} + \frac{\one_{\{0 < i \le k\}}}{z}\bigg\} \\
&+ \one_{\{i = 0\}}\bigg\{\frac{\one_{\{j > k\}}}{1-w} + \frac{\one_{\{0 < j \le k\}}}{1-z}\bigg\}\bigg)
\end{split}
\end{align}
for $i, j, k \in \bbZ_{\ge 0}$ and $w, z \in (0, 1)$. 
For clarity, let us indicate the last-passage times and exit points defined from the weights $\wt{\w}^{w, z, k}$ also with the decoration $\wt{\ }$ and superscript $w, z, k$ as in $\wt{\Ev}^{w, z, k, \hor}$ for example.  

\begin{proof}[Proof of Proposition \ref{PGeoInStepLB}]
We prove only (a) leaving out the similar argument for (b). 

Let $(m, n) \in S_\delta \cap \bbZ_{>0}^2$. By Lemma \ref{LShpMinBnd}(b), $\Min = \Min(m, n) \in (\epsilon_0, 1-\epsilon_0)$ for some constant $\epsilon_0 = \epsilon_0(\delta) > 0$. Choose $N_0 = N_0(\delta, \epsilon) > 0$ sufficiently large such that $\epsilon N_0^{-1/3} \le \epsilon_0/8$. Assuming $m, n \ge N_0$ from now on, pick $z \in [\Min+\epsilon(m+n)^{-1/3}, \Min+\epsilon_0/8]$. Then put $\lambda = z-\Min \in [\epsilon (m+n)^{-1/3}, \epsilon_0/8]$ and $w = \Min - \lambda \in [7\epsilon_0/8, 1-\epsilon_0)$. Hence, $z-w = 2\lambda$. Introduce another constant $r = r(\delta, \epsilon) \ge 1$ to be specified below and let $k = \lf r \lambda (m+n)\rf$. Since $k \ge \epsilon N_0^{2/3}-1$, choosing $N_0$ large enough ensures that $k \ge 1$. 

In the development of the bound below, the first step changes the measure such that the underlying weights $\wh{\w}^w$ are replaced with $\wt{\w}^{w, z, k}$. Note that the associated Radon-Nikodym derivative is given by 
\begin{align*}
&\bigg(\frac{w}{z}\bigg)^k \prod_{i \in [k]} \exp\{2\lambda\wt{\w}^{w, z, k}(i, 0)\} \cdot \bigg(\frac{1-w}{1-z}\bigg)^{k} \prod_{j \in [k]} \exp\{-2\lambda \wt{\w}^{w, z, k}(0, j)\} \\
&= \bigg(\frac{w}{z}\bigg)^k \exp\{2\lambda \Gb^{z}(k, 0)\} \cdot \bigg(\frac{1-w}{1-z}\bigg)^{k} \exp\{-2\lambda \Gb^{z}(0, k)\}. 
\end{align*}
The second step below applies the Cauchy-Schwarz inequality. For the last equality, use independence and recall that $4\lambda \le \epsilon_0/2 < \epsilon_0 \le \Min < z$. 
\begin{align}
\begin{split}
&\P\{\Ev^{w, \hor}(m, n) > 0\}^2 \\
&= \bigg(\frac{w}{z}\bigg)^{2k} \bigg(\frac{1-w}{1-z}\bigg)^{2k} \E\left[\one\{\wt{\Ev}^{w, z, k, \hor}(m, n) > 0\}\exp\{2\lambda[\Gb^{z}(k, 0)-\Gb^{z}(0, k)]\}\right]^2 \\
&\le \bigg(\frac{w}{z}\bigg)^{2k} \bigg(\frac{1-w}{1-z}\bigg)^{2k} \E\left[\exp\{4\lambda[\Gb^{z}(k, 0)-\Gb^{z}(0, k)]\}\right]\P\{\wt{\Ev}^{w, z, k, \hor}(m, n) > 0\} \\
&= \bigg(\frac{w}{z}\bigg)^{2k} \bigg(\frac{1-w}{1-z}\bigg)^{2k} \bigg(\frac{z}{z - 4\lambda}\bigg)^k \bigg(\frac{1-z}{1-z +4\lambda}\bigg)^k\P\{\wt{\Ev}^{w, z, k, \hor}(m, n) > 0\}. 
\end{split}
\label{E-4}
\end{align}

By virtue of Proposition \ref{PGeoInStep}(b) and the choices of $w$ and $\lambda$, for some constant $c_0 = c_0(\delta) > 0$, the first probability in \eqref{E-4} obeys the bound 
\begin{align}
\label{E-5}
\begin{split}
\P\{\Ev^{w, \hor}(m, n) > 0\} &= 1-\P\{\Eh^{w, \ver}(m, n) > 0\} \\
&\ge 1-\exp\{-c_0(m+n)\lambda^3\} \\
&\ge 1-\exp\{-c_0\epsilon^3\} = p_0 > 0. 
\end{split}
\end{align}

Next bound the logarithm of the product of the first four factors on the last line of \eqref{E-4} as follows. For some constant $C_1 = C_1(\delta) > 0$, 
\begin{align}
\begin{split}
&2k \log\bigg(1 - \frac{2\lambda}{z}\bigg) - k \log\bigg(1 - \frac{4\lambda}{z}\bigg) + 2k \log\bigg(1 + \frac{2\lambda}{1-z}\bigg) - k \log \bigg(1+\frac{4\lambda}{1-z}\bigg) \\
&\le \frac{4k\lambda^2}{z^2} + \frac{4k\lambda^2}{(1-z)^2} + C_1k \lambda^3\\ 
&\le C_1 r \lambda^3 (m+n).
\end{split}
\label{E-6}
\end{align}
For the first inequality in \eqref{E-6}, apply the estimate $|\log (1+t)-t + t^2/2| \le |t|^3$ for $t \in [-1/2, 1/2]$ recalling that $\lambda \le \epsilon_0/8$ and $z \in (\epsilon_0, 1-\epsilon_0/2)$. The second inequality inserts the definition of $k$, 
and uses the bounds on $z$ once more. 

Now turn to the last probability in \eqref{E-4}.  Recall that the weights in \eqref{ECplBlkBd} and \eqref{EMixEnv} are all coupled through the $\eta$-variables.  Let $E = E_{m, n}^{w, z, k}$ denote the event on which the inequalities 
\begin{align}
\label{E-8}
\begin{split}
&\wt{\Ev}^{w, z, k, \hor}(m, n) > 0 \text{ and } \\
&\max \{\Ev^{w, \hor}(m, n), \Eh^{z, \hor}(m, n), \Eh^{z, \ver}(m, n)\} \le k, 
\end{split}
\end{align}
all hold.  Also, write $F = F_{m, n}^{w, z, k}$ for the event of the second inequality in \eqref{E-8}. By virtue of Corollary \ref{CExitUB}, the probability of the complementary event is at most 
\begin{align}
\label{E-9}
\begin{split}
\P\{F^c\} \le 3\exp\{-c_1 r^3 \lambda^3 (m+n)\}
\end{split}
\end{align}
for some constant $c_1 = c_1(\delta) > 0$ provided that $N_0$ and $r$ are sufficiently large. From $r$ specifically, it would suffice to require $\min \{r\epsilon, r\wt{\epsilon}_0\} \ge 1$ where $\wt{\epsilon}_0 = \wt{\epsilon}_0(\delta)$ refers to the constant denoted with $\epsilon_0$ in the statement of the corollary. (Then $s = r\lambda (m+n)^{1/3} \ge r\epsilon \ge 1 \ge (m+n)^{-2/3}$ and $z-\Min = \Min-w = \lambda \le \wt{\epsilon}_0 r \lambda = \wt{\epsilon_0} s (m+n)^{-1/3}$ as needed). Let us next claim the containment
\begin{align}
E \subset \{\Ev^{z, \hor}(m, n) > 0\} \label{E-10}
\end{align}
whose verification is deferred to the end of the proof.  Combining \eqref{E-9} and \eqref{E-10} via a union bound yields 
\begin{align}
\label{E-7}
\begin{split}
\P\{\wt{\Ev}^{w, z, k, \hor}(m, n) > 0\} &\le \P\{\Ev^{z, \hor}(m, n) > 0\} + 3\exp\{-c_1 r^3 \lambda^3 (m+n)\}. 
\end{split}
\end{align}

Putting together \eqref{E-4},  \eqref{E-5}, \eqref{E-6} and \eqref{E-7} results in 
\begin{align*}
\begin{split}
\P\{\Ev^{z, \hor}(m, n) > 0\} &\ge p_0^2 \exp\{-C_1 r \lambda^3 (m+n)\}-3\exp\{-c_1 r^3 \lambda^3 (m+n)\} \\ 
&\ge \frac{p_0^2}{2} \exp\{-C_1 r \lambda^3 (m+n)\} \ge \exp\{-2C_1 r \lambda^3 (m+n)\}. 
\end{split}
\end{align*}
The inequalities on the last line hold for sufficiently large $r$ since $\lambda^3 (m+n) \ge \epsilon^3$. Then the conclusion of (a) holds with $C_0 = 2C_1r$. 

It remains to verify \eqref{E-10} to complete the proof of (a). Restrict to the event $E$ below. Then 
$0 < \wt{\Ev}^{w, z, k, \hor}(m, n) \le \Ev^{w, \hor}(m, n) \le k$ where the middle inequality comes from 
Lemma \ref{LExMon} (the first monotonicity in part (a)) and since $w \le z$. Consequently, 
\begin{align}
\label{E-12}
\begin{split}
\max_{i \in [k]} \{\Gb^z(i, 0) + \G_{i, 1}(m, n)\} &= \wt{\G}^{w, z, k}(m, n) = \wt{\G}^{w, z, k}_{1, 0}(m, n) \ge \wt{\G}^{w, z, k}_{0, 1}(m, n) \\
&\ge \max_{j \in [k]}\{\Gb^z(0, j) + \G_{1, j}(m, n)\}. 
\end{split}
\end{align}
Since also $\max \{\Eh^{z, \hor}(m, n), \Ev^{z, \ver}(m, n)\} \le k$, one concludes from \eqref{E-12} that $\Gb_{1, 0}^z(m, n) \ge \Gb_{0, 1}^z(m, n)$. Hence, \eqref{E-10} holds. 
%
\end{proof}

\subsection{Proof of the main lower bound}

Our proof of Theorem \ref{TExitLB} is a suitable modification of the proof of Theorem \ref{TExitUB}. 

\begin{proof}[Proof of Theorem \ref{TExitLB}]
Let $c_0 = c_0(\delta,  K) > 0$ and $N_0 = N_0(\delta, \epsilon, K) > 0$ denote constants to be chosen below.  Let $(m, n) \in S_\delta \cap \bbZ_{\ge N_0}^2$ and $s \in [\epsilon,  c_0 (m+n)^{1/3}]$ taking $N_0 \ge (\epsilon/c_0)^3$ to ensure that the preceding interval is nonempty.  Let $k = \lf s (m+n)^{2/3} \rf \le c_0 (m+n)$.  After decreasing $c_0$ and increasing $N_0$ if necessary,  one has $1 \le k < m$.  

If $w > 0$ and $z < 1$ with $\max \{w, z\} \le \Min(m, n) + K s (m+n)^{-1/3}$ then it follows from Lemma \ref{LShpMinBnd}(b) that 
\begin{align*}
0 < \max\{w, z\} \le \Min(m, n) + Kc_0 < 1
\end{align*}
for sufficiently small $c_0$.  Then,  appealing to the monotonicity in Lemma \ref{LExMon2},  it suffices to prove the bound in (a) for the exit point $\Eh^{\max\{w, z\}, \max\{w, z\}, +}_{\nu, i_0, j_0}$,  which makes sense according to definition  \eqref{EEx} since $\max\{w, z\} \in (0, 1)$.  Consequently,  the bound in (a) reduces to the case $w = z$.  Hence,  pick $z \in (0, 1)$ such that $z \le \Min(m, n) + Ks(m+n)^{-1/3}$.  Via another appeal to Lemma \ref{LExMon2},  it suffices to restrict to the case $z \ge \Min(m, n)$ in proving (a). 

Let $(i_0, j_0) \in \bbZ_{\ge 0}^2$, and $\nu$ be a down-right path on $\bbZ_{\ge 0}^2$ such that $(m+i_0, n+j_0) \in V_{\nu}$ and $\nu_b = (i_0, j_0)$ for some $b \in [\ell(\nu)]$. Then $k+b < \ell(\nu)$ since the path $\nu$ takes at least $m$ steps from $\nu_b \in \{i_0\} \times \bbZ$ to $\nu_{\ell(\nu)} \in \bbZ_{\ge m+i_0} \times \bbZ$. Consider the L-shaped path $L_{k} = L_{i_{k}, j_{k}}^{m+i_0, n+j_0}$ where $(i_k, j_k)$ denotes the coordinates of $\nu_{b+k+1}$. (The picture is the same as in Figure \ref{FPrExitUB} except that the vertex $\nu_{b+k+1}$ is used now instead of $\nu_{b+k}$). 

Now appeal to Lemma \ref{LExitStr}(b) and then Lemma \ref{LExitDistId} (in the last step) to obtain 
\begin{align}
\label{E-26}
\begin{split}
\P\{\Eh^{z, +}_{\nu, i_0, j_0}(m+i_0, n+j_0) > s(m+n)^{2/3}\}  &= \P\{\Eh^{z, +}_{\nu, i_0, j_0}(m+i_0, n+j_0) > k\} \\
&= \P\{\Eh^{z, +}_{\nu, i_0, j_0}(m+i_0, n+j_0) \ge k+1\} \\
&\ge \P\{\Eh^{z, +}_{L_{k}, i_{k}, j_{k}}(m+i_0, n+j_0) > 0\} \\
&= \P\{\Eh^{z, \hor}(m+i_0-i_{k}, n+j_0-j_{k}) > 0\}. 
\end{split}
\end{align}
The computation in \eqref{E-23} still gives  
\begin{align}
\label{E-23a}
\begin{split}
\Min(m+i_0-i_k, n+j_0-j_k)-\Min(m, n) \le -a_0 s (m+n)^{-1/3}
\end{split}
\end{align}
for some constant $a_0 = a_0(\delta) > 0$.  In the same vein,  one also obtains the lower bound 
\begin{align}
\label{E-23b}
\begin{split}
&\Min(m+i_0-i_k, n+j_0-j_k)-\Min(m, n) \\
&\ge -\frac{A_0(i_k-i_0+j_0-j_k)}{2(m+n)} = -\frac{A_0 (k+1)}{2(m+n)} \ge -A_0 s (m+n)^{-1/3}
\end{split}
\end{align}
for some constant $A_0 = A_0(\delta) > 0$.  Due to \eqref{E-23a},  and the assumptions that $z \ge \Min(m, n)$ and $s \ge \epsilon$,  one has
\begin{align}
\label{E-23c}
\begin{split}
z-\Min(m+i_0-i_k, n+j_0-j_k) &\ge a_0 s (m+n)^{-1/3} \ge a_0 \epsilon (m+n)^{-1/3} 
\end{split}
\end{align}
Also,  by \eqref{E-23b} and the assumptions that $z \le \Min(m, n)+K s (m+n)^{-1/3}$ and $s \le c_0(m+n)^{1/3}$,  
\begin{align}
\label{E-23d}
z-\Min(m+i_0-i_k, n+j_0-j_k) \le (K + A_0) s (m+n)^{-1/3} \le (K+A_0) c_0. 
\end{align}
With $c_0$ chosen sufficiently small,  $(m+i_0-i_k,  n+j_0-j_k) \in S_{\delta/2}$ and the last expression in \eqref{E-23d} can be made $(K+A_0) c_0 \le \epsilon_0(\delta/2)$ where $\epsilon_0$ refers to the constant in Proposition \ref{PGeoInStepLB}.  Then Proposition \ref{PGeoInStepLB}(a) applied to the last probability in \eqref{E-26} yields 
\begin{align}
\label{E-23e}
\begin{split}
&\P\{\Eh^{z, +}_{\nu, i_0, j_0}(m+i_0, n+j_0) > s(m+n)^{2/3}\}  \\
&\ge \exp\{-C_1 (m+i_0-i_k+n+j_0-j_k)(z-\Min(m+i_0-i_k,  n+j_0-j_k))^3\} \\
&\ge \exp\{-C_0 s^3\}
\end{split}
\end{align}
for some constants $C_1 = C_1(\delta, \epsilon) > 0$ and $C_0 = C_0(\delta, \epsilon,  K) > 0$ provided that $N_0$ is sufficiently large.  The final inequality in \eqref{E-23e} relies on the first bound in \eqref{E-23d}.  This finishes the proof of (a),  and the proof of (b) is completely analogous. 
\end{proof}

\subsection{Proof of the exit point upper bounds for bulk geodesics}
\label{SsPfPathFluc}

We conclude this section with the proof of Theorem \ref{TPathFluc}. 

\begin{proof}[Proof of Theorem \ref{TPathFluc}]
Let $c, \epsilon_0, s_0$ and $N_0$ denote positive constants depending only on $\delta$ and to be chosen in the course of the proof. Let $(m, n) \in S_\delta \cap \bbZ_{\ge N_0}^2$, $s \in [s_0, c(m+n)^{1/3}]$ and $k = \lf s(m+n)^{2/3}\rf$ where $N_0$ is taken sufficiently large to ensure the existence of $s$. Pick $(i_0, j_0) \in \bbZ^2_{\ge 0}$ subject to \eqref{EBaseCond}, and a down-right path $\nu$ on $\bbZ^2_{\ge 0}$ such that $(i_0, j_0) \in \nu$ and $(m+i_0, n+j_0) \in V_\nu$. Write $b \in [\ell(\nu)]$ for the unique index for which $\nu_b = (i_0, j_0)$, and $\wt{\nu} = \nu + (1, 1)$ for the down-right path obtained by shifting the vertices in $\nu$ by $(1, 1)$. 

Abbreviate $M = m+i_0+1$ and $N = n+j_0+1$. Condition \eqref{EBaseCond} and the upper bound on $s$ imply that $|\Min(M, N) - \Min(m, n)| \le c\epsilon_0$. Combining this with the fact that $(m, n) \in S_\delta$, recalling definition \eqref{EMin} and appealing to Lemma \ref{LShpMinBnd}(b), one concludes that $(M, N) \in S_{\varepsilon}$ for some constant $\varepsilon = \varepsilon(\delta) > 0$ provided that $c$ is sufficiently small given $\epsilon_0$.  (Importantly,  $\varepsilon$ does not depend on the choice of $c$,  which will be used in a moment). 

Let $E = \{Z_{\pi, \wt{\nu}} > b+k\}$ where $\pi$ denotes the geodesic $\pi_{1, 1}(M, N)$. Let 
\begin{align*}
z = \Min(M, N) - \epsilon_0 s(M+N)^{-1/3} \ge \Min(M, N)-c\epsilon_0. 
\end{align*}
Because $(M, N) \in S_{\varepsilon}$, by Lemma \ref{LShpMinBnd}(b) and after decreasing $c$ if necessary depending on $\varepsilon$ and $\epsilon_0$,  one guarantees that $z > 0$.  Then $z \in (0, 1)$ is a legitimate parameter below. 

The first step in the next displayed argument is due to the choice of $k$. The second equality holds on the account of \eqref{EBlkDistId}. To justify the final step, observe that if $E$ occurs and the geodesic $\pi_{0, 0}^z(M, N)$ contains $(1, 0)$ then necessarily $\Eh^{z, +}_{\wt{\nu}, i_0+1, j_0+1}(M, N) > k$ by \eqref{EEx}. See Figure \ref{FPathFluc}. 
\begin{align}
\label{E-33}
\begin{split}
&\P\{\Eh^{1, 0, +}_{\nu, i_0, j_0}(m+i_0, n+j_0) > s(m+n)^{2/3}\} = \P\{\Eh^{1, 0, +}_{\nu, i_0, j_0}(m+i_0, n+j_0) > k\} = \P\{E\} \\
&\le \P\{\Eh^{z, \ver}(M, N) > 0\} +\P\{\Eh^{z, +}_{\wt{\nu}, i_0+1, j_0+1}(M, N) > k\}. 
\end{split}
\end{align}

\begin{figure}
\centering
\begin{overpic}[scale=0.6]{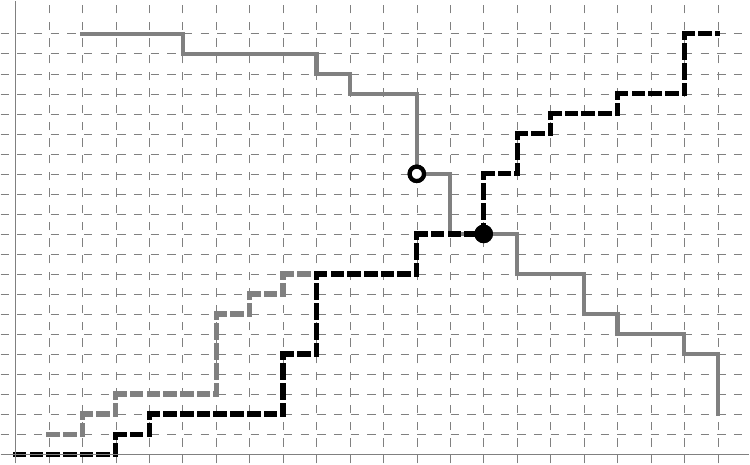}
\put (20, 52){$\wt{\nu}$}
\put (18, 12){$\pi$}
\put (44, 12){$\pi^z$}
\put(49, 38){$\wt{\nu}_b$}
\end{overpic}
\begin{overpic}[scale=0.6]{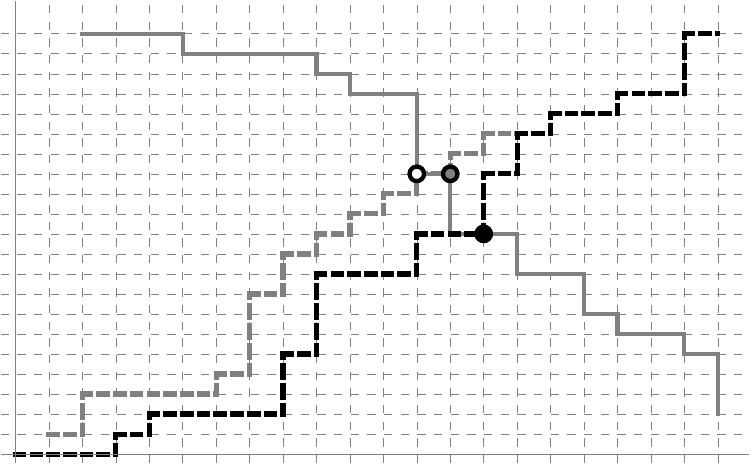}
\put (20, 52){$\wt{\nu}$}
\put (18, 12){$\pi$}
\put (44, 12){$\pi^z$}
\put(49, 38){$\wt{\nu}_b$}
\end{overpic}
\caption{Illustrates the justification for the inequality in \eqref{E-33} in two situations. The down-right path $\wt{\nu}$ (gray) and  the geodesics $\pi^z = \pi^z_{0, 0}(M, N)$ (dashed black) and $\pi = \pi_{1, 1}(M, N)$ (dashed gray until it meets $\pi^z$) are shown. The exit points of $\pi^z$ (black dot) and $\pi$ (gray dot unless already indicated) from $\wt{\nu}$ are marked. $\pi^z$ passes through $(1, 0)$. Then $Z_{\pi^z, \wt{\nu}} \ge Z_{\pi, \wt{\nu}}$ a.s.}
\label{FPathFluc}
\end{figure}

Now bound the last two probabilities in \eqref{E-33} as follows. By virtue of Proposition \ref{PGeoInStep}(b), 
\begin{align}
\P\{\Eh^{z, \ver}(M, N) > 0\} \le \exp\{-c_0 s^3\} \label{E-32}
\end{align}
for some constant $c_0 = c_0(\delta) > 0$. Note also from \eqref{EBaseCond} and the choice of $z$ that 
\begin{align*}
z-\Min(m, n) &= -\epsilon_0 s(M+N)^{-1/3} + \Min(M, N)-\Min(m, n) \ge -2\epsilon_0 s(m+n)^{-1/3}. 
\end{align*} 
Therefore, for sufficiently small $\epsilon_0$, one obtains from Theorem \ref{TExitUB}(a) that 
\begin{align}
\P\{\Eh^{z, +}_{\wt{\nu}, i_0+1, j_0+1}(M, N) > k\} \le \exp\{-c_0 \min \{s^3, m+n\}\} \label{E-34}
\end{align}
after increasing $N_0$ and decreasing $c_0$ if necessary. 

Putting together \eqref{E-33}, \eqref{E-32} and \eqref{E-34} and choosing $s_0$ sufficiently large establish the claimed upper bound for $s \in [s_0, c(m+n)^{1/3}]$ when $\square = +$. The upper bound extends to $s > c(m+n)^{1/3}$ after modifying $c_0$ suitably (as done previously in \eqref{E-25}). The case $\square = -$ is handled similarly. 
\end{proof}

\section{Proofs of the speed bounds}\label{SPfApp}

This section proves Theorems \ref{TBuse},  \ref{TCif} and \ref{TCifSt}.  The proofs of first two results utilize Proposition \ref{PGeoInStep}, monotonicity properties of last-passage times, planarity, and increment-stationary LPP processes with \emph{northeast} boundary weights. 
To introduce these processes, first define the weights 
\begin{align}
\label{Ewne}
\wne^{w, z, m, n}(i, j) = \eta(i, j) \bigg(\one_{\{i \le m, j \le n\}} + \frac{\one_{\{i \le m, j = n+1\}}}{w} + \frac{\one_{\{i = m+1, j \le n\}}}{1-z}\bigg)
\end{align}
for $w > 0$, $z < 1$, $m, n \in \bbZ_{>0}$, $i \in [m+1]$ and $j \in [n+1]$. In particular, $\wne^{w, z, m, n}(m+1, n+1) = 0$. Then let 
\begin{align}
\label{ELppne}
\Gne^{w, z, m, n}_{p, q}(k, l) = \max \limits_{\pi \in \Pi_{k, l}^{p, q}} \sum_{(i, j) \in \pi} \wne^{w, z, m, n}(i, j) \quad \text{ for } p, k \in [m+1] \text{ and } q, l \in [n+1]. 
\end{align}
A comparison of \eqref{Ewbd}--\eqref{Elppbd} with \eqref{Ewne}--\eqref{ELppne} shows the distributional identity
\begin{align}
\label{ElppDisId}
\begin{split}
&\{\Gne^{w, z, m, n}_{p, q}(k, l): p, k \in [m+1], q, l \in [n+1]\} \\
&\stackrel{\text{\rm{dist.}}}{=} \{\Gb^{w, z}_{m+1-p, n+1-q}(m+1-k, n+1-l): p, k \in [m+1], q, l \in [n+1]\}
\end{split}
\end{align}
Below we  use \eqref{ELppne} only in the stationary case $w = z$ and, as before, write $z$ only once in the superscript.

\subsection{Proofs of the bounds for the Busemann limits}

As the first step towards the proof of Theorem \ref{TBuse}, let us bound the c.d.f.s in \eqref{EIncCdf} via the c.d.f.s in \eqref{Ebcdf}. 

\begin{lem}
\label{LSqz}
Let $\nu$ be a down-right path in $\bbZ_{>0}^2$. Let $(u, l)$ and $(k, v)$ denote the first and last points on $\nu$ {\rm(}in the down-right direction{\rm)}. Let $\delta > 0$, $(m, n) \in S_\delta \cap (\bbZ_{\ge k} \times \bbZ_{\ge l})$, $p = k-u$, $q = l-v$, $s \in \bbR^p$, $t  \in \bbR^q$ and $z \in (0, 1)$. Write $\underline{\Min} = \Min(m-k+1, n-v+1)$ and $\overline{\Min} = \Min(m-u+1, n-l+1)$. Then there exist constants $c = c(\delta) > 0$ and $\epsilon = \epsilon(\delta) > 0$ such that
\begin{align*}
\Bcdf_{\nu}^{m, n, \hor}(s, t) &\le \bcdf_{p, q}^{z, \hor}(s, t) + \exp\{-c(m+n)(\underline{\Min}-z)^3\} \qquad \text{ if } z < \underline{\Min}, \\
\Bcdf_{\nu}^{m, n, \hor}(s, t) &\ge  \bcdf_{p, q}^{z, \hor}(s, t) - \exp\{-c(m+n)(z-\overline{\Min})^3\} \qquad \text{ if } z > \overline{\Min}, \\
\Bcdf_{\nu}^{m, n, \ver}(s, t) &\le \bcdf_{p, q}^{z, \ver}(s, t) + \exp\{-c(m+n)(z-\overline{\Min})^3\} \qquad \text{ if } z > \overline{\Min}, \\
\Bcdf_{\nu}^{m, n, \ver}(s, t) &\ge  \bcdf_{p, q}^{z, \ver}(s, t) - \exp\{-c(m+n)(\underline{\Min}-z)^3\} \qquad \text{ if } z < \underline{\Min}
\end{align*}
provided that $k, l \le \epsilon (m+n)$. 
\end{lem}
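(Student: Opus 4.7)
The plan is to compare bulk LPP increments along $\nu$ with those of an increment-stationary LPP process $\tilde{\G}$ anchored at the northeast corner $(m+1, n+1)$, and to identify $\bcdf_{p, q}^{z, \hor}$ as the joint c.d.f.\ of the increments of $\tilde{\G}$. Concretely, using \eqref{Ewne}--\eqref{ELppne} with $w = z$, I would set $\tilde{\G}(k, l) := \Gne^{z, m, n}_{m+1, n+1}(k, l)$, coupled to the bulk via shared interior weights and equipped with fresh i.i.d.\ $\Exp(z)$ top-row weights and $\Exp(1-z)$ right-column weights. The reflection identity \eqref{ElppDisId} and Burke's property \eqref{EBurke} applied to $\Gb^z$ then imply that the joint law of the increments $(\tilde{\G}(\nu_{k+1}) - \tilde{\G}(\nu_k))_{k \in [\ell(\nu)-1]}$ matches the joint distribution encoded by $\bcdf_{p, q}^{z, \hor}$, with right-step increments $\sim -\Exp(z)$ and down-step increments $\sim \Exp(1-z)$, jointly independent.

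For the first inequality, define
\begin{align*}
E := \{\text{the }\tilde{\G}\text{-geodesic from }(k, v)\text{ to }(m+1, n+1)\text{ exits through the top boundary }\{j = n+1\}\}.
\end{align*}
Under the reflection $(i, j) \mapsto (m+1-i, n+1-j)$, the event $E^c$ corresponds to $\{\Ev^{z, \ver}(m+1-k, n+1-v) > 0\}$ in the southwest-boundary LPP. The hypothesis $k, l \le \epsilon(m+n)$ with sufficiently small $\epsilon = \epsilon(\delta)$ ensures $(m+1-k, n+1-v) \in S_{\delta'} \cap \bbZ_{\ge N_0}^2$ for some $\delta' = \delta'(\delta) > 0$, so Proposition \ref{PGeoInStep}(b) under the hypothesis $z < \underline{\Min}$ yields $\P(E^c) \le \exp\{-c(m+n)(\underline{\Min}-z)^3\}$ for some $c = c(\delta) > 0$. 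By the planar monotonicity of geodesics sharing an endpoint, shifting the start up and to the left cannot switch an exit from the top to the right, so on $E$ the $\tilde{\G}$-geodesic from every vertex of $\nu$ also exits through the top. This propagation avoids any union bound over the $\ell(\nu)$ vertices and keeps the error term free of polynomial factors.

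On $E$, each $\tilde{\G}(\nu_i)$ decomposes as $\G_{\nu_i}(i^{\ast}_{\nu_i}, n) + \sum_{k = i^{\ast}_{\nu_i}}^m \wne(k, n+1)$, where $i^{\ast}_{\nu_i}$ is the column at which the $\tilde{\G}$-geodesic from $\nu_i$ enters the top row. Combining this with the monotone ordering $i^{\ast}_{\nu_i} \le i^{\ast}_{\nu_{i+1}}$ along $\nu$ and the max-plus structure of bulk LPP yields pointwise one-sided comparisons
\begin{align*}
\tilde{\G}(\nu_{r_i}) - \tilde{\G}(\nu_{r_i}+(1,0)) &\ge \B^\hor_{\nu_{r_i}}(m, n),  \\
\tilde{\G}(\nu_{d_j}) - \tilde{\G}(\nu_{d_j}+(0,1)) &\le \B^\ver_{\nu_{d_j}}(m, n)
\end{align*}
between the stationary and bulk increments. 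The sign pattern is aligned with \eqref{Ebcdf}--\eqref{EIncCdf}, so the bulk event $\{\B^\hor_{\nu_{r_i}}(m,n) \ge -s_i,\, \B^\ver_{\nu_{d_j}}(m,n) \le t_j\}$ is contained in the corresponding stationary event on $E$, which has probability $\bcdf_{p, q}^{z, \hor}(s, t)$. Splitting over $E$ and $E^c$ produces the first inequality. The remaining three bounds follow by symmetric arguments: the $\hor$ lower bound with $z > \overline{\Min}$ reverses the roles of the two boundaries (now using the critical vertex $(u, l)$ and Proposition \ref{PGeoInStep}(a)), and the two $\ver$ bounds come from interchanging horizontal and vertical roles throughout.

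The main obstacle is establishing the pointwise increment comparisons on $E$ with the precise opposite-sign pattern described above. The underlying reason is that on $E$ the map $h(k, l) := \tilde{\G}(k, l) - \G_{k, l}(m, n)$, representing the boundary bonus at $(k, l)$, should be monotonically decreasing along down-right steps: non-increasing in $k$ for fixed $l$ and non-decreasing in $l$ for fixed $k$. This is plausible because the bonus from extending to the northeast corner shrinks as the start gets closer to $(m, n)$, but proving it from the LPP recursion requires a careful comparison of the horizontal-versus-vertical competition at each stage in the two processes, which is the technical heart of the argument.
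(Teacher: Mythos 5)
Your overall strategy is sound and relies on the same ingredients as the paper's proof: the NE stationary process $\tilde{\G} = \Gne^{z,m,n}_{m+1,n+1}$, the reflection identity \eqref{ElppDisId} plus Burke's property \eqref{EBurke} to identify $\bcdf^{z,\hor}_{p,q}$ as its increment c.d.f.\ along $\nu$, planarity propagation from the critical vertex $(k,v)$ (resp.\ $(u,l)$), and Proposition \ref{PGeoInStep} to bound the bad-event probability with no union over $\ell(\nu)$. Where you diverge is organizational: the paper sandwiches $\Bcdf^{\hor}_\nu$ between intermediate events $E_1, E_2$ anchored at $(m,n+1)$ and $(m+1,n)$, using the crossing Lemma \ref{LCros} deterministically as the first step and the planarity event-coincidence probabilistically as the second; you try to compress both into a single pointwise inequality between $\tilde{\G}$-increments and bulk increments on your event $E$. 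That inequality is correct, but you stop short of proving it, calling it ``plausible'' and flagging it as the technical heart.

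The gap closes cleanly, and you do not need to track exit columns $i^\ast_{\nu_i}$, prove their monotone ordering, or establish monotonicity of $h$. On $E$, every $\tilde{\G}$-geodesic from a vertex of $\nu$ passes through $(m,n+1)$ (your planarity observation), and since $\wne^{z,z,m,n}(m+1,n+1)=0$, this gives $\tilde{\G}(\nu_i)=\Gne^{z,m,n}_{m,n+1}(\nu_i)$ for every $\nu_i\in\nu$; hence the $\tilde{\G}$-increments along $\nu$ coincide on $E$ with those of $\Gne^{z,m,n}_{m,n+1}$. Now the crossing Lemma \ref{LCros}, applied with endpoints $(m,n)$ and $(m,n+1)$, yields the deterministic inequalities
\begin{align*}
\Gne^{z,m,n}_{m,n+1}(\nu_{r_i})-\Gne^{z,m,n}_{m,n+1}(\nu_{r_i}+(1,0)) &\ge \B^{\hor}_{\nu_{r_i}}(m,n),\\
\Gne^{z,m,n}_{m,n+1}(\nu_{d_j})-\Gne^{z,m,n}_{m,n+1}(\nu_{d_j}+(0,1)) &\le \B^{\ver}_{\nu_{d_j}}(m,n),
\end{align*}
which together with the event coincidence is exactly your claimed pointwise comparison on $E$. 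With this in hand, your splitting over $E$ and $E^c$, the application of Proposition \ref{PGeoInStep}(b), and the symmetric argument for the remaining three bounds all go through and reproduce the lemma as stated.
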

\begin{proof}
We prove the first two inequalities. The remaining two can be obtained in a similar manner. 

Recall \eqref{ERDsteps}. The number of right and down steps of $\nu$ are given by $\hash R_\nu = p$ and $\hash D_\nu = q$. Let $(r_i)_{i \in [p]}$ and $(d_j)_{j \in [q]}$ denote the enumerations of $R_\nu$ and $D_\nu$, respectively, in increasing order. Write $E_0$ for the event 
\begin{align*}
\Gne^{z, m, n}_{m+1, n+1}(\nu_{r_{i}})-\Gne^{z, m, n}_{m+1, n+1}(\nu_{r_i+1})  \ge -s_i \quad \text{ for } i \in [p] \\
\text{and}\quad \Gne^{z, m, n}_{m+1, n+1}(\nu_{d_j})-\Gne^{z, m, n}_{m+1, n+1}(\nu_{d_j-1}) \le t_j \quad \text{ for } j \in [q]. 
\end{align*}
Noting that $\nu$ is a sequence of length $\ell(\nu) = L = p+q+1$, define another sequence $\wt{\nu} = (\wt{\nu}_i)_{i \in [L]}$ via $\wt{\nu}_i = (m+1, n+1)-\nu_{L+1-i}$ for $i \in [L]$. Then $\wt{\nu}$ is also a down-right path with $R_{\wt{\nu}} = \{L-r_i: i \in [p]\}$ and $D_{\wt{\nu}} = \{L+2-d_j: j \in [q]\}$. 
Now on account of \eqref{ElppDisId}, \eqref{EBurke} and definition \eqref{Ebcdf}, the probability of $E_0$ can be computed exactly: 
\begin{align}
\label{E25}
\begin{split}
\P\{E_0\} &= \P\{\Gb^z((m+1, n+1)-\nu_{r_i})-\Gb^z((m+1, n+1)-\nu_{r_i+1}) \ge -s_i \text{ for } i \in [p], \\
&\quad \quad \ \ \Gb^z((m+1, n+1)-\nu_{d_j})-\Gb^z((m+1, n+1)-\nu_{d_{j}-1}) \le t_j \text{ for } j \in [q]\} \\
&= \P\{\Gb^z(\wt{\nu}_{L+1-r_i})-\Gb^z(\wt{\nu}_{L-r_i}) \ge -s_i \text{ for } i \in [p], \\
&\quad \quad \ \ \Gb^z(\wt{\nu}_{L+1-d_j})-\Gb^z(\wt{\nu}_{L+2-d_j}) \le t_j \text{ for } j \in [q]\} \\
&= \prod_{i \in [p]} \exp\{-s_i^-z\} \prod_{j \in [q]} (1-\exp\{-t_j^+ (1-z)\})\\
&= \bcdf_{p, q}^{z, \hor}(s, t). 
\end{split}
\end{align}

Define the events $E_1$ and $E_2$ exactly as $E_0$ but  replace the base point $(m+1, n+1)$ with $(m, n+1)$ and $(m+1, n)$, respectively.  That is,  $E_1$ denotes the event 
\begin{align*}
\Gne^{z, m, n}_{m, n+1}(\nu_{r_{i}})-\Gne^{z, m, n}_{m, n+1}(\nu_{r_i+1})  \ge -s_i \quad \text{ for } i \in [p] \\
\text{and}\quad \Gne^{z, m, n}_{m, n+1}(\nu_{d_j})-\Gne^{z, m, n}_{m, n+1}(\nu_{d_j-1}) \le t_j \quad \text{ for } j \in [q], 
\end{align*}
while $E_2$ denotes the event 
\begin{align*}
\Gne^{z, m, n}_{m+1, n}(\nu_{r_{i}})-\Gne^{z, m, n}_{m+1, n}(\nu_{r_i+1})  \ge -s_i \quad \text{ for } i \in [p] \\
\text{and}\quad \Gne^{z, m, n}_{m+1, n}(\nu_{d_j})-\Gne^{z, m, n}_{m+1, n}(\nu_{d_j-1}) \le t_j \quad \text{ for } j \in [q]. 
\end{align*}
From a union bound and \eqref{ElppDisId}, one obtains that 
\begin{align}
\label{E26}
\begin{split}
\P\{E_1\} &= \P\{E_1 \cap \{\Gne^{z, m, n}_{m, n+1}(k, v) \ge \Gne^{z, m, n}_{m+1, n}(k, v)\}\} \\
&+ \P\{E_1 \cap \{\Gne^{z, m, n}_{m, n+1}(k, v) < \Gne^{z, m, n}_{m+1, n}(k, v)\}\}\\
&= \P\{E_0 \cap \{\Gne^{z, m, n}_{m, n+1}(k, v) \ge \Gne^{z, m, n}_{m+1, n}(k, v)\}\} \\
&+ \P\{E_1 \cap \{\Gne^{z, m, n}_{m, n+1}(k, v) < \Gne^{z, m, n}_{m+1, n}(k, v)\}\}\\
&\le \P\{E_0\} + \P\{\Gne^{z, m, n}_{m, n+1}(k, v) < \Gne^{z, m, n}_{m+1, n}(k, v)\} \\ 
&= \P\{E_0\} + \P\{\Gb^{z}_{1, 0}(m+1-k, n+1-v) < \Gb^{z}_{0, 1}(m+1-k, n+1-v)\} \\
&=\P\{E_0\}+\P\{\Ev^{z, \ver}(m+1-k, n+1-v) > 0\}. 
\end{split}
\end{align}
In the second step above, we utilized this consequence of planarity and a.s.\ uniqueness of geodesics: If the geodesic from $(m+1, n+1)$ to $(k, v)$ visits $(m, n+1)$ then a.s.\ so does the geodesic from $(m+1, n+1)$ to any point in $[k] \times ([n+1] \smallsetminus [v-1])\supset \nu$. See Figure \ref{FPlan}. 
Similar reasoning also gives 
\begin{align}
\label{E27}
\begin{split}
\P\{E_2\} &= \P\{E_2 \cap \{\Gne^{z, m, n}_{m+1, n}(u, l) \ge \Gne^{z, m, n}_{m, n+1}(u, l)\}\} \\
&+ \P\{E_2 \cap \{\Gne^{z, m, n}_{m+1, n}(u, l) < \Gne^{z, m, n}_{m, n+1}(u, l)\}\}\\
&= \P\{E_0 \cap \{\Gne^{z, m, n}_{m+1, n}(u, l) \ge \Gne^{z, m, n}_{m, n+1}(u, l)\}\} \\
&+ \P\{E_2 \cap \{\Gne^{z, m, n}_{m+1, n}(u, l) < \Gne^{z, m, n}_{m, n+1}(u, l)\}\}\\
&\ge \P\{E_0\}-\P\{\Gne^{z, m, n}_{m+1, n}(u, l) < \Gne^{z, m, n}_{m, n+1}(u, l)\} \\
&= \P\{E_0\}- \P\{\Gb^{z}_{0, 1}(m+1-u, n+1-l) < \Gb^{z}_{1, 0}(m+1-u, n+1-l)\} \\
&=\P\{E_0\}- \P\{\Eh^{z, \hor}(m+1-u, n+1-l) > 0\}. 
\end{split}
\end{align}

Assume now that $k, l \le \epsilon (m+n)$ for some constant $\epsilon = \epsilon(\delta) > 0$ chosen sufficiently small to have $(m+1-k, n), (m, n+1-l) \in S_{\delta/2}$. The first two steps of the next display recall definitions \eqref{EIncCdf} and \eqref{Einc}. The third equality follows from definition \eqref{ELppne} and since $k \le m$ and $l \le n$. The subsequent inequality is an application of Lemma \ref{LCros}. The final inequality holds for some constant $c = c(\delta) > 0$ and $z < \underline{\Min}$ by virtue of \eqref{E26} and Proposition \ref{PGeoInStep}, and because $(m+1-k, n+1-v), (m+1-u, n+1-l) \in S_{\delta/2}$.  
\begin{align*}
\Bcdf_{\nu}^{m, n, \hor}(s, t) &= \P\{\B_{\nu_{r_i}}^{\hor}(m, n) \ge -s_i \text{ for } i \in [p] \text{ and } \B_{\nu_{d_j}}^{\ver}(m, n) \le t_j \text{ for } j \in [q]\} \\
&= \P\{\G_{\nu_{r_i}}(m, n)-\G_{\nu_{r_i+1}}(m, n) \ge -s_i \text{ for } i \in [p], \\ 
&\quad \quad \ \ \G_{\nu_{d_j}}(m, n)-\G_{\nu_{d_j-1}}(m, n) \le t_j \text{ for } j \in [q]\} \\
&= \P\{\Gne_{m, n}^{m, n, z}(\nu_{r_i})-\Gne_{m, n}^{m, n, z}(\nu_{r_i+1}) \ge -s_i \text{ for } i \in [p], \\ 
&\quad \quad \ \ \Gne_{m, n}^{m, n, z}(\nu_{d_j})-\Gne_{m, n}^{m, n, z}(\nu_{d_j-1}) \le t_j \text{ for } j \in [q]\} \\
&\le \P\{\Gne_{m, n+1}^{m, n, z}(\nu_{r_i})-\Gne_{m, n+1}^{m, n, z}(\nu_{r_i+1}) \ge -s_i \text{ for } i \in [p], \\ 
&\quad \quad \ \ \Gne_{m, n+1}^{m, n, z}(\nu_{d_j})-\Gne_{m, n+1}^{m, n, z}(\nu_{d_j-1}) \le t_j \text{ for } j \in [q]\} \\
&= \P\{E_1\} \\
&\le \P\{E_0\} + \exp\{-c(m+n)(\underline{\Min}-z)^3\} \quad \text{ when } z < \underline{\Min}. 
\end{align*}
A similar sequence of steps using \eqref{E27} also yields 
\begin{align*}
\Bcdf_{\nu}^{m, n, \hor}(s, t) &\ge \P\{E_2\} \ge \P\{E_0\} - \exp\{-c(m+n)(z-\overline{\Min})^3\} \quad \text{ when } z > \overline{\Min}.
\end{align*}
Then the proof is complete in view of \eqref{E25}. 
\end{proof}

\begin{figure}
\centering
\begin{overpic}[scale=0.5]{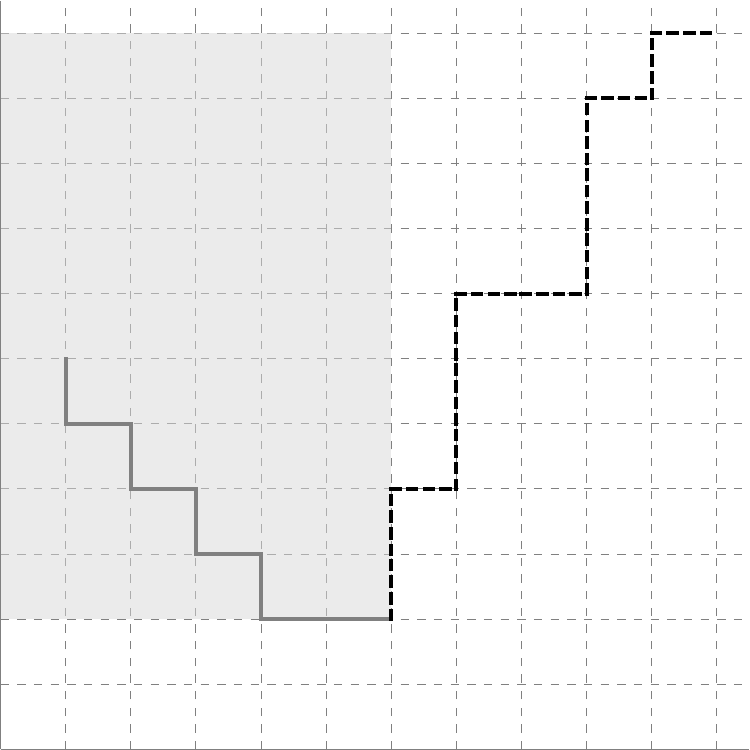}
\put(19, 36){$\nu$}
\put(97, 94){$(m+1, n+1)$}
\put(45, 12){$(k, v)$}
\put(3, 54){$(u, l)$}
\end{overpic}
\caption{Illustrates the justification for the second step in \eqref{E26}. If the geodesic (dashed black) from $(k, v)$ to $(m+1, n+1)$ visits $(m, n+1)$ then a.s.\ the geodesic from any vertex in $[k] \times ([n+1] \smallsetminus [v-1])$ (shaded region) to $(m+1, n+1)$ also visits $(m, n+1)$. In particular, this is true for any vertex on $\nu$ (gray). }
\label{FPlan}
\end{figure}

For optimal use of Lemma \ref{LSqz} ahead we bound the variations of the functions $\{\bcdf_{p, q}^{z, \square}: p, q \in \bbZ_{\ge 0}, \square \in \{\hor, \ver\}\}$ with respect to the $z$-parameter. 
\begin{lem}
\label{LcdfLip}
Let $\delta > 0$, $p, q \in \bbZ_{\ge 0}$, $s \in \bbR^p$, $t  \in \bbR^q$ and $w, z \in (\delta, 1-\delta)$ with $w \ge z$. There exists a constant $C_0 = C_0(\delta) > 0$ such that 
\begin{align*}
0 &\le \bcdf_{p, q}^{z, \hor}(s, t) - \bcdf_{p, q}^{w, \hor}(s, t) \le C_0 (1+\one_{\{q > 0\}}\log q) (w-z) \\
0 &\le \bcdf_{p, q}^{w, \ver}(s, t) - \bcdf_{p, q}^{z, \ver}(s, t) \le C_0 (1+\one_{\{p > 0\}}\log p) (w-z).  
\end{align*}
\end{lem}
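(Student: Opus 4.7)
The lower bound $\bcdf_{p,q}^{z,\hor}(s,t) \ge \bcdf_{p,q}^{w,\hor}(s,t)$ follows by inspection of \eqref{Ebcdf}: for fixed $s, t$, each factor $\exp(-s_i^- z)$ and $1-\exp(-t_j^+(1-z))$ of $\bcdf^{z,\hor}$ is non-increasing in $z$. The $\ver$ analogue is similar, and by the identity $\bcdf_{p,q}^{z,\ver}(s,t) = \bcdf_{q,p}^{1-z,\hor}(t,s)$ (verified by substituting $1-z$ into \eqref{Ebcdf}), the $\ver$ upper bound with $\log p$ reduces to the $\hor$ upper bound with $\log q$. Hence it suffices to prove the upper bound in the $\hor$ case.

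I write $\bcdf^{z,\hor}_{p,q}(s,t) = A(z)B(z)$ with $A(z) = \exp(-zS)$, $S = \sum_{i=1}^p s_i^- \ge 0$, and $B(z) = \prod_{j=1}^q(1-\exp(-t_j^+(1-z)))$, so that
\[
-\partial_z \bcdf^{z,\hor}_{p,q}(s,t) = Se^{-zS} B(z) + A(z) F_q'(1-z),
\]
where $F_q(\alpha) = \prod_{j=1}^q(1-e^{-t_j^+\alpha})$ and both summands are non-negative. The first summand is at most $Se^{-zS} \le 1/(ez) \le 1/(e\delta)$ uniformly in $S \ge 0$. Since $A(z) \le 1$, the task reduces to the uniform bound $F_q'(\alpha) \le C_0(1+\one_{\{q>0\}}\log q)$ for $\alpha \in (\delta, 1-\delta)$.

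This key estimate is the heart of the argument. Assuming $t_j > 0$ for all $j$ (otherwise $F_q \equiv 0$) and setting $x_j = \exp(-t_j^+ \alpha) \in (0,1)$, one has
\[
\alpha F_q'(\alpha) = \sum_{j=1}^q (-x_j \log x_j) \prod_{k \ne j}(1-x_k).
\]
The crucial observation is the probabilistic identity
\[
\sum_{j=1}^q x_j \prod_{k \ne j}(1-x_k) = \P\{\text{exactly one success among $q$ independent Bernoulli trials with parameters } x_j\} \le 1.
\]
I would split the sum at the threshold $x_j = 1/q$. For indices with $x_j \ge 1/q$ the estimate $-x_j \log x_j \le x_j \log q$ together with the identity above contributes at most $\log q$. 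For indices with $x_j < 1/q$, assuming $q \ge 3$ so that $1/q < 1/e$, the monotonicity of $x \mapsto -x \log x$ on $(0, 1/e)$ gives $-x_j \log x_j \le (\log q)/q$, and summing at most $q$ such terms contributes at most $\log q$. Hence $\alpha F_q'(\alpha) \le 2\log q$ for $q \ge 3$, and the cases $q \in \{1, 2\}$ are covered by the trivial term-by-term bound $\alpha F_q'(\alpha) \le q/e$ coming from $-x\log x \le 1/e$.

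Assembling the pieces yields $-\partial_z \bcdf^{z,\hor}_{p,q}(s,t) \le C_0(1 + \one_{\{q > 0\}}\log q)$ uniformly on $(\delta, 1-\delta)$, and integrating over $[z,w]$ delivers the claimed Lipschitz estimate. I expect the main obstacle to be the $O(\log q)$ bound on $F_q'$: the naive term-by-term estimate via $-x\log x \le 1/e$ only yields $O(q)$, and the improvement to $O(\log q)$ depends crucially on exploiting the elementary symmetric identity rather than the pointwise bound.
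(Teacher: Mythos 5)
Your argument is correct, but the route to the $O(\log q)$ bound is genuinely different from the paper's. The paper bounds $\varphi_q(t) = \sum_{j}(-x_j\log x_j)\prod_{k\neq j}(1-x_k)$ (in the variables $x_j = e^{-t_j^+(1-z)}$) by a change of variables $u_k = 1 - x_k$, then carries out a global maximization over $[0,1]^q$: it computes the gradient of $\psi_q$, shows that every interior critical point has all coordinates equal to a common value $v_q$ solving a transcendental equation, proves by contradiction that $v_q \le 1 - (e^2 q)^{-1}$, and evaluates the critical value using the critical equation to extract $2 + \log q$. You instead observe the identity $\sum_j x_j\prod_{k\neq j}(1-x_k) = \P\{\text{exactly one success among $q$ independent Bernoulli($x_j$) trials}\} \le 1$ and split at the threshold $x_j = 1/q$: for $x_j \ge 1/q$ the factor $-\log x_j$ is at most $\log q$ and the Bernoulli identity caps the sum; for $x_j < 1/q$ (with $q\ge 3$ so that $1/q < 1/e$) the monotonicity of $x\mapsto -x\log x$ on $(0,1/e)$ bounds each term by $(\log q)/q$, and there are at most $q$ of them. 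This is shorter, avoids the critical-point machinery entirely, and gives an entirely elementary reason for the logarithm; the paper's variational argument is more machinery but pins down the precise extremal configuration (all $u_k$ equal) which could be useful if one wanted sharp constants. Your reduction $\bcdf_{p,q}^{z,\ver}(s,t) = \bcdf_{q,p}^{1-z,\hor}(t,s)$ for the second inequality checks out against \eqref{Ebcdf} and is a cleaner way to dispatch the $\ver$ case than the paper's ``similar'' remark. One small stylistic note: it is worth saying explicitly that when some $t_j \le 0$ the factor $1-e^{-t_j^+\alpha}$ vanishes identically, so $F_q'\equiv 0$ and there is nothing to prove — you mention this in passing but it deserves a full sentence before the substitution $x_j = e^{-t_j^+\alpha}$, since that substitution implicitly needs $x_j < 1$.
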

\begin{proof}
Writing $s = (s_i)_{i \in [p]}$ and $t = (t_j)_{j \in [q]}$, the $z$-derivative of $\bcdf^z = \bcdf_{p, q}^{z, \hor}(s, t)$ is given by 
\begin{align}
\partial_z f^z = -f^z \sum_{i \in [p]} s_i^- - f^z \sum_{j \in [q]}\frac{t_j^+e^{-t_j^+(1-z)}}{1-e^{-t_j^+(1-z)}}, \label{E108}
\end{align}
where the  $j$th term in the second sum is interpreted as zero when $t_j \le 0$. Since $f^z > 0$, \eqref{E108} shows that $f^z$ is nonincreasing in $z$ proving the first inequality asserted in the lemma. 

To obtain the second inequality, first note that the absolute value of the first term in the right-hand side of \eqref{E108} is at most 
\begin{align}
f^z\sum_{i \in [p]}s_i^-  \le \sum_{i \in [p]}s_i^- \exp\bigg\{-\sum_{i \in [p]}s_i^- z\bigg\} \le \frac{1}{z}\sup_{t \ge 0} \{te^{-t}\} = \frac{1}{ez}. \label{E110}
\end{align}
Next bound the second term in the right-hand side of \eqref{E108} in absolute value from above by the function 
\begin{align}
\varphi_q(t) = \sum_{k \in [q]} t_k^+e^{-t_k^+(1-z)} \prod_{\substack{j \in [q] \\ j \neq k}} (1-e^{-t_j^+(1-z)}). \label{E109}
\end{align}
If $t_k \le 0$ for some $k \in [q]$ then all term vanish on the right-hand side. In the case $q \in \{0, 1\}$, one has $\varphi_q(t) \le e^{-1}(1-z)^{-1}$ similarly to \eqref{E110}. Assume that $q > 1$ and $t_k > 0$ for $k \in [q]$ from here until the last paragraph. Our objective is to maximize $\varphi_q$ over $\bbR^q_{>0}$. To aid the next computation, change the variables via $u_k = 1-\exp\{-t_k(1-z)\} \in (0, 1)$ for $k \in [q]$. Then \eqref{E109} turns into the following function of $u = (u_k)_{k \in [q]} \in (0, 1)^q$: 
\begin{align}
\psi_q(u) = -\frac{1}{1-z}\sum_{k \in [q]} (1-u_k) \log (1-u_k) \prod_{\substack{j \in [q] \\ j \neq k}} u_j. \label{E114}
\end{align} 
Note that $\psi_q$ extends continuously to $[0, 1]^q$, and the boundary values are given by 
\begin{align}
\psi_q(u)|_{u_j = 0} = 0 \quad \text{ and } \quad \psi_q(u)|_{u_j = 1} = \psi_{q-1}(u^j) \quad \text{ for } j \in [q], \label{E117}
\end{align}
where $u^j \in \bbR^{q-1}$ is obtained from $u$ by deleting the $j$th coordinate. 

The partial $u$-derivatives of $\psi_q$ are given by 
\begin{align}
\label{E111} 
\begin{split}
(1-z)\partial_r \psi_q(u) &= (1+\log (1-u_r))\prod_{\substack{j \in [q] \smallsetminus \{r\}}} u_j \\
& -\sum_{k \in [q] \smallsetminus \{r\}}(1-u_k) \log (1-u_k)\prod_{\substack{j \in [q] \smallsetminus \{k, r\}}} u_j   \\
&= \prod_{\substack{j \in [q] \smallsetminus \{r\}}} u_j \cdot \bigg(1 + \log (1-u_r) - \sum_{k \in [q] \smallsetminus \{r\}}\bigg(\frac{1}{u_k}-1\bigg)\log(1-u_k)\bigg). 
\end{split}
\end{align}
Note from \eqref{E111} that $u \in (0, 1)^q$ is a zero of the gradient $\nabla \psi_q$  if and only if 
\begin{align}
1+\frac{\log (1-u_r)}{u_r} = \sum_{k \in [q]}\bigg(\frac{1}{u_k}-1\bigg)\log(1-u_k) \quad \text{ for each } r \in [q].  \label{E112}
\end{align}
Since the function $x \mapsto x^{-1}\log (1-x)$ is strictly decreasing on $(0, 1)$, \eqref{E112} holds if and only if the coordinates $u_k = v$ for $k \in [q]$ for some $v \in (0, 1)$ such that   
\begin{align}
1 + \bigg(q-\frac{q-1}{v}\bigg) \log (1-v) = 0. \label{E113}
\end{align}
The left-hand side of \eqref{E113} defines a continuous function $g = g(v)$ on $(0, 1)$ that is decreasing since its derivative
\begin{align*}
g'(v) = \frac{(q-1)}{v^2}\log (1-v)-\bigg(q-\frac{q-1}{v}\bigg)\frac{1}{1-v} = -\frac{1}{1-v} + \frac{(q-1)}{v^2}\bigg(\log (1-v) + v\bigg) < 0, 
\end{align*}
where the last inequality can be seen from the expansion $\log (1-t) = -\sum_{i=1}^\infty t^i/i$. Furthermore, the limits of $g$ at the endpoints $0$ and $1$ can be computed as $q$ and $-\infty$, respectively. Therefore, there exists a unique $v_q \in (0, 1)$ such that \eqref{E113} holds. The next step is to verify that 
\begin{align}
v_q \le 1-\frac{1}{e^2q}. \label{E116}
\end{align}
Arguing by contradiction, suppose that \eqref{E116} is false. Then, by the monotonicity of $g$, 
\begin{align}
0 = g(v_q) < g\bigg(1-\frac{1}{e^2q}\bigg) = 1 - \frac{1-\dfrac{1}{e^2}}{1-\dfrac{1}{e^2q}} (2 + \log q) \le 1 - 2\bigg(1-\frac{1}{e^2}\bigg) < 0, 
\end{align}
a contradiction. Therefore, \eqref{E116} holds. 

Now, setting $u_j = v_q$ for $j \in [q]$ in \eqref{E114} leads to 
\begin{align}
(1-z)\psi_q((v_q)_{j \in [q]}) &= -q(1-v_q) \log(1-v_q) v_q^{q-1} \nonumber \\
&\le -(1-v_q) \log(1-v_q)-(q-1)(1-v_q) \log(1-v_q) v_q^{q-1} \nonumber \\
&= -(1-v_q) \log(1-v_q)-(1+\log(1-v_q)) v_q^{q} \label{E115}\\
&\le \frac{1}{e} +(1+\log q)v_q^q \le 2 + \log q. \label{E118}
\end{align}
Line \eqref{E115} above comes from \eqref{E113}. The first inequality in \eqref{E118} bounds the two terms of \eqref{E115} separately and uses \eqref{E116}. 

From \eqref{E118}, the structure of the boundary values \eqref{E117} and the positivity of \eqref{E114}, one concludes that 
\begin{align}
\sup_{t \in \bbR^q} \varphi_q(t) = \sup_{t \in \bbR_{>0}^q} \varphi_q(t) = \sup_{u \in [0, 1]^q} \psi_q(u) \le \frac{2+\log q}{1-z}. \label{E119}
\end{align}
Combining \eqref{E119} with the bounds in the case $q \in \{0, 1\}$ and \eqref{E110} gives 
\begin{align*}
\partial_z f^z \le C_0 (1 + \one_{\{q > 0\}}\log q) \quad \text{ for } z \in (\delta, 1-\delta)
\end{align*}
for some constant $C_0 = C_0(\delta) > 0$. Then the second inequality of the lemma follows from the mean value theorem. 

The proofs of the remaining inequalities are similar,  and are therefore omitted. 
\end{proof}

\begin{proof}[Proof of Theorem \ref{TBuse}]
Let $(m, n) \in S_\delta \cap \bbZ_{\ge N_0}^2$ where $N_0 = N_0(\delta, \epsilon) > 0$ is a constant to be chosen below. Let $\nu$ be a down-right path contained in $[1, \epsilon (m+n)^{2/3}]^2$ taking $N_0$ large enough to ensure that the preceding set is nonempty.  Write $(u, l) = \nu_1$ and $(k, v) = \nu_{\ell{(\nu)}}$ for the first and last vertices, respectively, on $\nu$. Let 
\begin{align*}
w = \overline{\Min} + \bigg(\frac{r \log (m+n)}{m+n}\bigg)^{1/3} \quad \text{ and } \quad z = \underline{\Min}-\bigg(\frac{r \log (m+n)}{m+n}\bigg)^{1/3}, 
\end{align*}
where $\underline{\Min} = \Min(m+1-k, n+1-v)$, $\overline{\Min} = \Min(m+1-u, n+1-l)$ and $r = r(\delta) > 0$ is another constant to be specified below. Lemmas \ref{LShpMinBnd}(b) and \ref{LMinEst} combined with the fact that $k, l \le \epsilon (m+n)^{2/3}$ imply that $w, z \in (\epsilon_0, 1-\epsilon_0) > 0$ for some constant $\epsilon_0 = \epsilon(\delta) > 0$, provided that $N_0$ is sufficiently large. Let $p = k-u = \hash R_{\nu}$, $q = l-v = \hash D_{\nu}$, $s \in \bbR^p$ and $t \in \bbR^q$. From the choice of $z$ and the first inequalities in Lemmas \ref{LSqz} and \ref{LcdfLip}, one obtains that
\begin{align*}
\Bcdf_{\nu}^{m, n, \hor}(s, t) &\le \bcdf_{p, q}^{z, \hor}(s, t) + \exp\{-c_0(m+n)(\underline{\Min}-z)^3\} \\
&= \bcdf_{p, q}^{z, \hor}(s, t) + \frac{1}{(m+n)^{c_0r}} \\
&\le \bcdf_{p, q}^{\Min, \hor}(s, t) + C_0(1+\one_{\{q > 0\}}\log q) |z-\Min| + \frac{1}{(m+n)^{c_0 r}} \\
&\le \bcdf_{p, q}^{\Min, \hor}(s, t) + C_0(1+\one_{\{q > 0\}}\log q) \bigg\{|\underline{\Min}-\Min| + \bigg(\frac{r\log (m+n)}{m+n}\bigg)^{1/3}\bigg\} + \frac{1}{(m+n)^{c_0 r}} \\
&\le \bcdf_{p, q}^{\Min, \hor}(s, t) + 2C_0 (1+\one_{\{q > 0\}}\log q) \bigg(\frac{r\log (m+n)}{m+n}\bigg)^{1/3} + \frac{1}{(m+n)^{c_0 r}}
\end{align*}
for some constants $c_0 = c_0(\delta) > 0$, $C_0 = C_0(\delta) > 0$ and sufficiently large $N_0$. The last inequality above holds because  
\begin{align*}
|\underline{\Min}-\Min| \le \frac{C(k+v)}{m+n} \le \frac{C (k+l)}{m+n} \le \frac{2C\epsilon}{(m+n)^{1/3}}
\end{align*}
for some constant $C = C(\delta) > 0$ by virtue of Lemma \ref{LMinEst} and that $k, l \le \epsilon (m+n)^{2/3}$. Now choosing $r = \dfrac{1}{3c_0}$ yields 
\begin{align*}
\Bcdf_{\nu}^{m, n, \hor}(s, t)  \le \bcdf_{p, q}^{\Min, \hor}(s, t) + C_0 (1+\one_{\{q > 0\}}\log q) \bigg(\frac{\log (m+n)}{m+n}\bigg)^{1/3}
\end{align*}
after adjusting $C_0$. The complementary lower bound is established similarly using $w$ instead of $z$. The second set of bounds in the theorem are also proved similarly. 
\end{proof}

\subsection{Proof of the speed bounds for the competition interface}


\begin{proof}[Proof of Theorem \ref{TCif}]
Let $n \in \bbZ_{>1}$ and $k \in [n-1]$. Restrict to the full probability event on which the competition interface $\cif$ is well-defined. In the next display, the first equality follows from \eqref{ETvmon} and definition \eqref{Ecif}, and the subsequent equalities are due to definitions \eqref{ETv} and \eqref{Einc}.  
\begin{align}
\label{E121}
\begin{split}
\{\cif^{\hor}_n > k\} &= \{(k+1, n-k+1) \in \tree^{\ver}\} \\
&= \{\G_{2, 1}(k+1, n-k+1) < \G_{1, 2}(k+1, n-k+1)\} \\
&= \{\B^{\hor}_{1, 1}(k+1, n-k+1) > \B^{\ver}_{1, 1}(k+1, n-k+1\}. 
\end{split}
\end{align}

Put $\Min = \Min(k+1, n-k)$ and let $z > \Min$ to be chosen below.  The next derivation begins with \eqref{E121}.  The second step writes the increments $\B^{\square}_{1, 1}(k+1, n-k+1)$ for $\square \in \{\hor,  \ver\}$ defined at \eqref{Einc} in terms of the process $\Gne^{z} = \Gne^{z, k+1, n-k+1}$ defined at \eqref{ELppne}.  The inequality at the end holds by virtue of Lemma \ref{LCros}. 
\begin{align}
&\P\{\cif^{\hor}_n \le k\} \nonumber\\
&= \P\{\B^{\hor}_{1, 1}(k+1, n-k+1) \le \B^{\ver}_{1, 1}(k+1, n-k+1)\} \nonumber\\
&= \P\{\Gne_{k+1, n-k+1}^{z}(1, 1)-\Gne_{k+1, n-k+1}^z(2, 1) \le \Gne_{k+1, n-k+1}^{z}(1, 1)-\Gne_{k+1, n-k+1}^z(1, 2)\}\nonumber\\
&\le \P\{\Gne_{k+2, n-k+1}^{z}(1, 1)-\Gne_{k+2, n-k+1}^z(2, 1) \le \Gne_{k+2, n-k+1}^{z}(1, 1)-\Gne_{k+2, n-k+1}^z(1, 2)\}.\label{E1000}
\end{align}
The next display applies a union bound using the following implication of planarity and the uniqueness of geodesics (as in the proof of Lemma \ref{LSqz}): If the vertex $(k+2, n-k+1)$ is on the geodesic from $(1, 2)$ to $(k+2, n-k+2)$ then it must also be on the two geodesics from $(1, 1)$ and $(2, 1)$ to $(k+2, n-k+2)$.  In terms of the northeast LPP process, 
this means that the inequality $\Gne_{k+1, n-k+2}^{z}(1, 2) < \Gne_{k+2, n-k+1}^{z}(1, 2)$ implies that $\Gne_{k+2, n-k+1}^{z}(i, j) = \Gne_{k+2, n-k+2}^{z}(i, j)$ for $(i, j) \in \{(1, 2), (1, 1), (2, 1)\}$.  Therefore, 
\begin{align}
&\text{the right-hand side of \eqref{E1000}} \nonumber\\
&\le \P\{\Gne_{k+2, n-k+2}^{z}(1, 1)-\Gne_{k+2, n-k+2}^z(2, 1) \le \Gne_{k+2, n-k+2}^{z}(1, 1)-\Gne_{k+2, n-k+2}^z(1, 2)\}\label{E1001}\\
&\qquad+ \P\{\Gne_{k+1, n-k+2}^{z}(1, 2) > \Gne_{k+2, n-k+1}^{z}(1, 2)\} \nonumber\end{align}
By virtue of \eqref{ElppDisId} and then \eqref{EBurke},  the right-hand side of \eqref{E1001} can be written as 
\begin{align}
&= \Gb^{z}(k+1, n-k+1)-\Gb^z(k+1, n-k)\}\nonumber\\
&\qquad + \P\{\Gb_{1, 0}^{z}(k+1, n-k) > \Gb_{0, 1}^{z}(k+1, n-k)\} \nonumber\\
&= \int_0^\infty ze^{-zx}\int_x^\infty (1-z)e^{-(1-z)y} \dd y \dd x + \P\{\Eh^{z, \hor}(k+1, n-k) > 0\}\nonumber\\
&= z + \P\{\Eh^{z, \hor}(k+1, n-k) > 0\}. \label{E122}
\end{align}

To avoid a vacuous statement, assume that $\delta \in (0, 1/2)$. Work with $k \in [\delta n, (1-\delta) n]$ and $n \ge N_0$ for some sufficiently large $N_0 = N_0(\delta) > 0$ that ensures that the preceding interval contains some integers. Then $(k+1, n-k) \in S_{\delta/2}$. Therefore, by the assumption $z>\Min$ and Proposition \ref{PGeoInStep},  
\begin{align}
\P\{\Eh^{z, \hor}(k+1, n-k) > 0\} \le \exp\{-c_0n(z-\Min)^3\}  \label{E123}
\end{align}
for some constant $c_0 = c_0(\delta) > 0$.    Set $z = \Min + \bigg(\dfrac{\log n}{3c_0n}\bigg)^{1/3}$ after increasing $N_0$ if necessary to have $z \in (0, 1)$. Resuming from \eqref{E122} and using \eqref{E123}, one obtains that 
\begin{align}
\P\{\cif^{\hor}_n \le k\} \le \Min + C\bigg(\frac{\log n}{n}\bigg)^{1/3} \label{E124}
\end{align}
for some constant $C = C(\delta) > 0$. For $x \in [\delta, 1-\delta]$, setting $k = \lc nx \rc$ in \eqref{E124} and using Lemma \ref{LMinEst} yield
\begin{align*}
\P\{\cif^{\hor}_n \le nx\} &\le \P\{\cif^{\hor}_n \le \lc nx \rc\} \le \Min(\lc nx \rc+1, n-\lc nx \rc) + C\bigg(\frac{\log n}{n}\bigg)^{1/3} \\
&\le \Min(x, 1-x) + \frac{c}{n} +C\bigg(\frac{\log n}{n}\bigg)^{1/3} \le \Min(x, 1-x) + C_0\bigg(\frac{\log n}{n}\bigg)^{1/3}
\end{align*}
for some constants $c = c(\delta) > 0$ and $C_0 = C_0(\delta) > 0$. The last bound also holds for $n \in \{2, \dotsc, N_0\}$ after adjusting $C_0$. 

To prove complementary lower bound, use Lemma \ref{LCros} to replace 
the 
inequality at \eqref{E1000} 
with 
\begin{align*}
&\P\{\B^{\hor}_{1, 1}(k+1, n-k+1) \le \B^{\ver}_{1, 1}(k+1, n-k+1)\}  \\
&\ge \P\{\Gne_{k+1, n-k+2}^{z}(1, 1)-\Gne_{k+1, n-k+2}^z(2, 1) \le \Gne_{k+1, n-k+2}^{z}(1, 1)-\Gne_{k+1, n-k+2}^z(1, 2)\}
\end{align*} 
and follow similar steps.  
\end{proof}

\begin{proof}[Proof of Theorem \ref{TCifSt}]
Let $w > 0$ and $z < 1$. From definition \eqref{Ecifbd}, one has 
\begin{align}
\label{E100}
\begin{split}
&\wh{\cif}^{w, z, \hor}_n < k \quad \text{ if and only if } \quad \Eh^{w, z, \hor}(k, n-k+1) > 0, \quad \text{ and }\\
&\wh{\cif}^{w, z, \hor}_n > k \quad \text{ if and only if } \quad \Eh^{w, z, \ver}(k+1, n-k) > 0
\end{split}
\end{align}
for $n \in \bbZ_{>0}$ and  $k \in [n] \cup \{0\}$. 

Set $w = z \in (0, 1)$ from here on. Let $n \in \bbZ_{>0}$, and pick $x \in [\delta, 1-\delta]$ assuming that $\delta \le 1/2$ to avoid vacuous statements.  It follows from Lemmas \ref{LMinEst} and \ref{LShpMinBnd}(b) that 
\begin{align}
\label{E101}
\begin{split}
|\Min(\lf nx \rf + 1, n - \lf n x \rf) -  \Min(x, 1-x)| \le \frac{A_0} {2n}
\end{split}
\end{align} 
for some constant $A_0 = A_0(\delta) > 0$. Using \eqref{E100}, Proposition \ref{PGeoInStep}(a) and \eqref{E101} leads to 
\begin{align}
\label{E102}
\begin{split}
\P\{\wh{\cif}_n^{z, \hor} \le nx\} &= \P\{\wh{\cif}_n^{z, \hor} < \lf nx \rf + 1\} = \P\{\Eh^{z, \hor}(\lf nx \rf + 1, n - \lf nx \rf) > 0\} \\
&\le \exp\{-c_0 (n+1) [z-\Min(\lf nx \rf + 1, n - \lf nx \rf)]^3 \} \\
&\le \exp\{-c_0 n [z-\Min(x, 1-x)]^3\}
\end{split}
\end{align}
for some constant $c_0 = c_0(\delta) > 0$ provided that $z-\Min(x, 1-x) \ge A_0 n^{-1}$. 
A similar computation using Proposition \ref{PGeoInStep}(b) also gives 
\begin{align}
\label{E103}
\begin{split}
1-\P\{\wh{\cif}_n^{z, \hor} \le nx\} &=  \P\{\wh{\cif}_n^{z, \hor} > \lf nx \rf \} = \P\{\Eh^{z, \ver}(\lf nx \rf +1, n- \lf nx \rf) > 0\} \\
&\le \exp\{-c_0 n[\Min(x, 1-x)-z]^3\}
\end{split}
\end{align}
provided that $z-\Min(x, 1-x) \le -A_0 n^{-1}$. Combining \eqref{E102} and \eqref{E103} yields (a). 

For part (b), pick a sufficiently large constant $N_0 = N_0(\delta, \epsilon) > 0$ such that $\epsilon N_0^{-1/3} \ge A_0 N_0^{-1}$, and work with $n \in \bbZ_{\ge N_0}$.  Assume also that $|\zeta(x, 1-x)-z| \le \epsilon_0$ where $\epsilon_0 = \epsilon_0(\delta) > 0$ is a constant to be chosen small. 
After the first line of \eqref{E102}, invoke Proposition \ref{PGeoInStepLB}(a) and appeal to \eqref{E101} to obtain 
\begin{align*}
\begin{split}
\P\{\wh{\cif}_n^{z, \hor} \le nx\} &\ge \exp\{-C_0 (n+1) [z-\Min(\lf nx \rf + 1, n - \lf nx \rf)]^3\} \\
&\ge \exp\{-C_0 n [z-\Min(x, 1-x)]^3\}
\end{split}
\end{align*}
for some constant $C_0 = C_0(\delta, \epsilon) > 0$ provided that $z - \Min (x, 1- x) \ge \epsilon n^{-1/3} \ge A_0 n^{-1}$,  $\epsilon_0$ is sufficiently small,  and $N_0$ is sufficiently large.  In the same vein, 
\begin{align*}
\begin{split}
1-\P\{\wh{\cif}_n^{z, \hor} \le nx\} &\ge \exp\{-C_0 n [\Min(x, 1-x)-z]^3\}
\end{split}
\end{align*}
provided that $z- \Min (x, 1- x) \le -\epsilon n^{-1/3}$. Hence, (b). 
\end{proof}

\appendix

\section{Some deterministic properties of exit points and last-passage times}
\label{SAux}

This section collects some general properties of last-passage times and exit points defined from arbitrary real weights.  Only subsections \ref{SsExGeod} and \ref{SsCros} are needed for the main text.  The purpose of the remaining material is to service Appendix \ref{SExBnd2}.

\subsection{Maximal exit points of geodesics}
\label{SsExGeod}

Let $w = \{w(i, j): i, j \in \bbZ\}$ be a collection of real (nonrandom) weights on $\bbZ^2$. As in \eqref{Elpp}, define the corresponding last-passage times by 
\begin{align}
G_{p, q}(m, n) = \max_{\pi \in \Pi_{p, q}^{m, n}} \sum_{(i, j) \in \pi} w(i, j) \quad \text{ for } m, n, p, q \in \bbZ.  \label{Elpp2}
\end{align}
Any maximizing path $\pi \in \Pi_{p, q}^{m, n}$ in \eqref{Elpp2} is called a geodesic (or $w$-geodesic) from $(p, q)$ to $(m, n)$. When $p \le m$ and $q \le n$, being a nonempty and finite set, $\Pi_{p, q}^{m, n}$ contains a geodesic,   and possibly more than one. 

Fix $(m_0, n_0) \in \bbZ^2$. Pick a down-right path $\nu$ with $\nu_{\ell(\nu)} \in \{u\} \times \bbZ_{\ge n_0}$ and $\nu_1 \in \bbZ_{\ge m_0} \times \{v\}$ for some $(u, v) \in \bbZ_{\ge m_0} \times \bbZ_{\ge n_0}$, and a base vertex $(i_0, j_0) = \nu_{b}$ for some $b \in [\ell(\nu)]$. Refer to \eqref{EVDef} for the definition of the set $V_\nu$. 


Recalling \eqref{EExitPts}, introduce the maximal (rightmost/leftmost) exit points of $w$-geodesics in $\Pi_{m_0, n_0}^{m, n}$ from $\nu$ by
\begin{align}
\label{EExitPts1}
\begin{split}
Z_{\nu, i_0, j_0}^\square(m, n) &= \max \{Z_{\pi, \nu, i_0, j_0}^\square: \pi \in \Pi_{m_0, n_0}^{m, n} \text{ is a $w$-geodesic}\}
\end{split}
\end{align}
for $(m, n) \in V_\nu$ and $\square \in \{+, -\}$. These are deterministic versions of the exit points in \eqref{EEx}. The right-hand side of \eqref{EExitPts1} is well-defined and finite since $\pi \cap \nu \neq \emptyset$ for each $\pi \in \Pi_{m_0, n_0}^{m, n} \neq \emptyset$. 


The next lemma records a monotonicity property for the exit points in \eqref{EExitPts1}. 
\begin{lem}
\label{LExMon}
The exit points in \eqref{EExitPts1} satisfy the following properties for each $(m, n) \in V_{\nu}$, $p \in \bbZ \cap [m_0, u]$ and $q \in \bbZ \cap [n_0, v]$. 
\begin{enumerate}[\normalfont (a)]
\item $Z^+_{\nu, i_0, j_0}(m, n)$ is nondecreasing and $Z^-_{\nu, i_0, j_0}(m, n)$ is nonincreasing as a function of $w(p, n_0)$. 
\item $Z^+_{\nu, i_0, j_0}(m, n)$ is nonincreasing and $Z^-_{\nu, i_0, j_0}(m, n)$ is nondecreasing as a function of $w(m_0, q)$. 
\end{enumerate}
\end{lem}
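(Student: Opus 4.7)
The approach is to reduce each monotonicity claim to a comparison at the level of events $\{Z^\square > r\}$ and then exploit a standard envelope (``crossing'') construction for pairs of up-right paths. For part (a) and $Z^+$, fix $r \ge 0$ and let $(i_r, j_r) = \nu_{b+r}$ (assuming $b+r \in [\ell(\nu)]$, else the event is trivially empty). By Lemma \ref{LExitStr}, $\{Z^+_{\nu, i_0, j_0}(m, n) > r\}$ coincides with the existence of a $w$-geodesic $\pi \in \Pi_{m_0, n_0}^{m, n}$ that visits some $(u, j_r)$ with $u > i_r$. Call such paths the \emph{$+$-class} and the remaining up-right paths in $\Pi_{m_0, n_0}^{m, n}$ the \emph{$-$-class}, and let $G^{\pm}_r(w)$ denote the maximum of $\sum_\pi w$ over each class. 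Then $\{Z^+ > r\} = \{G^+_r(w) \ge G^-_r(w)\}$, and the task reduces to showing, for every $r \ge 0$, that replacing $w$ by $w' = w + \delta\, \one_{(p, n_0)}$ with $\delta \ge 0$ preserves this inequality.

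The main tool is an envelope lemma for pairs of up-right paths. Given $\pi_1, \pi_2 \in \Pi_{m_0, n_0}^{m, n}$ with row profiles $f_{\pi_k}(i) = \max\{j : (i, j) \in \pi_k\}$, the functions $\min(f_{\pi_1}, f_{\pi_2})$ and $\max(f_{\pi_1}, f_{\pi_2})$ are nondecreasing and so are the row profiles of up-right paths $\sigma^{\mathrm{SE}}, \sigma^{\mathrm{NW}} \in \Pi_{m_0, n_0}^{m, n}$. A column-by-column accounting of which of the four paths covers each vertex yields the identity
\begin{equation*}
\sum_{\sigma^{\mathrm{SE}}} w + \sum_{\sigma^{\mathrm{NW}}} w = \sum_{\pi_1} w + \sum_{\pi_2} w
\end{equation*}
for every weight assignment $w$. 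Two further observations, both direct from the profile definitions, will be used: (i) the dual column profiles satisfy $g_{\sigma^{\mathrm{SE}}}(j) = \max(g_{\pi_1}(j), g_{\pi_2}(j))$ and $g_{\sigma^{\mathrm{NW}}}(j) = \min(g_{\pi_1}(j), g_{\pi_2}(j))$, so $\pi_1 \in +$ forces $\sigma^{\mathrm{SE}} \in +$ and $\pi_2 \in -$ forces $\sigma^{\mathrm{NW}} \in -$; (ii) the bottom-row vertex $(p, n_0)$ lies in $\sigma^{\mathrm{SE}}$ if and only if it lies in $\pi_1 \cup \pi_2$.

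The monotonicity argument is then short. Assume $G^+_r(w) \ge G^-_r(w)$, let $\pi_+$ be a $w$-maximizer in the $+$-class and $\pi_-$ a $w'$-maximizer in the $-$-class, and apply the envelope lemma to $(\pi_+, \pi_-)$ to obtain $\sigma^{\mathrm{SE}} \in +$ and $\sigma^{\mathrm{NW}} \in -$. Combining the crossing identity for $w$ with the class bounds $\sum_{\sigma^{\mathrm{NW}}} w \le G^-_r(w) \le G^+_r(w) = \sum_{\pi_+} w$ gives $\sum_{\sigma^{\mathrm{SE}}} w \ge \sum_{\pi_-} w$. Since $(p, n_0) \in \pi_-$ forces $(p, n_0) \in \sigma^{\mathrm{SE}}$ by (ii), the difference $\one\{(p, n_0) \in \sigma^{\mathrm{SE}}\} - \one\{(p, n_0) \in \pi_-\}$ is nonnegative, so adding $\delta$ times this difference preserves the inequality and yields $\sum_{\sigma^{\mathrm{SE}}} w' \ge \sum_{\pi_-} w' = G^-_r(w')$. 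Since $\sigma^{\mathrm{SE}}$ is a $+$-class path, we conclude $G^+_r(w') \ge \sum_{\sigma^{\mathrm{SE}}} w' \ge G^-_r(w')$, as required.

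The nonincreasing statement for $Z^-$ is obtained by swapping the roles of the $+$- and $-$-classes (applying Lemma \ref{LExitStr} with $\square = -$ so that the split is now by column $i_{-r}$ rather than row $j_r$), and part (b) is entirely analogous with rows and columns interchanged. The main delicate point will be the bookkeeping: correctly matching each envelope path with its class in observation (i), and tracking the boundary-vertex containment in (ii). Once these geometric facts are in place, the inequality falls out of two applications of maximality and one $0$-$1$ comparison of indicator functions.
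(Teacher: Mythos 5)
Your envelope/crossing approach is a genuine alternative to the paper's argument. The paper proves the monotonicity by contradiction, via a path surgery: it assumes $Z^+_{\nu,i_0,j_0}(m,n) > \wt{Z}^+_{\nu,i_0,j_0}(m,n)$, shows that a maximal $w$-geodesic $\pi$ and a $\wt w$-geodesic $\wt\pi$ must have their exits from $\nu$ and from the corner path $L = L_{m_0,n_0}^{m,n}$ ordered oppositely, locates an intersection $\pi_r = \wt\pi_{\wt r}$ strictly between the two exits, and splices the tails to produce a $\wt w$-geodesic $\rho$ with $Z_{\rho,\nu} = Z_{\pi,\nu}$, contradicting the assumed strict inequality. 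Your proof instead uses the min/max-profile envelope and the additive crossing identity, which is a well-known and perfectly respectable device. The envelope construction, the crossing identity, the column-profile formulas $g_{\sigma^{\mathrm{SE}}} = \max(g_{\pi_1},g_{\pi_2})$, $g_{\sigma^{\mathrm{NW}}} = \min(g_{\pi_1},g_{\pi_2})$, and the boundary-containment observation (ii) are all correct as stated.

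The proof as written has a genuine gap, however, in the very first reduction. You claim, citing Lemma \ref{LExitStr}, that $\{Z^+_{\nu,i_0,j_0}(m,n) > r\}$ \emph{coincides} with the existence of a $w$-geodesic visiting some $(u, j_r)$ with $u > i_r$. Lemma \ref{LExitStr} does not give an equivalence: part (a) gives $Z^+_{\pi,\nu} > r \Rightarrow Z^+_{\pi,L_r} > 0$, while part (b) gives only $Z^+_{\pi,L_r} > 0 \Rightarrow Z^+_{\pi,\nu} \ge r$ (not $> r$); the paper's Figure \ref{FExitStr} (top right) exhibits exactly a path with $Z^+_{\pi,L_r} > 0$ but $Z^+_{\pi,\nu} = r$. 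Consequently one only has the sandwich $\{Z^+_\nu > r\} \subseteq \{G^+_r \ge G^-_r\} \subseteq \{Z^+_\nu \ge r\}$. Your envelope step correctly establishes that $G^+_r(w) \ge G^-_r(w)$ implies $G^+_r(w') \ge G^-_r(w')$, but chaining through the sandwich gives only $Z^+_\nu(w) > r \Rightarrow Z^+_\nu(w') \ge r$, i.e.\ $Z^+_\nu(w') \ge Z^+_\nu(w) - 1$, which is off by one and does not prove the lemma. (A concrete failure mode: $Z^+_\nu(w) = r$ with the maximal geodesic stepping \emph{up} out of $\nu_{b+r}$ is compatible with there being no geodesic in your ``$+$-class'' at level $r$.)

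The repair is to work with the closed events $\{Z^+_\nu \ge r\}$ and the closed class split $g_\pi(j_r) \ge i_r$. Then the equivalence does hold: $g_\pi(j_r) = i_r$ means $\nu_{b+r} \in \pi$ directly (so $Z_{\pi,\nu} \ge b+r$), $g_\pi(j_r) > i_r$ gives $Z^+_{\pi,\nu} \ge r$ by Lemma \ref{LExitStr}(b), and conversely if $Z_{\pi,\nu} = b+s$ with $s\ge r$ then $\pi$ visits $\nu_{b+s}=(i_s,j_s)$ with $i_s\ge i_r$, $j_s\le j_r$, and crossing row $j_r$ between $\nu_{b+s}$ and $(m,n)$ gives $g_\pi(j_r) \ge i_s \ge i_r$. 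The envelope observations still hold verbatim with $\ge$ in place of $>$. Two further details then need addressing: the hypotheses of Lemma \ref{LExitStr} require $\nu$'s endpoints to lie at height $n$ and column $m$, so in the general setting of Lemma \ref{LExMon} you must first truncate $\nu$ to the rectangle $[m_0,m]\times[n_0,n]$ (harmless but not mentioned); and when $n < j_r$ or $m < i_r$ the crossing-row argument above breaks down, though in those degenerate configurations the event $\{Z^+_\nu \ge r\}$ is constant in $w$, so monotonicity is trivial. These are fixable, but the proof as written does not establish the statement.
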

\begin{proof}
Let $(m, n) \in V_{\nu}$ and $\pi \in \Pi_{m_0, n_0}^{m, n}$. Write $L$ for the L-shaped path $L_{m_0, n_0}^{m, n}$. 
Let $k, l \in [\ell(\pi)]$ denote the unique indices such that $\pi_k = \nu_{Z_{\pi, \nu}}$ and $\pi_l = L_{Z_{\pi, L}}$. Pick another path $\wt{\pi} \in \Pi_{m_0, n_0}^{m, n}$ and define the indices $\wt{k}, \wt{l} \in [\ell(\wt{\pi})]$ analogously. Note that if $k < \ell(\pi)$ then $\pi_{k+1} \in V_{\nu} \smallsetminus \nu \subset \bbZ_{>m_0} \times \bbZ_{>n_0}$ and therefore $\pi_{k+1} \not \in L$. Hence, $k \ge l$ and, similarly, $\wt{k} \ge \wt{l}$. 

To prove the first statement in (a), let $p \in \bbZ \cap [m_0, u]$, and consider real weights $\wt{w} = \{\wt{w}(i, j): i, j \in \bbZ\}$ such that $\wt{w}(i, j) \ge w(i, j)$ if $(i, j) = (p, n_0)$ and $\wt{w}(i, j) = w(i, j)$ otherwise. Let $\wt{Z}_{\nu, i_0, j_0}^{\square}(m, n)$ denote the exit points computed as in \eqref{EExitPts1} using the $\wt{w}$-weights in place of the $w$-weights. From here on, choose $\pi$ as a $w$-geodesic and $\wt{\pi}$ as a $\wt{w}$-geodesic. 

Arguing by contradiction, suppose that 
\begin{align}
Z_{\nu, i_0, j_0}^+(m, n) > \wt{Z}_{\nu, i_0, j_0}^+(m, n). \label{E-27}
\end{align}
Assume that $Z_{\pi, \nu}$ is maximal over all choices of $\pi$ as a $w$-geodesic. Then $Z_{\wt{\pi}, \nu} < Z_{\pi, \nu}$ because otherwise $Z_{\nu, i_0, j_0}^+(m, n) = [Z_{\pi, \nu}-b]^+ \le [Z_{\wt{\pi}, \nu}-b]^+ \le \wt{Z}_{\nu, i_0, j_0}^+(m, n)$, which would violate assumption \eqref{E-27}. This means that the vertex $\wt{\pi}_{\wt{k}}$ appears strictly earlier than $\pi_k$ in the sequence $\nu$ (namely, $\nu$ first visits $\wt{\pi}_{\wt{k}}$ and then $\pi_k \neq \wt{\pi}_{\wt{k}}$ in the down-right direction). The maximality of $Z_{\pi, \nu}$ and \eqref{E-27} also imply that $\pi$ is not a $\wt{w}$-geodesic. Consequently, 
\begin{align}
\sum_{s \in [\ell(\pi)]} \wt{w}(\pi_s) < \sum_{s \in [\ell(\wt{\pi})]} \wt{w}(\wt{\pi}_s).  \label{E-40}
\end{align}
Since $\pi$ is a $w$-geodesic, the strict inequality in \eqref{E-40} is possible only if $(p, n_0) \not \in \pi$ and $(p, n_0) \in \wt{\pi}$ due to the structure of the $\wt{w}$-weights. Since also $\pi_1 = \wt{\pi}_1 = (m_0, n_0) \in L$, necessarily $p > m_0$ and $Z_{\pi, L} < Z_{\wt{\pi}, L}$. Thus, $\pi_l$ comes strictly earlier than $\wt{\pi}_{\wt{l}}$ in the sequence $L$. 

Now because $L$ and $\nu$ are both down-right, $\pi$ and $\wt{\pi}$ are both up-right, $k \ge l$ and $\wt{k} \ge \wt{l}$, it follows from the orderings of the exit points above that $\pi_{r} = \wt{\pi}_{\wt{r}}$ for some indices $r \in \bbZ \cap (l, k]$ and $\wt{r} \in \bbZ \cap (\wt{l}, \wt{k}]$. See Figure \ref{FExitMon}. Let $\rho \in \Pi_{m_0, n_0}^{m, n}$ denote the up-right path obtained from $\wt{\pi}$ by replacing the segment  $\{\wt{\pi}_s: s \in [\ell(\wt{\pi})] \smallsetminus [\wt{r}-1]\}$ with the segment $\{\pi_s: s \in [\ell(\pi)] \smallsetminus [r-1]\}$ of $\pi$. Being disjoint from $L$, the preceding segments are both $w$-geodesics. Hence, $\rho$ is a $\wt{w}$-geodesic. Furthermore, by the construction, $Z_{\rho, \nu} = Z_{\pi, \nu}$ since $r \le k$. Then $\wt{Z}_{\nu, i_0, j_0}^+(m, n) \ge [Z_{\rho, \nu}-b]^+ = [Z_{\pi, \nu}-b]^+ = Z_{\nu, i_0, j_0}^+(m, n)$ contradicting \eqref{E-27}. The claimed monotonicity of $Z^+_{\nu, i_0, j_0}(m, n)$ follows as a consequence. 

The second statement in (a), and part (b) are obtained similarly. 
\end{proof}

\begin{figure}
\centering
\begin{overpic}[scale=0.65]{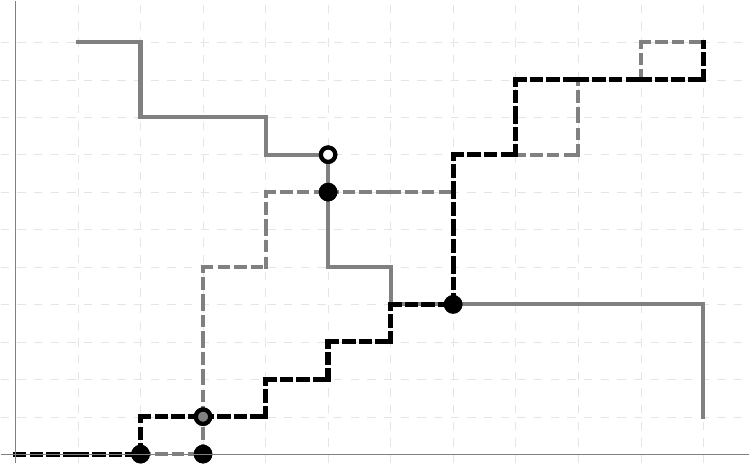}
\put(21, 48){$\nu$}
\put(45, 43){$\nu_b$}
\put(3, 25){$L$}
\put(14, 6){$\pi$}
\put(23, 20){$\wt{\pi}$}
\put(40, 8){$\rho$}
\put(21, 9){$\pi_r$\ \ $\wt{\pi}_{\wt{r}}$}
\put(16, -3){$\pi_l$}
\put(28, -3){$\wt{\pi}_{\wt{l}}$}
\put(45, 32){$\wt{\pi}_{\wt{k}}$}
\put(59, 17){$\pi_k$}
\end{overpic}
\caption{Illustrates the argument by contradiction in the proof of Lemma \ref{LExMon}. Assumption \eqref{E-27} implies that the exit points (black dots) of the maximal (rightmost) $w$-geodesic $\pi$ (dashed black) and $\widetilde{w}$-geodesic $\widetilde{\pi}$ (dashed gray) from the downright paths $\nu$ (gray) and $L$ (light gray) are positioned as displayed. Therefore, $\pi$ and $\wt{\pi}$ intersect at a vertex $\pi_r = \widetilde{\pi}_{\widetilde{r}}$ after both exiting $L$ but before exiting $\nu$. Then the path $\rho$ consisting of the segment of $\widetilde{\pi}$ until $\pi_r$ and the segment of $\pi$ after $\pi_r$ is also a $\widetilde{\pi}$-geodesic. $\rho$ exits $\nu$ at the same vertex as $\pi$ contradicting \eqref{E-27}.}
\label{FExitMon}
\end{figure}

\subsection{Crossing (comparison) lemma}
\label{SsCros}

The next lemma states a  well-known monotonicity for the increments of  planar first- and last-passage percolation.  
Different proofs can be found in 
\cite[Lemma 6.2]{rass-cgm-18} and \cite[Lemma 4.6]{sepp-cgm-18}. 

\begin{lem}
\label{LCros} 
For the LPP values in \eqref{Elpp2}, the following inequalities hold for $i, j, m, n \in \bbZ$ with $i \le m$ and $j \le n$: 
\begin{align*}
\begin{split}
G_{i, j}(m+1, n)-G_{i+1, j}(m+1, n) &\le  G_{i, j}(m, n)-G_{i+1, j}(m, n) \\
& \le
 G_{i, j}(m, n+1)-G_{i+1, j}(m, n+1) ;   
 \end{split} \\[3pt] 
\begin{split}
G_{i, j}(m, n+1)-G_{i, j+1}(m, n+1)   &\le  G_{i, j}(m, n)-G_{i, j+1}(m, n) \\
&\le  G_{i, j}(m+1, n)-G_{i, j+1}(m+1, n) . 
 \end{split} 
\end{align*}
\end{lem}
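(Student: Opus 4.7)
The plan is to establish each inequality by the standard crossing-and-swap argument for lattice last-passage percolation. By the coordinate symmetry $(i,j) \leftrightarrow (j,i)$ of the LPP setup, the two inequalities in the second display follow from the two in the first. I shall focus on the right inequality of the first display; the left one is handled by a completely analogous swap after relabeling endpoints. After rearrangement, the goal becomes
\begin{align*}
G_{i,j}(m,n) + G_{i+1,j}(m,n+1) \le G_{i,j}(m,n+1) + G_{i+1,j}(m,n).
\end{align*}
Fix $w$-geodesics $\pi_1 \in \Pi_{i,j}^{m,n}$ and $\pi_2 \in \Pi_{i+1,j}^{m, n+1}$. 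The degenerate case $i = m$ is handled separately since then $G_{i+1,j}(m,n) = -\infty$ by convention; I assume $i < m$ from here on.

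The crucial geometric input is that $\pi_1$ and $\pi_2$ must share a common vertex. For each $k$ in the common range $[i+j+1,\, m+n]$, let $f_1(k)$ and $f_2(k)$ denote the unique $x$-coordinates of $\pi_1$ and $\pi_2$, respectively, along the antidiagonal $\{(x,y) : x+y = k\}$. Each $f_\ell$ is non-decreasing with unit increments, so $f_1 - f_2$ is integer-valued and changes by at most $1$ between consecutive antidiagonals. At $k = i+j+1$ one has $f_2(k) = i+1$ and $f_1(k) \in \{i, i+1\}$, giving $f_1 \le f_2$; at $k = m+n$ one has $f_1(k) = m$ and $f_2(k) \in \{m-1, m\}$, giving $f_1 \ge f_2$. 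Since $f_1 - f_2$ switches weak sign while its increments remain in $\{-1, 0, 1\}$, there must exist $k^*$ with $f_1(k^*) = f_2(k^*)$, producing a shared vertex $p = (f_1(k^*), k^* - f_1(k^*)) \in \pi_1 \cap \pi_2$.

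With $p$ in hand, perform the swap. Let $\sigma_1$ be the concatenation of the segment of $\pi_1$ from $(i,j)$ up to $p$ with the segment of $\pi_2$ from $p$ onward to $(m,n+1)$; this is an admissible up-right path from $(i,j)$ to $(m,n+1)$. Define $\sigma_2 \in \Pi_{i+1,j}^{m,n}$ symmetrically by exchanging the two tails at $p$. A direct accounting gives $w(\sigma_1) + w(\sigma_2) = w(\pi_1) + w(\pi_2)$, since the weight at $p$ is counted exactly once in each of $w(\sigma_1)$ and $w(\sigma_2)$, matching its two appearances in $w(\pi_1) + w(\pi_2)$. Combining this identity with the LPP inequalities $w(\sigma_1) \le G_{i,j}(m,n+1)$ and $w(\sigma_2) \le G_{i+1,j}(m,n)$ yields the desired bound. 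The main obstacle here is the crossing step itself: one must ensure that two up-right lattice paths with interleaved endpoints truly share a lattice vertex rather than simply cross between vertices, and the antidiagonal monotonicity analysis above is what secures this at the discrete level.
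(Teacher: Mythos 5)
Your proof is correct and complete. The paper itself does not supply a proof of this lemma; it cites two external references (\cite[Lemma 6.2]{rass-cgm-18} and \cite[Lemma 4.6]{sepp-cgm-18}), so there is no in-paper argument to compare against. Your crossing-and-swap argument is the standard path-swapping proof of this monotonicity: the rearrangement into the superadditivity-type inequality $G_{i,j}(m,n) + G_{i+1,j}(m,n+1) \le G_{i,j}(m,n+1) + G_{i+1,j}(m,n)$ is the right first move, the antidiagonal coordinate functions $f_1, f_2$ with increments in $\{0,1\}$ give a discrete intermediate-value argument that correctly forces a shared lattice vertex $p$ from the interleaved boundary conditions ($f_1 \le f_2$ at the bottom level, $f_1 \ge f_2$ at the top), the tail swap at $p$ is weight-preserving because $p$ is counted exactly once in each of $\sigma_1, \sigma_2$, and the degenerate case $i=m$ is correctly dispatched via the $\max\emptyset = -\infty$ convention. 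The reduction of the left inequality to an analogous swap and of the second display to the first by the $(i,j)\leftrightarrow(j,i)$ symmetry is also sound. For completeness you might note that the cited lecture notes of Sepp\"al\"ainen give an alternative proof by induction on the recursion $G(m,n) = w(m,n) + \max\{G(m-1,n), G(m,n-1)\}$, which avoids the geometric crossing step; both routes are standard and of comparable length here.
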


\subsection{Induced path-to-point LPP and exit points}
\label{SsLppInd}

Resume with the setting in Section \ref{SsExGeod}. 
The weights and LPP values in \eqref{Elpp2} together with the path $\nu$ induce new weights $\underline{w} = \{\underline{w}(i, j): i, j \in \bbZ\}$ on $\bbZ^2$ defined as follows. If $(i, j) = \nu_k$ for some $k \in [\ell(\nu)]$ then 
\begin{align}
\underline{w}(i, j) = \one_{\{k > b\}}\{G_{m_0, n_0}(\nu_k)-G_{m_0, n_0}(\nu_{k-1})\} + \one_{\{k < b\}}\{G_{m_0, n_0}(\nu_{k})-G_{m_0, n_0}(\nu_{k+1})\} \label{EIndw}
\end{align}
In particular, $\underline{w}(i_0, j_0) = \underline{w}(\nu_b) = 0$. If $(i, j) \in \bbZ^2 \smallsetminus \nu$ then $\underline{w}(i, j) = w(i, j)$. The weights in \eqref{EIndw} are well-defined and finite due to the choice of $\nu$, and satisfy the identity 
\begin{align}
G_{m_0, n_0}(\nu_k)-G_{m_0, n_0}(i_0, j_0) = \one_{\{k > b\}} \sum_{r = b}^{k} \underline{w}(\nu_r) + \one_{\{k < b\}} \sum_{r=k}^b \underline{w}(\nu_r) \quad \text{ for } k \in [\ell(\nu)]. \label{ETelId}
\end{align}

Write $\underline{G}_{p, q}(m, n)$ for the last-passage time from $(p, q) \in \bbZ^2$ to $(m, n) \in \bbZ^2$ computed from the $\underline{w}$-weights as in \eqref{Elpp2}. Using the notation from \eqref{ERDsteps}, also define 
\begin{align}
\underline{G}_{\nu_k}^{\circ}(m, n) &= \max_{\pi \in \underline{\Pi}^{m, n}_{\nu, k}} \sum_{(i, j) \in \pi} \underline{w}(i, j) \quad \text{ for } (m, n) \in \bbZ^2 \text{ and } k \in [\ell(\nu)], \label{ELppRes}
\end{align}
where the admissible paths are given by 
\begin{align}
\underline{\Pi}^{m, n}_{\nu, k} &= 
\begin{cases}
\{\pi \in \Pi_{\nu_k}^{m, n}: \nu_k + (1, 0) \in \pi\} \quad &\text{ if } k > b \text{ and } k \in D_\nu \\
\{\pi \in \Pi_{\nu_k}^{m, n}: \nu_k + (0, 1) \in \pi\}\quad &\text{ if } k < b \text{ and } k \in R_{\nu} \\
\Pi_{\nu_k}^{m, n} \quad &\text{ otherwise. } \label{EResPath}
\end{cases} 
\end{align}
The second conditions in the first and second cases of \eqref{EResPath} mean that 
$\nu_k+ (0, 1) \in \nu$ and $\nu_k+(1, 0) \in \nu$, respectively. 

Let us refer to any maximizing path in \eqref{ELppRes} as a \emph{restricted} geodesic ($\underline{w}$-geodesic). 

Now define the path-to-point last-passage time $\underline{G}_{\nu}$ by 
\begin{align}
\begin{split}
\underline{G}_{\nu}(m, n) &= \max_{k \in [\ell(\nu)]} \bigg\{\one_{\{k > b\}} \sum_{r = b}^{k-1} \underline{w}(\nu_r)+ \one_{\{k < b\}} \sum_{r = k+1}^{b} \underline{w}(\nu_r) + \underline{G}_{\nu_k}^{\circ}(m, n)\bigg\} \\
&= \max_{k \in [\ell(\nu)]} \bigg\{G_{m_0, n_0}(\nu_k)+ \underline{G}_{\nu_k}^{\circ}(m, n)-\underline{w}(\nu_k)\bigg\}-G_{m_0, n_0}(i_0, j_0)
\end{split}
\label{EIndLpp}
\end{align}
for $(m, n) \in \bbZ^2$. The second equality above comes from \eqref{ETelId}. 
\begin{example}
\label{ExLpath}
In the case of L-shaped 
path $\nu = L_{i_0, j_0}^{u, v}$, since the first two cases in \eqref{EResPath} never occur, \eqref{ELppRes} is the same as the unrestricted version $\underline{G}_{\nu_k}(m, n)$, and \eqref{EIndLpp} is precisely the LPP considered in \cite[(A.2)]{sepp-cgm-18}. Furthermore, $k$ is a maximizer in \eqref{EIndLpp} if and only if $\nu_k \in \pi$ for some $\underline{w}$-geodesic $\pi \in \Pi_{i_0, j_0}^{m, n}$. 
\end{example}

As is clear from the definition \eqref{EIndLpp}, $\underline{G}_{\nu}(m, n)$ is finite if and only if $\underline{G}_{\nu_k}^\circ(m, n)$ is finite for some $k \in [\ell(\nu)]$. The latter is equivalent to the condition \eqref{EVDef}. 
On $V_\nu$, the LPP values in \eqref{Elpp2} and \eqref{EIndLpp} are further related through the following lemma. 


\begin{lem}
\label{LLppPath2Pt}
For the path-to-point LPP in \eqref{EIndLpp}, the following statements hold for each $(m, n) \in V_{\nu}$. 
\begin{enumerate}[\normalfont (a)]
\item $\underline{G}_{\nu}(m, n) = G_{m_0, n_0}(m, n)-G_{m_0, n_0}(i_0, j_0)$. 
\item If $\pi \in \Pi_{m_0, n_0}^{m, n}$ is a $w$-geodesic then $Z_{\pi, \nu} \in [\ell(\nu)]$ is a maximizer in \eqref{EIndLpp}. 
\item  If $k \in [\ell(\nu)]$ is a maximizer in \eqref{EIndLpp} and $\underline{\pi} \in \underline{\Pi}_{\nu, k}^{m, n}$ is a restricted $\underline{w}$-geodesic then there exists a $w$-geodesic $\pi \in \Pi_{m_0, n_0}^{m, n}$ such that $Z_{\pi, \nu} = Z_{\underline{\pi}, \nu}$. 
\end{enumerate}
\end{lem}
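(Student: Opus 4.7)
The core idea is to analyze how the up-right path $\underline{\pi}$ interacts with the down-right path $\nu$ and then exploit the definition of $\underline{w}$ on $\nu$ to reduce $\underline{G}_{\nu}(m, n)$ to the point-to-point value $G_{m_0, n_0}(m, n)$. First I would show that each $\underline{\pi} \in \underline{\Pi}_{\nu, k}^{m, n}$ decomposes as a contiguous \emph{$\nu$-block} $v_1 = \nu_k, v_2, \dotsc, v_s$ followed by an off-$\nu$ tail that never re-enters $\nu$. The argument is that if $\underline{\pi}$ visits $\nu_{k_0}$ and $\nu_{k_1}$ with $k_0 < k_1$ (resp.\ $k_0 > k_1$), comparing the up-right constraint on $\underline{\pi}$ with the down-right structure of $\nu$ forces $y(\nu_{k_0}) = y(\nu_{k_1})$ (resp.\ $x(\nu_{k_0}) = x(\nu_{k_1})$), and the portion of $\underline{\pi}$ between the two visits must coincide with a horizontal (resp.\ vertical) segment of $\nu$. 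Consequently consecutive $\nu$-visits in $\underline{\pi}$ differ in index by exactly $1$, and leave-and-re-enter scenarios are impossible. The restriction in \eqref{EResPath} then eliminates the ``wrong-direction'' block types: for $k > b$ only forward traversal along horizontal $\nu$-segments survives (the alternative requires $k \in D_\nu$ and hence starting with a step up, which is blocked), and symmetrically for $k < b$ only backward traversal along vertical segments survives.

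With this decomposition in hand, a case-by-case telescoping using the sign convention in \eqref{EIndw} yields, in every admissible configuration,
\[
\sum_{i=1}^s \underline{w}(v_i) - \underline{w}(v_1) = G_{m_0, n_0}(v_s) - G_{m_0, n_0}(v_1).
\]
Combined with $\underline{w} = w$ off $\nu$, and writing $k^* = Z_{\underline{\pi}, \nu}$ so that $v_s = \nu_{k^*}$, this gives the key identity
\[
G_{m_0, n_0}(\nu_k) + \sum_{(i, j) \in \underline{\pi}} \underline{w}(i, j) - \underline{w}(\nu_k) = G_{m_0, n_0}(\nu_{k^*}) + \sum_{(i, j) \in \mathrm{tail}(\underline{\pi})} w(i, j),
\]
where $\mathrm{tail}(\underline{\pi})$ denotes the off-$\nu$ portion of $\underline{\pi}$ strictly after $\nu_{k^*}$. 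The right-hand side is the $w$-weight of the concatenation in $\Pi_{m_0, n_0}^{m, n}$ obtained by prepending any $w$-geodesic from $(m_0, n_0)$ to $\nu_{k^*}$ to this tail, so it is at most $G_{m_0, n_0}(m, n)$. Taking a max over $k$ and $\underline{\pi}$ and subtracting $G_{m_0, n_0}(i_0, j_0)$ proves the upper-bound direction of (a).

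For the reverse inequality and part (b), take any $w$-geodesic $\pi \in \Pi_{m_0, n_0}^{m, n}$ and set $k_0 = Z_{\pi, \nu}$, with $\underline{\pi}$ the portion of $\pi$ starting from $\nu_{k_0}$. The exit-point condition forces the restriction in \eqref{EResPath} to hold (for instance, if $k_0 > b$ and $k_0 \in D_\nu$, a first step up would revisit $\nu_{k_0 - 1} \in \nu$, contradicting the last-visit property), and the block has length $s = 1$, so the identity above collapses to $G_{m_0, n_0}(\nu_{k_0}) + (\text{weight of tail}) = G_{m_0, n_0}(m, n)$. Thus $k_0$ saturates the max in \eqref{EIndLpp}, which yields (a) and (b). For (c), given a maximizing $k$ and a restricted $\underline{w}$-geodesic $\underline{\pi}$, the concatenation constructed in the second paragraph attains weight $G_{m_0, n_0}(m, n)$ by the maximizer property and is therefore a $w$-geodesic $\pi$; since its tail does not meet $\nu$ and its initial segment ends at $\nu_{k^*}$, one has $Z_{\pi, \nu} = k^* = Z_{\underline{\pi}, \nu}$.

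The main obstacle will be the case analysis behind the telescoping identity: the sign flips between $k > b$ and $k < b$ in \eqref{EIndw} cause naive telescoping to fail for ``wrong-direction'' blocks, so each admissible combination of ($k$ relative to $b$, block direction) must be matched carefully with the corresponding restriction in \eqref{EResPath} to confirm that the identity holds exactly in each surviving case.
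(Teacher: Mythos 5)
Your proof is correct and takes essentially the same route as the paper: both establish the two inequalities in part (a) by (1) examining a restricted $\underline{w}$-geodesic, showing via the structure of up-right/down-right intersections that its $\nu$-segment telescopes into $G_{m_0,n_0}(\nu_{Z_{\underline{\pi},\nu}})-G_{m_0,n_0}(\nu_k)$ and then comparing with a concatenated $w$-path, and (2) examining a $w$-geodesic and its exit point; parts (b) and (c) then drop out of the saturated inequalities exactly as you describe. Your telescoping identity is the paper's \eqref{E-17}, and the ``contiguous $\nu$-block plus off-$\nu$ tail'' decomposition is the paper's observation (implicit in the line ``the restrictions in \eqref{EResPath} imply that $\underline{\pi}_j=\nu_{k+j-1}$''), so the two arguments coincide.
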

\begin{rem}
The case $\nu = L_{i_0, j_0}^{u, v}$ of Lemma \ref{LLppPath2Pt} has the same content as \cite[Lemma A.1]{sepp-cgm-18} in view of the characterization of the maximizers in Example \ref{ExLpath}. 
\end{rem}

\begin{proof}[Proof of Lemma \ref{LLppPath2Pt}]
Let $(m, n) \in V_{\nu}$. 
Consider a $w$-geodesic $\pi \in \Pi_{m_0, n_0}^{m, n}$. Let $i \in [\ell(\pi)]$ denote the unique index such that $\pi_i = \nu_{z}$ where $z = Z_{\pi, \nu}$. Then, starting from \eqref{EIndLpp}, one obtains the lower bound 
\begin{align}
\label{E-15}
\begin{split}
\underline{G}_{\nu}(m, n)+G_{m_0, n_0}(i_0, j_0) &\ge G_{m_0, n_0}(\nu_z)+ \underline{G}_{\nu_z}^{\circ}(m, n)-\underline{w}(\nu_z) \\
&\ge G_{m_0, n_0}(\nu_z)+ \sum_{j \in [\ell(\pi)] \smallsetminus [i]} \underline{w}(\pi_j) \\
&= G_{m_0, n_0}(\nu_z)+ \sum_{j \in [\ell(\pi)] \smallsetminus [i]} w(\pi_j) \\
&= G_{m_0, n_0}(\nu_z) + G_{\nu_z}(m, n)-w(\nu_z) \\
&= G_{m_0, n_0}(m, n).  
\end{split}
\end{align}
The second inequality in \eqref{E-15} holds by definitions \eqref{ELppRes}-\eqref{EResPath} and the fact that if $i < \ell(\pi)$ then $\pi_{i+1} \not \in \nu$ and consequently $\{\pi_{j}: j \in [\ell(\pi)] \smallsetminus [i-1]\} \in \underline{\Pi}_{\nu, z}^{m, n}$. 
The first equality in \eqref{E-15} follows because $w$- and $\underline{w}$-weights coincide on 
$\bbZ^2 \smallsetminus \nu$. The subsequent equalities are due to the assumption that $\pi$ is a $w$-geodesic. 

To proceed in the converse direction, let $k \in [\ell(\nu)]$ be a maximizer in \eqref{EIndLpp}. Then $\underline{\Pi}_{\nu, k}^{m, n} \neq \emptyset$. Pick a restricted $\underline{w}$-geodesic $\underline{\pi} \in \underline{\Pi}_{\nu, k}^{m, n}$. Write $\underline{z} = Z_{\underline{\pi}, \nu} \in [\ell(\nu)]$, and $\underline{i} \in [\ell(\underline{\pi})]$ for the unique index such that $\underline{\pi}_{\underline{i}} = \nu_{\underline{z}}$. Then develop the upper bound 
\begin{align}
\label{E-16}
\begin{split}
\underline{G}_{\nu}(m, n)+G_{m_0, n_0}(i_0, j_0) &= G_{m_0, n_0}(\nu_{k})+ \underline{G}_{\nu_{k}}^{\circ}(m, n)-\underline{w}(\nu_{k})\\
&=  G_{m_0, n_0}(\nu_{k}) + \sum_{j \in [\ell(\underline{\pi})] \smallsetminus \{1\}} \underline{w}(\underline{\pi}_{j}) \\
&= G_{m_0, n_0}(\nu_{k}) + \sum_{j \in [\underline{i}] \smallsetminus \{1\}} \underline{w}(\underline{\pi}_{j}) + \sum_{j \in [\ell(\underline{\pi})] \smallsetminus [\underline{i}]} \underline{w}(\underline{\pi}_{j}) \\
&= G_{m_0, n_0}(\nu_{k}) + \sum_{j \in [\underline{i}] \smallsetminus \{1\}} \underline{w}(\underline{\pi}_{j}) + \sum_{j \in [\ell(\underline{\pi})] \smallsetminus [\underline{i}]} w(\underline{\pi}_{j}) \\
&= G_{m_0, n_0}(\nu_{\underline{z}})  + \sum_{j \in [\ell(\underline{\pi})] \smallsetminus [\underline{i}]} w(\underline{\pi}_{j}) \\
&\le G_{m_0, n_0}(\nu_{\underline{z}})  + G_{\nu_{\underline{z}}}(m, n)-w(\nu_{\underline{z}}) \\
&\le G_{m_0, n_0}(m, n). 
\end{split}
\end{align}
The first two equalities in \eqref{E-16} use that $k$ and $\underline{\pi}$ are maximizers in \eqref{EIndLpp} and \eqref{ELppRes}, respectively. The fourth equality holds because $\underline{\pi}_j \in \bbZ^2 \smallsetminus \nu$ for $j \in [\ell(\underline{\pi})] \smallsetminus [\underline{i}]$. To justify the last equality in \eqref{E-16}, consider the case $k > b$ first. Then the restrictions in \eqref{EResPath} imply that $\underline{\pi}_j = \nu_{k+j-1}$ for $j \in [\underline{i}]$, and $\underline{z} = k+\underline{i}-1$. Consequently and by virtue of \eqref{ETelId},
\begin{align}
\label{E-17}
\sum_{j \in [\underline{i}] \smallsetminus \{1\}} \underline{w}(\underline{\pi}_{j}) = \sum_{r \in [\underline{z}] \smallsetminus [k]} \underline{w}(\nu_{r}) = G_{m_0, n_0}(\nu_{\underline{z}})-G_{m_0, n_0}(\nu_{k}). 
\end{align} 
A symmetric argument also gives \eqref{E-17} in the case $k < b$.  
Consider now the situation $k = b$.  If $\underline{i} = 1$ then $\underline{z} = k$ and \eqref{E-17} holds trivially.  If $\underline{i} > 1$ then the two possibilities $\underline{z} > b$ and $\underline{z} < b$ are handled similarly to the cases $k > b$ and $k < b$,  respectively. 
Finally, the last two inequalities in \eqref{E-16} come from definition \eqref{Elpp2}. 

Combining \eqref{E-15} and \eqref{E-16} yields (a). Moreover, all inequalities in \eqref{E-15} and \eqref{E-16} are in fact equalities. Therefore, $z$ is a maximizer in \eqref{EIndLpp}, and the vertices $\{\underline{\pi}_j: j \in [\ell(\underline{\pi})] \smallsetminus [\underline{i}-1]\}$ extend to a $w$-geodesic $\pi' \in \Pi_{m_0, n_0}^{m, n}$ that exits $\nu$ at $Z_{\pi', \nu} = \underline{z}$. Hence, (b) and (c). 
\end{proof}

\subsection{An identity for the exit points}

Define the exit points associated to \eqref{EIndLpp} by 
\begin{align}
\begin{split}
\underline{Z}_{\nu, i_0, j_0}^\square(m, n) &=\max\left\{[k-b]^\square: k \text{ is a maximizer in \eqref{EIndLpp}}\right\} \text{ for each } \square \in \{+, -\}. 
\end{split}
\label{EExitPts2}
\end{align}
The following lemma connects the notions in \eqref{EExitPts1} and \eqref{EExitPts2}. A similar statement can be found in \cite[Example 2]{pime-21}. 

\begin{lem}
\label{LIndLppExit}
The exit points in \eqref{EExitPts1} and \eqref{EExitPts2} satisfy the relation
\begin{align*}
\underline{Z}_{\nu, i_0, j_0}^\square(m, n) = Z_{\nu, i_0, j_0}^{\square}(m, n) \quad \text{ for each } (m, n) \in V_{\nu} \text{ and } \square \in \{+, -\}. 
\end{align*}
\end{lem}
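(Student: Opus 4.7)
The plan is to deduce the identity directly from Lemma \ref{LLppPath2Pt} parts (b) and (c), supplemented with a short geometric observation about how an up-right path that starts at $\nu_k$ can revisit the down-right path $\nu$.

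For the inequality $Z^\square_{\nu, i_0, j_0}(m, n) \le \underline{Z}^\square_{\nu, i_0, j_0}(m, n)$, I would pick any $w$-geodesic $\pi \in \Pi_{m_0, n_0}^{m, n}$. By Lemma \ref{LLppPath2Pt}(b), the integer $k := Z_{\pi, \nu}$ is a maximizer in \eqref{EIndLpp}, so by definition \eqref{EExitPts2}, $Z^\square_{\pi, \nu, i_0, j_0} = [k-b]^\square \le \underline{Z}^\square_{\nu, i_0, j_0}(m, n)$. Maximizing over $w$-geodesics $\pi$ gives the claim.

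The converse inequality hinges on the following elementary fact, which I would prove as a preliminary observation: if $\underline{\pi} \in \underline{\Pi}^{m, n}_{\nu, k}$ is any admissible path for \eqref{ELppRes}, then $Z_{\underline{\pi}, \nu} \ge k$ (in $\nu$'s indexing) because $\nu_k = \underline{\pi}_1$, and moreover $Z_{\underline{\pi}, \nu} = k$ unless $\underline{\pi}_2 = \nu_k + (1, 0)$ and $k \in R_\nu$. The reason is that any two vertices $\nu_{k_1}, \nu_{k_2}$ with $k_1 < k_2$ visited by an up-right path must share a horizontal segment of $\nu$ (since $\nu_{k_2}.x \ge \nu_{k_1}.x$ and $\nu_{k_2}.y \le \nu_{k_1}.y$ force equality of $y$-coordinates when the path is up-right), and so $\underline{\pi}$ can reach any $\nu_{k'}$ with $k' > k$ only by leaving $\nu_k$ via the right step to $\nu_{k+1}$. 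The restriction \eqref{EResPath} for $k < b$ and $k \in R_\nu$ rules out precisely this scenario, forcing $Z_{\underline{\pi}, \nu} = k$ whenever $k < b$.

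Given this observation, fix $\square \in \{+, -\}$ and choose a maximizer $k$ of \eqref{EIndLpp} with $\underline{Z}^\square_{\nu, i_0, j_0}(m, n) = [k-b]^\square$. Since $(m, n) \in V_\nu$, the set $\underline{\Pi}^{m, n}_{\nu, k}$ is nonempty, so a restricted $\underline{w}$-geodesic $\underline{\pi}$ exists, and Lemma \ref{LLppPath2Pt}(c) supplies a $w$-geodesic $\pi \in \Pi_{m_0, n_0}^{m, n}$ with $Z_{\pi, \nu} = Z_{\underline{\pi}, \nu}$. For $\square = +$ the inequality $Z_{\underline{\pi}, \nu} \ge k$ yields $[Z_{\pi, \nu} - b]^+ \ge [k-b]^+$; for $\square = -$ and $k < b$ the equality $Z_{\underline{\pi}, \nu} = k$ yields $[Z_{\pi, \nu} - b]^- = [k-b]^-$, while the case $k \ge b$ is trivial since $[k-b]^- = 0$. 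In both signs, one concludes $Z^\square_{\nu, i_0, j_0}(m, n) \ge \underline{Z}^\square_{\nu, i_0, j_0}(m, n)$.

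The only mildly delicate step is the geometric observation in the second paragraph—verifying that, under the restriction \eqref{EResPath}, no restricted $\underline{w}$-geodesic starting at $\nu_k$ can return to $\nu$ at an index strictly greater than $k$ when $k < b$. Once this is in hand, the theorem reduces to a clean matching of extrema between $w$-geodesic exit points and maximizers of the induced path-to-point LPP, with the required correspondence already furnished by Lemma \ref{LLppPath2Pt}.
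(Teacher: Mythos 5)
Your strategy---deducing both inequalities from Lemma~\ref{LLppPath2Pt}(b)--(c)---is the same as the paper's proof, and the~$\le$ direction is correct. However, the geometric observation you introduce for the~$\ge$ direction is false as stated. You claim that every $\underline{\pi} \in \underline{\Pi}_{\nu,k}^{m,n}$ satisfies $Z_{\underline{\pi},\nu} \ge k$, with equality whenever $k < b$. This fails when $k < b$ and $k \in D_\nu$: if also $k \in R_\nu$, the restriction \eqref{EResPath} \emph{forces} $\underline{\pi}_2 = \nu_k + (0,1) = \nu_{k-1}$, so $\underline{\pi}$ revisits $\nu$ at a strictly smaller index and $Z_{\underline{\pi},\nu} \le k-1 < k$; if $k \notin R_\nu$ there is no restriction and $\underline{\pi}$ may still take the up step onto $\nu_{k-1}$. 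Your parenthetical justification---that two $\nu$-vertices visited by an up-right path must share a horizontal segment---holds only when the lower-index vertex is visited first; starting at $\nu_k$, up-right moves can trace a \emph{vertical} segment of $\nu$ and revisit $\nu_{k'}$ with $k' < k$.

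What the restriction \eqref{EResPath} actually enforces is the dichotomy: $k > b$ implies $Z_{\underline{\pi},\nu} \ge k$, while $k < b$ implies $Z_{\underline{\pi},\nu} \le k$. (In each case the forced or only-available first step prevents $\underline{\pi}$ from re-entering $\nu$ on the wrong side of $\nu_k$, but not on the right side.) Substituting this dichotomy, your remaining steps survive: for $\square = +$ and $k < b$, the inequality is trivial since $[k-b]^+ = 0$; for $\square = -$ and $k < b$, the one-sided bound $Z_{\underline{\pi},\nu} \le k$ gives $[Z_{\pi,\nu}-b]^- \ge [k-b]^-$, which is all you need (your claimed equality is not guaranteed, and is not needed). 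So the argument is repairable, but as written it rests on an incorrect geometric fact that you should correct before relying on it.
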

\begin{proof}
The claim is a consequence of Lemma \ref{LLppPath2Pt}(b)-(c), and definitions \eqref{EExitPts1} and \eqref{EExitPts2}. 
\end{proof}

\section{Exit point bounds for path-to-point exponential LPP}
\label{SExBnd2}

We now set out to reformulate Theorems \ref{TExitUB} and \ref{TExitLB} in terms of increment-stationary down-right-path-to-point exponential LPP. One benefit of this undertaking is to be able to connect the present work with the exit point bounds in recent articles \cite{ferr-ghos-nejj-19, pime-18}. 

\subsection{Increment-stationary path-to-point exponential LPP}

To introduce the path-to-point model, let $u, v \in \bbZ_{\ge 0}$ and $\nu$ be a down-right path such that $\nu_{\ell(\nu)} \in \{u\} \times \bbZ_{\ge 0}$ and $\nu_1 \in \bbZ_{\ge 0} \times \{v\}$. Fix a parameter $z \in (0, 1)$ and a base vertex $\nu_{b} = (i_0, j_0)$ for some $b \in [\ell(\nu)]$. Then using the i.i.d.\ $\Exp(1)$-distributed $\eta$-variables, 
define the weights $\overline{\w}^z = \{\overline{\w}^z(i, j): (i, j) \in \bbZ_{\ge 0}^2\}$ 
as follows: 
\begin{align}
\overline{\w}^z(i, j) &= \eta(i, j) \quad \text{ if } (i, j) \not \in \nu. \label{Ebw}
\end{align}
Otherwise, $(i, j) = \nu_k$ for some unique $k \in [\ell(\nu)]$. Then, recalling \eqref{ERDsteps}, set 
\begin{align}
\label{Epw}
\begin{split}
\overline{\w}^z(i, j) = \eta(i, j) \cdot \bigg(&\frac{1}{z}[\one\{k > b, k-1 \in R_\nu\} - \one\{k < b, k \in R_\nu\}] \\
+ &\frac{1}{1-z}[\one\{k < b, k+1 \in D_\nu\}-\one\{k > b, k \in D_\nu\}]\bigg). 
\end{split}
\end{align}
By definition, the weights $\overline{\w}^z$ are independent, marginally $\Exp(1)$-distributed on $\bbZ^2_{\ge 0} \smallsetminus \nu$, and $\overline{\w}^z(\nu_b) = 0$. The rule \eqref{Epw} for the remaining marginals along $\nu$ can be informally described as follows. As $\nu$ is traversed from $\nu_b$ to $\nu_{\ell(\nu)}$, each vertex encountered after $\nu_b$ receives an $\Exp(z)$ weight if preceded by a horizontal step and a $-\Exp(1-z)$ weight otherwise. The same also holds as $\nu$ is traversed from $\nu_b$ to $\nu_1$ except that the signs are now flipped. See Figure \ref{FLppPath2Pt}. 

\begin{figure}
\centering
\begin{overpic}[scale=0.6]{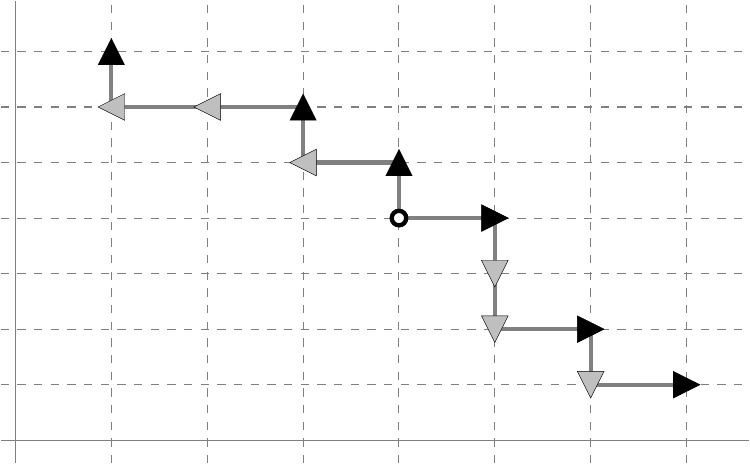}
\end{overpic}
\caption{Illustrates the assignment of the weights according to \eqref{Epw} to the vertices along a down-right path $\nu$ (gray). The choice of the base vertex $\nu_b$ is indicated (white dot) where the weight is zero. The remaining vertices along $\nu$ (marked with triangles) receive independent weights with the marginals chosen as follows: Horizontal and vertical triangles correspond to $\Exp[z]$ and $\Exp[1-z]$-distributed weights in absolute value, respectively. Black and gray triangles correspond to positive and negative weights, respectively.}
\label{FLppPath2Pt}
\end{figure}

\begin{example}\label{Ex1}
Consider the case $(i_0, j_0) = (0, 0)$. Necessarily, $\nu = L = L_{0, 0}^{u, v}$, the L-shaped path from $(0, v)$ to $(u, 0)$. Then comparing definitions \eqref{Ebw}-\eqref{Epw} with \eqref{ECplBlkBd} shows that 
\begin{align}
\label{E-13}
\overline{\w}^z(i, j) = \wb^z(i, j) \quad \text{ for } (i, j) \in V_{\nu}. 
\end{align}
\end{example}

Let us next relate the $\overline{\w}^z$-weights to the induced weights $\underline{\w}^z = \{\underline{\w}^z(i, j): (i, j) \in \bbZ^2_{\ge 0}\}$ obtained from the $\wb^z$-weights in the manner described around \eqref{EIndw}. More precisely, 
\begin{align}
\underline{\w}^z(i, j) =  \one_{\{k > b\}}\{\Gb^z(\nu_k)-\Gb^z(\nu_{k-1})\} + \one_{\{k < b\}}\{\Gb^z(\nu_k)-\Gb^z(\nu_{k+1})\} \label{EIndLppSt}
\end{align}
if $(i, j) = \nu_k$ for some $k \in [\ell(\nu)]$, and $\underline{\w}^z = \wb^{z}(i, j)$ otherwise. The following lemma records that $\overline{w}^z$ and $\underline{w}^z$ are identical in distribution on $V_\nu$. 
\begin{lem}
\label{Lwdist}
$\{\overline{\w}^z(i, j): (i, j) \in V_{\nu}\} \stackrel{\text{\rm{dist.}}}{=}  \{\underline{\w}^z(i, j): (i, j) \in V_{\nu}\}$. 
\end{lem}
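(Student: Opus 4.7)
The plan is to decompose $V_\nu = \nu \sqcup (V_\nu \setminus \nu)$ and establish the distributional equality block by block, exploiting the fact that the two blocks use disjoint collections of the underlying i.i.d.\ $\eta$-variables for both $\overline{\w}^z$ and $\underline{\w}^z$.

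First I would verify the geometric claim that $V_\nu \setminus \nu \subset \bbZ_{>0}^2$ and, more strongly, that no $(i,j) \in V_\nu \setminus \nu$ satisfies $(i,j) \le \nu_k$ coordinatewise for any $k \in [\ell(\nu)]$. The argument uses only that $\nu$ is down-right: if $(i',j') \le (i,j) \le \nu_k$ with $(i',j'), \nu_k \in \nu$, then the down-right structure forces either $j' = \nu_k^{(2)}$ (so $(i,j)$ lies on a horizontal segment of $\nu$) or $i' = \nu_k^{(1)}$ (so $(i,j)$ lies on a vertical segment), contradicting $(i,j) \notin \nu$. The same consideration with $k=1$ and $k=\ell(\nu)$ shows that any axis point of $V_\nu$ must already lie on $\nu$, so on $V_\nu \setminus \nu$ the weights $\wb^z$ are just bulk $\Exp(1)$'s. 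Consequently $\overline{\w}^z$ and $\underline{\w}^z$ both reduce to $\{\eta(i,j): (i,j) \in V_\nu \setminus \nu\}$ on this part, and the marginals agree trivially.

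Next I would match the joint laws on $\nu$. For $\overline{\w}^z$ this is built into the definition \eqref{Ebw}--\eqref{Epw}: the values $\{\overline{\w}^z(\nu_k)\}$ are independent, $\overline{\w}^z(\nu_b)=0$, and each other $\overline{\w}^z(\nu_k)$ is a signed exponential whose sign and rate are dictated by the step type and whether $k$ is to the right or left of $b$ along $\nu$. For $\underline{\w}^z$ the Burke property \eqref{EBurke} says exactly that $\{\Gb^z(\nu_{i+1}) - \Gb^z(\nu_i): i \in R_\nu\} \cup \{\Gb^z(\nu_{j-1}) - \Gb^z(\nu_j): j \in D_\nu\}$ are jointly independent with $\Exp(z)$ and $\Exp(1-z)$ marginals, respectively. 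A short case check on \eqref{EIndLppSt} then shows that for each $k \neq b$, $\underline{\w}^z(\nu_k)$ equals $\pm$ one of these Burke increments, with the sign matching the corresponding term in \eqref{Epw}: for $k>b$ it is $+$ (Burke increment in the forward direction of $\nu$), while for $k<b$ the telescoping reverses and produces the negative increment; when combined with whether the local step is a right-step ($\Exp(z)$) or down-step ($-\Exp(1-z)$) the signs precisely reproduce \eqref{Epw}. Finally $\underline{\w}^z(\nu_b) = 0 = \overline{\w}^z(\nu_b)$.

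To conclude I would argue independence between the two blocks for $\underline{\w}^z$, which combined with the already-established independence for $\overline{\w}^z$ finishes the proof. Each $\Gb^z(\nu_k)$ is a measurable function of $\{\eta(i,j): (i,j) \le \nu_k\}$, so $\{\underline{\w}^z(\nu_k): k \in [\ell(\nu)]\}$ is measurable with respect to $\{\eta(i,j): (i,j) \le \nu_k \text{ for some } k\}$. By the geometric claim from the first paragraph, this $\sigma$-algebra is independent of $\{\eta(i,j): (i,j) \in V_\nu \setminus \nu\}$, which governs $\underline{\w}^z$ on $V_\nu \setminus \nu$. The main technical step I expect to require care is the geometric/combinatorial claim isolating $V_\nu \setminus \nu$ from the coordinate-downset of $\nu$; the distributional identifications are then immediate from the Burke property.
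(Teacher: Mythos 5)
Your proof is correct and follows essentially the same route as the paper: identify the law of $\{\underline{\w}^z(\nu_k)\}$ via the Burke property \eqref{EBurke} and match it to \eqref{Epw}, observe that on $V_\nu\smallsetminus\nu$ both collections reduce to the i.i.d.\ bulk $\eta$'s, and glue the two blocks by noting they depend on disjoint sets of $\eta$-variables. The only difference is cosmetic: the paper disposes of the disjointness in one sentence (``the weights in \eqref{E-19} do not enter definition \eqref{EIndLppSt}''), while you spell out the underlying geometric fact that no point of $V_\nu\smallsetminus\nu$ lies coordinatewise below any $\nu_k$, which is a sound and welcome elaboration.
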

\begin{proof}
From definition \eqref{EIndLppSt} and by virtue of the Burke property \eqref{EBurke}, the weights $\{\underline{\w}^z(\nu_k): k \in [\ell(\nu)]\}$ are independent with $\underline{\w}^z(\nu_b) = 0$ and the remaining marginals given by 
\begin{align}
\underline{\w}^z(\nu_k) \sim 
\begin{cases}
\Exp(z) \quad &\text{ if } k > b \text{ and } k-1 \in R_{\nu}\\ 
-\Exp(1-z) \quad &\text{ if } k > b \text{ and } k \in D_{\nu} \\
-\Exp(z) \quad &\text{ if } k < b \text{ and } k \in R_{\nu} \\
\Exp(1-z) \quad &\text{ if } k < b \text{ and } k+1 \in D_{\nu}. 
\end{cases}
\label{E-18}
\end{align}
Furthermore, the weights 
\begin{align}
\label{E-19}
\{\underline{\w}^{z}(i, j): (i, j) \in V_{\nu} \smallsetminus \nu\} = \{\wb^{z}(i, j): (i, j) \in V_{\nu} \smallsetminus \nu\}
\end{align} 
are independent, and marginally $\Exp(1)$-distributed by \eqref{Ewbd} because the set $V_{\nu} \smallsetminus \nu$ does not intersect the axes. Finally, since the weights in \eqref{E-19} do not enter definition \eqref{EIndLppSt}, the collection $\{\underline{\w}^z(i, j): (i, j) \in V_{\nu}\}$ is also independent. This completes the proof in view of the discussion following \eqref{Epw} where the joint distribution of $\{\overline{\w}^{z}(i, j): (i, j) \in V_{\nu} \smallsetminus \nu\}$ is described. 
\end{proof}

Using $\overline{\w}^z$ as the weights, let $\overline{\G}^z_{p, q}(m, n)$ denote the last-passage time from $(p, q) \in \bbZ_{\ge 0}^2$ to $(m, n) \in \bbZ_{\ge 0}^2$ computed via \eqref{Elpp2}, and write $\overline{\G}^{z, \circ}_{\nu_k}(m, n)$ for the restricted last-passage times computed according to \eqref{ELppRes} for each $k \in [\ell(\nu)]$.  (Recall that $\overline{\G}^{z, \circ}_{\nu_k}(m, n)$ need not agree with $\overline{\G}^z_{\nu_k}(m, n)$ due to the additional restrictions on the admissible paths in \eqref{ELppRes}).  Now define the path-to-point last-passage time from $\nu$ to $(m, n)$ (with the base vertex $(i_0, j_0)$) by 
\begin{align}
\begin{split}
\overline{\G}^z_{\nu, i_0, j_0}(m, n) = \max_{k \in [\ell(\nu)]} \bigg\{&\one_{\{k > b\}} \cdot \sum_{r = b}^{k-1} \overline{\w}^z(\nu_r) +  \one_{\{k < b\}} \cdot \sum_{r = k+1}^b  \overline{\w}^z(\nu_r)+\overline{\G}^{z, \circ}_{\nu_k}(m, n)\bigg\}. \label{ELppDR}
\end{split}
\end{align}
The right-hand side is the same as that of the first line in \eqref{EIndLpp} except that the underlying weights are now $\overline{\w}^z$.  

As explained in the next example and later in Examples \ref{ExLppAntd2Pt}-\ref{ExLppLine2Pt} below, \eqref{ELppDR} simultaneously generalizes 
the $\Gb^z$-process from Section \ref{SstatLPP} and some line-to-point LPP models considered in recent literature. 
\begin{example}
\label{Ex2}
When $(i_0, j_0) = (0, 0)$ ($\nu = L = L_{0, 0}^{u, v}$), it follows from \eqref{E-13} and definitions \eqref{Elppbd} and \eqref{ELppDR} that $\overline{\G}^z_{L, 0, 0}(m, n) = \Gb^z(m, n)$ for $(m, n) \in V_L = ([u] \cup \{0\}) \times ([v] \cup \{0\})$. 
\end{example}

In the general situation, \eqref{ELppDR} enjoys a simple relation to the $\Gb^z$-process on $V_{\nu}$ through the following distributional identity. 

\begin{prop} 
\label{PLppDist}
For the path-to-point LPP in \eqref{ELppDR}, 
\begin{align*}
\{\overline{\G}^z_{\nu, i_0, j_0}(m, n): (m, n) \in V_{\nu}\} \stackrel{\text{\rm{dist.}}}{=} \{\Gb^z(m, n)-\Gb^z(i_0, j_0): (m, n) \in  V_{\nu}\}. 
\end{align*}
\end{prop}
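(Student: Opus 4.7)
The plan is to chain together Lemma \ref{Lwdist} and Lemma \ref{LLppPath2Pt}(a). The key observation is that the path-to-point last-passage time $\overline{\G}^z_{\nu, i_0, j_0}(m, n)$ for $(m, n) \in V_\nu$ is computed by the \emph{same deterministic rule} (\eqref{ELppDR}/\eqref{EIndLpp}) from the weights on $V_\nu$ as the induced path-to-point last-passage time $\underline{\G}^z_\nu(m, n)$ is computed from the induced weights $\underline{\w}^z$ of \eqref{EIndLppSt}. So Lemma \ref{Lwdist} transfers the distributional equivalence on the weight level to the LPP level, after which Lemma \ref{LLppPath2Pt}(a) identifies the induced LPP with the shifted $\Gb^z$-process.

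In detail: first I would verify that the map
$$\{\overline{\w}^z(i, j): (i, j) \in V_\nu\} \longmapsto \{\overline{\G}^z_{\nu, i_0, j_0}(m, n): (m, n) \in V_\nu\}$$
is a well-defined deterministic functional. The sums along $\nu$ in \eqref{ELppDR} involve only vertices $\nu_r \in \nu \subset V_\nu$, and each restricted last-passage time $\overline{\G}^{z,\circ}_{\nu_k}(m,n)$ involves up-right paths from $\nu_k$ to $(m,n)$, which by the definition \eqref{EVDef} of $V_\nu$ stay inside $V_\nu$ (since $V_\nu$ is closed under up-right steps taken within the box $[0,u]\times[0,v]$). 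Therefore the same deterministic functional $F$ applied to $\{\underline{\w}^z(i, j): (i, j) \in V_\nu\}$ produces $\underline{\G}^z_\nu(m, n)$ as in \eqref{EIndLpp}.

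By Lemma \ref{Lwdist}, the two weight collections on $V_\nu$ are equal in distribution, hence
$$\{\overline{\G}^z_{\nu, i_0, j_0}(m, n): (m, n) \in V_\nu\} \stackrel{\text{dist.}}{=} \{\underline{\G}^z_\nu(m, n): (m, n) \in V_\nu\}.$$
Finally, applying Lemma \ref{LLppPath2Pt}(a) to the deterministic weights $w = \wb^z$ with base point $(m_0, n_0) = (0, 0)$ (so that the induced weights are exactly $\underline{\w}^z$ of \eqref{EIndLppSt}) gives the almost sure identity
$$\underline{\G}^z_\nu(m, n) = \Gb^z(m, n) - \Gb^z(i_0, j_0) \quad \text{ for all } (m, n) \in V_\nu,$$
and chaining yields the proposition.

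The only step that requires any care is the stability of $V_\nu$ under up-right moves, needed to make sure that $\overline{\G}^z_{\nu, i_0, j_0}(m, n)$ truly is measurable with respect to the weights on $V_\nu$; once this is noted, the rest of the argument is simply a substitution of one description of the boundary weights for the other.
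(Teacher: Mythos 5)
Your proof is correct and follows essentially the same route as the paper's: both arguments observe that $\overline{\G}^z_{\nu, i_0, j_0}$ and $\underline{\G}^z_{\nu, i_0, j_0}$ (from \eqref{E-20}) are the same deterministic functional applied to the two weight collections, invoke Lemma \ref{Lwdist} to transfer the distributional identity, and then conclude with Lemma \ref{LLppPath2Pt}(a). Your additional observation that the restricted geodesics stay in $V_\nu$ makes explicit a measurability point that the paper passes over quickly ("The two maps \ldots are identical"), but it is not a new idea.
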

\begin{proof}
Consider the path-to-point LPP given by 
\begin{align}
\begin{split}
\underline{\G}_{\nu, i_0, j_0}^z(m, n) &= \max_{k \in [\ell(\nu)]} \bigg\{\one_{\{k > b\}} \sum_{r = b}^{k-1} \underline{\w}^z(\nu_r)+ \one_{\{k < b\}} \sum_{r = k+1}^{b} \underline{\w}^z(\nu_r) + \underline{\G}_{\nu_k}^{z, \circ}(m, n)\bigg\}
\end{split}
\label{E-20}
\end{align}
for $(m, n) \in \bbZ_{\ge 0}^2$. The two maps that compute the LPP values in \eqref{E-20} from the $\underline{\w}^z$-weights and the LPP values in \eqref{ELppDR} from the $\overline{\w}^z$-weights are identical. Hence, by Lemma \ref{Lwdist}, 
\begin{align}
\{\overline{\G}^z_{\nu, i_0, j_0}(m, n): (m, n) \in V_{\nu}\} \stackrel{\text{\rm{dist.}}}{=} \{\underline{\G}^z_{\nu, i_0, j_0}(m, n): (m, n) \in  V_{\nu}\}. \label{E-21}
\end{align}
Appealing to Lemma \ref{LLppPath2Pt}(a), the terms on the right-hand side of \eqref{E-21} can be written as 
\begin{equation*}
\underline{\G}^z_{\nu, i_0, j_0}(m, n) = \Gb^z(m, n)-\Gb^z(i_0, j_0) \quad \text{ for } (m, n) \in V_\nu. \qedhere 
\end{equation*}
\end{proof}

As a consequence of Proposition \ref{PLppDist}, the $\overline{\G}^z_{\nu, i_0, j_0}$-process inherits the Burke property \eqref{EBurke}.

\subsection{Distributional identities for the exit points}
 
Introduce the exit points associated with the path-to-point LPP \eqref{ELppDR} by 
\begin{align}
\begin{split}
\overline{Z}_{\nu, i_0, j_0}^{z, \square}(m, n) &= \max \{[k-b]^\square: k \text{ is a maximizer in \eqref{ELppDR}}\} \text{ for each } \square \in \{+, -\}.
\end{split}\label{EEx2}
\end{align}
These recover \eqref{EExhv} in the case $i_0 = j_0 = 0$; see Examples \ref{Ex1} and \ref{Ex2}. Moreover, one has the following key distributional connection to the exit points in \eqref{EEx}. 

\begin{prop}
\label{PExitDistId}
The exit points in \eqref{EEx2} satisfy the distributional identity: 
\begin{align*}
\{\overline{\Eh}^{z, \square}_{\nu, i_0, j_0}(m, n): (m, n) \in V_{\nu}, \square \in \{+, -\}\} \stackrel{\text{\rm{dist.}}}{=} \{\Eh^{z, \square}_{\nu, i_0, j_0}(m, n): (m, n) \in V_{\nu}, \square \in \{+, -\}\}. 
\end{align*}
\end{prop}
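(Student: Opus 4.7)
The plan is to realize both sides of the asserted distributional identity as the image of a single deterministic map applied to two random weight configurations that Lemma \ref{Lwdist} has already identified in distribution. Concretely, for each $(m,n) \in V_\nu$ and $\square \in \{+,-\}$, define a functional $\Phi^\square_{m,n}$ on the space of real weight arrays $\{W(i,j):(i,j)\in V_\nu\}$ by carrying out the path-to-point LPP optimization of the form \eqref{EIndLpp}/\eqref{ELppDR} with input $W$ and returning the $\square$-signed displacement from $b$ of the maximizing index, as in \eqref{EExitPts2}/\eqref{EEx2}. Comparing \eqref{ELppDR}--\eqref{EEx2} with the definition of $\Phi^\square_{m,n}$ gives, at once,
\begin{align*}
\overline{\Eh}^{z,\square}_{\nu,i_0,j_0}(m,n) = \Phi^\square_{m,n}\bigl(\overline{\w}^z|_{V_\nu}\bigr),
\end{align*}
while reading \eqref{EIndLpp}--\eqref{EExitPts2} against the same $\Phi^\square_{m,n}$ (with underlying weights $w=\wb^z$, so the induced weights \eqref{EIndw} become precisely $\underline{\w}^z$ of \eqref{EIndLppSt}) gives
\begin{align*}
\underline{Z}^\square_{\nu,i_0,j_0}(m,n) = \Phi^\square_{m,n}\bigl(\underline{\w}^z|_{V_\nu}\bigr).
\end{align*}
A short check using \eqref{EVDef} and \eqref{EResPath} confirms that $\Phi^\square_{m,n}$ really does depend only on $W|_{V_\nu}$, since every admissible up-right path from a vertex $\nu_k$ to $(m,n)\in V_\nu$ stays inside $V_\nu$.

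The second step is the pointwise (deterministic) identity from Lemma \ref{LIndLppExit} applied to the random weights $w=\wb^z$. Combined with the tautological identification $\Eh^{z,\square}_{\nu,i_0,j_0}(m,n) = Z^\square_{\nu,i_0,j_0}(m,n)$ built into \eqref{EEx} and \eqref{EExitPts1} at $(m_0,n_0)=(0,0)$, this yields
\begin{align*}
\Eh^{z,\square}_{\nu,i_0,j_0}(m,n) = \Phi^\square_{m,n}\bigl(\underline{\w}^z|_{V_\nu}\bigr)
\end{align*}
almost surely, jointly over all $(m,n)\in V_\nu$ and $\square\in\{+,-\}$, since both sides are computed from the same realization of the $\wb^z$-weights.

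To finish, I invoke Lemma \ref{Lwdist} to upgrade the joint distributional identity $\overline{\w}^z|_{V_\nu} \stackrel{d}{=} \underline{\w}^z|_{V_\nu}$ to the joint distributional identity of the two families $(\Phi^\square_{m,n}(\overline{\w}^z|_{V_\nu}))$ and $(\Phi^\square_{m,n}(\underline{\w}^z|_{V_\nu}))$ indexed by $(m,n) \in V_\nu$ and $\square\in\{+,-\}$. In view of the two preceding displays, this is precisely the claim. I do not expect any serious obstacle: the content of the proposition is a clean corollary of Lemmas \ref{Lwdist} and \ref{LIndLppExit} once one carries out the bookkeeping to confirm that \eqref{ELppDR} and \eqref{EIndLpp} define the \emph{same} functional of their respective weight inputs, which is transparent from the structural parallel between the two definitions.
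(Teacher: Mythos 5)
Your proposal is correct and follows essentially the same route as the paper's proof: the paper likewise introduces the intermediate exit points $\underline{Z}^{z,\square}_{\nu,i_0,j_0}$ computed from $\underline{\w}^z$, observes (without naming the functional explicitly) that \eqref{EEx2} and its $\underline{\w}^z$-analogue are the same map applied to $\overline{\w}^z$ and $\underline{\w}^z$, invokes Lemma \ref{Lwdist} for the distributional equality, and finishes with Lemma \ref{LIndLppExit}. Your added observation that the functional factors through the restriction to $V_\nu$ (because admissible up-right paths from $\nu_k$ to $(m,n)\in V_\nu$ stay in $V_\nu$) is a worthwhile detail that the paper leaves implicit, but it does not constitute a different argument.
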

\begin{proof}
Consider the exit points given by 
\begin{align}
\begin{split}
\underline{\Eh}_{\nu, i_0, j_0}^{z, \square}(m, n) &= \max \{[k-b]^\square: k \text{ is a maximizer in \eqref{E-20}}\} \text{ for each } \square \in \{+, -\}.
\end{split}\label{EEx3}
\end{align}
Definitions \eqref{EEx2} and \eqref{EEx3} are the same except that they input $\overline{\w}^z$ and $\underline{\w}^z$, respectively, as the weights. Hence, by virtue of Lemma \ref{Lwdist}, 
\begin{align*}
\{\overline{\Eh}^{z, \square}_{\nu, i_0, j_0}(m, n): (m, n) \in V_{\nu}, \square \in \{+, -\}\} \stackrel{\text{\rm{dist.}}}{=} \{\underline{\Eh}^{z, \square}_{\nu, i_0, j_0}(m, n): (m, n) \in V_{\nu}, \square \in \{+, -\}\}. 
\end{align*}
On the other hand,  
\begin{align*}
\underline{\Eh}^{z, \square}_{\nu, i_0, j_0}(m, n) =  \Eh^{z, \square}_{\nu, i_0, j_0}(m, n)  \quad \text{ for each } (m, n) \in V_{\nu} \text{ and } \square \in \{+, -\}, 
\end{align*}
by Lemma \ref{LIndLppExit} and definitions \eqref{EEx} and \eqref{EExitPts1}. 
\end{proof}

\subsection{Right-tail bounds for the exit points restated}

The next pair of propositions bounds the right tail of the exit points in \eqref{EEx2}. These statements are in fact equivalent to Theorems \ref{TExitUB} and \ref{TExitLB}, respectively, by virtue of Proposition \ref{PExitDistId} and Lemma \ref{LExMon}.  It is worth emphasis that the main text does not rely on the present section.  In particular,  the following propositions should be viewed as applications of our main results. 
\begin{prop}
\label{PExitUB2}
Fix $\delta > 0$. There exist finite positive constants $c_0 = c_0(\delta)$, $\epsilon_0 = \epsilon_0(\delta)$ and $N_0 = N_0(\delta)$ such that
\begin{align*}
\P\{\overline{\Eh}^{z, \square}_{\nu, i_0, j_0}(m+i_0, n+j_0) > s(m+n)^{2/3}\} \le \exp\{-c_0 \min \{s^3, m+n\}\}
\end{align*}
whenever $\square \in \{+, -\}$, $(m, n) \in S_\delta \cap \bbZ_{\ge N_0}^2$, $s \ge (m+n)^{-2/3}$, $z \in (0, 1)$ with $|z-\Min(m, n)| \le \epsilon_0 s(m+n)^{-1/3}$, $(i_0, j_0) \in \bbZ_{\ge 0}^2$, and $\nu$ is a down-right path on $\bbZ_{\ge 0}^2$ with $(i_0, j_0) \in \nu$ and $(m+i_0, n+j_0) \in V_\nu$.  
\end{prop}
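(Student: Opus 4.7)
The plan is to derive Proposition \ref{PExitUB2} directly from Theorem \ref{TExitUB} by appealing to the distributional identity supplied by Proposition \ref{PExitDistId}. In effect, Proposition \ref{PExitUB2} is the ``restatement in the path-to-point language'' mentioned in the surrounding discussion, so no new probabilistic input should be required.

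First I would observe that Proposition \ref{PExitDistId} yields, for each $\square \in \{+,-\}$, the marginal distributional equality
\begin{align*}
\overline{\Eh}^{z, \square}_{\nu, i_0, j_0}(m+i_0, n+j_0) \stackrel{\text{dist.}}{=} \Eh^{z, \square}_{\nu, i_0, j_0}(m+i_0, n+j_0).
\end{align*}
Consequently the probability to be controlled in Proposition \ref{PExitUB2} equals
\begin{align*}
\P\{\Eh^{z, \square}_{\nu, i_0, j_0}(m+i_0, n+j_0) > s(m+n)^{2/3}\},
\end{align*}
so the problem is reduced to the corresponding bound from Theorem \ref{TExitUB}. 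Note that the identity is valid because the hypothesis $(m+i_0, n+j_0)\in V_\nu$ is exactly what is needed for Proposition \ref{PExitDistId} to apply, and the path $\nu$ together with the base vertex $(i_0,j_0)=\nu_b$ is the same object on both sides.

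Next, to invoke Theorem \ref{TExitUB} I specialize that result to the equilibrium parameter choice $w=z$. In this case $\min\{w,z\}=\max\{w,z\}=z$, so both hypotheses ``$\min\{w,z\}\ge \Min(m,n)-\epsilon_0 s(m+n)^{-1/3}$'' and ``$\max\{w,z\}\le \Min(m,n)+\epsilon_0 s(m+n)^{-1/3}$'' reduce to $|z-\Min(m,n)|\le \epsilon_0 s(m+n)^{-1/3}$, which is precisely the standing hypothesis of Proposition \ref{PExitUB2}. Hence parts (a) and (b) of Theorem \ref{TExitUB} (with the same constants $c_0,\epsilon_0,N_0$, taken to be the smaller of the pair) give
\begin{align*}
\P\{\Eh^{z, \square}_{\nu, i_0, j_0}(m+i_0, n+j_0) > s(m+n)^{2/3}\} \le \exp\{-c_0 \min\{s^3, m+n\}\}
\end{align*}
for both signs $\square\in\{+,-\}$ in the stated regime $s\ge (m+n)^{-2/3}$. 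Combining with the distributional identity completes the argument.

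There is no substantive obstacle to overcome here: the real work is contained in Theorem \ref{TExitUB} (whose proof relies on the stationarity property \eqref{Eincst} together with Proposition \ref{PGeoInStep}) and in Proposition \ref{PExitDistId} (which in turn rests on the stronger Burke property \eqref{EBurke} via Lemma \ref{Lwdist}). The only minor bookkeeping is to verify that the choice $w=z$ is harmless, which is immediate once one rewrites the two asymmetric conditions of Theorem \ref{TExitUB} as the single symmetric condition imposed in Proposition \ref{PExitUB2}.
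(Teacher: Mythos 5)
Your argument is correct and is essentially identical to the paper's own (one-line) proof, which also deduces Proposition \ref{PExitUB2} directly from Proposition \ref{PExitDistId} together with Theorem \ref{TExitUB}. Your additional bookkeeping — specializing Theorem \ref{TExitUB} to $w=z$ and noting that the two one-sided hypotheses then collapse to the single two-sided condition $|z-\Min(m,n)|\le\epsilon_0 s(m+n)^{-1/3}$ — is the right way to make the reduction explicit.
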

\begin{proof}
The result follows from Proposition \ref{PExitDistId} and Theorem \ref{TExitUB}. 
\end{proof}

%

\begin{prop}
\label{PExitLB2}
Fix $\delta > 0$,  $\epsilon > 0$ and $K \ge 0$. There exist finite positive constants $c_0 = c_0(\delta, K)$, $C_0 = C_0(\delta, \epsilon, K)$ and $N_0 = N_0(\delta, \epsilon,  K)$ such that
\begin{align*}
\P\{\overline{\Eh}^{z, \square}_{\nu, i_0, j_0}(m+i_0, n+j_0) > s(m+n)^{2/3}\} \ge \exp\{-C_0s^3\}
\end{align*}
whenever $\square \in \{+, -\}$, $(m, n) \in S_\delta \cap \bbZ_{\ge N_0}^2$, $s \in [\epsilon, c_0(m+n)^{1/3}]$, $z \in (0, 1)$ with $|z-\Min(m, n)| \le K s(m+n)^{-1/3}$, $(i_0, j_0) \in \bbZ_{\ge 0}^2$, and $\nu$ is a down-right path on $\bbZ_{\ge 0}^2$ with $(i_0, j_0) \in \nu$ and $(m+i_0, n+j_0) \in V_\nu$.  
\end{prop}
\begin{proof}
Combine Proposition \ref{PExitDistId} and Theorem \ref{TExitLB}. 
\end{proof}

\subsection{Some line-to-point LPP as special cases}

Our aim in this part is to demonstrate that the increment-stationary line-to-point LPP processes introduced in \cite{ferr-ghos-nejj-19, pime-18} arise from \eqref{ELppDR} as special cases. To this end, it is convenient to first develop formula  \eqref{ELppDR} into an alternative form as follows. 

Introduce another collection $\w' = \{\w'(i, j) :(i, j) \in \bbZ_{\ge 0}^2\}$ of weights by 
\begin{align}
\label{Ew'}
\w'(i, j) = \one_{\{(i, j) \not \in \nu \}} \cdot \eta(i, j)
\end{align}
Let $\G'_{p, q}(m, n)$ denote the last-passage time from $(p, q) \in \bbZ_{\ge 0}^2$ and $(m, n) \in \bbZ_{\ge 0}^2$ computed with the $\w'$-weights as in \eqref{Elpp2}. Then define a new path-to-point LPP by 
\begin{align}
\label{ELppDR2}
\G^{\prime, z}_{\nu, i_0, j_0}(m, n) = \max_{k \in [\ell(\nu)]} \bigg\{\one_{\{k > b\}} \sum_{r = b}^{k} \overline{\w}^z(\nu_r)+ \one_{\{k < b\}} \sum_{r = k}^{b} \overline{\w}^z(\nu_r) + \G'_{\nu_k}(m, n)\bigg\}
\end{align}
for $(m, n) \in \bbZ_{\ge 0}^2$. This process a.s.\ coincides on $V_\nu$ with the LPP given by \eqref{ELppDR} as the next lemma shows. 

\begin{lem}
\label{LLppDRAlt}
$\overline{\G}^z_{\nu, i_0, j_0}(m, n) \stackrel{\text{a.s.}}{=} \G^{\prime, z}_{\nu, i_0, j_0}(m, n)$ for each $(m, n) \in V_\nu$. 
\end{lem}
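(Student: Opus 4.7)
The plan is to prove both inequalities in $\overline{\G}^z_{\nu, i_0, j_0}(m,n)=\G^{\prime,z}_{\nu, i_0, j_0}(m,n)$ directly by path manipulation, exploiting two features of the construction. First, $\overline{\w}^z$ and $\w'$ agree off $\nu$ (both equal $\eta$), while $\w'\equiv 0$ on $\nu$ and $\overline{\w}^z$ takes the signed $\Exp$-values of \eqref{Epw}, so the difference between the two LPP quantities is entirely accounted for by how weights along $\nu$ are bookkept. Second, an up-right path starting at $\nu_k$ and ending at $(m,n)\in V_\nu$ can intersect the down-right path $\nu$ only in (i) the singleton $\{\nu_k\}$, or (ii) a horizontal block $\{\nu_k,\ldots,\nu_{k''}\}$ with $k,\ldots,k''-1\in R_\nu$ (following $\nu$ forward along a horizontal run), or (iii) a vertical block $\{\nu_{k'''},\ldots,\nu_k\}$ with $k'''+1,\ldots,k\in D_\nu$ (following $\nu$ backward up a vertical run). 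This trichotomy drives the entire case analysis.

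For $\overline{\G}^z\le \G^{\prime,z}$, I would fix a maximizing $k$ in \eqref{ELppDR} and a restricted $\overline{\w}^z$-geodesic $\pi\in\underline{\Pi}^{m,n}_{\nu,k}$, let $k^\ast$ denote the last index of $\pi\cap\nu$ in case (ii), the first in case (iii), and $k^\ast=k$ in case (i), and split $\pi$ into its $\nu$-portion $\{\nu_k,\ldots,\nu_{k^\ast}\}$ (or its reverse) and its remaining sub-path $\pi^+$ from $\nu_{k^\ast}$ to $(m,n)$ which lies off $\nu$ except at its starting point. A telescoping computation then gives $\sum_{(i,j)\in\pi}\overline{\w}^z(i,j)=\sum \overline{\w}^z(\nu_r)+\sum_{(i,j)\in\pi^+\setminus\{\nu_{k^\ast}\}}\eta(i,j)$, where the sum over $r$ runs between $k$ and $k^\ast$; and the bound $\sum_{(i,j)\in\pi^+\setminus\{\nu_{k^\ast}\}}\eta(i,j)=\sum_{(i,j)\in\pi^+}\w'(i,j)\le \G'_{\nu_{k^\ast}}(m,n)$ combined with the identity $A_k+\sum_{r\text{ between }k,k^\ast}\overline{\w}^z(\nu_r)=B_{k^\ast}$ (using $\overline{\w}^z(\nu_b)=0$ when the summation range crosses $b$) yields $A_k+\overline{\G}^{z,\circ}_{\nu_k}(m,n)\le B_{k^\ast}+\G'_{\nu_{k^\ast}}(m,n)\le \G^{\prime,z}_{\nu,i_0,j_0}(m,n)$. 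For the reverse inequality, I would repeat the argument starting from a maximizing $k$ in \eqref{ELppDR2} and a $\w'$-geodesic $\pi'$, define $k^\ast$ from $\pi'\cap\nu$ as above, and verify that the sub-path $\pi^+$ lies in $\underline{\Pi}^{m,n}_{\nu,k^\ast}$: for case (ii) one checks $k^\ast=k''\notin D_\nu$ so the restriction is vacuous; for case (iii) one observes that the minimality of $k^\ast=k'''$ on $\pi'$ combined with the geometry forces the first step out of $\nu_{k^\ast}$ to be rightward whenever $k^\ast\in D_\nu$, exactly the restriction in \eqref{EResPath}.

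The crucial point, and the main obstacle, lies in the reverse direction: after the telescoping $\sum\overline{\w}^z(\pi^+)=\overline{\w}^z(\nu_{k^\ast})+\G'_{\nu_k}(m,n)$ one still needs $B_k\le B_{k^\ast}$ in order to conclude $B_k+\G'_{\nu_k}(m,n)\le A_{k^\ast}+\overline{\G}^{z,\circ}_{\nu_{k^\ast}}(m,n)$. This reduces to checking that every $\overline{\w}^z(\nu_r)$ picked up when shifting the base from $\nu_k$ to $\nu_{k^\ast}$ has the right sign a.s. In case (ii) the $r$'s lie in $\{k+1,\ldots,k''\}$ with $r-1\in R_\nu$, so \eqref{Epw} assigns positive $\Exp(z)/z$ weights when $r,k\ge b$ and negative ones when $r,k\le b$; in case (iii) the $r$'s lie in a vertical run of $\nu$ with the opposite sign pattern from \eqref{Epw}. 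In each of the four sub-cases determined by the positions of $k$ and $k^\ast$ relative to $b$, the sign pattern of \eqref{Epw}---which flips across $\nu_b$---precisely aligns with the direction in which the block sits relative to $\nu_b$ and hence gives $B_{k^\ast}\ge B_k$. Verifying this alignment across all sub-cases, while unpacking the various cases of \eqref{Epw} and \eqref{EResPath}, is the most tedious and error-prone part of the argument; conceptually, however, it is the statement that the weight assignment \eqref{Epw} is tailored so that moving the summation basepoint along $\nu$ always comes at a cost, matching the restriction \eqref{EResPath} that prevents double-counting of $\nu$-weights in $\overline{\G}^{z,\circ}$.
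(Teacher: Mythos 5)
Your proof is correct, but in the reverse direction ($\G^{\prime,z}\le\overline{\G}^z$) you take a genuinely different route from the paper. The paper's proof begins from a \emph{maximizing} $k$ in \eqref{ELppDR2} and proves a Claim: the $\w'$-geodesic $\pi'$ from $\nu_k$ already lies in the restricted class $\underline{\Pi}_{\nu,k}^{m,n}$, because if it took the forbidden first step, one of the weights $\overline{\w}^z(\nu_k)$ would be strictly negative and the neighboring index $k\mp 1$ would produce a strictly larger term in \eqref{ELppDR2}, contradicting maximality. Once this Claim is in place, the exit point $Z'$ necessarily lies on the far side of $k$ from $b$, and the weights $\overline{\w}^z(\nu_r)$ for $r$ strictly between $k$ and $Z'$ are a.s.\ \emph{positive}; a single one-sided positivity step then closes the inequality, and one never needs to confront a block running toward $b$. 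You instead bypass the maximality argument entirely: you work with an arbitrary $k$, allow the $\nu$-block of $\pi'$ to run in either direction, transplant the tail to start at the exit point $k^\ast$, and close the gap by verifying $B_k\le B_{k^\ast}$ through a direct sign analysis of \eqref{Epw} in each of the (roughly eight) sub-cases determined by the block's orientation and position relative to $b$. This is logically sound — each sub-case checks out, and the sign pattern of \eqref{Epw} (which flips across $\nu_b$ and distinguishes horizontal from vertical runs) is exactly calibrated to make every relevant $\overline{\w}^z(\nu_r)$ contribute the right sign — and it has the minor aesthetic advantage that the inequality $B_k+\G'_{\nu_k}(m,n)\le\overline{\G}^z_{\nu,i_0,j_0}(m,n)$ is established for \emph{all} $k$, not just the maximizer. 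But as you anticipate yourself, it is considerably more case-heavy than the paper's argument, which uses maximality to collapse all sub-cases into the single scenario where the block points away from $b$ and positivity is immediate. Either route works; the paper's is shorter to write carefully.
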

\begin{proof}

Let $(m, n) \in V_\nu$, $k \in [\ell(\nu)]$ and $\pi \in \underline{\Pi}_{\nu, k}^{m, n}$. Let $i \in [\ell(\pi)]$ denote the unique index such that $\pi_i = \nu_Z$ where $Z = Z_{\pi, \nu}$. Note from definition \eqref{EResPath} that $Z \ge k$ if $k > b$, and $Z \le k$ if $k < b$. (When $k = b$, both $Z \ge b$ and $Z \le b$ are possible).  
Using these implications with the fact that $\overline{\w}^z(\nu_b) = 0$, the agreement of the $\overline{\w}^z$ and $\w'$ weights on $V_\nu \smallsetminus \nu$, and definition \eqref{ELppDR2}, one obtains that 
\begin{align*}
&\one_{\{k > b\}} \cdot \sum_{r = b}^{k-1} \overline{\w}^z(\nu_r) +  \one_{\{k < b\}} \cdot \sum_{r = k+1}^{b} \overline{\w}^z(\nu_r) + \sum_{s \in [\ell(\pi)]} \overline{\w}^z(\pi_s) \\
&= (\one_{\{k > b\}} + \one_{\{k = b, Z \ge b\}}) \cdot \sum_{r = b}^{Z} \overline{\w}^z(\nu_r) +  (\one_{\{k < b\}} + \one_{\{k = b, Z \le b\}}) \cdot \sum_{r = Z}^{b} \overline{\w}^z(\nu_r) + \sum_{s \in [\ell(\pi)] \smallsetminus [i]} \overline{\w}^z(\pi_s) \\
&= \one_{\{Z > b\}} \cdot \sum_{r = b}^{Z} \overline{\w}^z(\nu_r) +  \one_{\{Z < b\}} \cdot \sum_{r = Z}^{b} \overline{\w}^z(\nu_r) + \sum_{s \in [\ell(\pi)] \smallsetminus [i]} \w'(\pi_s) \\
&\le \G^{\prime, z}_{\nu, i_0, j_0}(m, n). 
\end{align*}
In view of definition \eqref{ELppDR}, maximizing the first line above yields $\overline{\G}^z_{\nu, i_0, j_0}(m, n) \le \G^{\prime, z}_{\nu, i_0, j_0}(m, n)$. 

To obtain the converse inequality, assume that $k$ is maximizer in \eqref{ELppDR2} and pick a $\w'$-geodesic $\pi' \in \Pi_{\nu_k}^{m, n}$. \emph{Claim:} $\pi' \in \underline{\Pi}_{\nu, k}(m, n)$ a.s. To verify this, consider the case $k > b$ and $k \in D_\nu$. Then $\overline{\w}^z(\nu_k) \sim -\Exp[1-z]$ by definition \eqref{Epw}. Restrict to the a.s.\ event that $\overline{\w}^z(\nu_k) < 0$. For a contradiction, suppose that $\pi'_2 = \nu_k + (0, 1)$. Then, using the preceding inequality and also that $\pi'_2 = \nu_{k-1}$ and $\w'(\nu_k) = 0$ yields  
\begin{align*}
&\sum_{r = b}^{k} \overline{\w}^z(\nu_r)+\G'_{\nu_k}(m, n) = \sum_{r = b}^{k} \overline{\w}^z(\nu_r)+\sum_{s=1}^{\ell(\pi')} \w'(\pi'_s) = \sum_{r = b}^{k} \overline{\w}^z(\nu_r)+\sum_{s=2}^{\ell(\pi')} \w'(\pi'_s)\\
&< \sum_{r = b}^{k-1} \overline{\w}^z(\nu_r)+ \sum_{s=2}^{\ell(\pi')} \w'(\pi'_s) \le \sum_{r = b}^{k-1} \overline{\w}^z(\nu_r)+\G'_{\nu_{k-1}}(m, n), 
\end{align*}
which contradicts the choices of $k$ and $\pi'$.  (The strict inequality above comes from dropping the negative weight $\overline{\w}^z(\nu_k)$). The verification of the claim is similar in the case $k < b$ and $k \in R_{\nu}$, and is trivial in the remaining case. 

Write $i' \in [\ell(\pi')]$ for the unique index such that $\pi'_{i'} = \nu_{Z'}$ where $Z' = Z_{\pi', \nu}$. It follows from the claim above that, a.s., $Z' \ge k$ if $k > b$ and $Z' \le k$ if $k < b$. Furthermore, definition \eqref{Epw} implies that $\overline{\w}^z(\nu_r) \sim \Exp(z)$ when $k < r \le Z'$, and $\overline{\w}^z(\nu_r) \sim \Exp(1-z)$ when $Z' < r < k$ (including the case $k = b$). In particular, these weights are all a.s.\ positive. 
This together with the fact that the $\w'$-weights vanish on $\nu$ justifies the first inequality below. The subsequent step uses that the $\overline{\w}^z$ and $\w'$ weights agree on $V_\nu \smallsetminus \nu$, and $\overline{\w}^z(\nu_b) = 0$. The final inequality comes from definition \eqref{ELppDR}.   
\begin{align*}
\G^{\prime, z}_{\nu, i_0, j_0}(m, n) &= \one_{\{k > b\}} \sum_{r = b}^{k} \overline{\w}^z(\nu_r)+ \one_{\{k < b\}} \sum_{r = k}^{b} \overline{\w}^z(\nu_r) + \sum_{s \in [\ell(\pi')]} \w'(\pi'_s) \\
&\stackrel{\text{a.s.}}{\le} \one_{\{Z' \ge b\}} \cdot \sum_{r = b}^{Z'} \overline{\w}^z(\nu_r) +  \one_{\{Z' \le b\}} \sum_{r = Z'}^{b} \overline{\w}^z(\nu_r) + \sum_{s \in [\ell(\pi')] \smallsetminus [i']} \w'(\pi'_s) \\
&= \one_{\{Z' > b\}} \cdot \sum_{r = b}^{Z'-1} \overline{\w}^z(\nu_r) +  \one_{\{Z' < b\}}  \cdot \sum_{r = Z'+1}^{b} \overline{\w}^z(\nu_r) + \sum_{s \in [\ell(\pi')] \smallsetminus [i'-1]} \overline{\w}^z(\pi'_s) \\
&\le \overline{\G}^{z}_{\nu, i_0, j_0}(m, n). \qedhere
\end{align*}
\end{proof}

Let us now discuss two special cases of \eqref{ELppDR}.  

\begin{example}\label{ExLppAntd2Pt}
Let $z \in (0, 1)$ and $n \in \bbZ_{>0}$. Consider the down-right path $\nu$ of length $\ell(\nu) = 2n+1$ given by 
$\nu_k = \lf (k-1)/2 \rf, n-\lf k/2 \rf)$ for $k \in [2n+1]$. 
In other words, $\nu$ consists of the vertices on the anti-diagonals $\{(i, n-i): i \in [n] \cup \{0\}\}$ and $\{(i-1, n-i): i \in [n]\}$. Choose the base vertex on $\nu$ as $\nu_b = (i_0, j_0)$ where $j_0 = n-i_0$ and $b = 2i_0+1$ for some $i_0 \in [n]$. 

It follows from Lemma \ref{LLppDRAlt} and the structure of $\nu$ that 
\begin{align}
\label{E-28}
\begin{split}
&\overline{\G}^z_{\nu, i_0, j_0}(n, n) \stackrel{\text{a.s.}}{=} \max \limits_{\substack{k \in [2n+1] \\ k \text{ is odd}}} \bigg\{\one_{\{k > b\}} \sum_{r = b}^{k} \overline{\w}^z(\nu_r)+ \one_{\{k < b\}} \sum_{r = k}^{b} \overline{\w}^z(\nu_r) + \G'_{\nu_k}(n, n)\bigg\} \\
&= \max \limits_{\substack{i \in [n] \cup \{0\}}} \bigg\{\one_{\{i > i_0\}} \cdot \sum_{r = 2i_0+1}^{2i+1} \overline{\w}^z(\nu_r) +  \one_{\{i < i_0\}} \cdot \sum_{r = 2i+1}^{2i_0+1}\overline{\w}^z(\nu_r) +\G'_{\nu_{2i+1}}(n, n)\bigg\}. 
\end{split}
\end{align}
To justify dropping the terms with even $k \in [2n+1]$ from the maximum above, consider the case $k > b$ for example. Then since $\overline{\w}^z(\nu_{k+1}) \ge 0 \ge \overline{\w}^z(\nu_{k})$ and $\w'(\nu_{k}) = 0$, 
\begin{align*}
\sum_{r = b}^{k} \overline{\w}^z(\nu_r) + \G'_{\nu_k}(n, n) &= \sum_{r = b}^{k} \overline{\w}^z(\nu_r) + \max \{\G'_{\nu_{k-1}}(n, n), \G'_{\nu_{k+1}}(n, n)\} \\
&\le \max \bigg\{\sum_{r = b}^{k-1} \overline{\w}^z(\nu_r)  + \G'_{\nu_{k-1}}(n, n), \sum_{r = b}^{k+1} \overline{\w}^z(\nu_r)  +\G'_{\nu_{k+1}}(n, n)\bigg\}.  
\end{align*}
A similar reasoning also holds for the case $k < b$.  Hence, the first step in \eqref{E-28} is justified. 

The first two terms within the last maximum in \eqref{E-28} can be written as 
\begin{align}
\mrS_i^z = \one_{\{i > i_0\}} \sum_{s = i_0+1}^{i} \{\overline{\w}^z(\nu_{2s}) + \overline{\w}^z(\nu_{2s+1})\} + \one_{\{i < i_0\}} \sum_{s = i+1}^{i_0}\{\overline{\w}^z(\nu_{2s}) + \overline{\w}^z(\nu_{2s-1})\}\label{ES}
\end{align}
for each $i \in [n]$. In view of definition \eqref{Epw}, $\mrS_i^z$ is a sum of $|i-i_0|$ i.i.d.\ terms with marginal distributions $\Exp[z]-\Exp[1-z]$ when $i > i_0$ and $\Exp[1-z]-\Exp[z]$ when $i < i_0$. Returning to the last line in \eqref{E-28}, one has 
\begin{align}
\overline{\G}^z_{\nu, i_0, j_0}(n, n) &\stackrel{\text{a.s.}}{=} \max \limits_{\substack{i \in [n]}} \bigg\{\mrS_i^z + \G'_{(i, n-i)}(n, n)\bigg\}.  \label{E-30}
\end{align}
Up to the irrelevant shift by $(i_0, j_0)$, the right-hand side of \eqref{E-30} coincides with the type of line-to-point LPP with stationary initial data considered in \cite[Section 1.1]{pime-18}. 
\end{example}

\begin{example}\label{ExLppLine2Pt}
Let us now generalize Example \ref{ExLppAntd2Pt} to down-right paths with an arbitrary negative \emph{slope}. Let $z \in (0, 1)$, $\mu \in \bbR_{<0}$, $n \in \bbZ_{>0}$ and $j_0 \in [n] \cup \{0\}$. Put $i_0 = - \lf \mu (n-j_0) \rf \in \bbZ_{\ge 0}$ and $m = i_0 +\lf -\mu j_0 \rf \in \bbZ_{\ge 0}$. Consider a down-right path $\nu$ that contains the vertices $\{(i_0 + \lf \mu (j-j_0)\rf, j): j \in [n] \cup \{0\}\} \subset \bbZ_{\ge 0}^2$. For each $j \in [n] \cup \{0\}$, let $p_j \in [\ell(\nu)]$ denote the unique index such that 
$\nu_{p_j} = (i_0 + \lf \mu (n-j-j_0) \rf, n-j)$. 
Assume further that $p_0 = 1$, $D_\nu = \{p_j+1: j \in [n-1] \cup \{0\}\}$ and $p_n = \ell(\nu)$. Then $\nu$ is determined uniquely as the down-right path that starts from $\nu_1 = (0, n)$, ends at $\nu_{\ell(\nu)} = (m, 0)$ and, for each $k \in [\ell(\nu)-1]$, leaves vertex $\nu_k$ via a down-step if and only if $k = p_j$ for some $j \in [n-1] \cup \{0\}$. Choose the base vertex on $\nu$ as $\nu_b = (i_0, j_0)$ where $b = p_{n-j_0}$. Note that the case $\mu = -1$ of the preceding setup corresponds exactly to Example \ref{ExLppAntd2Pt}. 

Restrict to the case $\mu \in [-1, 0)$ below. In the next computation, the first equality is by virtue of Lemma \ref{LLppDRAlt} and the choice of $\nu$. The justification for dropping the terms $k \in R_{\nu}$ from the maximum is similar\footnote{The assumption $\mu \ge -1$ comes in crucially here by ensuring that $\nu$ does not take consecutive right steps.  Consequently,  each vertex $\nu_k$ with $k \in R_\nu$ lies between two vertices of the form $\nu_{p_j}$ and $\nu_{p_{j+1}}$ for some $j \in [n-1] \cup \{0\}$.} to the one given after \eqref{E-28} and omitted. The second equality holds since $[\ell(\nu)] \smallsetminus R_{\nu} = \{p_j: j \in [n] \cup \{0\}\}$. 
\begin{align}
\label{E-29}
\begin{split}
&\overline{\G}^z_{\nu, i_0, j_0}(m, n) \stackrel{\text{a.s.}}{=} \max \limits_{\substack{k \in [\ell(\nu)] \\ k \not \in R_{\nu}}} \bigg\{\one_{\{k > b\}} \sum_{r = b}^{k} \overline{\w}^z(\nu_r)+ \one_{\{k < b\}} \sum_{r = k}^{b} \overline{\w}^z(\nu_r) + \G'_{\nu_k}(m, n)\bigg\} \\
&= \max \limits_{\substack{j \in [n] \cup \{0\}}} \bigg\{\one_{\{j > n-j_0\}} \cdot \sum_{r = b}^{p_j} \overline{\w}^z(\nu_{r}) +  \one_{\{j < n-j_0\}} \cdot \sum_{r = p_j}^{b}\overline{\w}^z(\nu_{r}) +\G'_{\nu_{p_j}}(m, n)\bigg\}
\end{split}
\end{align}
Generalizing \eqref{ES}, introduce the sum 
\begin{align}
\label{ES2}
\mrS_j^{z, \mu} = \one_{\{j < j_0\}} \cdot \sum_{r=b}^{p_{n-j}} \overline{\w}^z(\nu_r) +  \one_{\{j > j_0\}} \cdot \sum_{r = p_{n-j}}^{b}\overline{\w}^z(\nu_r)
\end{align}
for each $j \in [n] \cup \{0\}$. Using \eqref{ES2} in \eqref{E-29} yields 
\begin{align}
\label{E-31}
\overline{\G}^z_{\nu, i_0, j_0}(m, n) \stackrel{\text{a.s.}}{=} \max \limits_{\substack{j \in [n] \cup \{0\}}} \bigg\{\mrS_{j}^{z, \mu}+\G'_{(i_0 + \lf \mu (j-j_0) \rf, j)}(m, n)\bigg\}. 
\end{align}
The right-hand side of \eqref{E-31} can be recognized as the LPP defined in \cite[(3.40)]{ferr-ghos-nejj-19} up to the shift of the origin to $(i_0, j_0)$. 
\end{example}

\section{Auxiliary estimates}\label{SEstim}

\begin{lem}
\label{LShpMinBnd} Let $\delta > 0$. The following statements hold.
\begin{enumerate}[\normalfont (a)]
\item 
$x+y \le \Shp(x, y) \le 2(x+y)$ for $x, y > 0$.
\item  
$
\Min(x, y) \in (\epsilon, 1-\epsilon) 
$
for $(x, y) \in S_\delta$ where $\epsilon = \sqrt{\delta}/2$. 
\item $(x+y)^{1/3} \le \curv(x, y) \le 2 \delta^{-1/3}(x+y)^{1/3}$ for $(x, y) \in S_\delta$. 
\end{enumerate}
\end{lem}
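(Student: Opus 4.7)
All three parts reduce to elementary algebra once one substitutes $a=\sqrt{x}$, $b=\sqrt{y}$ so that $\Shp(x,y)=(a+b)^2$, $\Min(x,y)=a/(a+b)$, and $\curv(x,y)=(a+b)^{4/3}/(ab)^{1/3}$. First, observe that $S_\delta$ is nonempty only when $\delta\le 1$, since $x\ge \delta y \ge \delta^2 x$ forces $\delta\le 1$; hence we may assume $\delta\in(0,1]$ throughout the proof of (b) and (c).

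For part (a), expand $\Shp(x,y)=x+2\sqrt{xy}+y$. The lower bound is immediate from $\sqrt{xy}\ge 0$, while the upper bound follows from AM--GM: $2\sqrt{xy}\le x+y$.

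For part (b), I plan to exploit the symmetry $\Min(y,x)=1-\Min(x,y)$, so it suffices to bound $\Min$ from above. The condition $y\ge \delta x$ gives $\sqrt{y}\ge\sqrt{\delta}\,\sqrt{x}$, so
\begin{align*}
\Min(x,y)=\frac{1}{1+\sqrt{y/x}}\le \frac{1}{1+\sqrt{\delta}}.
\end{align*}
It remains to verify $1/(1+\sqrt{\delta})\le 1-\sqrt{\delta}/2$, which simplifies to $\sqrt{\delta}(1-\sqrt{\delta})\ge 0$ and is valid for $\delta\in(0,1]$. The complementary lower bound $\Min(x,y)\ge \sqrt{\delta}/2$ follows by symmetry (using $x\ge\delta y$) and the inequality $\sqrt{\delta}/(1+\sqrt{\delta})\ge \sqrt{\delta}/2$.

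For part (c), I would write $\curv(x,y)^3=(a+b)^4/(ab)$ and compare against $(x+y)=a^2+b^2$. The lower bound $\curv(x,y)\ge(x+y)^{1/3}$ reduces to $(a+b)^4\ge ab(a^2+b^2)$. Since $(a+b)^2\ge a^2+b^2$, I get $(a+b)^4\ge (a^2+b^2)^2\ge ab(a^2+b^2)$, where the last step uses $a^2+b^2\ge 2ab\ge ab$; no constraint from $S_\delta$ is needed here. For the upper bound, I use $(a+b)^2\le 2(a^2+b^2)$ so $(a+b)^4\le 4(a^2+b^2)^2$, together with the $S_\delta$-bound $ab\ge\sqrt{\delta}\,\max(a^2,b^2)\ge \tfrac12\sqrt{\delta}\,(a^2+b^2)$, which yields
\begin{align*}
\curv(x,y)^3=\frac{(a+b)^4}{ab}\le \frac{4(a^2+b^2)^2}{\tfrac12\sqrt{\delta}\,(a^2+b^2)}=\frac{8}{\sqrt{\delta}}(x+y).
\end{align*}
Taking cube roots gives $\curv(x,y)\le 2\delta^{-1/6}(x+y)^{1/3}\le 2\delta^{-1/3}(x+y)^{1/3}$, where the last inequality uses $\delta\le 1$.

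None of these computations is delicate; the only step that requires a moment of thought is recognizing that $S_\delta$ forces $\delta\le 1$ (so that $\delta^{-1/6}\le\delta^{-1/3}$ in the last line), without which the upper bound in (c) would be stated with a sharper exponent.
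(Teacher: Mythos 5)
Your proof is correct and takes essentially the same approach as the paper, which simply states that the claims are elementary consequences of the definitions \eqref{EShp}, \eqref{EMin}, \eqref{Ecurv}; you have supplied the verifications. (Your aside that part (c) actually yields the sharper constant $2\delta^{-1/6}$ is correct; the paper's $\delta^{-1/3}$ is just a coarser choice, valid since $\delta\le 1$ on $S_\delta$.)
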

\begin{proof}
The claims are elementary consequences of definitions \eqref{EShp}, \eqref{EMin} and \eqref{Ecurv}. 
\end{proof}

Next is an estimate of the mean function in \eqref{EM}. 
\begin{lem}
\label{LMShpId}
Let $x, y \in \bbR_{>0}$ and $z \in (0, 1)$. Abbreviate $\Shp = \Shp(x, y)$, $\Min = \Min(x, y)$ and $\curv = \curv(x, y)$. 
Fix $\delta > 0$ and $\epsilon > 0$. Then there exists a constant $C_0 = C_0(\delta, \epsilon) > 0$ such that  
\begin{align*}
|\M^z(x, y)-\Shp-\curv^3(z-\Min)^2| \le C_0 (x+y) |z-\Min|^3 \quad \text{ for } (x, y) \in S_\delta \text{ and } z \in (\epsilon, 1-\epsilon).
\end{align*}
\end{lem}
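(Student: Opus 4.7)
The plan is to apply Taylor's theorem with Lagrange remainder to the smooth map $z \mapsto \M^z(x,y) = x/z + y/(1-z)$ at the base point $z = \Min$. Three observations give the first three Taylor coefficients for free: by \eqref{EShp} we have $\M^{\Min}(x,y) = \Shp$; since $\Min$ is the (unique) minimizer in \eqref{EShp} the first derivative vanishes, $\partial_z|_{z=\Min}\M^z(x,y) = 0$; and identity \eqref{Ecurv2} gives $\tfrac12\partial_z^2|_{z=\Min}\M^z(x,y) = \curv^3$. Hence Taylor's theorem produces some $\zeta$ lying between $z$ and $\Min$ such that
\begin{align*}
\M^z(x,y) - \Shp - \curv^3(z-\Min)^2 &= \frac{1}{6}\,\partial_z^3\M^\zeta(x,y)\,(z-\Min)^3.
\end{align*}

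Next I would compute the third derivative explicitly: $\partial_z^3\M^z(x,y) = -6x/z^4 + 6y/(1-z)^4$, so
\begin{align*}
|\partial_z^3\M^\zeta(x,y)| \le 6\bigg(\frac{x}{\zeta^4} + \frac{y}{(1-\zeta)^4}\bigg) \le \frac{6(x+y)}{\min\{\zeta,\, 1-\zeta\}^4}.
\end{align*}
The task is then to bound $\zeta$ away from $0$ and $1$ uniformly. For $(x,y) \in S_\delta$, Lemma \ref{LShpMinBnd}(b) provides $\epsilon_1 = \epsilon_1(\delta) > 0$ with $\Min \in (\epsilon_1, 1-\epsilon_1)$. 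Combined with the hypothesis $z \in (\epsilon, 1-\epsilon)$ and the fact that $\zeta$ lies between $z$ and $\Min$, we obtain $\zeta \in (\epsilon_2, 1-\epsilon_2)$ for $\epsilon_2 = \min\{\epsilon, \epsilon_1\}$, which depends only on $\delta$ and $\epsilon$.

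Plugging this into the Lagrange remainder yields
\begin{align*}
|\M^z(x,y) - \Shp - \curv^3(z-\Min)^2| \le \frac{(x+y)}{\epsilon_2^4}\,|z-\Min|^3,
\end{align*}
so setting $C_0 = \epsilon_2^{-4}$ completes the proof. There is no real obstacle here; the only thing to be mildly careful about is the uniform separation of $\zeta$ from $0$ and $1$, which is handled as above by combining the hypothesis on $z$ with Lemma \ref{LShpMinBnd}(b) applied to $\Min$.
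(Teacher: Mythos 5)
Your proof is correct, and it takes a genuinely different route from the paper's. You invoke Taylor's theorem with the Lagrange remainder at $z=\Min$, using the three free facts $\M^{\Min}=\Shp$, $\partial_z\big|_{\Min}\M^z=0$, and $\tfrac12\partial_z^2\big|_{\Min}\M^z=\curv^3$ (i.e.\ \eqref{Ecurv2}), then bounding $|\partial_z^3\M^\zeta|$ by confining the intermediate point $\zeta$ to $(\epsilon_2,1-\epsilon_2)$. The paper instead performs an exact algebraic telescoping: using $x/\Min^2=y/(1-\Min)^2=\Shp$ it derives the closed-form identity
\begin{align*}
\M^z(x,y)-\Shp-\curv^3(z-\Min)^2=(z-\Min)^3\,\Shp\,\frac{\Min+z-1}{z(1-z)\Min(1-\Min)},
\end{align*}
and then bounds this explicit cubic remainder via Lemma \ref{LShpMinBnd}(a)--(b) and $z\in(\epsilon,1-\epsilon)$. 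What your approach buys is routine portability: it needs only smoothness, the vanishing first derivative, and a bound on the third derivative, so it would transfer verbatim to other stationary models where $\M^z$ has a different algebraic form. What the paper's approach buys is an exact remainder with no undetermined $\zeta$; the factor $\Min+z-1$ in particular makes visible that the sign of the error flips as $z$ crosses $1-\Min$, information the Lagrange form hides. Your constant-tracking is sound (the factor $6$ in $\partial_z^3$ cancels the $1/6$, and $\zeta$ between $z$ and $\Min$ inherits the separation $\min\{\epsilon,\sqrt{\delta}/2\}$ from both endpoints), so the argument stands as written.
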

\begin{proof}
From definitions \eqref{EM}, \eqref{EShp} and \eqref{EMin}, and the identity $\dfrac{x}{\Min^2} = \dfrac{y}{(1-\Min)^2} = \Shp$, 
\begin{align*}
\M^{z}(x, y)-\Shp &= (z-\Min)\bigg(-\frac{x}{z\Min}+ \frac{y}{(1-z)(1-\Min)}\bigg) \\
&= (z-\Min)\bigg(-\frac{x}{z\Min}+ \frac{x}{\Min^2} + \frac{y}{(1-z)(1-\Min)}-\frac{y}{(1-\Min)^2}\bigg) \\
&= (z-\Min)^2\bigg(\frac{x}{z\Min^2}+\frac{y}{(1-z)(1-\Min)^2}\bigg) = \frac{(z-\Min)^2\Shp}{z(1-z)}. 
\end{align*}
Recalling  the definition of $\curv$ from \eqref{Ecurv}, one obtains that 
\begin{align*}
\M^{z}(x, y)-\Shp - \curv^3 (z-\Min)^2  = (z-\Min)^2\Shp \cdot \bigg(\frac{1}{z(1-z)}-\frac{1}{\Min(1-\Min)}\bigg) = (z-\Min)^3\Shp\cdot \frac{\Min+z-1}{z(1-z)\Min(1-\Min)}.  
\end{align*}
The result follows from bounding the last expression above using Lemma \ref{LShpMinBnd}(a)--(b) and the assumption $z \in (\epsilon, 1-\epsilon)$. 
\end{proof}

\bibliographystyle{habbrv}
\bibliography{refs}

\begin{thebibliography}{100}

\bibitem{auff-damr-hans-17}
A.~Auffinger, M.~Damron, and J.~Hanson.
\newblock {\em 50 years of first-passage percolation}, volume~68 of {\em
  University Lecture Series}.
\newblock American Mathematical Society, Providence, RI, 2017.

\bibitem{baik-22}
J.~Baik.
\newblock {KPZ} limit theorems.
\newblock {\em \href{https://arxiv.org/abs/2206.14086}{arXiv:2206.14086}},
  2022.

\bibitem{baik-deif-joha-99}
J.~Baik, P.~Deift, and K.~Johansson.
\newblock On the distribution of the length of the longest increasing
  subsequence of random permutations.
\newblock {\em J. Amer. Math. Soc.}, 12(4):1119--1178, 1999.

\bibitem{baik-etal-01}
J.~Baik, P.~Deift, K.~T.-R. McLaughlin, P.~Miller, and X.~Zhou.
\newblock Optimal tail estimates for directed last passage site percolation
  with geometric random variables.
\newblock {\em Adv. Theor. Math. Phys.}, 5(6):1207--1250, 2001.

\bibitem{Bai-Fer-Pec-14}
J.~Baik, P.~L. Ferrari, and S.~P\'{e}ch\'{e}.
\newblock Convergence of the two-point function of the stationary {TASEP}.
\newblock In {\em Singular phenomena and scaling in mathematical models}, pages
  91--110. Springer, Cham, 2014.

\bibitem{baik-rain-01a}
J.~Baik and E.~M. Rains.
\newblock Algebraic aspects of increasing subsequences.
\newblock {\em Duke Math. J.}, 109(1):1--65, 2001.

\bibitem{bala-busa-sepp-20}
M.~Bal\'azs, O.~Busani, and T.~Sepp\"al\"ainen.
\newblock Non-existence of bi-infinite geodesics in the exponential corner
  growth model.
\newblock {\em Forum Math. Sigma}, 8:Paper No. e46, 34 pp., 2020.

\bibitem{bala-busa-sepp-21}
M.~Bal{\'a}zs, O.~Busani, and T.~Sepp{\"a}l{\"a}inen.
\newblock Local stationarity in exponential last-passage percolation.
\newblock {\em Probab. Theory Related Fields}, 180(1-2):113--162, 2021.

\bibitem{bala-cato-sepp}
M.~Bal{\'a}zs, E.~Cator, and T.~Sepp{\"a}l{\"a}inen.
\newblock Cube root fluctuations for the corner growth model associated to the
  exclusion process.
\newblock {\em Electron. J. Probab.}, 11:no. 42, 1094--1132 (electronic), 2006.

\bibitem{bala-komj-sepp-12jsp}
M.~Bal{\'a}zs, J.~Komj{\'a}thy, and T.~Sepp{\"a}l{\"a}inen.
\newblock Fluctuation bounds in the exponential bricklayers process.
\newblock {\em J. Statist. Phys.}, 147:35--62, 2012.

\bibitem{bala-komj-sepp-12}
M.~Bal{\'a}zs, J.~Komj{\'a}thy, and T.~Sepp{\"a}l{\"a}inen.
\newblock Microscopic concavity and fluctuation bounds in a class of deposition
  processes.
\newblock {\em Ann. Inst. Henri Poincar\'e Probab. Stat.}, 48(1):151--187,
  2012.

\bibitem{bala-quas-sepp}
M.~Bal{\'a}zs, J.~Quastel, and T.~Sepp{\"a}l{\"a}inen.
\newblock Fluctuation exponent of the {KPZ}/stochastic {B}urgers equation.
\newblock {\em J. Amer. Math. Soc.}, 24(3):683--708, 2011.

\bibitem{bala-rass-sepp-19}
M.~Bal\'azs, F.~Rassoul-Agha, and T.~Sepp\"al\"ainen.
\newblock Large deviations and wandering exponent for random walk in a dynamic
  beta environment.
\newblock {\em Ann. Probab.}, 47(4):2186--2229, 2019.

\bibitem{bala-sepp-07JSP}
M.~Bal{\'a}zs and T.~Sepp{\"a}l{\"a}inen.
\newblock Exact connections between current fluctuations and the second class
  particle in a class of deposition models.
\newblock {\em J. Statist. Phys.}, 127(2):431--455, 2007.

\bibitem{bala-sepp-aom}
M.~Bal{\'a}zs and T.~Sepp{\"a}l{\"a}inen.
\newblock Order of current variance and diffusivity in the asymmetric simple
  exclusion process.
\newblock {\em Ann. of Math. (2)}, 171(2):1237--1265, 2010.

\bibitem{bara-corw-17}
G.~Barraquand and I.~Corwin.
\newblock Random-walk in beta-distributed random environment.
\newblock {\em Probab. Theory Related Fields}, 167(3-4):1057--1116, 2017.

\bibitem{basu-bhat-21}
R.~Basu and M.~Bhatia.
\newblock Small deviation estimates and small ball probabilities for geodesics
  in last passage percolation.
\newblock {\em ar{X}iv:2101.01717}, 2021.

\bibitem{basu-busa-ferr-22}
R.~Basu, O.~Busani, and P.~L. Ferrari.
\newblock On the exponent governing the correlation decay of the {$\rm Airy_1$}
  process.
\newblock {\em Comm. Math. Phys.}, 398(3):1171--1211, 2023.

\bibitem{basu-gang-18}
R.~Basu and S.~Ganguly.
\newblock Time correlation exponents in last passage percolation.
\newblock In {\em In and out of equilibrium 3. Celebrating Vladas
  Sidoravicius}, pages 101--123. Birkh{\"a}user/Springer, 2021.

\bibitem{basu-gang-zhan-21}
R.~Basu, S.~Ganguly, and L.~Zhang.
\newblock {T}emporal {C}orrelation in {L}ast {P}assage {P}ercolation with
  {F}lat {I}nitial {C}ondition via {B}rownian {C}omparison.
\newblock {\em Comm. Math. Phys.}, 383:1805--1888, 2021.

\bibitem{basu-hoff-sly-22}
R.~Basu, C.~Hoffman, and A.~Sly.
\newblock Nonexistence of bigeodesics in planar exponential last passage
  percolation.
\newblock {\em Comm. Math. Phys.}, 389:1--30, 2022.

\bibitem{basu-sark-sly-19}
R.~Basu, S.~Sarkar, and A.~Sly.
\newblock Coalescence of geodesics in exactly solvable models of last passage
  percolation.
\newblock {\em J. Math. Phys.}, 60(9):093301, 22, 2019.

\bibitem{basu-sido-sly-16}
R.~Basu, V.~Sidoravicius, and A.~Sly.
\newblock {L}ast {P}assage {P}ercolation with a {D}efect {L}ine and the
  {S}olution of the {S}low {B}ond {P}roblem.
\newblock {\em \href{https://arxiv.org/abs/1408.3464}{arXiv:1408.3464}}, 2014.

\bibitem{bena-corw}
G.~Ben~Arous and I.~Corwin.
\newblock Current fluctuations for {TASEP}: a proof of the
  {P}r\"{a}hofer-{S}pohn conjecture.
\newblock {\em Ann. Probab.}, 39(1):104--138, 2011.

\bibitem{Bha-20-JSP}
M.~Bhatia.
\newblock Moderate deviation and exit time estimates for stationary last
  passage percolation.
\newblock {\em J. Stat. Phys.}, 181(4):1410--1432, 2020.

\bibitem{boro-gori-16-lec}
A.~Borodin and V.~Gorin.
\newblock Lectures on integrable probability.
\newblock In {\em Probability and statistical physics in {S}t. {P}etersburg},
  volume~91 of {\em Proc. Sympos. Pure Math.}, pages 155--214. Amer. Math.
  Soc., Providence, RI, 2016.

\bibitem{boro-petr-14-surv}
A.~Borodin and L.~Petrov.
\newblock Integrable probability: from representation theory to {M}acdonald
  processes.
\newblock {\em Probab. Surv.}, 11:1--58, 2014.

\bibitem{busa-ferr-22}
O.~Busani and P.~L. Ferrari.
\newblock Universality of the geodesic tree in last passage percolation.
\newblock {\em Ann. Probab.}, 50(1):90--130, 2022.

\bibitem{cato-groe-05}
E.~Cator and P.~Groeneboom.
\newblock Hammersley's process with sources and sinks.
\newblock {\em Ann. Probab.}, 33(3):879--903, 2005.

\bibitem{cato-groe-06}
E.~Cator and P.~Groeneboom.
\newblock Second class particles and cube root asymptotics for {H}ammersley's
  process.
\newblock {\em Ann. Probab.}, 34(4):1273--1295, 2006.

\bibitem{cato-pime-13}
E.~Cator and L.~P.~R. Pimentel.
\newblock Busemann functions and the speed of a second class particle in the
  rarefaction fan.
\newblock {\em Ann. Probab.}, 41(4):2401--2425, 2013.

\bibitem{cato-pime-15}
E.~Cator and L.~P.~R. Pimentel.
\newblock On the local fluctuations of last-passage percolation models.
\newblock {\em Stochastic Process. Appl.}, 125(2):538--551, 2015.

\bibitem{chau-noac-18}
H.~Chaumont and C.~Noack.
\newblock Characterizing stationary 1+1 dimensional lattice polymer models.
\newblock {\em Electron. J. Probab.}, 23:19 pp., 2018.

\bibitem{chau-noac-18b}
H.~Chaumont and C.~Noack.
\newblock Fluctuation exponents for stationary exactly solvable lattice polymer
  models via a {M}ellin transform framework.
\newblock {\em ALEA Lat. Am. J. Probab. Math. Stat.}, 15(1):509--547, 2018.

\bibitem{chhi-ferr-spoh-16}
S.~Chhita, P.~Ferrari, and H.~Spohn.
\newblock Limit distributions for {KPZ} growth models with spatially
  homogeneous random initial conditions.
\newblock {\em Ann. Appl. Probab.}, 28(3):1573--1603, 2018.

\bibitem{ciec-geor-19}
F.~Ciech and N.~Georgiou.
\newblock Order of the variance in the discrete {H}ammersley process with
  boundaries.
\newblock {\em Journal of Statistical Physics}, 176(3):591--638, Aug 2019.

\bibitem{Cor-12}
I.~Corwin.
\newblock The {K}ardar-{P}arisi-{Z}hang equation and universality class.
\newblock {\em Random Matrices Theory Appl.}, 1(1):1130001, 76, 2012.

\bibitem{corw-14-icm}
I.~Corwin.
\newblock Macdonald processes, quantum integrable systems and the
  {K}ardar-{P}arisi-{Z}hang universality class.
\newblock In {\em Proceedings of the {I}nternational {C}ongress of
  {M}athematicians---{S}eoul 2014. {V}ol. {III}}, pages 1007--1034. Kyung Moon
  Sa, Seoul, 2014.

\bibitem{Cor-16}
I.~Corwin.
\newblock Kardar-{P}arisi-{Z}hang universality.
\newblock {\em Notices Amer. Math. Soc.}, 63(3):230--239, 2016.

\bibitem{corw-sepp-shen-15}
I.~Corwin, T.~Sepp{\"a}l{\"a}inen, and H.~Shen.
\newblock The strict-weak lattice polymer.
\newblock {\em J. Stat. Phys.}, 160(4):1027--1053, 2015.

\bibitem{coup-11}
D.~Coupier.
\newblock Multiple geodesics with the same direction.
\newblock {\em Electron. Commun. Probab.}, 16:517--527, 2011.

\bibitem{emra-geor-ortm-22}
E.~Emrah, N.~Georgiou, and J.~Ortmann.
\newblock Optimal-order central moment bounds in exponential last-passage
  percolation via the coupling method.
\newblock {\em \href{https://arxiv.org/abs/2204.06613}{ar{X}iv:2204.06613}},
  2022.

\bibitem{emra-janj-sepp-20-md}
E.~Emrah, C.~Janjigian, and T.~Sepp\"{a}l\"{a}inen.
\newblock Right-tail moderate deviations in the exponential last-passage
  percolation.
\newblock {\em \href{https://arxiv.org/abs/2004.04285}{ar{X}iv:2004.04285}},
  2020.

\bibitem{ferr-92}
P.~A. Ferrari.
\newblock Shock fluctuations in asymmetric simple exclusion.
\newblock {\em Probab. Theory Related Fields}, 91(1):81--101, 1992.

\bibitem{ferr-font-94a}
P.~A. Ferrari and L.~R.~G. Fontes.
\newblock Current fluctuations for the asymmetric simple exclusion process.
\newblock {\em Ann. Probab.}, 22(2):820--832, 1994.

\bibitem{ferr-kipn-95}
P.~A. Ferrari and C.~Kipnis.
\newblock Second class particles in the rarefaction fan.
\newblock {\em Ann. Inst. H. Poincar\'{e} Probab. Statist.}, 31(1):143--154,
  1995.

\bibitem{ferr-mart-pime-06}
P.~A. Ferrari, J.~B. Martin, and L.~P.~R. Pimentel.
\newblock Roughening and inclination of competition interfaces.
\newblock {\em Phys.\ Rev.\ E}, 73:031602(3), 2006.

\bibitem{ferr-mart-pime-09}
P.~A. Ferrari, J.~B. Martin, and L.~P.~R. Pimentel.
\newblock A phase transition for competition interfaces.
\newblock {\em Ann. Appl. Probab.}, 19(1):281--317, 2009.

\bibitem{ferr-pime-05}
P.~A. Ferrari and L.~P.~R. Pimentel.
\newblock Competition interfaces and second class particles.
\newblock {\em Ann. Probab.}, 33(4):1235--1254, 2005.

\bibitem{ferr-ghos-nejj-19}
P.~L. Ferrari, P.~Ghosal, and P.~Nejjar.
\newblock Limit law of a second class particle in {TASEP} with non-random
  initial condition.
\newblock {\em Ann. Inst. Henri Poincar\'{e} Probab. Stat.}, 55(3):1203--1225,
  2019.

\bibitem{ferr-nejj-17}
P.~L. Ferrari and P.~Nejjar.
\newblock Fluctuations of the competition interface in presence of shocks.
\newblock {\em ALEA Lat. Am. J. Probab. Math. Stat.}, 14(1):299--325, 2017.

\bibitem{Fer-Occ-18}
P.~L. Ferrari and A.~Occelli.
\newblock Universality of the {GOE} {T}racy-{W}idom distribution for {TASEP}
  with arbitrary particle density.
\newblock {\em Electron. J. Probab.}, 23:Paper No. 51, 24, 2018.

\bibitem{ferr-occe-19}
P.~L. Ferrari and A.~Occelli.
\newblock Time-time covariance for last passage percolation with generic
  initial profile.
\newblock {\em Math. Phys. Anal. Geom.}, 22(1), 2019.

\bibitem{gang-22}
S.~Ganguly.
\newblock Random metric geometries on the plane and {K}ardar-{P}arisi-{Z}hang
  universality.
\newblock {\em Notices Amer. Math. Soc.}, 69:26--35, 2022.

\bibitem{gang-hegd-20}
S.~Ganguly and M.~Hegde.
\newblock Optimal tail exponents in general last passage percolation via
  bootstrapping \& geodesic geometry.
\newblock {\em Probab. Theory and Related Fields}, 2023.

\bibitem{geor-rass-sepp-17-geod}
N.~Georgiou, F.~Rassoul-Agha, and T.~Sepp\"al\"ainen.
\newblock Geodesics and the competition interface for the corner growth model.
\newblock {\em Probab. Theory Related Fields}, 169(1-2):223--255, 2017.

\bibitem{geor-rass-sepp-17-buse}
N.~Georgiou, F.~Rassoul-Agha, and T.~Sepp\"al\"ainen.
\newblock Stationary cocycles and {B}usemann functions for the corner growth
  model.
\newblock {\em Probab. Theory Related Fields}, 169(1-2):177--222, 2017.

\bibitem{groa-janj-rass-21}
S.~Groathouse, C.~Janjigian, and F.~Rassoul-Agha.
\newblock Non-existence of non-trivial bi-infinite geodesics in geometric last
  passage percolation.
\newblock {\em \href{https://arxiv.org/abs/2112.00161}{ar{X}iv:2112.00161}},
  2021.

\bibitem{hagg-pema-98}
O.~H\"aggstr\"om and R.~Pemantle.
\newblock First passage percolation and a model for competing spatial growth.
\newblock {\em J. Appl. Probab.}, 35(3):683--692, 1998.

\bibitem{hamm-sark-20}
A.~Hammond and S.~Sarkar.
\newblock Modulus of continuity for polymer fluctuations and weight profiles in
  {P}oissonian last passage percolation.
\newblock {\em Electron. J. Probab.}, 25:Paper No. 29, 38, 2020.

\bibitem{hoff-05}
C.~Hoffman.
\newblock Coexistence for {R}ichardson type competing spatial growth models.
\newblock {\em Ann. Appl. Probab.}, 15(1B):739--747, 2005.

\bibitem{hoff-08}
C.~Hoffman.
\newblock Geodesics in first passage percolation.
\newblock {\em Ann. Appl. Probab.}, 18(5):1944--1969, 2008.

\bibitem{Jan-Ras-20}
C.~Janjigian and F.~Rassoul-Agha.
\newblock Busemann functions and {G}ibbs measures in directed polymer models on
  {$\Bbb Z^2$}.
\newblock {\em Ann. Probab.}, 48(2):778--816, 2020.

\bibitem{Jan-Ras-Sep-20-}
C.~Janjigian, F.~Rassoul-Agha, and T.~Sepp\"{a}l\"{a}inen.
\newblock Geometry of geodesics through {B}usemann measures in directed
  last-passage percolation.
\newblock {\em J. Eur. Math. Soc. (JEMS)}, 25(7):2573--2639, 2023.

\bibitem{joha}
K.~Johansson.
\newblock Shape fluctuations and random matrices.
\newblock {\em Comm. Math. Phys.}, 209(2):437--476, 2000.

\bibitem{joha-ptrf-00}
K.~Johansson.
\newblock Transversal fluctuations for increasing subsequences on the plane.
\newblock {\em Probab. Theory Related Fields}, 116(4):445--456, 2000.

\bibitem{joha-06-lec}
K.~Johansson.
\newblock Random matrices and determinantal processes.
\newblock In {\em Mathematical statistical physics}, pages 1--55. Elsevier B.
  V., Amsterdam, 2006.

\bibitem{kard-pari-zhan-86}
M.~Kardar, G.~Parisi, and Y.~Zhang.
\newblock Dynamic scaling of growing interfaces.
\newblock {\em Phys. Rev. Lett}, 56:889--892, 1986.

\bibitem{krie-krug-10-rev}
T.~Kriecherbauer and J.~Krug.
\newblock A pedestrian's view on interacting particle systems, {KPZ}
  universality and random matrices.
\newblock {\em J. Phys. A}, 43(40):403001, 41, 2010.

\bibitem{land-soso-22b}
B.~Landon and P.~Sosoe.
\newblock Tail bounds for the {O}'{C}onnell-{Y}or polymer.
\newblock {\em \href{https://arxiv.org/abs/2209.12704}{ar{X}iv:2209.12704}},
  2022.

\bibitem{land-soso-22a}
B.~Landon and P.~Sosoe.
\newblock Upper tail bounds for stationary {KPZ} models.
\newblock {\em Comm. Math. Phys.}, 2023.

\bibitem{ledo-ride-10}
M.~Ledoux and B.~Rider.
\newblock Small deviations for beta ensembles.
\newblock {\em Electron. J. Probab.}, 15:no. 41, 1319--1343, 2010.

\bibitem{lice-newm-96}
C.~Licea and C.~M. Newman.
\newblock Geodesics in two-dimensional first-passage percolation.
\newblock {\em Ann. Probab.}, 24(1):399--410, 1996.

\bibitem{lowe-merk-01}
M.~L\"{o}we and F.~Merkl.
\newblock Moderate deviations for longest increasing subsequences: the upper
  tail.
\newblock {\em Comm. Pure Appl. Math.}, 54(12):1488--1520, 2001.

\bibitem{lowe-merk-roll-02}
M.~L\"{o}we, F.~Merkl, and S.~Rolles.
\newblock Moderate deviations for longest increasing subsequences: the lower
  tail.
\newblock {\em J. Theoret. Probab.}, 15(4):1031--1047, 2002.

\bibitem{mart-sly-zhan-21}
J.~Martin, A.~Sly, and L.~Zhang.
\newblock Convergence of the environment seen from geodesics in exponential
  last-passage percolation.
\newblock {\em \href{https://arxiv.org/abs/2106.05242}{arXiv:2106.05242}},
  2021.

\bibitem{mate-quas-reme-21}
K.~Matetski, J.~Quastel, and D.~Remenik.
\newblock The {KPZ} fixed point.
\newblock {\em Acta Math.}, 227(1):115--203, 2021.

\bibitem{more-sepp-valk-14}
G.~R. Moreno~Flores, T.~Sepp{\"a}l{\"a}inen, and B.~Valk{\'o}.
\newblock Fluctuation exponents for directed polymers in the intermediate
  disorder regime.
\newblock {\em Electron. J. Probab.}, 19:no. 89, 28, 2014.

\bibitem{moun-guio-05}
T.~Mountford and H.~Guiol.
\newblock The motion of a second class particle for the {TASEP} starting from a
  decreasing shock profile.
\newblock {\em Ann. Appl. Probab.}, 15(2):1227--1259, 2005.

\bibitem{newm-icm-95}
C.~M. Newman.
\newblock A surface view of first-passage percolation.
\newblock In {\em Proceedings of the {I}nternational {C}ongress of
  {M}athematicians, {V}ol.\ 1, 2 ({Z}\"urich, 1994)}, pages 1017--1023, Basel,
  1995. Birkh\"auser.

\bibitem{noac-soso-20a}
C.~Noack and P.~Sosoe.
\newblock {Central moments of the free energy of the stationary
  {O}'{C}onnell-{Y}or polymer}.
\newblock {\em Ann. Appl. Probab.}, 32(5):3205 -- 3228, 2022.

\bibitem{noac-soso-20b}
C.~Noack and P.~Sosoe.
\newblock Concentration for integrable directed polymer models.
\newblock {\em Ann. Inst. Henri Poincar\'{e} Probab. Stat.}, 58(1):34--64,
  2022.

\bibitem{ocon-ortm-15}
N.~O'Connell and J.~Ortmann.
\newblock Tracy-{W}idom asymptotics for a random polymer model with
  gamma-distributed weights.
\newblock {\em Electron. J. Probab.}, 20:no. 25, 18, 2015.

\bibitem{oconn-yor-01}
N.~O'Connell and M.~Yor.
\newblock Brownian analogues of {B}urke's theorem.
\newblock {\em Stochastic Process. Appl.}, 96(2):285--304, 2001.

\bibitem{pime-16}
L.~P.~R. Pimentel.
\newblock Duality between coalescence times and exit points in last-passage
  percolation models.
\newblock {\em Ann. Probab.}, 44(5):3187--3206, 2016.

\bibitem{pime-18}
L.~P.~R. Pimentel.
\newblock Local behaviour of {A}iry processes.
\newblock {\em J. Stat. Phys.}, 173(6):1614--1638, 2018.

\bibitem{pime-21}
L.~P.~R. Pimentel.
\newblock Ergodicity of the {KPZ} fixed point.
\newblock {\em ALEA Lat. Am. J. Probab. Math. Stat.}, 18(1):963--983, 2021.

\bibitem{prah-spoh}
M.~Pr{\"a}hofer and H.~Spohn.
\newblock Current fluctuations for the totally asymmetric simple exclusion
  process.
\newblock In {\em In and out of equilibrium (Mambucaba, 2000)}, volume~51 of
  {\em Progr. Probab.}, pages 185--204. Birkh\"auser Boston, Boston, MA, 2002.

\bibitem{quas-12-rev}
J.~Quastel.
\newblock Introduction to {KPZ}.
\newblock In {\em Current developments in mathematics, 2011}, pages 125--194.
  Int. Press, Somerville, MA, 2012.

\bibitem{Qua-Spo-15}
J.~Quastel and H.~Spohn.
\newblock The one-dimensional {KPZ} equation and its universality class.
\newblock {\em J. Stat. Phys.}, 160(4):965--984, 2015.

\bibitem{rain-00}
E.~Rains.
\newblock A mean identity for longest increasing subsequence problems.
\newblock {\em \href{https://arxiv.org/abs/math/0004082}{arXiv:math/0004082}},
  2000.

\bibitem{rass-cgm-18}
F.~Rassoul-Agha.
\newblock Busemann functions, geodesics, and the competition interface for
  directed last-passage percolation.
\newblock In {\em Random growth models}, volume~75 of {\em Proc. Sympos. Appl.
  Math.}, pages 95--132. Amer. Math. Soc., Providence, RI, 2018.

\bibitem{rost}
H.~Rost.
\newblock Nonequilibrium behaviour of a many particle process: density profile
  and local equilibria.
\newblock {\em Z. Wahrsch. Verw. Gebiete}, 58(1):41--53, 1981.

\bibitem{schm-sly-22}
D.~Schmid and A.~Sly.
\newblock Mixing times for the {TASEP} on the circle.
\newblock {\em \href{https://arxiv.org/abs/2203.11896}{arXiv:2203.11896}},
  2022.

\bibitem{sepp98ebp}
T.~Sepp{\"a}l{\"a}inen.
\newblock Coupling the totally asymmetric simple exclusion process with a
  moving interface.
\newblock {\em Markov Process. Related Fields}, 4(4):593--628, 1998.
\newblock I Brazilian School in Probability (Rio de Janeiro, 1997).

\bibitem{sepp-12-aop}
T.~Sepp{\"a}l{\"a}inen.
\newblock Scaling for a one-dimensional directed polymer with boundary
  conditions.
\newblock {\em Ann. Probab.}, 40(1):19--73, 2012.

\bibitem{sepp-cgm-18}
T.~Sepp\"{a}l\"{a}inen.
\newblock The corner growth model with exponential weights.
\newblock In {\em Random growth models}, volume~75 of {\em Proc. Sympos. Appl.
  Math.}, pages 133--201. Amer. Math. Soc., Providence, RI, 2018.

\bibitem{shen-sepp-19}
T.~Sepp\"{a}l\"{a}inen and X.~Shen.
\newblock Coalescence estimates for the corner growth model with exponential
  weights.
\newblock {\em Electron. J. Probab.}, 25:31 pp., 2020.

\bibitem{thie-ledo-15}
T.~Thiery and P.~Le~Doussal.
\newblock On integrable directed polymer models on the square lattice.
\newblock {\em J. Phys. A}, 48(46):465001, 41, 2015.

\bibitem{wido-02}
H.~Widom.
\newblock On convergence of moments for random {Y}oung tableaux and a random
  growth model.
\newblock {\em Int. Math. Res. Not.}, (9):455--464, 2002.

\bibitem{wuth-02}
M.~V. W{\"u}thrich.
\newblock {\em Asymptotic Behaviour of Semi-Infinite Geodesics for Maximal
  Increasing Subsequences in the Plane}, pages 205--226.
\newblock Birkh{\"a}user Boston, Boston, MA, 2002.

\bibitem{xie-22-phd}
Y.~Xie.
\newblock {Limiting distributions and deviation estimates of random walks in
  dynamic random environments}.
\newblock {\em
  \href{https://doi.org/10.25394/PGS.19643079.v1}{https://doi.org/10.25394/PGS.19643079.v1}},
  2022.

\bibitem{zhan-20}
L.~Zhang.
\newblock Optimal exponent for coalescence of finite geodesics in exponential
  last passage percolation.
\newblock {\em Electron. Commun. Probab.}, 25:Paper No. 74, 14 pp., 2020.

\bibitem{zygo-18}
N.~Zygouras.
\newblock Some algebraic structures in the {KPZ} universality.
\newblock {\em \href{https://arxiv.org/abs/1812.07204}{ar{X}iv:1812.07204}},
  2018.

\end{thebibliography}

\end{document}